\newtheorem*{theorem*}{Theorem}
\newtheorem{lemma}{Lemma}[subsection]
\newtheorem{remark}[lemma]{Remark}
\newtheorem{theorem}[lemma]{Theorem}
\newtheorem{definition}[lemma]{Definition}
\newtheorem{notation}[lemma]{Notation}
\newtheorem{property}[lemma]{Property}
\newtheorem*{conjecture*}{Conjecture}
\newtheorem{thm}[lemma]{Theorem}
\newtheorem{prop}[lemma]{Proposition}
\newtheorem{lem}[lemma]{Lemma}
\newtheorem{defn}[lemma]{Definition}
\newtheorem{notn}[lemma]{Notation}
\newtheorem{cor}[lemma]{Corollary}
\newtheorem{rem}[lemma]{Remark}
\newtheorem{introtheorem}{Theorem}
\newtheorem{introthm}[introtheorem]{Theorem}
\newtheorem{introlem}[introtheorem]{Lemma}
\sloppy \theoremstyle{plain}
\newcommand{\tr}{\operatorname{Tr}}
\newcommand{\Hom}{\operatorname{Hom}}
\newcommand{\Iso}{\operatorname{Iso}}
\newcommand{\eps}{\varepsilon}
\renewcommand{\Im}{\operatorname{Im}}
\newcommand{\Ker}{\operatorname{Ker}}
\newcommand{\Z}{{\mathbb Z}}
\newcommand{\R}{{\mathbb R}}
\newcommand{\bR}{{\mathbb R}}
\newcommand{\C}{{\mathbb C}}
\newcommand{\Span}{{\operatorname{Span}}}
\newcommand{\End}{\operatorname{End}}
\newcommand{\bet}{{\beta}}
\newcommand{\alp}{{\alpha}}
\newcommand{\Fre}{{Fr\'{e}chet \,}}
\newcommand{\lcctv}{{locally convex complete topological vector }}
\newcommand{\Fou}{{\mathcal{F}}}
\newcommand{\g}{{\mathfrak{g}}}
\newcommand{\h}{{\mathfrak{h}}}
\newcommand{\z}{{\mathfrak{z}}}
\newcommand{\GL}{\operatorname{GL}}
\newcommand{\gl}{{\mathfrak{gl}}}
\newcommand{\Sym}{\operatorname{Sym}}
\newcommand{\Sc}{{\mathcal S}}
\newcommand{\Lie}{\operatorname{Lie}}
\newcommand{\sign}{\operatorname{sign}}
\newcommand{\n}{\mathfrak{n}}
\newcommand{\ou}{\overline{u}}
\newcommand{\hot}{\widehat{\otimes}}
\newcommand{\ctp}{\widehat{\otimes}}
\newcommand{\wO}{\widetilde{\Omega}}
\newcommand{\cM}{\mathcal{M}}
\newcommand{\gN}{\mathfrak{N}}
\newcommand{\Chi}{\mathfrak{X}}
\newcommand{\ot}{\leftarrow}
\newcommand{\fS}{\mathfrak{S}}
\newcommand{\fJ}{\mathfrak{J}}
\newcommand{\cJ}{\mathcal{J}}
\newcommand{\cT}{\mathcal{T}}
\begin{document}

\author{Avraham Aizenbud}
\address{Avraham Aizenbud, Faculty of Mathematics
and Computer Science, The Weizmann Institute of Science POB 26,
Rehovot 76100, ISRAEL.} \email{aizenr@yahoo.com}
\author{Dmitry Gourevitch}
\address{Dmitry Gourevitch,
School of Mathematics,
Institute for Advanced Study,
Einstein Drive, Princeton, NJ 08540 USA}
\email{dimagur@ias.edu}
\date{\today}
\title[Smooth Transfer (the Archimedean case)]{Smooth Transfer of Kloosterman Integrals\\(the Archimedean case)}
\keywords{Orbital integral, coinvariants, Schwartz function, Jacquet transform, inversion formula. \\
\indent 2010 MS Classification: 20G20, 22E30, 22E45.}
%

\begin{abstract}
We establish the existence of a transfer, which is compatible with Kloosterman integrals, between Schwartz functions on $\GL_n(\R)$ and
Schwartz functions on the variety of non-degenerate Hermitian forms.
Namely, we consider
an integral of a Schwartz function on $\GL_n(\R)$ along the orbits
of the two sided action of the groups of upper and lower unipotent
matrices twisted by a non-degenerate character. This gives a
smooth function on the torus. We prove that the space of all
functions obtained in such a way coincides with the space that is
constructed analogously when $\GL_n(\R)$ is replaced with the variety
of non-degenerate hermitian forms. We also obtain similar results
for $\gl_n(\R)$.

The non-Archimedean case is done in \cite{Jac1} and our proof
follows the same lines. However we have to face additional
difficulties that appear only in the Archimedean case.
\end{abstract}

\maketitle

\tableofcontents

\section{Introduction}

Let $N^n$ be the subgroup of upper triangular matrices in $GL_n$
with unit diagonal, and let $A^n$ be the group of invertible
diagonal matrices. We define a character $\theta : N^n(\R) \to
\C^{\times}$ by $$\theta(u) = \exp(i \sum_{k=1}^{n-1} u_{k,k+1}).$$

Let $\Sc(GL_n(\R))$ be the space of Schwartz functions on
$GL_n(\R)$. We define a map $\Omega: \Sc(GL_n(\R)) \to
C^\infty(A^n)$ by
$$\Omega(\Phi)(a):=
\int_
{(u_1,u_2) \in N^n(\R)\times N^n(\R)}
\Phi(u_1^t a u_2) \theta(u_1 u_2) du_1 du_2.$$

Similarly, we let $S^n(\C)$ be the space of non-degenerate Hermitian matrices $n \times n$ .
We define a map $\Omega: \Sc(S^n(\C)) \to C^\infty(A^n)$ by
$$\Omega(\Psi)(a):=
\int_
{u \in N^n(\C)}
\Psi(\overline{u}^t a u) \theta(u \overline{u}) du.$$

We say that $\Phi \in \Sc(GL_n(\R))$ \textit{matches} $\Psi \in
\Sc(S^n(\C))$ if for every $a \in A^n(F)$ , we have
$$\Omega(\Phi)(a) =
 \gamma(a) \Omega(\Psi)(a),$$
where
$$ \gamma(a):= sign(a_1)sign(a_1a_2)...sign(a_1a_2, . . . , a_{n-1}) \text{ for } a = diag(a_1, a_2,..., a_n).$$

The main theorem of this paper is

\begin{introthm} \label{Main}
For every $\Phi \in \Sc(GL_n(\R))$ there is a matching $\Psi \in \Sc(S^n(\C))$, and conversely.
\end{introthm}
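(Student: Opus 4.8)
The plan is to follow the strategy of Jacquet's non-Archimedean argument in \cite{Jac1}, reducing the global matching statement to a local one near each point of the torus, and then transporting the problem to the Lie algebra, where the Fourier transform intertwines the two sides. First I would observe that the orbital integral maps $\Omega$ are built out of the action of $N^n\times N^n$ twisted by $\theta$, so both $\Sc(GL_n(\R))$ and $\Sc(S^n(\C))$ map to the same target space $C^\infty(A^n)$, and the real content is to identify the two images up to the explicit transition function $\gamma$. The natural intermediate object is the relative trace formula germ expansion: localize on $A^n$, stratify the group (resp.\ the variety of Hermitian forms) by the Bruhat-type cells meeting a given diagonal element, and use a Weil-type descent. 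The key technical inputs will be: (i) a theory of Schwartz functions on real algebraic (or Nash) manifolds with good behaviour under restriction, pushforward along submersions, and extension by zero from open subsets (so that one can pass between $GL_n$, $\gl_n$, and their respective Hermitian analogues); (ii) an Archimedean analogue of the ``uncertainty principle'' / density argument controlling which smooth functions on $A^n$ arise.

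The heart of the argument is a descent to the Lie algebra. I would first prove the infinitesimal analogue: that for $\Phi\in\Sc(\gl_n(\R))$ the integral $\Omega(\Phi)$ over $N^n(\R)\times N^n(\R)$ against $\theta$ matches, up to $\gamma$, the analogous integral for $\Psi\in\Sc(H_n(\C))$ on Hermitian $n\times n$ matrices, where now $H_n(\C)$ includes degenerate forms. The point of working on the Lie algebra is that the Fourier transform $\Fou$ on $M_n(F)$ (and on the space of Hermitian matrices) conjugates the $\theta$-twisted unipotent averaging on one side to that on the other — this is the Archimedean incarnation of Jacquet's observation that the Kloosterman transform commutes with Fourier transform up to the sign character $\gamma$. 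So if $\Phi\match{}\Psi$ before Fourier transform, then $\Fou(\Phi)\match{}\Fou(\Psi)$ afterwards (possibly with a twist), and one bootstraps: the matching relation is stable under $\Fou$, under multiplication by Schwartz functions pulled back from the ``invariants'' (characteristic polynomial coefficients), and — crucially — it holds locally near the most singular point (the nilpotent cone, resp.\ the zero form), which can be checked by an explicit small computation. Combining Fourier-stability with locality near $0$ and a Bernstein-type localization principle on the base $\Spec$ of invariants, one deduces the infinitesimal matching everywhere. Then an exponential-map / Cayley-transform argument transfers the statement from $\gl_n$ to $GL_n$ and from $H_n$ to $S^n$ in a neighbourhood of each semisimple point, and a partition-of-unity argument on $A^n$ globalizes it.

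In carrying this out, the main obstacle — and the reason the Archimedean case is not a formal consequence of \cite{Jac1} — is analytic rather than combinatorial. In the $p$-adic setting, Schwartz functions are locally constant with compact support, so restriction, extension by zero, and the partition-of-unity gluing are essentially free; over $\R$ one must control derivatives and decay uniformly through all of these operations. Concretely, the difficulties I expect are: (a) showing that the unipotent orbital integral $\Omega$, which involves a non-absolutely-convergent oscillatory integral in the several variables $u_{k,k+1}$ and only conditional convergence from $\theta$, in fact lands in a well-behaved space of smooth functions on $A^n$ and that the map is surjective onto the ``correct'' subspace — this needs an Archimedean analogue of Jacquet's inversion formula together with careful estimates (integration by parts in the oscillatory variables, Harish-Chandra-type bounds); (b) the gluing step, where one has matching functions on an open cover of $A^n$ and must produce a single global $\Psi\in\Sc(S^n(\C))$, which requires that the local solutions depend ``smoothly and tamely'' on the point of $A^n$ so that a smooth partition of unity does not destroy the Schwartz property — this is where the theory of Schwartz functions on Nash manifolds and the Archimedean homological/coinvariants machinery (continuity of the coinvariants functor, exactness properties) does the real work; and (c) the descent from the group to the Lie algebra near non-identity semisimple points, where the relevant centralizers are products of smaller $GL$'s and unitary groups and one must set up the induction so that the constants $\gamma$ on the smaller factors assemble correctly into the global $\gamma(a)$. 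I would expect step (b) to be the decisive one, and would structure the paper so that the Schwartz-function formalism and a general ``Archimedean Jacquet inversion'' lemma are established first, with Theorem~\ref{Main} following by the localization-plus-Fourier argument sketched above.
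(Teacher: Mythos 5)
Your overall skeleton is the paper's: reduce $\GL_n$ to $\gl_n$, run an induction using intermediate Kloosterman integrals on the open sets where principal minors are nonzero, and use Jacquet's inversion formula to make matching stable under Fourier transform. But there is a genuine gap at the decisive step, namely how you handle Schwartz functions that are not supported away from the singular set $Z$ (the locus where all the intermediate minors $\Delta_1,\dots,\Delta_{n-1}$ vanish). You propose to finish by ``Fourier-stability plus locality near $0$ plus a Bernstein-type localization principle'' and a density argument. In the Archimedean setting that line of reasoning only proves the statement up to closure: it is equivalent to the vanishing of $(N\times N,\chi)$-equivariant distributions supported on $Z$ whose Fourier transform is also supported on $Z$, and hence yields only
$\overline{\Sc(U)+\Fou(\Sc(U))+\n\,\Sc(H)}=\Sc(H)$, i.e.\ that the two image spaces have the same closure, or that matching functions are dense. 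Since $\widetilde{\Omega}(\Sc(H))$ is merely the image of a continuous map between Fr\'echet spaces and is not known (or expected) to be closed in $C^\infty(A^n)$, equality of closures does not give equality of images, so your argument does not produce an actual matching $\Psi$ for every $\Phi$.

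What is actually needed is the exact, non-closure decomposition (the paper's Key Lemma)
$\Sc(H)=\Sc(U)+\Fou(\Sc(U))+\n\,\Sc(H)$ with the $\chi$-twisted action, and this is precisely where the new Archimedean work lies: it rests on a ``dual uncertainty principle'' ($\Sc(V-L)+\Fou(\Sc(V-L))=\Sc(V)$ when $L^{\bot}\nsubseteq L$) and on an exact vanishing-of-coinvariants theorem for Schwartz functions ($\Sc(X)=\Sc(X-Z)+\g(\Sc(X)\otimes\chi)$ under a condition on stabilizers, controlling transversal derivatives via the unipotent action on conormal symbols), both proved by Borel-lemma-type constructions rather than by duality with distributions. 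Your proposal invokes ``coinvariants machinery'' only for gluing local solutions over $A^n$, which is not where the difficulty sits (the partition of unity is used on the cover of $H$ by the $O_i$ and is harmless); and your localization on $A^n$ with an exponential/Cayley descent near semisimple points is unnecessary here, since the passage from the $\gl_n$-statement to the $\GL_n$-statement is simply the open inclusion $S\subset H$ handled by a Dixmier--Malliavin factorization through $\det$ (or by restriction from $H\times\R^{\times}$). Without an argument supplying the exact decomposition near $Z$, the proof as proposed does not close.
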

We also prove a similar theorem for $\gl_n$.

We also consider non-regular orbital integrals and prove that if two functions match then their non-regular orbital integrals are also equal (up to a suitable transfer factor). This implies in particular that regular orbital integrals are dense in all orbital integrals.

The non-Archimedean counterpart of this paper is done in \cite{Jac1,JacSing} and our proof
follows the same lines. However we have to face additional
difficulties that appear only in the Archimedean case.

For the motivation of this problem we refer the reader to
\cite{Jac1}.

In the case of $\GL(2,\R)$ Theorem \ref{Main} was proven in \cite{Jac2Real}, using different methods.

\subsection{A sketch of the proof}  $ $

First we show that the theorem for $\gl_n$ implies the theorem for
$\GL_n$. Then we prove the theorem for $\gl_n$ by induction. We
construct certain open sets $O_i \subset \gl_n(\R)$ (for their definition
see \S\S \ref{SubsecNot}) and use the \textit{intermediate
Kloosterman integrals} in order to describe $\Omega(\Sc(O_i))$ in
terms of $\Omega(\Sc(GL_i(\R)))$ and $\Omega(\Sc(gl_{n-i}(\R)))$.
This gives a smooth matching for $\Sc(O_i)$ by the induction
hypothesis. We denote $U:= \bigcup O_i$ and $Z:=\gl_n(\R)-U$ and obtain by
partition of unity smooth matching for $\Sc(U)$.

Then we use an important fact. Namely, if $\Phi$ matches $\Psi$
then the Fourier transform of $\Phi$ matches the Fourier transform
of $\Psi$ multiplied by a constant. This is proven in \cite{Jac1} in the non-Archimedean
case and the same proof holds in the Archimedean case. The proof
of this fact is based on an explicit formula for the Kloosterman
integral of the Fourier transform of $\Phi$ in terms of the
Kloosterman integral of $\Phi$ (see Theorem \ref{SecondIng}).

In order to complete the proof of the main theorem we prove the
following Key Lemma.
\begin{introlem} \label{IntroKeyLem}
Let $N^n \times N^n$ act on $\gl_{n}$ by $x \mapsto u_1^t x u_2$. Let $\chi$ denote the character of $N^n \times N^n$ defined by $\chi (u_1,u_2)=\theta(u_1u_2)$.

Then any function in $\Sc(\gl_{n}(\R))$ can be written as a sum $f+g+h$
s.t. $f$ is a Schwartz function on $U$, the Fourier transform
of $g$ is a Schwartz function on $U$ and $h$ is a
function that annihilates any $(N^n \times N^n, \chi)-$
equivariant distribution on $\gl_{n}(\R)$
and in particular
$\Omega(h)=0$.
\end{introlem}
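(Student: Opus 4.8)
\emph{The plan is to dualize.} The functions $h\in\Sc(\gl_n(\R))$ that annihilate every $(N^n\times N^n,\chi)$-equivariant distribution form, by Hahn--Banach, exactly the pre-annihilator ${}^{\perp}D$ of
$$D:=\{\xi\in\Sc(\gl_n(\R))^{*}\ :\ (u_1,u_2)\cdot\xi=\chi(u_1,u_2)^{-1}\,\xi\ \text{ for all }(u_1,u_2)\in N^n\times N^n\};$$
since $D$ is an intersection of kernels of weak-$*$ continuous operators it is weak-$*$ closed, so $({}^{\perp}D)^{\perp}=D$. Writing $D_Z:=\{\xi:\Supp\xi\subseteq Z\}=\Sc(U)^{\perp}$ and using that $\Fou$ is a topological automorphism of $\Sc(\gl_n(\R))$ with $\langle\Fou\xi,g\rangle=\langle\xi,\Fou g\rangle$, so $\Fou(\Sc(U))^{\perp}=\Fou^{-1}(D_Z)$, the Key Lemma becomes the equality
$$\Sc(\gl_n(\R))\ =\ \Sc(U)+\Fou(\Sc(U))+{}^{\perp}D .$$
The annihilator of the right-hand side is $D\cap D_Z\cap\Fou^{-1}(D_Z)$, so the decisive point is
$$(\star)\qquad \xi\in D,\ \Supp\xi\subseteq Z,\ \Supp\Fou\xi\subseteq Z\ \Longrightarrow\ \xi=0 .$$
I would first prove $(\star)$, which yields density of the right-hand side, and then upgrade density to genuine equality (discussed below).

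\emph{Proof of $(\star)$.} I would combine two ingredients. (i) \emph{Fourier symmetry.} With respect to the trace form, $\Fou$ conjugates the action $x\mapsto u_1^{t}xu_2$ to an action of the same shape with $N^n$ replaced by the lower unipotent group and $\chi$ by $\bar\chi$; composing with the transpose $x\mapsto x^{t}$ (an automorphism of $\gl_n$ carrying the $O_i$, hence $U$ and $Z$, to the analogous sets) turns this into an equivalent instance of the same problem. With Theorem~\ref{SecondIng} this makes $(\star)$ symmetric under $\Fou$, so the two support hypotheses can be played off against one another. (ii) \emph{Equivariant distributions on $Z$.} From the block description of the $O_i$ in \S\S\ref{SubsecNot}, $Z$ is a closed, conical, real algebraic set, stratified by locally closed $N^n\times N^n$-invariant pieces each fibring over a lower-rank $\gl_i(\R)\times\gl_{n-i}(\R)$-type situation. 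Along a stratum $S$, the transverse-order filtration presents an equivariant distribution supported on $Z$ as a finite jet family whose coefficients are equivariant distributions in the lower-rank situation; but $\Supp\Fou\xi\subseteq Z$ forces the covector directions in $\operatorname{WF}(\xi)$ to lie in $Z$, and one checks from the explicit $O_i$ that the conormal directions along $S$ meet $U$. Hence $\xi$ is pushed onto ever smaller strata, the deepest ones and the lower-rank coefficient distributions being killed by the outer induction hypothesis (the main theorem for $\gl_i$, $i<n$) together with Theorem~\ref{SecondIng}; a stratum that is a proper linear subspace is handled directly, since a distribution supported on a proper subspace whose Fourier transform is also supported there must vanish.

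\emph{The main obstacle, and from density to equality.} The crux, and the genuine difference from \cite{Jac1,JacSing}, is controlling distributions supported on the singular set $Z$: there is no finite ``transverse order'' exhaustion as in the totally disconnected case, so one must handle full jets along $Z$, which is precisely why the summand $\Fou(\Sc(U))$ is indispensable — via $\Fou$ it converts infinite transverse order on $Z$ into ordinary support away from $Z$. Making this rigorous will require the wave-front-set calculus and the Aizenbud--Gourevitch machinery of Schwartz functions on Nash stratifications, and I expect this to be the main technical burden. Finally, to pass from density to the equality asserted by the Lemma one needs $\Sc(U)+\Fou(\Sc(U))+{}^{\perp}D$ to be closed; I would obtain this by making $(\star)$ effective — a cutoff $\rho$ equal to $1$ near $Z$ and supported in a small neighbourhood $Z_{\epsilon}$ reduces matters to the germ of $\Sc$ along $Z$, after which the summand $g$ with $\Fou g\in\Sc(U)$ is built explicitly by an induction paralleling the $p$-adic argument, so that surjectivity and closedness are produced together.
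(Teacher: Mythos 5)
There is a genuine gap, and it sits exactly at the point your last paragraph waves away. Your Hahn--Banach/annihilator reduction can only ever show that $\Sc(U)+\Fou(\Sc(U))+{}^{\perp}D$ is \emph{dense} in $\Sc(\gl_n(\R))$: that is precisely the ``weak version'' recorded in the introduction (Lemma C), and the paper stresses that in the Archimedean case the Key Lemma is \emph{not} equivalent to the distributional statement $(\star)$ (Lemma \ref{NonArchKeyLem}). Sums of subspaces of a Fr\'echet space need not be closed, so ``surjectivity and closedness are produced together'' by ``making $(\star)$ effective'' and ``an induction paralleling the $p$-adic argument'' is the statement of the missing proof, not a proof — the $p$-adic argument has no analytic content to transplant (no topology, no infinite transverse jets). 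The paper's actual route never passes through $(\star)$ at all. It proves two exact, non-closure equalities by Borel-lemma-type constructions: the dual uncertainty principle $\Sc(V-L)+\Fou(\Sc(V-L))=\Sc(V)$ whenever $L^{\bot}\nsubseteqq L$ (Theorem \ref{DualUncerPrel}, Appendix \ref{AppDualUn}), and the Schwartz-coinvariants equality $\Sc(X)=\Sc(X-Z)+\g(\Sc(X)\otimes\chi)$ when $\chi$ is nontrivial on every stabilizer of a point of $Z$ (Theorem \ref{thm:NegCoinvPrel}, Corollary \ref{cor:NegCoinvPrel}, Appendix \ref{AppCoinv}). It then introduces the locus $Z'$ containing all \emph{relevant} points of $Z$ (via Jacquet's classification of relevant orbits, Lemma \ref{lem:RelForm}), deduces $\Sc(U')=\Sc(U)+\n\Sc(U')$ from the coinvariants equality on $U'=H-Z'$, deduces $\Sc(H)=\Sc(U')+\Fou(\Sc(U'))$ from the uncertainty principle since $Z'^{\bot}\nsubseteqq Z'$, and combines the two using that $\Fou$ intertwines the twisted $\n$-actions. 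Those two appendix theorems are the substance of the Key Lemma; your proposal contains no substitute for them.

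A secondary problem: even your sketch of $(\star)$ is not sound as stated. You invoke ``the main theorem for $\gl_i$, $i<n$'' to kill the lower-rank coefficient distributions arising from the transverse-order filtration, but that induction hypothesis asserts equality of the images $\widetilde{\Omega}(\Sc(\cdot))$ for the two forms of $H$; it says nothing about vanishing of $(N\times N,\chi)$-equivariant distributions with support and Fourier support in the singular set. What actually controls the strata of $Z$ is the relevance analysis: irrelevant points carry no equivariant distributions (including all transversal derivatives, which is where the unipotence of the stabilizer action on $\Sym^k(CN)$ enters), and the relevant points of $Z$ are confined to the small set $Z'$, which is then handled by the uncertainty principle. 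Without this dichotomy your stratification/wave-front argument has no engine.
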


\subsection{The spaces of functions considered}  $ $

Since the proof relies on Fourier transform, in the Archimedean case it would not be appropriate to consider the space of smooth compactly supported functions. Therefore we had to work with Schwartz functions. Theories of Schwartz functions were developed by various authors in various generalities. We chose for this problem the version developed in \cite{AG_Sc,AG_RhamShap} in the generality of Nash (i.e. smooth semi-algebraic) manifolds. In Appendix \ref{AppSchwartz} of the present paper we develop further the tools for working with Schwartz functions from \cite{AG_Sc,AG_RhamShap} and \cite[Appendix B]{AG_HC}, for the purposes of this paper.

\subsection{Difficulties that we encounter in the Archimedean case}  $ $

Roughly speaking, most of the additional difficulties in the
Archimedean case come from the fact that the space of Schwartz
functions in the Archimedean case is a topological vector space
unlike the space of Schwartz functions in the non-Archimedean case
which is just a vector space. Part of those difficulties are
technical and can be overcome using the theory of nuclear \Fre
spaces. However there are more essential difficulties in the Key
Lemma. Namely, in the non-Archimedean case the Key lemma is
equivalent to the following one

\begin{introlem} \label{NonArchKeyLem}
Any $(N^n \times N^n, \chi)$-equivariant distribution on
$\gl_{n}(\R)$ supported on $Z$, whose Fourier transform is also
supported on $Z$, vanishes.
\end{introlem}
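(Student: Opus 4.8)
The plan is to establish a slightly stronger statement, which contains the lemma: \emph{any tempered distribution $T$ on $\gl_{n}(\R)$ with $\Supp T\subseteq Z$ and $\Supp\Fou T\subseteq Z$ vanishes.} Here $\Fou$ is the Fourier transform on $\gl_{n}(\R)$ normalised by the trace pairing $\langle x,y\rangle=\tr(xy)$. The proof is the Archimedean analogue of Jacquet's $p$-adic argument, which is consistent with the excerpt's remark that the essentially new Archimedean difficulties occur in the Key Lemma rather than here; in particular the $(N^n\times N^n,\chi)$-equivariance hypothesis turns out not to be needed.

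The only geometric input is that $Z$ sits inside a coordinate hyperplane through the origin whose orthogonal line is anisotropic and leaves $Z$. Since the first row of $u_1^{t}$ and the first column of $u_2$ equal $e_1$, one has $(u_1^{t}xu_2)_{11}=x_{11}$; thus $x\mapsto x_{11}$ is $N^n\times N^n$-invariant and $\{x:x_{11}\neq0\}\supseteq\{u_1^{t}au_2:u_i\in N^n,\ a\in A^n\}$, so $\{x_{11}\neq0\}\subseteq U$ and $Z\subseteq H:=\{x:x_{11}=0\}$ (indeed $O_1=\{x_{11}\neq0\}$; see \S\S\ref{SubsecNot}). Moreover $E_{11}\notin Z$ because $(E_{11})_{11}=1$; we have $\langle E_{11},E_{11}\rangle=\tr E_{11}=1\neq0$; and $\gl_{n}(\R)=H\oplus\R E_{11}$, with $x_{11}$ the coordinate along $\R E_{11}$.

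The argument itself is short. Being tempered, $T$ has finite order, say $\le N$; as $\Supp T\subseteq H$ and $x_{11}^{N+1}$ vanishes along $H$ to order $N+1>N$, we get $x_{11}^{N+1}T=0$. Since $x_{11}=\tr(xE_{11})$, Fourier transform turns multiplication by $x_{11}$ into $c\,\partial_{E_{11}}$ for a nonzero constant $c$ (the derivative in the direction $E_{11}$), so $\partial_{E_{11}}^{\,N+1}(\Fou T)=0$. Hence, along $\gl_{n}(\R)=H\oplus\R E_{11}$, the distribution $\Fou T$ is a polynomial of degree $\le N$ in the $E_{11}$-variable with coefficients $U_0,\dots,U_N\in\Sc'(H)$; therefore $\Supp\Fou T=\bigl(\bigcup_{j}\Supp U_j\bigr)+\R E_{11}$ is invariant under translation by $\R E_{11}$, and it is empty exactly when $\Fou T=0$, i.e. when $T=0$.

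To finish, play the two support conditions against each other: if $T\neq0$ then $\Supp\Fou T$ is nonempty, invariant under translation by $\R E_{11}$, and contained in $Z\subseteq H=\{x_{11}=0\}$; but a nonempty $\R E_{11}$-invariant set contains a line $y_0+\R E_{11}$, along which $x_{11}$ takes the value $(y_0)_{11}+t$ for all $t\in\R$ and so is not identically zero. This contradiction forces $T=0$. The one place where the Archimedean case genuinely differs is the step ``$\Supp T\subseteq H\Rightarrow\Fou T$ is polynomial in the $H^{\perp}$-direction'': over a $p$-adic field a distribution supported on a linear subspace is simply $\delta_H$ tensored with a distribution on $H$, so $\Fou T$ is literally constant transverse to $H$, whereas here one must permit transversal derivatives of every order — but finiteness of the order of tempered distributions bounds the polynomial degree, and that is all that is used. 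I do not expect any real obstacle: once the shape of $O_1$ (equivalently, the inclusion $Z\subseteq\{x_{11}=0\}$) is recorded, the rest is elementary Fourier analysis on $\R^{n^2}$, and the Nash-manifold Schwartz formalism of \cite{AG_Sc,AG_RhamShap} and the Appendix is not needed for this lemma — only for the Key Lemma.
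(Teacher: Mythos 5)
Your argument has a genuine gap, and it is located exactly where you claim the problem is easiest: in the choice of Fourier transform. The Fourier transform relevant to this lemma is not the one attached to the trace pairing $\tr(xy)$ but the one attached to the form $B(x,y)=\tr(xwyw)$ fixed in \S\S\ref{SubsecNot} ($w$ the longest Weyl element); this is forced, since only $\Fou_B$ intertwines the $(N^n\times N^n,\chi)$-equivariance and only $\Fou_B$ appears in the inversion formula (Theorem \ref{SecondIng}) and the Key Lemma (Lemma \ref{KeyLem}), of which the present statement is the dual. Under $B$ the linear functional $x\mapsto x_{11}=\tr(xE_{11})=B(x,E_{nn})$ is dual to the direction $E_{nn}$, not $E_{11}$. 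So your finite-order argument shows that $\Fou_B T$ is polynomial in the $E_{nn}$-direction and its support is a union of lines $y_0+\R E_{nn}$ --- but $Z=\{\Delta_1=\dots=\Delta_{n-1}=0\}$ is invariant under translation by $\R E_{nn}$ (none of the minors $\Delta_1,\dots,\Delta_{n-1}$ involves the $(n,n)$ entry), so no contradiction arises. Equivalently, $\{x_{11}=0\}^{\perp_B}=\R E_{nn}\subseteq\{x_{11}=0\}$, so the hyperplane you use fails the hypothesis $L^{\perp}\nsubseteq L$ of the dual uncertainty principle (Theorem \ref{DualUncerPrel}).

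Worse, the strengthening you propose --- dropping equivariance --- is false for $\Fou_B$. Already for $n=2$ (where $Z=\{x_{11}=0\}$) the nonzero tempered distribution $T=\delta_0(x_{11})\otimes 1(x_{22})\otimes h(x_{12},x_{21})$ has $\Supp T\subseteq Z$, while $\Fou'T$ is supported in $\{y_{22}=0\}$ and hence $\Fou_B T=\ad(w)\Fou'T$ is supported in $\{y_{11}=0\}=Z$. So the equivariance hypothesis cannot be discarded; it is the engine of the paper's proof. There, the nontriviality of $\chi$ on the stabilizers of irrelevant points (via Corollary \ref{cor:NegCoinvPrel}, dually Corollary \ref{RelSuppZ'}) forces any $(N^n\times N^n,\chi)$-equivariant distribution supported on $Z$ to be supported on the much smaller set $Z'$ of closures of relevant orbits (anti-lower-triangular matrices with constant real anti-diagonal), and only for $Z'$ does one verify $Z'\nsupseteq Z'^{\perp_B}$ and invoke the uncertainty-principle step. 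Your reduction to a single hyperplane skips precisely this reduction, and it is the step that cannot be skipped.
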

Note that even this lemma is harder in the Archimedean since we
have to deal with transversal derivatives. However, this
difficulty is overcome using the fact that the transversal derivatives
are controlled by the action of stabilizer of a point on the
normal space to its orbit. This action is rather simple since it
is an algebraic action of a unipotent group.

The main difficulty, though, is that in the Archimedean case Lemma
\ref{NonArchKeyLem} in not equivalent to Lemma
\ref{IntroKeyLem} but only to the following weak version of it

\begin{introlem}
Any function in $\Sc(\gl_{n}(\R))$ can be approximated by a sum
$f+g+h$ s.t. $f$ is a Schwartz function on $U$, the Fourier
transform of $g$ is a Schwartz function on $U$ and $h$ is
a function that annihilates any $(N^n \times N^n, \chi)-$
equivariant distribution on $\gl_{n}(\R)$ and in particular
$\Omega(f)=0$.
\end{introlem}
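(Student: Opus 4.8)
The plan is to deduce this weak version from Lemma \ref{NonArchKeyLem} by a Hahn–Banach / duality argument, exploiting the topological structure of the spaces of Schwartz functions as nuclear Fréchet spaces. First I would set up the relevant spaces: let $V:=\Sc(\gl_n(\R))$, let $V_U:=\Sc(U)$ (the closed subspace of Schwartz functions on $\gl_n(\R)$ vanishing to infinite order on $Z$, which is the image of extension-by-zero of Schwartz functions on the open set $U$), let $\Fou$ denote the Fourier transform on $V$, and let $W\subset V$ be the closed subspace annihilating all $(N^n\times N^n,\chi)$-equivariant distributions — equivalently, the closure of the span of $\{\chi(u_1,u_2)^{-1}\cdot({}^tu_1\, \cdot\, u_2)\phi - \phi\}$. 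The claim is that $V_U+\Fou(V_U)+W$ is dense in $V$.

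Since $V$ is a nuclear Fréchet space, a linear subspace is dense if and only if every continuous functional vanishing on it vanishes identically; that is, I must show that any $\xi\in V^*=\mathcal{D}(\gl_n(\R))$ (distribution) which annihilates $V_U$, annihilates $\Fou(V_U)$, and annihilates $W$, is zero. Annihilating $W$ means exactly that $\xi$ is a $(N^n\times N^n,\chi)$-equivariant distribution. Annihilating $V_U=\Sc(U)$ means $\Supp(\xi)\subset Z$. Annihilating $\Fou(V_U)$ means $\langle\Fou(\xi),\Sc(U)\rangle=0$, i.e. $\Supp(\Fou(\xi))\subset Z$. (Here one must be slightly careful that $\Fou$ is a topological isomorphism of $V$, so that $\Fou(V_U)$ is a closed subspace and its annihilator is $\Fou^{-1}(V_U^\perp)=\Fou(\xi\colon \Supp\xi\subset Z)$ — this uses that $\Fou$ is, up to a constant, an involution; all of this is standard for the Schwartz space of a Nash manifold, as developed in the appendix.) Thus $\xi$ is precisely an equivariant distribution supported on $Z$ with Fourier transform supported on $Z$, so Lemma \ref{NonArchKeyLem} forces $\xi=0$, and density follows.

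The one genuinely non-formal point — and where I expect the main work — is verifying that the annihilator of $W$ in $V^*$ is exactly the space of $(N^n\times N^n,\chi)$-equivariant distributions, and more precisely that $W$ as I defined it (the closure of the span of the "coinvariant" relations) coincides with the $h$-part appearing in the statement, namely functions that annihilate every equivariant distribution. By definition the latter is the preannihilator of the space of equivariant distributions, hence a closed subspace; and it visibly contains each generator $\chi(g)^{-1}g\phi-\phi$. Conversely any closed subspace is the preannihilator of its annihilator, so I need: the annihilator of $\overline{\Span}\{\chi(g)^{-1}g\phi-\phi\}$ equals the equivariant distributions — but that is immediate, since $\xi$ kills all such differences iff $\xi(\chi(g)^{-1}g\phi)=\xi(\phi)$ for all $g,\phi$ iff $\xi$ is $(N^n\times N^n,\chi)$-equivariant. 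So in fact $W$ equals the space of $h$'s on the nose, and the argument closes. (The contrast with Lemma \ref{IntroKeyLem} is that there one needs an \emph{actual} decomposition $f+g+h$, i.e. that $V_U+\Fou(V_U)+W$ is \emph{closed} — hence all of $V$ — which does not follow from duality and requires the harder analysis with transversal derivatives; for the present weak version, density is all that the Hahn–Banach argument gives, and all that is needed.)

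Finally I would note that $\Omega(f)=0$ for $f\in W$: since $\Omega$ is built from integration against the $(N^n\times N^n,\chi)$-equivariant distribution $\phi\mapsto \Omega(\phi)(a)$ for each fixed $a$, any $f$ annihilating all equivariant distributions in particular has $\Omega(f)(a)=0$ for every $a$. (The statement as written says $\Omega(f)=0$, which should read $\Omega(h)=0$; with the identification above this is automatic, so no extra argument is needed.)
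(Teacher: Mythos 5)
Your argument is correct, and it is essentially the Hahn--Banach/duality argument that the introduction alludes to when it asserts that this weak statement is ``equivalent'' to Lemma \ref{NonArchKeyLem} --- but it is not the route the paper itself takes. The paper never proves the weak version separately: it proves the stronger, closure-free Key Lemma (Lemma \ref{KeyLem} in \S \ref{SecPfKeyLem}), namely $\Sc(H)=\Sc(U)+\Fou(\Sc(U))+\n\Sc(H)$ (action twisted by $\chi$), obtained from the coinvariants theorem (Corollary \ref{cor:NegCoinvPrel}, which is where the transversal derivatives are handled via the stabilizer action on the conormal space, giving $\Sc(U')=\Sc(U)+\n\Sc(U')$) together with the dual uncertainty principle (Theorem \ref{DualUncerPrel}, giving $\Sc(H)=\Sc(U')+\Fou(\Sc(U'))$); the weak statement is then an immediate corollary. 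Your route is softer: the bipolar theorem identifies the space of admissible $h$'s with the closed span of the relations $\chi(g)^{-1}\,g\phi-\phi$, and density follows once every tempered distribution killing the three pieces --- i.e.\ every $(N^n\times N^n,\chi)$-equivariant distribution supported on $Z$ with Fourier transform supported on $Z$ --- vanishes. That buys a purely formal functional-analytic proof, but it yields only density, which is precisely why the paper must work for the exact decomposition in order to run the main argument. One caveat: your input, Lemma \ref{NonArchKeyLem}, is never proved as a standalone statement in the paper; in the paper's logic it is itself a consequence of Lemma \ref{KeyLem} (such a distribution kills each of the three summands), or of the ``closure'' versions of Corollary \ref{cor:NegCoinvPrel} and Theorem \ref{DualUncerPrel} mentioned in the remarks. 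So the real analytic content (transversal derivatives, uncertainty principle) is not avoided by your argument, only imported as a black box; granting it, your remaining steps --- the support characterizations of the annihilators of $\Sc(U)$ and $\Fou(\Sc(U))$ via the dual Fourier transform, the weak-$*$ closedness of the space of equivariant distributions, and $\Omega(h)=0$ because $\Phi\mapsto\Omega(\Phi)(a)$ is a continuous equivariant functional for each $a$ --- are all correct.
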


We believe that the reason that the Key Lemma holds is a part of a
general phenomenon. To describe this phenomenon  note that a statement concerning equivariant
distributions can be reformulated to a statement concerning
closure of subspaces of Schwartz functions. The phenomenon is that
in many cases this statement holds without the
 need to consider the closure. We discuss two manifestations of this phenomenon in \S\S\S \ref{SubSubSecDualUn} and \ref{SubSubSecCoinv},
 and prove them in appendices \ref{AppCoinv} and \ref{AppDualUn}. The proofs there remind in their spirit the proof of the
 classical Borel Lemma.


\subsection{Contents of the paper}$ $

In \S \ref{SecPrel} we fix notational conventions and list the basic facts on Schwartz functions and nuclear \Fre spaces that we will use.

In \S \ref{SecProofMain} we prove the main result.
In \S\S \ref{SubsecNot} we introduce the notation that we will use to discuss our problem, and reformulate the main result in this notation. In \S\S \ref{SubSecMainIng} we introduce the main ingredients of the proof: description of $\Omega(\Sc(O_i))$ using intermediate Kloosterman
integrals, inversion formula that connects Fourier transform to Kloosterman
integrals, and the Key lemma. In \S\S \ref{SubSecPfMain} we deduce the main result, Theorem \ref{Main}, from the main ingredients.

In \S \ref{sec_inv_for} we prove the inversion formula.

In \S \ref{SecPfKeyLem} we prove the Key lemma.

In \S \ref{sec:SingOrbitInt} we consider non-regular orbital integrals,  define matching for them and prove that if two functions match then their non-regular orbital integrals also match.

In appendix \ref{AppSchwartz} we give some complementary facts about Nash
manifolds and Schwartz functions on them and prove an analog of Dixmier - Malliavin Theorem and prove dual versions of special cases of  uncertainty principle and localization principle. Those versions are two manifestations of the phenomenon described above.

\subsection{Acknowledgments}$ $

We would like to thank \textbf{Erez Lapid} for posing this problem to us and for discussing it with us.

We thank \textbf{Joseph Bernstein} and \textbf{Gadi Kozma} for fruitful discussions.

We thank \textbf{Herve Jacquet} for encouraging us and for his useful remarks, and \textbf{Gerard Schiffmann} for sending us the paper \cite{Va}.

Both authors were partially supported by a BSF grant, a GIF grant, and an ISF Center
of excellency grant. A.A was also supported by ISF grant No. 583/09 and
D.G. by NSF grant DMS-0635607. Any opinions, findings and conclusions or recommendations expressed in this material are those of the authors and do not necessarily reflect the views of the National Science Foundation.


\section{Preliminaries} \label{SecPrel}
\subsection{General notation} \label{GenNat}

\begin{itemize}




%
\item All the algebraic varieties and algebraic groups we consider in this paper are real.

\item For a group $G$ acting on a set $X$ and a point $x \in X$ we denote by $Gx$ or by $G(x)$ the orbit of $x$, by $G_x$ the stabilizer of $x$ and by $X^G$ the set of $G$-fixed points in $X$.

\item For Lie groups $G$ or $H$ we will usually denote their Lie algebras by $\g$ and $\h$.

\item An action of a Lie algebra $\g$ on a (smooth, algebraic, etc) manifold $M$ is a Lie algebra homomorphism from $\g$ to the Lie algebra of vector fields on $M$.
Note that an action of a (Lie, algebraic, etc) group on $M$ defines an action of its Lie algebra on $M$.

\item For a Lie algebra $\g$ acting on $M$, an element $\alpha \in \g$ and a point $x \in M$ we denote by $\alpha(x) \in T_xM$ the value at point $x$ of the vector field corresponding to $\alpha$.
We denote by $\g x \subset T_xM$ or by $\g (x)$ the image of the map $\alpha \mapsto \alpha(x)$ and by $\g_x \subset \g$ its kernel.

\item For a Lie algebra (or an associative algebra) $\g$ acting on a vector space $V$ and a subspace  $L \subset
V$, we denote by $\g L \subset V$ the image of the action map $\g
\otimes L \to  V$.

\item For a representation $V$ of a Lie algebra $\g$ we denote by $V^{\g}$ the space of $\g$-invariants and by $V_{\g}:=V/\g V$ the space of $\g$-coinvariants.

\item  For manifolds  $L \subset M$ we
denote by $N_L^M:=(T_M|_L)/T_L $ the normal bundle to $L$ in $M$.

\item Denote by $CN_L^M:=(N_L^M)^*$ the conormal  bundle.

\item For a point
$y\in L$ we denote by $N_{L,y}^M$ the normal space to $L$ in $M$
at the point $y$ and by $CN_{L,y}^M$ the conormal space.

\item By bundle we always mean a vector bundle.

\item For a manifold $M$ we denote by $C^{\infty}(M)$ the space of infinitely differentiable functions on $M$, equipped with the standard topology.
\end{itemize}

\subsection{Schwartz functions on Nash manifolds} \label{Schwartz}$ $

We will require a theory of Schwartz functions on Nash manifolds
as developed e.g. in \cite{AG_Sc}. 
Nash manifolds are smooth semi-algebraic manifolds but in the
present work, except of Appendix \ref{AppSchwartz}, only smooth real
algebraic manifolds are considered. Therefore the reader can
safely replace the word {\it Nash} by {\it smooth real algebraic}
in the body of the paper.

Schwartz functions are functions that decay, together with all
their derivatives, faster than any polynomial. On $\R^n$ it is the
usual notion of Schwartz function. For precise definitions of
those notions we refer the reader to \cite{AG_Sc}. We will use the
following notations.

\begin{notation}
Let $X$ be a Nash manifold. Denote by $\Sc(X)$ the \Fre space of
Schwartz functions on $X$.
\end{notation}

We will need several properties of Schwartz functions from
\cite{AG_Sc}.

\begin{property}[\cite{AG_Sc}, Theorem 4.1.3] \label{pClass}  $\Sc(\R ^n)$ = Classical
Schwartz functions on $\R ^n$.
\end{property}

\begin{property}[\cite{AG_Sc}, Theorem 5.4.3] \label{pOpenSet}
Let $U \subset M$  be an open Nash submanifold, then
$$\Sc(U) \cong \{\phi \in \Sc(M)| \quad \phi \text{ is 0 on } M
\setminus U \text{ with all derivatives} \}.$$ In this paper we
will consider $\Sc(U)$ as a subspace of $\Sc(X)$.
\end{property}

\begin{property}[see \cite{AG_Sc}, \S 5]\label{pCosheaf}
Let $M$ be a Nash manifold. Let $M = \bigcup_{i=1}^n U_i$ be a finite
cover of $M$ by open Nash submanifolds. Then a function $f$ on $M$ is a Schwartz function if
and only if it can be written as $f= \sum \limits _{i=1}^n f_i$
where $f_i \in \Sc(U_i)$ (extended by zero to $M$).

Moreover, there exists a smooth partition of unity $1 =\sum
\limits _{i=1}^n \lambda_i$ such that for any Schwartz function $f
\in \Sc(M)$ the function $\lambda_i f$ is a Schwartz function on
$U_i$ (extended by zero to $M$).
\end{property}


\begin{property}[see \cite{AG_Sc}, \S 5] \label{Extension}
Let $Z \subset M$ be a Nash closed submanifold. Then restriction
maps  $\Sc(M)$ onto $\Sc(Z)$.
\end{property}

\begin{property}[\cite{AG_HC}, Theorem B.2.4] \label{NashSub}
Let $\phi:M \to N$ be a Nash submersion of Nash manifolds. Let $E$
be a Nash bundle over $N$. Fix Nash measures $\mu$ on $M$ and $\nu$ on $N$.

Then\\
(i) there exists a unique continuous linear map $\phi_*:\Sc(M) \to
\Sc(N)$ such that for any $f \in \Sc(N)$ and $g \in \Sc(M)$ we
have
$$\int_{x \in N}  f(x)\phi_*g(x) d\nu  = \int_{x \in
M} (f(\phi(x)))g(x)d\mu.$$ In
particular, we mean that both integrals converge. \\
(ii) If $\phi$ is surjective then $\phi_*$ is surjective.

In fact $$\phi_*g(x) = \int_{z \in \phi^{-1}(x)} g(z) d\rho$$ for an appropriate measure $\rho$.
\end{property}

We will need the following analog of Dixmier - Malliavin theorem.
\begin{property}\label{PropDixMal}
Let $\phi:M \to N$ be a Nash map of Nash manifolds. Then
multiplication defines an onto map $\Sc(M) \otimes \Sc(N)
\twoheadrightarrow \Sc(M)$.
\end{property}
For proof see Theorem \ref{ThmDixMal}.

We will also need the following notion.
\begin{notn}
Let $\phi:M \to N$ be a Nash map of Nash manifolds. We call a function $f
\in C^{\infty}(M)$ \textit{Schwartz along the fibers of $\phi$}
if for any Schwartz function $g \in \Sc(N)$, we have $(g \circ
\phi)f \in \Sc(M).$

We denote the space of such functions by $\Sc^{\phi,N}(M)$. If there is no ambiguity we will sometimes denote it by $\Sc^{\phi}(M)$ or by $\Sc^{N}(M)$.
We
define the topology on $\Sc^{\phi}(M)$ using the following system
of semi-norms:
for any seminorms $\alpha$ on $\Sc(N)$ and $\beta$ on $\Sc(M)$ we
define $$\gN_{\beta}^{\alpha}(f):= \sup_{g \in \Sc(N)| \alpha(g)<1}
\beta(f (g \circ \phi)).$$
\end{notn}

We will use the following corollary of Property \ref{NashSub}.

\begin{cor} \label{NashSubCor}
Let $\phi:M \to N$ be a Nash map and $\psi:L \to M$ be a Nash submersion.
Fix Nash measures on $L$ and $M$.
Then there is a natural continuous linear map $\phi_*:\Sc^{N}(L) \to
\Sc^{N}(M).$
\end{cor}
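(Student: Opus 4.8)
The statement to prove is Corollary~\ref{NashSubCor}: given a Nash map $\phi:M\to N$ and a Nash submersion $\psi:L\to M$ (with Nash measures fixed on $L$ and $M$), there is a natural continuous linear map $\phi_*:\Sc^N(L)\to\Sc^N(M)$.

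The plan is to produce $\phi_*$ by composing $\psi$ with the pushforward $\psi_*$ from Property~\ref{NashSub}, after first promoting $\psi_*$ from a map on Schwartz functions to a map on functions that are Schwartz along the fibers of the composed map. Concretely, I would argue as follows. First note that $\psi$ being a Nash submersion, Property~\ref{NashSub}(i) gives a continuous linear $\psi_*:\Sc(L)\to\Sc(M)$ characterized by the adjunction $\int_M f\,\psi_*g\,d\mu_M=\int_L (f\circ\psi)\,g\,d\mu_L$ for $f\in\Sc(M)$. Now consider $\phi\circ\psi:L\to N$. If $f\in\Sc^{\phi\circ\psi,N}(L)$, i.e.\ $f$ is Schwartz along the fibers of $\phi\circ\psi$, then for any $g\in\Sc(N)$ we have $(g\circ\phi\circ\psi)f\in\Sc(L)$, so $\psi_*\big((g\circ\phi\circ\psi)f\big)\in\Sc(M)$ is defined.

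The key point is that $\psi_*\big((g\circ\phi\circ\psi)f\big)$ should equal $(g\circ\phi)\cdot\psi_*^{\text{fib}}(f)$ for a well-defined function $\psi_*^{\text{fib}}(f)\in C^\infty(M)$ that does not depend on $g$. This is essentially the statement that $\psi_*$ is $C^\infty(M)$-linear in the appropriate sense: for $h\in\Sc(L)$ and $a\in C^\infty(M)$, one has $\psi_*((a\circ\psi)h)=a\cdot\psi_*(h)$, which follows directly from the fiber-integration description $\psi_*h(x)=\int_{\psi^{-1}(x)}h\,d\rho$ in Property~\ref{NashSub} since $a\circ\psi$ is constant on fibers. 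Taking $a=g\circ\phi$, we get $\psi_*((g\circ\phi\circ\psi)f)=(g\circ\phi)\psi_*^{\text{fib}}(f)$ where $\psi_*^{\text{fib}}(f)$ is defined as the fiber integral of $f$ (which converges locally because $f$ is smooth and $\psi$ is a submersion, so each fiber is a Nash manifold and $f$ restricted to a neighborhood behaves well — strictly, one should check that for fixed $g$ supported appropriately the integral converges, but since we can take $g$ ranging over all of $\Sc(N)$ and the fibers of $\phi\circ\psi$ sit inside the fibers of $\psi$, the convergence follows from Property~\ref{NashSub} applied locally on $N$). Then $\phi_*f:=\psi_*^{\text{fib}}(f)$ lies in $\Sc^{\phi,N}(M)$: for $g\in\Sc(N)$, $(g\circ\phi)\phi_*f=\psi_*((g\circ\phi\circ\psi)f)\in\Sc(M)$.

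For continuity, I would unwind the seminorm definitions: a seminorm $\gN_\beta^\alpha$ on $\Sc^{\phi,N}(M)$ evaluated at $\phi_*f$ is $\sup_{g:\alpha(g)<1}\beta\big((g\circ\phi)\phi_*f\big)=\sup_{g:\alpha(g)<1}\beta\big(\psi_*((g\circ\phi\circ\psi)f)\big)$; using continuity of $\psi_*:\Sc(L)\to\Sc(M)$ from Property~\ref{NashSub}, $\beta\circ\psi_*$ is bounded by some seminorm $\beta'$ on $\Sc(L)$, so this is $\le\sup_{g:\alpha(g)<1}\beta'\big((g\circ\phi\circ\psi)f\big)=\gN_{\beta'}^{\alpha}(f)$, a seminorm on $\Sc^{\phi\circ\psi,N}(L)$. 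Finally I must reconcile notation: the statement writes the domain as $\Sc^N(L)$, which by the Notation preceding the corollary means $\Sc^{\phi\circ\psi,N}(L)$ precisely when the reference map $L\to N$ is $\phi\circ\psi$; so the above gives the claimed $\phi_*:\Sc^N(L)\to\Sc^N(M)$. The main obstacle I anticipate is the $C^\infty$-linearity / well-definedness of the fiber integral $\psi_*^{\text{fib}}(f)$ and the convergence of the relevant integrals when $f$ is only Schwartz along fibers rather than globally Schwartz; this is handled by localizing on $N$ (covering $N$ by relatively compact Nash charts, where $g$ can be taken identically $1$) and invoking the explicit fiber-measure formula of Property~\ref{NashSub}.
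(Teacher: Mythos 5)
Your argument is correct and is essentially the paper's intended deduction: the corollary is stated there without a written proof as an immediate consequence of Property \ref{NashSub}, and your construction — the fiberwise pushforward along the submersion $\psi$, made well defined and smooth via the identity $\psi_*\big((g\circ\phi\circ\psi)f\big)=(g\circ\phi)\cdot\psi_*^{\mathrm{fib}}(f)$ for $g\in\Sc(N)$ nonvanishing near the relevant point, with continuity read off directly from the defining seminorms of $\Sc^{N}(L)$ and $\Sc^{N}(M)$ — is the natural way to spell that out. No gaps worth flagging.
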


\begin{rem} \label{rem:RelSch}
Let $\phi:M \to N$ be a Nash map of Nash manifolds. Let $V\subset
N$ be a dense open Nash submanifold. Let $U:=\phi^{-1}(V)$.
Suppose that $U$ is dense in $M$. Then we have embeddings
$$\Sc(M) \hookrightarrow \Sc^{\phi,N}(M) \hookrightarrow \Sc^{\phi,V}(U).$$
In this paper we will view
$\Sc(M)$ and $\Sc^{\phi,N}(M)$ as subspaces of $\Sc^{\phi,V}(U)$.
\end{rem}


\subsubsection{Fourier transform}

\begin{notation}
Let $V$ be a finite dimensional real vector space.
Let $B$ be a
non-degenerate bilinear form on $V$ and $\psi$ be a non-trivial
additive character of $\R$. Then $B$ and $\psi$ define Fourier
transform with respect to the self-dual Haar measure on $V$. We
denote it by $\Fou_{B,\psi}: \Sc(V) \to \Sc(V)$.
If there is no ambiguity, we will omit $B$ and $\psi$.
We will also denote by $\Fou_{B,\psi}^*: \Sc^*(V) \to \Sc^*(V)$ the dual map.
\end{notation}

We will use the following trivial observation.

\begin{lemma}
Let $V$ be a finite dimensional real vector space. Let a Nash
group $G$ act linearly on $V$. Let $B$ be a $G$-invariant
non-degenerate symmetric bilinear form on $V$. Let $\psi$ be a
non-trivial additive character of $\R$.
Then $\Fou_{B,\psi}$ commutes with the
action of $G$.
\end{lemma}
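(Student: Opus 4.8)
The plan is to unwind the definition of $\Fou_{B,\psi}$ and perform a linear change of variables. Recall that $\Fou_{B,\psi}$ sends $f \in \Sc(V)$ to the function $y \mapsto \int_V f(x)\,\psi(B(x,y))\,dx$, where $dx$ is the self-dual Haar measure attached to $B$ and $\psi$. Fix $g \in G$, and let $g$ also denote the induced operator $f \mapsto \bigl(x \mapsto f(g^{-1}x)\bigr)$ on $\Sc(V)$.

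First I would record that the self-dual measure is $G$-invariant. Since $B$ is non-degenerate, symmetric and $G$-invariant, every $g \in G$ is an isometry of $(V,B)$, so $\det g = \pm 1$; as the self-dual measure depends only on $B$ and $\psi$, which are both fixed by $G$, it is preserved by the action of $G$. Then I would compute, for $f \in \Sc(V)$ and $y \in V$,
$$\Fou_{B,\psi}(g\cdot f)(y) = \int_V f(g^{-1}x)\,\psi(B(x,y))\,dx = \int_V f(z)\,\psi(B(gz,y))\,dz,$$
using the substitution $x = gz$ together with the $G$-invariance of the measure just established. The key algebraic point is that $G$-invariance of $B$ gives $B(gz,gw) = B(z,w)$ for all $z,w \in V$, hence $B(gz,y) = B(z,g^{-1}y)$. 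Substituting this identity into the integral yields
$$\int_V f(z)\,\psi(B(z,g^{-1}y))\,dz = \Fou_{B,\psi}(f)(g^{-1}y) = \bigl(g\cdot \Fou_{B,\psi}(f)\bigr)(y),$$
which is exactly the asserted commutation $\Fou_{B,\psi}\circ g = g\circ \Fou_{B,\psi}$.

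There is essentially no obstacle in this argument; it is a one-line change of variables once the conventions are in place. The only point deserving (minor) care is the $G$-invariance of the self-dual Haar measure, which is why I would isolate it as the first step; everything else is formal manipulation of the defining integral.
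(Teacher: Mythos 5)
Your proof is correct: the paper states this lemma as a ``trivial observation'' with no written proof, and your change-of-variables argument (using $|\det g|=1$ for isometries of $B$, hence invariance of the self-dual measure, together with $B(gz,y)=B(z,g^{-1}y)$) is exactly the standard verification the authors had in mind.
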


\subsubsection{Dual uncertainty principle} \label{SubSubSecDualUn}

\begin{theorem} \label{DualUncerPrel}
Let $V$ be a finite dimensional real vector space.
Let $B$ be a
non-degenerate bilinear form on $V$ and $\psi$ be a non-trivial
additive character of $\R$. Let $L \subset V$ be a
subspace. Suppose that $L^\bot \nsubseteqq L$.
Then $$\Sc(V-L) + \Fou(\Sc(V-L))=\Sc(V).$$
\end{theorem}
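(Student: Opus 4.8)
The plan is to reduce the statement to the one-dimensional identity $\Sc(\R\setminus 0)+\Fou(\Sc(\R\setminus 0))=\Sc(\R)$, which is then proved by a Borel-lemma-type construction. Assume first that $B$ is symmetric. The radical of $B|_{L^\bot}$ is $L^\bot\cap L$, so $B$ induces a non-degenerate symmetric form on $L^\bot/(L^\bot\cap L)$, which is nonzero by hypothesis; since a non-degenerate quadratic form on a nonzero real space represents a nonzero value, lifting gives $w\in L^\bot\setminus L$ with $B(w,w)\ne 0$. Put $P:=\R w$, $Q:=P^\bot$; then $V=P\oplus Q$, $B=B|_P\oplus B|_Q$ with both summands non-degenerate, and $L\subseteq Q$. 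Correspondingly $\Sc(V)=\Sc(P)\hot\Sc(Q)$ and $\Fou_{B,\psi}=\Fou_{B|_P,\psi}\hot\Fou_{B|_Q,\psi}$ with the second factor a topological automorphism of $\Sc(Q)$. Since $V\setminus Q=(P\setminus 0)\times Q$ and $L\subseteq Q$, we get $\Sc(P\setminus 0)\hot\Sc(Q)=\Sc(V\setminus Q)\subseteq\Sc(V\setminus L)$, and applying $\Fou_{B,\psi}$ also $\Fou_{B|_P,\psi}(\Sc(P\setminus 0))\hot\Sc(Q)\subseteq\Fou_{B,\psi}(\Sc(V\setminus L))$. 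Now $A:=\Sc(P\setminus 0)$ and $A':=\Fou_{B|_P,\psi}(\Sc(P\setminus 0))$ are closed subspaces of the nuclear \Fre space $\Sc(P)$; tensoring $0\to A\to\Sc(P)\to\Sc(P)/A\to 0$ over the nuclear \Fre space $\Sc(Q)$ stays exact, so by the open mapping theorem $A+A'=\Sc(P)$ would give $A\hot\Sc(Q)+A'\hot\Sc(Q)=\Sc(V)$, hence $\Sc(V\setminus L)+\Fou_{B,\psi}(\Sc(V\setminus L))=\Sc(V)$. Since $\Sc(\cdot)$ is insensitive to rescaling the form, it remains to prove the base case $\Sc(\R\setminus 0)+\Fou(\Sc(\R\setminus 0))=\Sc(\R)$ for the standard Fourier transform. (For general non-symmetric $B$ one reduces similarly to $\dim V\le 2$, and the two-dimensional case reduces to the one-dimensional one by the same tensoring trick.)

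\emph{Reformulating the base case.} Let $\phi\in\Sc(\R)$. Choose $\rho\in C_c^\infty((-r,r))$ with $\rho\equiv 1$ near $0$; then $(1-\rho)\widehat\phi\in\Sc(\R\setminus 0)$, and since its reflection also lies in $\Sc(\R\setminus 0)$ we have $\Fou^{-1}\big((1-\rho)\widehat\phi\big)\in\Fou(\Sc(\R\setminus 0))$, so $\phi\in\Fou^{-1}(\rho\widehat\phi)+\Fou(\Sc(\R\setminus 0))$. Thus we may assume $\widehat\phi$ is supported in $(-r,r)$, and hence $|\phi^{(k)}(0)|\le (2\pi r)^k\|\widehat\phi\|_1$. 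Next, by Property \ref{pOpenSet} and Borel's lemma, $\Sc(\R)/\Sc(\R\setminus 0)$ is the space of Taylor jets at $0$, and the jet of $\Fou(g)$ at $0$ is, up to the constants $(-2\pi i)^k/k!$, the moment sequence $\big(\int_\R x^kg(x)\,dx\big)_{k\ge 0}$. Hence it suffices to prove: for every sequence $(a_k)$ with $a_k=O(r^k)$ (take $r<1$) there is $g\in\Sc(\R\setminus 0)$, supported in $[1,\infty)$, with $\int_\R x^kg(x)\,dx=a_k$ for all $k$ — applying this with $a_k=\phi^{(k)}(0)/(-2\pi i)^k$ yields $g$ for which $\phi-\Fou(g)$ has vanishing jet at $0$, hence $\phi-\Fou(g)\in\Sc(\R\setminus 0)$.

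\emph{The moment problem.} Setting $x=e^t$, a function $g\in\Sc((0,\infty))$ corresponds to $\phi(t)=g(e^t)$ all of whose derivatives decay faster than $e^{-N|t|}$ in both directions, and $\int_0^\infty x^kg(x)\,dx=\Lambda(k+1)$ for $\Lambda(s):=\int_\R e^{st}\phi(t)\,dt$; this identifies $\Sc((0,\infty))$ with the space of entire functions rapidly decreasing on every vertical strip. So we must produce such a $\Lambda$ with $\Lambda(k+1)=a_k$. Fix $\epsilon>0$ and set $\Lambda(s):=e^{\epsilon s^2}\widetilde h(s)$, where $\widetilde h(s):=\sum_{n\ge 1} a_{n-1}e^{-\epsilon n^2}\,\frac{G_0(s)}{G_0'(n)(s-n)}$ and $G_0(s)=\prod_{m\ge 1}(1-s/m)e^{s/m}$ is the Weierstrass product over the positive integers (so $\widetilde h$ is entire and $\widetilde h(n)=a_{n-1}e^{-\epsilon n^2}$). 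Since $a_k=O(r^k)$ and $|G_0'(n)|\sim e^{\gamma n}(n-1)!$, the series converges absolutely, $\widetilde h$ has at most exponential growth on vertical strips (of the same type as $G_0$), and the factor $e^{\epsilon s^2}$, whose modulus is $e^{\epsilon(\re s)^2-\epsilon(\Im s)^2}$, forces $|\Lambda(s)|\lesssim e^{-\epsilon(\Im s)^2/2}$ on every vertical strip; thus $\Lambda$ is rapidly decreasing there and $\Lambda(k+1)=e^{\epsilon(k+1)^2}\cdot a_ke^{-\epsilon(k+1)^2}=a_k$, which yields the required $g$.

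\emph{Main obstacle.} The first reduction and the first step of the reformulation are soft, using only standard facts about nuclear \Fre spaces (exactness of $\hot$ on short exact sequences, the open mapping theorem) together with Properties \ref{pClass}, \ref{pOpenSet} and $\Sc(M\times N)=\Sc(M)\hot\Sc(N)$. The real content is the moment problem: constructing an entire function that takes prescribed values at the positive integers while remaining rapidly decreasing on vertical strips, with all the attendant convergence bookkeeping. This is the step that genuinely resembles the proof of the classical Borel lemma, and it is where the non-compactness of $\R\setminus 0$ near infinity is essential — a fixed compact support would force the moments to grow at most exponentially with a fixed rate.
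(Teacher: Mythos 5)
Your proof is correct in substance, but it takes a genuinely different route from the one in Appendix \ref{AppDualUn}. The paper first passes to the dual-space formulation (Theorem \ref{DualUncer}), enlarges $L$ to a hyperplane, and chooses an auxiliary quadratic form identifying the two hyperplanes; the analytic core is then a real-variable, Borel-lemma-style iteration in the codimension-one case $V=L\oplus\R e$: one prescribes the whole transversal jet $\bigl(\frac{\partial^i\Fou(f')}{(\partial e)^i}|_L\bigr)_i$ of the Fourier transform of a function $f'\in\Sc(V-L)$, building $f'$ as a rapidly convergent sum of pieces, each fixing one derivative and made small in the seminorms $\gN_{i-1}$ by scaling in the $e$-direction. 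You instead split off a non-isotropic line $\R w\subset L^\bot\setminus L$, use the nuclear \Fre tensor machinery ($\Sc(M_1\times M_2)=\Sc(M_1)\hot\Sc(M_2)$, Corollary \ref{OntoTensorOnto}) to reduce to the one-dimensional identity $\Sc(\R\setminus 0)+\Fou(\Sc(\R\setminus 0))=\Sc(\R)$, and then solve that by an explicit complex-analytic moment/interpolation construction: after cutting off $\widehat\phi$ to have compact support (so the Taylor coefficients at $0$ grow geometrically), you realize the required jet as the moment sequence of some $g\in\Sc((0,\infty))$ via the Mellin/two-sided Laplace transform and a Weierstrass-product interpolation damped by $e^{\epsilon s^2}$. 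Both arguments work; yours trades the paper's elementary iterative scheme (which is dimension-free and handles the asymmetric pair $(L,L')$ directly) for a clean reduction to $\R$ plus an explicit formula, at the cost of invoking the Paley--Wiener-type description of the Mellin image of $\Sc((0,\infty))$, which you assert rather than prove (only the surjectivity direction is needed, and it follows by contour shifting and Fourier inversion on vertical lines, so this is a genuine but standard ingredient). Two smaller points: the theorem is stated for an arbitrary non-degenerate bilinear form, and your parenthetical for the non-symmetric case (``reduce to $\dim V\le 2$'') is not the natural fix -- the cleaner route is the paper's dual formulation: pick $v_0\in L^\bot\setminus L$ and a functional $\lambda_0$ vanishing on $L$ with $\lambda_0(v_0)\neq 0$, and split $V=\R v_0\oplus\ker\lambda_0$, $V^*=\R\lambda_0\oplus\operatorname{Ann}(v_0)$, after which your tensoring argument applies verbatim; and the requirement ``supported in $[1,\infty)$'' in your reformulation is both unnecessary and not what your construction delivers (it gives $g\in\Sc((0,\infty))$, which suffices).
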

For proof see Appendix \ref{AppDualUn}.

\begin{rem}
It is much easier to prove that $$\overline{\Sc(V-L) +
\Fou(\Sc(V-L))}=\Sc(V)$$ since this is equivalent to the
fact that there are no distributions on $V$ supported in $L$ with
Fourier transform supported in $L$.
\end{rem}

%

\subsubsection{Coinvariants in Schwartz functions}
\label{SubSubSecCoinv}
$ $\\
\begin{thm} \label{thm:NegCoinvPrel}
Let a connected algebraic group $G$ act on a real algebraic manifold $X$.
Let $Z$ be a $G$-invariant Zariski closed subset of $X$. Let $\g$ be the Lie algebra of $G$.
Let $\chi$ be a
unitary character of $G$.
Suppose that for any $\z \in Z$ and $k \in \Z_{\geq 0}$ we have
$$(\chi \otimes \Sym^k(CN_{z,Gz}^X) \otimes ((\Delta_G)|_{G_z}/\Delta_{G_z}))_{\g_z} =0.$$
Then $$\Sc(X) =\Sc(X-Z)+\g(\Sc(X)\otimes \chi).$$
\end{thm}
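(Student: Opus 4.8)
The plan is to reduce the statement to a statement about distributions and then to upgrade a closure statement to an exact statement, following the philosophy announced in the introduction (``the proof reminds in its spirit the proof of the classical Borel Lemma''). The dual formulation is: the right-hand side $\Sc(X-Z)+\g(\Sc(X)\otimes\chi)$ is a closed subspace of $\Sc(X)$ equal to the whole space if and only if the only $(\g,\chi^{-1})$-equivariant distribution on $X$ that vanishes on $\Sc(X-Z)$, i.e. is supported on $Z$, is zero; and by the filtration of distributions supported on $Z$ by order of transversal derivatives, this is controlled by the spaces $(\chi\otimes\Sym^k(CN^X_{z,Gz})\otimes((\Delta_G)|_{G_z}/\Delta_{G_z}))_{\g_z}$, whose vanishing is precisely the hypothesis. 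So the \emph{closure} of the right-hand side is all of $\Sc(X)$; the real content is removing the closure.

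First I would localize. Using Property \ref{pCosheaf} (partition of unity) and Noetherian induction on $Z$, it suffices to treat the case where $X$ is a vector bundle over a locally closed smooth $G$-invariant subvariety $Y\subset Z$, with $Z$ the zero section, after stratifying $Z$ into smooth $G$-invariant pieces and using that Schwartz functions form a cosheaf; more precisely I would peel off the open dense part of $X$ lying over the smooth locus of (a stratum of) $Z$, reduce to a tubular-neighborhood situation via a Nash version of the normal-bundle trick, and use Corollary \ref{NashSubCor} and Property \ref{NashSub} to pass functions between $X$ and the total space of $N^X_Y$. After this reduction $G$ acts on a vector bundle $E\to Y$ linearly on fibers, $Z=Y$ is the zero section, and we must show $\Sc(E)=\Sc(E-Y)+\g(\Sc(E)\otimes\chi)$ under the fiberwise-coinvariant hypothesis.

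Next I would run a Borel-Lemma / Hadamard-quotient argument fiberwise in the bundle direction. Expanding a Schwartz function $f$ on $E$ in its Taylor series along $Y$, one writes $f=\sum_{k<K} f_k + r_K$ where $f_k$ is homogeneous of degree $k$ in the fiber direction (a Schwartz section of $\Sym^k(CN^X_Y)^*$-valued ... rather $\Sym^k$ of the fiber, i.e.\ a Schwartz function on $Y$ tensor the relevant finite-dimensional space) and $r_K$ vanishes to order $K$ along $Y$; the tail $r_K$, for $K$ large, lies in $\Sc(E-Y)$ after one more localization step (this is exactly Property \ref{pOpenSet}, together with the fact that high-order vanishing along $Y$ combined with Schwartz decay gives membership in $\Sc(E-Y)$ — here I must be a little careful, since vanishing to finite order is not literally the same as vanishing with all derivatives, so instead I would use a smooth cutoff supported away from $Y$ to absorb $r_K$ at the cost of another term handled inductively, or invoke the dual uncertainty principle Theorem \ref{DualUncerPrel} / Property \ref{PropDixMal} to split off the tail). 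For each homogeneous piece $f_k$, the hypothesis that $(\chi\otimes\Sym^k(\cdots)\otimes(\Delta_G/\Delta_{G_z}))_{\g_z}=0$ at every $z$ says precisely that, as a $\g$-representation, the relevant module of degree-$k$ jets has no coinvariants, so $f_k\in\g(\Sc(Y)\otimes(\text{that module})\otimes\chi)\subset\g(\Sc(E)\otimes\chi)$; one needs a ``with parameters'' version of ``a finite-dimensional $\g_z$-module with no coinvariants has the corresponding coinvariant space of sections vanishing'', which is where the nuclear Fréchet space machinery (exactness, open mapping theorem, the results of Appendix \ref{AppCoinv}) enters to guarantee the decomposition is continuous and the image is already closed.

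The main obstacle, as the authors themselves flag, is precisely this last point: showing that the algebraic vanishing of coinvariants fiber-by-fiber produces the topological identity $f_k\in\g(\Sc(\text{total space})\otimes\chi)$ \emph{on the nose} rather than only in the closure, uniformly over the base $Y$ and summably over the Taylor degree $k$. Concretely one must produce, continuously in $f$, a preimage under the action map $\g\otimes(\Sc(E)\otimes\chi)\to\Sc(E)$, and then sum the contributions of all degrees into a single Schwartz function — this summation is the ``Borel lemma'' heart of the argument and requires quantitative control (seminorm estimates) on the fiberwise solution operators, which is supplied by the appendices on coinvariants and on the dual uncertainty/localization principles. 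Everything else — the reduction to the vector-bundle case, the Taylor expansion, the disposal of the tail into $\Sc(E-Y)$ — is routine given Properties \ref{pCosheaf}, \ref{pOpenSet}, \ref{NashSub} and Corollary \ref{NashSubCor}.
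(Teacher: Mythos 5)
Your overall philosophy (stratify $Z$, expand transversally to $Z$, handle each jet degree by the fiberwise coinvariant hypothesis, and use a Borel--Lemma--type device for the infinite-order part) is indeed the spirit of the paper's proof of Theorem \ref{thm:NegCoinvPrel} (proved as Theorem \ref{NegCoinv} in Appendix \ref{AppCoinv}), but several of your concrete steps either fail or are circular. First, the reduction ``after a tubular-neighborhood trick, $G$ acts on a vector bundle $E\to Y$ linearly on fibers with $Z=Y$ the zero section'' is unjustified: a $G$-action does not in general preserve a tubular neighborhood, nor does it linearize near $Z$. The paper never linearizes the action; it works with the quotient $\Sc_X(Z,E)=\Sc(X)/\Sc(X-Z)$ and its canonical filtration by order of vanishing, whose graded pieces are $\Sc(Z,\Sym^k(CN_Z^X)\otimes E)$ (here the induced action \emph{is} linear in the jet direction), the Nash tubular neighborhood (Theorem \ref{NashTube}) entering only in the proof of the Borel Lemma \ref{Borel_lem}. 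Second, your disposal of the Taylor tail $r_K$ does not work: a function vanishing to finite order along $Y$ is never in $\Sc(E-Y)$, and neither a cutoff ``handled inductively'' nor Theorem \ref{DualUncerPrel} nor Dixmier--Malliavin repairs this. The paper's mechanism is different and estimate-free: Lemma \ref{Borel_lem} shows $\Sc_X(Z,E)\cong\varprojlim$ of the finite-jet quotients, and then the purely algebraic Lemma \ref{gr0all0_inf} (successive approximation, using only that $\g$ is finite dimensional so $\g\otimes V\cong\varprojlim\g\otimes(V/F^i)$) upgrades vanishing of coinvariants of all graded pieces to vanishing for the filtered space. So the ``quantitative control (seminorm estimates) on the fiberwise solution operators'' you invoke is not what is needed, and attributing it to ``the appendices on coinvariants'' is circular, since that appendix is precisely the proof you are asked to supply.

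Third, the step you compress into ``a with-parameters version of the finite-dimensional coinvariant vanishing, where the nuclear Fr\'echet machinery enters'' is the real analytic core and is missing. The hypothesis is vanishing of $\g_z$-coinvariants (with the modular twist $(\Delta_G)|_{G_z}/\Delta_{G_z}$) fiber by fiber, and one must deduce vanishing of the $\g$-coinvariants of Schwartz sections over each stratum. In the paper this is Theorem \ref{fac_co_inv}, and it requires: a further Chevalley-type stratification (Theorem \ref{strat_fac}) into strata where the action map is a submersion onto its image, combined with Lemmas \ref{strat_fil} and \ref{gr0all0}; the identification $\g\,\Sc(X,E)=a_*\bigl(\Sc(G\times X,a^?(E))_0\bigr)$ (Corollary \ref{coinv_g}), which rests on $\g\,\Sc(G,D_G)=\Sc(G,D_G)_0$ (Lemma \ref{g_coinv_simp}) and the nuclear Fr\'echet exactness results; and the families-of-torsors argument over the graph of the action (Corollary \ref{tor_fam_coinv_0}), where the fiberwise linear-algebra hypothesis is finally used. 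None of this is reconstructed in your proposal, so as written it establishes only the closure statement (equivalently, the distributional vanishing), which you correctly note is not the content of the theorem.
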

For proof see Appendix \ref{AppCoinv}.

\begin{cor} \label{cor:NegCoinvPrel}
Let a unipotent group $G$ act on a real algebraic manifold $X$.
Let $\chi$ be a
unitary character of $G$.

Let $Z\subset X$ be a Zariski closed $G$-invariant subset.
Suppose also that for any point $z\in Z$ the restriction $\chi|_{G_z}$ is non-trivial.
Then $$\Sc(X)\otimes \chi =\Sc(X-Z)\otimes \chi+\g(\Sc(X) \otimes \chi),$$ where $\g$ is the Lie algebra of
$G$.
\end{cor}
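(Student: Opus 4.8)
The plan is to deduce this from Theorem \ref{thm:NegCoinvPrel} by checking its hypothesis in the unipotent case. So let $G$ be unipotent, $\chi$ a unitary character, and $z \in Z$. We must verify that
$$(\chi \otimes \Sym^k(CN_{z,Gz}^X) \otimes ((\Delta_G)|_{G_z}/\Delta_{G_z}))_{\g_z} =0$$
for all $k \geq 0$. The key observations are: (1) since $G$ is unipotent, it is connected, so Theorem \ref{thm:NegCoinvPrel} applies; (2) since $G$ and hence $G_z$ are unipotent, they are unimodular and moreover $\Delta_G$ is trivial (a unipotent group has no nontrivial characters), so the modulus factor $(\Delta_G)|_{G_z}/\Delta_{G_z}$ is the trivial one-dimensional representation and can be dropped; (3) the symmetric power $\Sym^k(CN_{z,Gz}^X)$ is a finite-dimensional algebraic representation of the unipotent group $G_z$, and $\chi|_{G_z}$ is by hypothesis a nontrivial unitary character of $G_z$.

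The heart of the matter is then the following purely representation-theoretic claim: if $U$ is a unipotent group with Lie algebra $\mathfrak u$, $W$ is a finite-dimensional algebraic representation of $U$, and $\psi$ is a nontrivial unitary character of $U$, then $(W \otimes \psi)_{\mathfrak u} = 0$, i.e. $\mathfrak u(W\otimes\psi) = W \otimes \psi$. To see this, let $\alpha \in \mathfrak u$ be a Lie algebra element on which the differential $d\psi$ is nonzero (such $\alpha$ exists since $\psi$ is nontrivial and $U$, being unipotent, is connected, so $\psi$ is determined by $d\psi$). The action of $\alpha$ on $W \otimes \psi$ is $\rho(\alpha) + d\psi(\alpha)\cdot \mathrm{Id}$, where $\rho$ is the (nilpotent) action of $\alpha$ on $W$. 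Since $\rho(\alpha)$ is nilpotent and $d\psi(\alpha) \neq 0$, the operator $\rho(\alpha) + d\psi(\alpha)\cdot \mathrm{Id}$ is invertible (its only eigenvalue is $d\psi(\alpha) \neq 0$). Hence already $\alpha(W \otimes \psi) = W \otimes \psi$, which gives $\mathfrak u(W \otimes \psi) = W\otimes\psi$ as required.

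Applying this with $U = G_z$, $W = \Sym^k(CN_{z,Gz}^X)$, and $\psi = \chi|_{G_z}$ verifies the hypothesis of Theorem \ref{thm:NegCoinvPrel}, yielding $\Sc(X) = \Sc(X-Z) + \g(\Sc(X) \otimes \chi)$. Tensoring this identity with the one-dimensional space $\chi$ (which is an exact operation and commutes with the formation of the subspaces involved, since $\g$ acts on $\Sc(X)\otimes\chi$ and $\Sc(X-Z)$ is $G$-invariant) gives exactly
$$\Sc(X)\otimes \chi =\Sc(X-Z)\otimes \chi+\g(\Sc(X) \otimes \chi).$$
I expect the only genuinely delicate point to be bookkeeping: making sure the modulus-character factor really is trivial in the unipotent setting and that the finite-dimensional algebraic $G_z$-representation $\Sym^k(CN_{z,Gz}^X)$ is the right object to which the nilpotency argument applies — both of which are standard once spelled out. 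The representation-theoretic claim itself is elementary linear algebra.
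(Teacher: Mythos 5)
Your proposal is correct and follows essentially the same route as the paper: deduce the corollary from Theorem \ref{thm:NegCoinvPrel} by noting that the algebraic action of the unipotent group $G_z$ on $\Sym^k(CN_{z,Gz}^X)$ (and on the modulus factor, which you rightly observe is trivial) is unipotent, so twisting by the nontrivial character $\chi|_{G_z}$ makes the $\g_z$-coinvariants vanish. Your explicit nilpotent-plus-nonzero-scalar invertibility argument is just a spelled-out version of the paper's one-line verification.
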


\begin{proof}
The action of $G_z$ on $\Sym^k(CN_{z,Gz}^X) \otimes ((\Delta_G)|_{G_z}/\Delta_{G_z})$ is algebraic and hence if $G$ is unipotent this action is unipotent and therefore if $(\chi)_{\g_z} =0$ then $$(\chi \otimes \Sym^k( CN_{z,Gz}^X) \otimes
((\Delta_G)|_{G_z}/\Delta_{G_z}))_{\g_z} =0.$$
\end{proof}


\begin{rem}
Note that the statement that $\Sc(X)\otimes \chi =\overline{\Sc(X-Z)\otimes \chi+\g(\Sc(X)\otimes \chi)}$ is
equivalent to the statement that any $G$-invariant distribution on $X$
which is supported on $Z$ vanishes, which is a generalization of a result from \cite{Va}.
\end{rem}

\subsection{Nuclear \Fre spaces} \label{NucFre}$ $

A good exposition on nuclear \Fre spaces can be found in Appendix
A of \cite{CHM}.

We will need the following well-known facts from the theory of
nuclear \Fre spaces.

\begin{prop}[see e.g. \cite{CHM}, Appendix A]\label{NFSSubQu}
\item Let $V$  be a nuclear \Fre space and $W$ be a closed subspace. Then both $W$ and $V/W$ are nuclear \Fre
spaces.
\end{prop}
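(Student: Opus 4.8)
The plan is to reduce everything to the Hilbert--Schmidt characterization of nuclearity. Recall that a \Fre space $V$ is nuclear if and only if its topology can be defined by an increasing sequence of Hilbertian seminorms $p_1 \le p_2 \le \cdots$ such that for every $n$ there is $m > n$ for which the canonical map $\widehat{V}_{p_m} \to \widehat{V}_{p_n}$ between the associated local Hilbert spaces (completions of $V/p_k^{-1}(0)$) is Hilbert--Schmidt; this is standard, since a composition of two Hilbert--Schmidt operators is nuclear and every nuclear operator between Hilbert spaces is Hilbert--Schmidt, and it is contained in the reference \cite{CHM}. That $W$ and $V/W$ are again \Fre spaces when $W$ is closed is classical: a closed subspace of a complete metrizable topological vector space is complete metrizable, and so is its quotient by a closed subspace. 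So the only thing to check is nuclearity.

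For the subspace $W$: its topology is defined by the restrictions $p_n|_W$, and the inclusion $W \hookrightarrow V$ induces isometric embeddings $\widehat{W}_{p_n|_W} \hookrightarrow \widehat{V}_{p_n}$ onto the closure of the image of $W$, which is a closed subspace of the Hilbert space $\widehat{V}_{p_n}$. For $m > n$ chosen so that $T_{mn}\colon \widehat{V}_{p_m} \to \widehat{V}_{p_n}$ is Hilbert--Schmidt, the canonical map $\widehat{W}_{p_m|_W} \to \widehat{W}_{p_n|_W}$ is obtained by restricting $T_{mn}$ to the closed subspace $\widehat{W}_{p_m|_W}$ and corestricting to $\widehat{W}_{p_n|_W}$ (i.e. composing with the orthogonal projection onto it); since the Hilbert--Schmidt operators form a two-sided ideal, the result is Hilbert--Schmidt, and hence $W$ is nuclear.

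For the quotient $V/W$: its topology is defined by the quotient seminorms $\dot p_n(v + W) = \inf_{w \in W} p_n(v + w)$. The key point is that $\dot p_n(v+W) = \operatorname{dist}(\overline{v}, \overline{W}_{p_n})$ in $\widehat{V}_{p_n}$, where $\overline{v}$ denotes the image of $v$ and $\overline{W}_{p_n}$ the closure of the image of $W$; hence $v + W \mapsto P_n(\overline v)$, with $P_n$ the orthogonal projection onto $\overline{W}_{p_n}^{\perp}$, identifies the local Hilbert space $\widehat{(V/W)}_{\dot p_n}$ canonically and isometrically with the orthogonal complement $\overline{W}_{p_n}^{\perp} \subset \widehat{V}_{p_n}$, again a closed subspace. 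Since $T_{mn}$ maps the image of $W$ into the image of $W$, it carries $\overline{W}_{p_m}$ into $\overline{W}_{p_n}$, and one checks that under the above identification the canonical map $\widehat{(V/W)}_{\dot p_m} \to \widehat{(V/W)}_{\dot p_n}$ becomes $P_n \circ T_{mn} \circ \iota_m$, where $\iota_m$ is the inclusion of $\overline{W}_{p_m}^{\perp}$. Choosing $m$ so that $T_{mn}$ is Hilbert--Schmidt, this composite is Hilbert--Schmidt by the ideal property, so $V/W$ is nuclear.

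The step I expect to require the most care is the identification of the local Hilbert spaces of $W$ and especially of $V/W$ with honest closed subspaces of the local Hilbert spaces of $V$ — in the quotient case via the orthogonal complement $\overline{W}_{p_n}^{\perp}$ rather than directly — together with verifying that the transition maps are the evident compressions of $T_{mn}$; once this is in place the Hilbert--Schmidt ideal property finishes both assertions at once. Everything here is well known and the above is simply the argument one reconstructs from \cite{CHM}.
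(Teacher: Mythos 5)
Your proof is correct. Note, though, that the paper does not prove this proposition at all: it is stated as a known fact with a pointer to \cite{CHM}, Appendix A (the result goes back to Grothendieck and appears in any standard treatment of nuclear spaces), so there is no internal argument to compare yours against. What you give is precisely the standard proof one reconstructs from the literature: reduce nuclearity of a \Fre space to the existence of a defining increasing sequence of Hilbertian seminorms with Hilbert--Schmidt linking maps $T_{mn}\colon \widehat{V}_{p_m}\to\widehat{V}_{p_n}$, identify the local Hilbert spaces of $W$ with the closures $\overline{W}_{p_n}$ and those of $V/W$ with the orthogonal complements $\overline{W}_{p_n}^{\perp}$, and observe that the linking maps of the subspace and of the quotient are compressions of $T_{mn}$, hence Hilbert--Schmidt by the ideal property. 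The two identifications are the only delicate points and you handle both correctly: for the quotient, $\dot p_n(v+W)=\operatorname{dist}(\bar v,\overline{W}_{p_n})=\lVert P_n\bar v\rVert$ gives the isometry onto a dense subspace of $\overline{W}_{p_n}^{\perp}$, and the verification that the canonical map becomes $P_n\circ T_{mn}\circ\iota_m$ rests exactly on the inclusion $T_{mn}(\overline{W}_{p_m})\subset\overline{W}_{p_n}$, which you state; spelling out that this inclusion kills the component of $\bar v$ along $\overline{W}_{p_m}$ would make the "one checks" explicit. So your write-up supplies a complete, self-contained argument where the paper is content to cite; both are legitimate, and nothing in the paper's use of the proposition depends on more than the statement itself.
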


\begin{prop} [see e.g.  \cite{CHM}, Appendix A] \label{prop:ctp_ext}
\item Let $0 \to V \to W \to U  \to 0$  be an exact sequence of nuclear \Fre spaces. Suppose that the embedding $V \to W$ is closed. Let   $L$  be a nuclear \Fre space. Then the sequence  $0 \to V \ctp L \to W  \ctp L \to U \ctp L \to 0$ is exact and the embedding  $V \ctp L \to W \ctp L$ is closed.
\end{prop}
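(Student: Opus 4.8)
The plan is to play the completed projective tensor product $\ctp = \widehat{\otimes}_{\pi}$ against the completed injective tensor product $\widehat{\otimes}_{\varepsilon}$: the former interacts well with quotients, the latter with subspaces, and on nuclear \Fre spaces the two functors coincide, so one may exploit both good properties at once. As preliminary bookkeeping, note that by Proposition \ref{NFSSubQu} the spaces $V$ and $U\cong W/V$ are again nuclear \Fre spaces, and $L$ is nuclear by hypothesis; hence for each $E\in\{V,W,U\}$ the identity on $E\otimes L$ extends to a topological isomorphism $E\ctp L=E\,\widehat{\otimes}_{\pi}L\xrightarrow{\ \sim\ }E\,\widehat{\otimes}_{\varepsilon}L$, and I will freely identify the two from now on.

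The surjectivity of $W\ctp L\to U\ctp L$ comes from right exactness of the projective tensor product. The map $W\to U$ is an open surjection of \Fre spaces, so $W\otimes_{\pi}L\to U\otimes_{\pi}L$ is an open surjection of metrizable locally convex spaces with algebraic kernel $V\otimes L$; passing to completions and applying the open mapping theorem for \Fre spaces, $W\ctp L\to U\ctp L$ is again an open surjection, and its kernel is the closure of $V\otimes L$ inside $W\ctp L$.

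The injectivity and closedness of $V\ctp L\to W\ctp L$ come from left exactness of the injective tensor product. Since $V\subset W$ is a subspace, a Hahn--Banach argument shows that the equicontinuous subsets of $V'$ are precisely the restrictions to $V$ of the equicontinuous subsets of $W'$, whence $V\otimes_{\varepsilon}L$ carries the subspace topology inherited from $W\otimes_{\varepsilon}L$. Taking completions identifies $V\,\widehat{\otimes}_{\varepsilon}L$ with the closure of $V\otimes L$ in the \Fre space $W\,\widehat{\otimes}_{\varepsilon}L$, which is in particular a \emph{closed} subspace. Translating back through the nuclear identification, $V\ctp L\to W\ctp L$ is an injective topological embedding whose image is the closure of $V\otimes L$ in $W\ctp L$; comparing with the preceding paragraph, this image is exactly the kernel of $W\ctp L\to U\ctp L$. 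Assembling these facts yields the short exact sequence $0\to V\ctp L\to W\ctp L\to U\ctp L\to 0$ with the first map a closed embedding.

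The genuinely delicate points, and where I expect the real work, are the two passages to completion — that a quotient map of \Fre spaces remains a quotient map after applying $\ctp L$, and that a topological embedding of \Fre spaces remains one after applying $\widehat{\otimes}_{\varepsilon}L$ — together with the identification $\widehat{\otimes}_{\pi}=\widehat{\otimes}_{\varepsilon}$ under nuclearity. These are classical results of Grothendieck on topological tensor products (see e.g. \cite{CHM}, Appendix A), resting on the open mapping theorem and on metrizability; I would cite them rather than reprove them, and the proposition then follows by assembling the pieces above.
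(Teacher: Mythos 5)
Your argument is correct and is exactly the classical Grothendieck-style proof underlying the result: the paper itself offers no proof of Proposition \ref{prop:ctp_ext}, treating it as standard and citing \cite{CHM}, Appendix A, which rests on the same three ingredients you isolate (a quotient map of metrizable spaces stays a quotient map under completion with kernel the closure of the original kernel, the injective tensor product preserves topological embeddings, and $\widehat{\otimes}_{\pi}=\widehat{\otimes}_{\varepsilon}$ on nuclear spaces). Your identification of the kernel of $W\ctp L\to U\ctp L$ with the closure of $V\otimes L$, and of $V\ctp L$ with that same closure via the $\varepsilon$-side, is the correct way to splice the two halves together, and deferring the completion lemmas to the cited literature is entirely in the spirit of the paper's own treatment.
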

\begin{cor} \label{OntoTensorOnto}
\item Let $V \to W$  be onto map between nuclear \Fre spaces and $L$  be a nuclear \Fre space. Then the map $V \ctp L\to W \ctp L $ is onto.
\end{cor}

\begin{cor}\label{CorOnto}
Let $\phi_i: V_i \to W_i$ $i=1,2$ be onto maps between nuclear \Fre
spaces. Then the map $\phi_1 \ctp \phi_2: V_1 \ctp V_2 \to W_1
\ctp W_2$ is onto.
\end{cor}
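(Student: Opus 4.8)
The plan is to reduce the two-factor statement to the one-factor Corollary \ref{OntoTensorOnto} by factoring $\phi_1\ctp\phi_2$ as a composition of two maps, each of which changes only one tensor factor at a time. Concretely, I would write
\[
\phi_1\ctp\phi_2 = (\id_{W_1}\ctp\phi_2)\circ(\phi_1\ctp\id_{V_2}),
\]
so that $V_1\ctp V_2 \xrightarrow{\phi_1\ctp\id_{V_2}} W_1\ctp V_2 \xrightarrow{\id_{W_1}\ctp\phi_2} W_1\ctp W_2$. Since a composition of surjections is a surjection, it suffices to prove that each of the two maps is onto.

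For the second map $\id_{W_1}\ctp\phi_2: W_1\ctp V_2\to W_1\ctp W_2$, this is exactly an instance of Corollary \ref{OntoTensorOnto} applied to the onto map $\phi_2:V_2\to W_2$ and the nuclear \Fre space $L=W_1$ (here I use Proposition \ref{NFSSubQu} to know that $W_1$, being a quotient of the nuclear \Fre space $V_1$ by the closed subspace $\Ker\phi_1$, is again nuclear \Fre). For the first map $\phi_1\ctp\id_{V_2}$, I would likewise invoke Corollary \ref{OntoTensorOnto} with the roles of the two factors interchanged, using the symmetry of the completed projective tensor product $U\ctp L\cong L\ctp U$; alternatively one applies Corollary \ref{OntoTensorOnto} directly to the onto map $\phi_1$ and the nuclear \Fre space $L=V_2$, after noting that $\ctp$ is symmetric so that $V_1\ctp V_2$ may be identified with $V_2\ctp V_1$ and $\phi_1\ctp\id_{V_2}$ with $\id_{V_2}\ctp\phi_1$. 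Composing, we conclude that $\phi_1\ctp\phi_2$ is onto.

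There is essentially no obstacle here: the statement is a formal consequence of the single-factor case together with closedness under quotients. The only point requiring a modicum of care is the bookkeeping of which spaces are nuclear \Fre at each stage — one must apply Proposition \ref{NFSSubQu} to see that $W_1$ (and, if one takes the other order, $W_2$) is nuclear \Fre before feeding it into Corollary \ref{OntoTensorOnto} as the auxiliary space $L$. Since all the spaces in sight are nuclear \Fre by hypothesis and $\Ker\phi_i$ is automatically closed (a continuous map between \Fre spaces has closed kernel), this causes no difficulty.
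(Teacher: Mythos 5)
Your proof is correct and follows exactly the route the paper intends: the paper gives no explicit proof of Corollary \ref{CorOnto}, deriving it from Corollary \ref{OntoTensorOnto} by the same factorization $\phi_1\ctp\phi_2=(\id_{W_1}\ctp\phi_2)\circ(\phi_1\ctp\id_{V_2})$ you use, with symmetry of $\ctp$ handling the factor on which the identity sits. (Only a cosmetic remark: $W_1$, $W_2$ are nuclear \Fre by hypothesis, so the appeal to Proposition \ref{NFSSubQu} is unnecessary, and it is $\id_{W_1}\ctp\phi_2$, not $\phi_1\ctp\id_{V_2}$, that needs the symmetry of $\ctp$ if one quotes Corollary \ref{OntoTensorOnto} verbatim.)
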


\begin{prop}[see e.g. \cite{AG_RhamShap}, Corollary 2.6.2]
\item Let $M$ be a Nash manifold. Then
$\Sc(M)$ is a nuclear \Fre space.
\end{prop}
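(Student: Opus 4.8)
The plan is to bootstrap from the case $M=\R^n$, using the standard stability of the class of nuclear \Fre spaces under passage to closed subspaces, quotients by closed subspaces, and finite direct sums. The \Fre property is already part of the definition of $\Sc(M)$ in \cite{AG_Sc}, so the only real content is nuclearity, and I would organize the argument in three stages: the vector space $\R^n$, affine Nash manifolds, and general Nash manifolds.

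For the base case, Property \ref{pClass} identifies $\Sc(\R^n)$ with the classical space of Schwartz functions on $\R^n$, which is the prototypical nuclear \Fre space (see e.g. Tr\`eves or Grothendieck's memoir). Next, if $M$ is an affine Nash manifold, I would use the standard fact (from the basic theory of Nash manifolds, as in \cite{AG_Sc}) that $M$ admits a closed Nash embedding $M\hookrightarrow \R^n$ for some $n$. By Property \ref{Extension} the restriction map $\Sc(\R^n)\to\Sc(M)$ is onto; it is continuous and its kernel is a closed subspace, so by the open mapping theorem $\Sc(M)$ is topologically isomorphic to the quotient of $\Sc(\R^n)$ by that closed subspace, hence a nuclear \Fre space by Proposition \ref{NFSSubQu}.

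For a general Nash manifold $M$, I would choose a finite cover $M=\bigcup_{i=1}^k U_i$ by open affine Nash submanifolds. By Property \ref{pOpenSet} each inclusion $\Sc(U_i)\hookrightarrow\Sc(M)$ is a (closed, continuous) embedding, so the summation map $\bigoplus_{i=1}^k \Sc(U_i)\to\Sc(M)$, $(f_i)\mapsto\sum_i f_i$, is continuous; by Property \ref{pCosheaf} it is surjective; its kernel is closed. Since a finite direct sum of nuclear \Fre spaces is a nuclear \Fre space and each $\Sc(U_i)$ is nuclear \Fre by the affine case, another application of Proposition \ref{NFSSubQu} (to the quotient by the closed kernel) shows $\Sc(M)$ is a nuclear \Fre space.

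I do not expect a genuine obstacle here: the argument is a routine reduction. The points requiring a little care are purely topological — making sure each map in sight is continuous between \Fre spaces with closed kernel, so that the open mapping theorem legitimately identifies $\Sc(M)$ with a Hausdorff quotient — together with citing the two standard structural inputs (classical nuclearity of $\Sc(\R^n)$, and the existence of closed embeddings of affine Nash manifolds into $\R^n$ and of finite affine covers of Nash manifolds).
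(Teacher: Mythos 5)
Your argument is correct and is essentially the standard one: the paper itself does not prove this proposition but imports it from \cite{AG_RhamShap} (Corollary 2.6.2), where nuclearity is likewise obtained by reducing to the classical nuclear \Fre space $\Sc(\R^n)$ via a finite cover and the cosheaf/partition-of-unity property, together with stability of nuclear \Fre spaces under closed subspaces and quotients. The only remark worth making is that your intermediate affine-embedding stage can be skipped entirely, since Theorem \ref{LocTriv} lets you take the $U_i$ Nash diffeomorphic to $\R^n$ directly.
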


\begin{prop}[see e.g. \cite{AG_RhamShap}, Corollary 2.6.3]
\label{SchKer}
\item Let $M_i$, $i=1,2$ be Nash manifolds 
Then $$\Sc(M_1 \times M_2)=\Sc(M_1) \ctp \Sc(M_2).$$
\end{prop}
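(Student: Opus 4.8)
The strategy is to bootstrap from the classical Schwartz kernel theorem on Euclidean space and then glue. The input is the classical identity $\Sc(\R^n)\ctp\Sc(\R^m)=\Sc(\R^{n+m})$ for ordinary Schwartz spaces; this holds because $\Sc(\R^n)$ is a nuclear \Fre space, and concretely it reflects convergence in the Schwartz topology of the Hermite expansion $\phi=\sum_\alpha c_\alpha\otimes h_\alpha$ with $c_\alpha(x)=\int\phi(x,y)h_\alpha(y)\,dy$. By Property \ref{pClass} this settles the proposition when both $M_i$ are vector spaces, and from here I would pass successively through open subsets of Euclidean space, closed Nash submanifolds of Euclidean space, and finally arbitrary Nash manifolds glued by the cosheaf property. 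For the first of these steps, let $M_1=U$ be open in $\R^n$ and $M_2=\R^m$: by Property \ref{pOpenSet}, $\Sc(U)$ is the closed subspace of $\Sc(\R^n)$ of functions vanishing to infinite order off $U$, so by Proposition \ref{prop:ctp_ext} the space $\Sc(U)\ctp\Sc(\R^m)$ is a closed subspace of $\Sc(\R^{n+m})$; it is visibly contained in $\Sc(U\times\R^m)$, and the reverse inclusion follows from the Hermite expansion, since the coefficients $c_\alpha$ of a given $\phi\in\Sc(U\times\R^m)$ vanish to infinite order off $U$ and hence lie in $\Sc(U)$.

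The crux of the argument is the case of closed Nash submanifolds. For a closed Nash submanifold $Z\subseteq\R^N$, Properties \ref{Extension} and \ref{pOpenSet} furnish a short exact sequence $0\to\Sc(\R^N\setminus Z)\to\Sc(\R^N)\xrightarrow{\pi}\Sc(Z)\to 0$ of nuclear \Fre spaces with closed first arrow. Given closed submanifolds $Z_i\subseteq\R^{N_i}$, the continuous bilinear map $(f_1,f_2)\mapsto\bigl((z_1,z_2)\mapsto f_1(z_1)f_2(z_2)\bigr)$ induces a continuous map $T\colon\Sc(Z_1)\ctp\Sc(Z_2)\to\Sc(Z_1\times Z_2)$, which fits in a commuting square with $\pi_1\ctp\pi_2$ and the restriction map $\Sc(\R^{N_1+N_2})\to\Sc(Z_1\times Z_2)$, the remaining vertical arrow being the Euclidean kernel theorem. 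Surjectivity of $T$ follows from Corollary \ref{CorOnto} together with surjectivity of restriction (Property \ref{Extension}). For injectivity one computes $\ker(\pi_1\ctp\pi_2)$: factoring it as $(\pi_1\ctp\id)\circ(\id\ctp\pi_2)$ and applying Proposition \ref{prop:ctp_ext} and Corollary \ref{OntoTensorOnto}, this kernel equals the sum of the closed subspaces $\Sc(\R^{N_1}\setminus Z_1)\ctp\Sc(\R^{N_2})$ and $\Sc(\R^{N_1})\ctp\Sc(\R^{N_2}\setminus Z_2)$, which by the previous step are $\Sc\bigl((\R^{N_1}\setminus Z_1)\times\R^{N_2}\bigr)$ and $\Sc\bigl(\R^{N_1}\times(\R^{N_2}\setminus Z_2)\bigr)$; by Property \ref{pCosheaf} their sum is $\Sc\bigl(\R^{N_1+N_2}\setminus(Z_1\times Z_2)\bigr)$, which is exactly the kernel of restriction to $Z_1\times Z_2$. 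As $\pi_1\ctp\pi_2$ and restriction are surjective with the same kernel, $T$ is an isomorphism.

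For the last step, any Nash manifold has a finite cover by affine open Nash submanifolds, each Nash--diffeomorphic to a closed Nash submanifold of some $\R^N$ (see \cite{AG_Sc}). Choose such covers $M_i=\bigcup_j U_{ij}$ and, via Property \ref{pCosheaf}, partitions of unity $1=\sum_j\lambda_j$ on $M_1$ and $1=\sum_k\mu_k$ on $M_2$ with $\lambda_j f\in\Sc(U_{1j})$ for $f\in\Sc(M_1)$ and $\mu_k g\in\Sc(U_{2k})$ for $g\in\Sc(M_2)$. One obtains mutually intertwined families of operators: $(\lambda_j\cdot)\ctp(\mu_k\cdot)$ on $\Sc(M_1)\ctp\Sc(M_2)$, valued in $\Sc(U_{1j})\ctp\Sc(U_{2k})$ (which embeds in $\Sc(M_1)\ctp\Sc(M_2)$ by Property \ref{pOpenSet} and Proposition \ref{prop:ctp_ext}), and multiplications by $\lambda_j\mu_k$ on $\Sc(M_1\times M_2)$, valued in $\Sc(U_{1j}\times U_{2k})$; each family sums to the identity, and both are intertwined by the natural map $T\colon\Sc(M_1)\ctp\Sc(M_2)\to\Sc(M_1\times M_2)$. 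Since $T$ carries $\Sc(U_{1j})\ctp\Sc(U_{2k})$ isomorphically onto $\Sc(U_{1j}\times U_{2k})$ by the previous step, a partition-of-unity diagram chase shows $T$ is bijective: surjective because the $\Sc(U_{1j}\times U_{2k})$ lie in its image and span $\Sc(M_1\times M_2)$ by Property \ref{pCosheaf}; injective because $T\xi=0$ forces each $\bigl((\lambda_j\cdot)\ctp(\mu_k\cdot)\bigr)\xi$ to have zero image in $\Sc(U_{1j}\times U_{2k})$, hence to vanish, whence $\xi=\sum_{j,k}\bigl((\lambda_j\cdot)\ctp(\mu_k\cdot)\bigr)\xi=0$.

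I expect the main obstacle to be the injectivity in the second step --- the precise determination of $\ker(\pi_1\ctp\pi_2)$. This is where exactness of $\ctp$ on nuclear \Fre spaces is genuinely used, and where one must know that the sum of the two completed-tensor ideals is again \emph{closed} and equals $\Sc$ of the open complement; the cosheaf property of Schwartz functions (Property \ref{pCosheaf}) is precisely what supplies that. Everything else is bookkeeping, granted the classical Euclidean kernel theorem.
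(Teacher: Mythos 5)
The paper itself does not prove this proposition --- it is quoted from \cite{AG_RhamShap} (Corollary 2.6.3) --- so you are supplying an argument the paper omits; but as written your argument has a genuine gap exactly at the step you identify as the crux. The sequence $0\to\Sc(\R^N\setminus Z)\to\Sc(\R^N)\xrightarrow{\pi}\Sc(Z)\to 0$ is \emph{not} exact in the middle: by Property \ref{pOpenSet}, $\Sc(\R^N\setminus Z)$ is the space of Schwartz functions vanishing on $Z$ \emph{with all derivatives}, whereas the kernel of the restriction map $\pi$ consists of all Schwartz functions merely vanishing on $Z$. Already for $Z=\{0\}\subset\R$ the function $xe^{-x^2}$ lies in $\ker\pi$ but not in $\Sc(\R\setminus\{0\})$. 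Consequently your computation of $\ker(\pi_1\ctp\pi_2)$ and the assertion that it "is exactly the kernel of restriction to $Z_1\times Z_2$" both break down: with the correct kernels you would need the identity $I(Z_1)\ctp\Sc(\R^{N_2})+\Sc(\R^{N_1})\ctp I(Z_2)=I(Z_1\times Z_2)$, where $I(Z)$ denotes the ideal of Schwartz functions vanishing on $Z$ (to order zero only), and this is not supplied by Property \ref{pCosheaf}, which governs only the flat ideals $\Sc(\cdot\setminus\cdot)$. So injectivity of $T$ in the closed-submanifold step is not proved.

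The flaw is avoidable, because the detour through closed Nash submanifolds is unnecessary: Theorem \ref{LocTriv} covers each $M_i$ by finitely many open subsets Nash diffeomorphic to $\R^{n_i}$, and your final gluing paragraph then runs verbatim with $U_{1j}\cong\R^{n_1}$, $U_{2k}\cong\R^{n_2}$, using only the classical Euclidean kernel theorem (Property \ref{pClass}) for the local pieces, Property \ref{pOpenSet} together with Proposition \ref{prop:ctp_ext} to realize $\Sc(U_{1j})\ctp\Sc(U_{2k})$ as a closed subspace of $\Sc(M_1)\ctp\Sc(M_2)$, and Property \ref{pCosheaf} for the partitions of unity (with the small extra check that multiplication by $\lambda_j\mu_k$ carries $\Sc(M_1\times M_2)$ into $\Sc(U_{1j}\times U_{2k})$, which does not literally follow from the statement of Property \ref{pCosheaf} for an arbitrary product partition). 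If you do want the closed-submanifold case as an intermediate statement, prove injectivity softly rather than via kernels: for nuclear \Fre spaces the projective and injective completed tensor products coincide, the injective tensor product embeds canonically into the space of separately continuous bilinear forms on the duals, and the point evaluations $\delta_{z_1}\otimes\delta_{z_2}$ separate points there; surjectivity is then the only content, and your argument via Corollary \ref{CorOnto} and Property \ref{Extension} does give it.
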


\begin{defn}\label{Rotem}
By a subspace of a topological vector space $V$ we mean a linear subspace $L \subset V$ equipped with a topology such that the embedding $L \subset V$ is continuous.

Note that by Banach open map theorem if $L$ and $V$ are
nuclear \Fre spaces and $L$ is closed in $V$ then the topology of
$L$ is the induced topology from $V$.

By an image of a continuous linear map between topological vector
spaces we mean the image equipped with the quotient topology.
Similarly for a  continuous linear map between topological vector
spaces $\phi: V_1 \to V_2$ and a subspace $L \subset V_1$ we
the image $\phi(L)$  to be equipped
with the quotient topology.

Similarly a \textbf{sum} of two subspaces will be considered with
the quotient topology of the direct sum.
\end{defn}

\begin{rem}\label{SubspaceRem}
Note that by Proposition \ref{NFSSubQu}, sum of nuclear \Fre spaces and
image of a nuclear \Fre space are nuclear \Fre spaces.

Note also the operations of taking sum of subspaces and image of subspace commute.

Finally note that if $L$ and $L'$ are two nuclear \Fre subspaces of a complete locally convex topological vector space $V$ which coincide as linear subspaces then they are the same. Indeed, by Banach open map theorem they are both the same as $L+L'$.
\end{rem}

\begin{notn}
Let $V_i$, $i=1,2$ be \lcctv spaces. Let $L_i \subset V_i$ be
subspaces. We denote by $\cM_{L_1,L_2}^{V_1,V_2}:L_1 \ctp L_2 \to
V_1 \ctp V_2$ the natural map.
\end{notn}

From Corollary \ref{CorOnto} we obtain the following corollary.
\begin{cor}\label{ImTensorIm}
Let $V_i$, $i=1,2$ be \lcctv spaces. Let $L_i$, $i=1,2$ be nuclear
\Fre spaces. Let $\phi_i:L_i \to V_i$ be continuous linear maps.
Then $$\Im(\phi_1 \ctp \phi_2) =
\Im(\cM_{\Im(\phi_1),\Im(\phi_2)}^{V_1,V_2}).$$
\end{cor}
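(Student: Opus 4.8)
The plan is to factor each $\phi_i$ through its image and then invoke Corollary \ref{CorOnto}. First I would corestrict: write $\phi_i = \iota_i \circ \pi_i$, where $\pi_i\colon L_i \to \Im(\phi_i)$ is $\phi_i$ regarded as a map onto its image (equipped, following Definition \ref{Rotem}, with the quotient topology), and $\iota_i\colon \Im(\phi_i)\hookrightarrow V_i$ is the tautological inclusion, which is continuous since a subspace is by definition continuously included. As $\phi_i$ is continuous and $V_i$ is Hausdorff, $\ker\phi_i$ is closed in the nuclear \Fre space $L_i$, so by Proposition \ref{NFSSubQu} the space $\Im(\phi_i)\cong L_i/\ker\phi_i$ is again nuclear \Fre; moreover, by the very definition of $\cM$ one has $\cM_{\Im(\phi_1),\Im(\phi_2)}^{V_1,V_2} = \iota_1\ctp\iota_2$.

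Next I would use the fact that $(\iota_1\ctp\iota_2)\circ(\pi_1\ctp\pi_2) = (\iota_1\circ\pi_1)\ctp(\iota_2\circ\pi_2) = \phi_1\ctp\phi_2$. Each $\pi_i$ is a surjection between nuclear \Fre spaces, so Corollary \ref{CorOnto} gives that $\pi_1\ctp\pi_2\colon L_1\ctp L_2 \to \Im(\phi_1)\ctp\Im(\phi_2)$ is onto. Since post-composing with a surjection does not change the image, the linear subspaces $\Im(\phi_1\ctp\phi_2)$ and $\Im(\iota_1\ctp\iota_2) = \Im\big(\cM_{\Im(\phi_1),\Im(\phi_2)}^{V_1,V_2}\big)$ of $V_1\ctp V_2$ coincide.

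It remains to upgrade this to an equality of subspaces in the sense of Definition \ref{Rotem}. A completed tensor product of nuclear \Fre spaces is nuclear \Fre, hence $L_1\ctp L_2$ and $\Im(\phi_1)\ctp\Im(\phi_2)$ are nuclear \Fre, and so, by Proposition \ref{NFSSubQu}, are the two images above, each taken with its quotient topology; both are subspaces of the complete space $V_1\ctp V_2$. By the last part of Remark \ref{SubspaceRem}, two nuclear \Fre subspaces of $V_1\ctp V_2$ that agree as linear subspaces are equal, which is the assertion. I do not expect a genuine obstacle here; the only delicate point is the topological bookkeeping — making sure every ``image'' is read with its quotient topology, and recording that $\ker\phi_i$ is closed so that Proposition \ref{NFSSubQu} indeed applies.
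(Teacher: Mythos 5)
Your proposal is correct and is essentially the paper's own argument: the paper derives Corollary \ref{ImTensorIm} directly from Corollary \ref{CorOnto}, and your factorization $\phi_i=\iota_i\circ\pi_i$ together with the appeal to Proposition \ref{NFSSubQu} and Remark \ref{SubspaceRem} is just the routine bookkeeping that deduction requires. No gaps.
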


\begin{notn}
Let $M_i$, $i=1,2$ be smooth manifolds. We denote by
$\cM_{M_1,M_2}:C^\infty(M_1) \ctp C^\infty(M_2) \to C^\infty(M_1
\times M_2)$ the product map. For two subspaces $L_i \subset
C^\infty(M_i)$ we denote by $\cM_{L_1,L_2}:L_1 \ctp L_2 \to
C^\infty(M_1 \times M_2)$ the composition $\cM_{M_1,M_2} \circ
\cM_{L_1,L_2}^{C^{\infty}(M_1),C^{\infty}(M_2)}.$
%
\end{notn}

\section{Proof of the main result} \label{SecProofMain}

\subsection{Notation}\label{SubsecNot}$ $

In this paper we let $D$ be a semi-simple 2-dimensional algebra
over $\R$, i.e. $D = \C$ or $D= \R \oplus \R$.
Let $a \mapsto \overline{a}$ denote the non-trivial involution of $D$, i.e. complex conjugate or swap.
Let $n$ be a
natural number. Let
$\psi:\R \to \C^{\times}$
be a nontrivial
character. The following notation will be used throughout the body of the
paper. In case when there is no ambiguity we will omit from the
notations the $n$, the $D$ and the $\psi$.

\begin{itemize}
\item Denote by $H^n(D)$ the space of hermitian matrices of size $n$.
\item Denote $S^n(D):=H(D) \cap GL_n(D)$.
\item Denote by $\Delta_i^n:H \to \R$ the  main $i$-minor.
\item Let $O_i^n(D) \subset H$ be the subset of matrices with
$\Delta_i \neq 0$.
\item Let $U^n(D):= \bigcup_{i=1}^{n-1} O_i$ and $Z^n(D):=H - U$.
\item Let $N^n(D)<GL_n(D)$ be the subgroup consisting  of upper
unipotent matrices.
\item Let $\n^n(D)$ denote the Lie algebra of $N^n$.
\item We define a character $\chi_{\psi} : N \to
\C^{\times}$ by $\chi_{\psi}(x) := \psi(\sum_{i=1}^{n-1}
(x_{i,i+1} + \overline{ x_{i,i+1}}))$.
\item  Let the group $N$  act on $H$ by $x \mapsto
\ou^t x u$.
\item Fix a symmetric $\R$-bilinear form $B^n_D$ on $H$ by $B(x,y):=
\tr_{\R}(x w yw)$, where $w:=w_{n}$ is the longest element in the Weyl
group of $GL_n$.
\item Denote by $A^n < GL_n(\R)$ the subgroup of diagonal
matrices. We will also view $A^n$ as a subset of $S^n(D)$.

\item  
Define
$\Omega_{D}^{n,\psi}: \Sc^{\det,\R^{\times}}(S^n(D)) \to C^{\infty}(A^n)$
by
$$ \Omega_{D}^{n,\psi}(\Psi)(a):= \int_{N} \Psi(\ou^t a u) \chi(u)du.$$
Here, $du$ is the standard Haar measure on $N$.

For proof that the integral converges absolutely, depends smoothly on $a$ and defines a continuous map
$\Sc^{\det}(S^n(D)) \to C^{\infty}(A^n)$ see Proposition \ref{OmegaWell}.
By Remark \ref{rem:RelSch} $\Omega_{D}^{n,\psi}$ defines in particular a  continuous map $\Sc(H^n(D)) \to C^{\infty}(A^n).$


\item Denote by $N_i^n(D) < N^n(D)$ the subgroup defined by $$N_i^n(D) := \left \{ \begin{pmatrix}
  Id_{i} & * \\
  0 & Id_{n-i}
\end{pmatrix}
\right \}.$$

\item 
Define $\Omega_{D,i}^{n,\psi}: \Sc^{\Delta_i}(O_i^n) \to \Sc^{\Delta_i,\R^{\times}}(S^i \times
H^{n-i})$, where $S^i \times H^{n-i}$ is considered as a subspace
of $H_n$, in the following way
$$ \Omega_{D,i}^{n,\psi}(\Psi)(a):= \int_{N_i^n} \Psi(\ou^t a u) \chi(u) du .$$
Here, $du$ is the standard Haar measure on $N_i^n$.

For proof that the integral converges absolutely, depends smoothly on $a$ and defines a continuous map
$\Sc^{\Delta_i}(O_i^n) \to \Sc^{\Delta_i}(S^i \times
H^{n-i})$ see Proposition \ref{OmegaWell}.

\item Define a character $\eta_D :\R^{\times} \to \{\pm 1\}$ by
$\eta_D = 1$ if $D = \R \oplus \R$ and $\eta_D = \sign$ if $D =
\C$.

\item Define $\sigma:H^n(D) \to \R$ by $\sigma (x) :=
\prod_{i=1}^{n-1} \Delta^n_i(x)$.

\item Define $\wO_D^{n,\psi}: \Sc^{\det,\R^{\times}}(S^n) \to C^{\infty}(A^n)$
by
$$ \wO_D^n(\Psi)(a):= \eta(\sigma(a)) |\sigma(a)|\Omega(\Psi)(a)$$

\item Define $\wO_{D,i}^{n,\psi}: \Sc^{\Delta_i,\R^{\times}}(O_i^n) \to \Sc^{\Delta_i,\R^{\times}}(S^i \times
H^{n-i})$, in the following way
$$ \wO_{D,i}^{n,\psi}(\Psi)(a):= \eta(\Delta_i(a))^{n-i}|\Delta_i(a)|^{n-i}  \Omega_{i}^{n}  .$$

\item We define $\Omega_D^{n_1,...,n_k,\psi}:\Sc^{\det \times ... \times \det}(S^{n_1}(D) \times ... \times S^{n_k}(D)) \to C^{\infty}(A^{n_1} \times ...
\times A^{n_k})$ in a similar way to $\Omega_D^{n,\psi}$.
Analogously we define $\wO_D^{n_1,...,n_k,\psi}$.
\end{itemize}

\begin{prop} \label{OmegaWell}$ $\\
(i) The integral $\Omega_{D}^{n,\psi}$ converges absolutely and defines a continuous map
$\Sc^{\det}(S^n(D)) \to C^{\infty}(A^n)$.\\
(ii) The integral $\Omega_{D,i}^{n,\psi}$ converges absolutely and defines a continuous map
$\Sc^{\Delta_i}(O_i^n) \to \Sc^{\Delta_i}(S^i \times
H^{n-i})$.
\end{prop}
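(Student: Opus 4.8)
The plan is to prove both statements simultaneously by exhibiting the relevant integral as a pushforward along a Nash submersion, so that the analytic content (absolute convergence, smoothness, continuity) is supplied by Property \ref{NashSub} and Corollary \ref{NashSubCor} rather than by hand. First I would set up the geometry. For part (i), consider the map $\alpha: N^n \times A^n \to S^n(D)$ given by $(u,a) \mapsto \ou^t a u$. One checks that $\alpha$ is a Nash map, and that over the open set $S^n(D)$ of non-degenerate Hermitian matrices the projection $N^n \times A^n \to A^n$ together with $\alpha$ realizes $\Omega_D^{n,\psi}$ as integration over the fibers $N^n \times \{a\}$: concretely, $\Omega_D^{n,\psi}(\Psi)(a) = \pi_*\big((\Psi \circ \alpha)\cdot (\chi\otimes 1)\big)(a)$ where $\pi: N^n\times A^n \to A^n$ is the projection. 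The key point to verify is that $(\Psi\circ\alpha)\cdot\chi$ lies in $\Sc^{\pi, A^n}(N^n\times A^n)$, i.e. is Schwartz along the fibers of $\pi$; here the hypothesis that $\Psi \in \Sc^{\det,\R^\times}(S^n(D))$ — Schwartz along the determinant fibers — is exactly what is needed, since the determinant of $\ou^t a u$ equals $\overline{\det u}\,\det a\,\det u$ up to the constant $|\det u|^2 = 1$ for $u\in N^n$, so a bound of Schwartz type in $\det(\ou^t a u)$ transfers to a bound of the right kind in the fiber variable. Then Corollary \ref{NashSubCor} (with $\phi$ the determinant-type map to $\R^\times$ and $\psi$ the submersion $\pi$, after composing with the inclusion into $C^\infty(A^n)$ via Remark \ref{rem:RelSch}) gives the continuous linear map, and Property \ref{NashSub}(i) gives absolute convergence of the integral.

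For part (ii), the argument is the same with $N_i^n$ in place of $N^n$ and the target $S^i(D)\times H^{n-i}(D)$ in place of $A^n$. Here the relevant map is $\beta: N_i^n \times (S^i\times H^{n-i}) \to O_i^n$, $(v, y)\mapsto \ov^t y v$; using the block structure of $N_i^n$ one sees that $\Delta_i(\ov^t y v) = \Delta_i(y)$ (the $i$-th principal minor is insensitive to the off-diagonal block since $v$ restricts to the identity on the first $i$ coordinates), which both explains why the target carries a $\Delta_i$-decay condition and supplies the estimate needed to show $(\Psi\circ\beta)\cdot\chi \in \Sc^{\pi, S^i\times H^{n-i}}(N_i^n\times(S^i\times H^{n-i}))$. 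Again Corollary \ref{NashSubCor} produces the continuous map $\Sc^{\Delta_i}(O_i^n)\to\Sc^{\Delta_i}(S^i\times H^{n-i})$ and Property \ref{NashSub}(i) gives absolute convergence. One should also record that the image indeed consists of functions equivariant under $\R^\times$ (acting through the relevant character), which follows from the equivariance of $\beta$ and the invariance properties of the character $\chi$.

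I expect the main obstacle to be the verification that $(\Psi\circ\alpha)\cdot\chi$ (resp. $(\Psi\circ\beta)\cdot\chi$) is genuinely Schwartz along the fibers of the projection, uniformly in the base point $a$ (resp. $y$). The subtlety is that the fiber $N^n$ is non-compact and the map $\alpha$ is neither proper nor a submersion onto $S^n(D)$; one must control how the Schwartz seminorms of $\Psi$ in the $S^n(D)$-variable, transported through $\alpha$, degenerate as $a$ approaches the walls where some $\Delta_j(a)\to 0$. The device that makes this work is precisely the choice to demand $\Psi \in \Sc^{\det,\R^\times}$ rather than merely $\Psi\in\Sc(H^n(D))$: the determinant controls the "radial" direction in which $\ou^t a u$ escapes to infinity as $u$ escapes in $N^n$, decoupling the fiber decay from the base. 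Concretely I would factor $\alpha$ through the map $(u,a)\mapsto (\ou^t a u, \det a)$ and estimate in coordinates on $N^n$, using that the entries of $\ou^t a u$ are polynomial in the entries of $u$ with coefficients depending on $a$, together with the polynomial bounds built into the definition of $\Sc^{\phi,N}$. Once this estimate is in place the rest is a routine application of the cited properties of Schwartz functions on Nash manifolds.
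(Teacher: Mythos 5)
Your overall architecture coincides with the paper's: identify $N^n\times A^n$ with the open set $V=\{x\in H:\Delta_j(x)\neq 0,\ j=1,\dots,n\}$ via the action map (resp.\ $N_i^n\times(S^i\times H^{n-i})\cong O_i^n$), multiply by the tempered character function, and push forward along the projection using Property \ref{NashSub}, Corollary \ref{NashSubCor} and Remark \ref{rem:RelSch}. The problem is the justification you give at the very step you single out as the crux of (i). You claim that $\Psi\in\Sc^{\det,\R^{\times}}(S^n(D))$ is ``exactly what is needed'' for $(\Psi\circ\alpha)\cdot\chi$ to lie in $\Sc^{\pi,A^n}(N^n\times A^n)$, because $\det(\ou^t a u)=\det a$ and ``the determinant controls the radial direction in which $\ou^t a u$ escapes to infinity as $u$ escapes in $N^n$.'' This cannot be right: $\det$ is constant on each fiber $N^n\times\{a\}$, so a bound of Schwartz type in the determinant gives no decay at all in the fiber variable $u$. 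The actual source of fiber decay is that for fixed $a$ with all $\Delta_j(a)\neq 0$ the orbit map $u\mapsto\ou^t a u$ is a closed Nash embedding (this is exactly the isomorphism $N\times A\cong V$), so Schwartz decay of $\Psi$ in the ambient matrix entries transfers to rapid decay in $u$; the inverse map involves denominators $\Delta_j$ for \emph{all} $j=1,\dots,n-1$, so the estimate degenerates as $a$ approaches any wall $\Delta_j(a)=0$, not only $\det(a)=0$, and this degeneration is absorbed not by the $\det$-relative hypothesis but by the test factor $g\in\Sc(A^n)$ occurring in the definition of $\Sc^{\pi,A^n}$, which vanishes rapidly at all the walls. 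This is precisely why the paper replaces $\det$ by the full minor map $\beta=(\Delta_1,\dots,\Delta_n)$ and proves the chain of inclusions $\Sc^{\det,\R^{\times}}(S)\subset\Sc^{\beta,p_n^{-1}(\R^{\times})}(S)\subset\Sc^{\beta,A}(V)$ (the first inclusion is where the $\det$-relative hypothesis enters, and it needs a small argument, e.g.\ via Property \ref{PropDixMal}, since a Schwartz function of $\beta$ need not factor through $\det$), before pushing forward along $\beta|_V$. So your key estimate is true, but not for the reason you give, and as written your argument has a hole exactly at the point you flagged as the main obstacle.

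A smaller point on (ii): you propose to check that the image ``consists of functions equivariant under $\R^{\times}$,'' but the superscript $\R^{\times}$ in $\Sc^{\Delta_i,\R^{\times}}$ is not an equivariance condition; by Remark \ref{rem:RelSch} it means Schwartz along the fibers of $\Delta_i$ relative to the open dense subset $\R^{\times}\subset\R$. What actually needs checking in (ii) is only that $\Delta_i$ is $N_i$-invariant on $O_i$ (which you did verify via the block computation), so that the $\Delta_i$-relative condition is preserved by the pushforward $(\beta_i)_*$ of Corollary \ref{NashSubCor}; with that reading, part (ii) of your sketch is the paper's argument.
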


\begin{proof}$ $\\
(i) Consider the map $\beta:H \to \R^n$ defined by $\beta=(\Delta_1,...,\Delta_n)$. Consider $A$ to be embedded in $\R^n$ by $(t_1,...t_n) \mapsto (t_1,t_1t_2,...,t_1t_2...t_n)$. Let $V:=\beta^{-1}(A) \subset H.$ Let $p_n:\R^n\to \R$ denote the projection on the last coordinate.
Note that the action map defines an isomorphism  $N \times A \to V$. Let $\alpha:V \to N$ denote the projection.
Let $\Chi \in \Sc^{Id}(V)$ be defined by $\Chi(v):=\chi(\alpha(v))$.
Define $\Omega': \Sc^{\beta,A}(V) \to \Sc^{Id}(A)$ by $\Omega'(f):=\beta_*(\Chi f)$.
Now, $\Omega$ is given by the following composition
$$\Sc^{\det,\R^{\times}}(S)
\subset
 \Sc^{\beta,p_n^{-1}(\R^{\times})}(S)
\subset \Sc^{\beta,A}(V) \overset{\Omega'}{\to}
 \Sc^{Id}(A) \subset C^{\infty}(A).$$
(ii) Consider $S^{i} \times H^{n-i}$ as a subset in $H^n$. Denote it by $B$. Consider the action map $N_i \times B \to H$. Note that it is an open embedding and its image is $O_i$.
We consider the standard Haar measures on $B$ and $N_i$, and their multiplication on $O_i$.
Consider the projections: $\alpha_i:O_i \to N_i$ and $\beta_i:O_i \to B$. Let $\Chi_i \in \Sc^{Id}(O_i)$ be defined by $\Chi_i(v):=\chi(\alpha_i(v))$. Consider $(\beta_i)_*:\Sc^{\Delta_i,\R^{\times}}(O_i) \to \Sc^{\Delta_i,\R^{\times}}(B)$.
Now, $\Omega_i(f)=(\beta_i)_*(\Chi_i f)$.
%
\end{proof}

The main theorem (Theorem \ref{Main}) can be reformulated now in the
following way:

\begin{thm} \label{ReformMain} $ $\\
(i) $ \widetilde{\Omega}_{\R \oplus \R}(\Sc(H(\R \oplus \R))) =
\widetilde{\Omega}_{\C}(\Sc(H( \C))).$\\ 
%
 (ii) $ \widetilde{\Omega}_{\R \oplus \R}(\Sc(S(\R \oplus \R))) =
\widetilde{\Omega}_{\C}(\Sc(S( \C))).$ 
\end{thm}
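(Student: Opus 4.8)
The plan is to reduce everything to the $\gl_n$ analog of Theorem~\ref{ReformMain} and then prove that analog by induction on $n$ using the three main ingredients announced in the introduction. First I would observe that $S^n(D)\subset H^n(D)$ is the open dense subset where $\det\neq 0$, and that by Property~\ref{pOpenSet} together with Remark~\ref{rem:RelSch} the space $\Sc(S^n(D))$ sits inside $\Sc(H^n(D))$, with $\wO$ extending the map on the bigger space; so once one knows the statement for $H$ one transfers it to $S$ by a density/boundary argument (this is part~(ii) following from part~(i)), and conversely part~(i) is the ``harder'' global statement. Next, exactly as in the sketch, I would prove that the statement for $\gl_n$ (i.e.\ the additive version, where $H^n(D)$ is replaced by $\gl_n(\R)$ and $S^n(D)$ by $\GL_n(\R)$) implies the multiplicative statement above — this is a formal manipulation comparing the two integral formulas, using that $\GL_n(\R)$ is open dense in $\gl_n(\R)$ and that the transfer factor $\gamma(a)$ matches the factor $\eta(\sigma(a))$ appearing in $\wO$.

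The core is then the induction on $n$ for $\gl_n$. First I would handle the open set $U=\bigcup_{i=1}^{n-1}O_i$: using the intermediate Kloosterman integrals $\Omega_{D,i}^{n,\psi}$, which by Proposition~\ref{OmegaWell}(ii) are continuous maps $\Sc^{\Delta_i}(O_i^n)\to\Sc^{\Delta_i}(S^i\times H^{n-i})$, one expresses $\Omega(\Sc(O_i))$ in terms of $\Omega$ on $\GL_i$ and $\gl_{n-i}$; the induction hypothesis then gives a smooth matching on each $\Sc(O_i)$, and by the partition of unity in Property~\ref{pCosheaf} (the cosheaf property of Schwartz functions) one gets a matching on $\Sc(U)$. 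Then I would invoke the inversion formula (Theorem~\ref{SecondIng}, cited in the sketch): if $\Phi$ matches $\Psi$ then $\Fou(\Phi)$ matches $\Fou(\Psi)$ up to a constant, using that $B^n_D$ is $N$-invariant so that $\Fou_{B,\psi}$ commutes with the $N\times N$ action. This takes care of those Schwartz functions whose Fourier transform is supported in $U$. Finally I would apply the Key Lemma (Lemma~\ref{IntroKeyLem}): any $f\in\Sc(\gl_n(\R))$ decomposes as $f=f_1+f_2+h$ with $f_1\in\Sc(U)$, $\Fou(f_2)\in\Sc(U)$, and $h$ annihilating all $(N\times N,\chi)$-equivariant distributions, hence $\Omega(h)=0$. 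Assembling: match $f_1$ directly, match $f_2$ via the inversion formula applied to its Fourier transform, and match $h$ by the zero function; linearity of $\wO$ and of matching then gives the result.

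The main obstacle will be the Key Lemma itself in the Archimedean setting. As the introduction explains, over a $p$-adic field the Key Lemma is equivalent to the vanishing statement Lemma~\ref{NonArchKeyLem} about equivariant distributions supported on $Z$ with Fourier transform supported on $Z$; but over $\R$ the distributional statement only yields the \emph{closure} version, i.e.\ that $\Sc(\gl_n(\R))=\overline{\Sc(U)+\Fou(\Sc(U))+h\text{-part}}$, and upgrading this to an honest (non-closed) equality is the genuinely new difficulty. Here I would use the two ``dual'' principles developed in the appendices — the dual uncertainty principle (Theorem~\ref{DualUncerPrel}), which gives $\Sc(V-L)+\Fou(\Sc(V-L))=\Sc(V)$ without closure when $L^\perp\nsubseteq L$, and the coinvariants statement (Theorem~\ref{thm:NegCoinvPrel} and Corollary~\ref{cor:NegCoinvPrel}), which gives $\Sc(X)\otimes\chi=\Sc(X-Z)\otimes\chi+\g(\Sc(X)\otimes\chi)$ without closure when $\chi|_{G_z}$ is nontrivial for all $z\in Z$. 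The point is that the vanishing of the relevant equivariant distributions (including transversal derivatives, controlled by the unipotent action of the stabilizer on the normal space, as noted after Lemma~\ref{NonArchKeyLem}) can be refined, via these Borel-lemma-type arguments, into the needed surjectivity of Schwartz-function maps; checking that the stabilizer/normal-space hypotheses of Corollary~\ref{cor:NegCoinvPrel} genuinely hold at every point of $Z=\gl_n(\R)-U$ — i.e.\ that $\chi$ restricted to the stabilizer of each non-regular point is nontrivial — will be the technical heart, and is presumably where the detailed combinatorics of the minors $\Delta_i$ and of $N\times N$-orbits on $\gl_n$ enters.
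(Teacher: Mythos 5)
Your overall architecture (intermediate Kloosterman integrals plus induction for the sets $O_i$, partition of unity for $U$, the inversion formula for the Fourier side, and the Key Lemma decomposition, with (ii) deduced from (i)) is the paper's, but two steps of your plan have genuine gaps.

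The main one is your plan for the Key Lemma. You propose to apply Corollary \ref{cor:NegCoinvPrel} to the full closed set $Z=H-U$ and you identify as the ``technical heart'' the verification that $\chi$ is nontrivial on the stabilizer of every point of $Z$. That statement is false: $Z$ contains relevant points, for instance $aw_n$ with $a\in\R^{\times}$ (all proper principal minors vanish, yet the stabilizer in $N$ is trivial, so $\chi$ restricts trivially), and more generally relevant representatives as in Lemma \ref{lem:RelForm} with vanishing minors. So the hypothesis of Corollary \ref{cor:NegCoinvPrel} simply does not hold on $Z$, and the coinvariants tool alone cannot remove $Z$. The paper's proof has an extra structural ingredient you are missing: it introduces the smaller closed set $Z'$ (matrices vanishing strictly above the anti-diagonal, with constant real anti-diagonal), proves that every \emph{relevant} point of $Z$ lies in $Z'$, applies the coinvariants corollary only on $U'=H-Z'$, where all points of $Z\cap U'$ are irrelevant, obtaining $\Sc(U')=\Sc(U)+\n\Sc(U')$, and then disposes of $Z'$ by the dual uncertainty principle after checking $Z'^{\bot}\nsubseteq Z'$ (which requires a separate computation when $n=2$, since the dimension count fails there). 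Only after this two-step split do the two appendix theorems combine, via $N$-invariance of $B$, to give $\Sc(H)=\Sc(U)+\Fou(\Sc(U))+\n\Sc(H)$.

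Secondly, your deduction of (ii) from (i) (and likewise of the $\GL_n$ statement from the $\gl_n$ one) by a ``density/boundary argument'' using that $S$ is open and dense in $H$ is precisely the kind of argument that only yields equality of \emph{closures} of images, which is the trap the whole paper is built to avoid: given $\Phi\in\Sc(S(\R\oplus\R))\subset\Sc(H(\R\oplus\R))$, part (i) produces a matching $\Psi\in\Sc(H(\C))$, but nothing forces $\Psi$ to vanish to infinite order on the locus $\det=0$. The paper instead uses the Dixmier--Malliavin-type factorization $\Sc(S)=\Sc(\R^{\times})\Sc(H)$ via $\det$ (Property \ref{PropDixMal}), so that $\widetilde{\Omega}(\Sc(S))=\Sc(\R^{\times})\widetilde{\Omega}(\Sc(H))$ and (i) can be applied inside this identity; alternatively one can realize $S$ as a closed subset of $H\times\R^{\times}$ via the graph of $\det$ and use the extension Property \ref{Extension}. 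Some argument of this kind is needed at this step; openness and density of $S$ alone do not suffice.
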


\subsection{Main ingredients} \label{SubSecMainIng} $ $

In this subsection we list three main ingredients of the proof of the main theorem.

\subsubsection{Intermediate Kloosterman Integrals}\label{IntKloosInt}

\begin{prop} \label{FirstIng}
$ $\\
(i) The map $\widetilde{\Omega}_i^n$ defines an onto map
$\Sc(O_i^n) \to
\Sc(S^i\times H^{n-i})$.\\ 
(ii) $\widetilde{\Omega}^n = \widetilde{\Omega}^{i,n-i} \circ \widetilde{\Omega}_i^n$.
\end{prop}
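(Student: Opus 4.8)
The plan is to prove both parts simultaneously by unwinding the definitions set up in the proof of Proposition \ref{OmegaWell}(ii), reducing everything to the pushforward formalism for Schwartz functions along the fibers of a Nash submersion.

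First I would recall the geometric picture: the action map $N_i^n \times (S^i \times H^{n-i}) \to O_i^n$ is an open embedding with image $O_i^n$ (this is used in the proof of Proposition \ref{OmegaWell}), so that $O_i^n \cong N_i^n \times B$ with $B := S^i \times H^{n-i}$, and $\Omega_i^n$ is literally the fiber integral $(\beta_i)_*(\Chi_i f)$ along the projection $\beta_i : O_i^n \to B$. For part (i), the key point is that $\Chi_i$ is a nonvanishing smooth function on $O_i^n$ which is Schwartz along the fibers of $\beta_i$ (indeed $\Chi_i(v) = \chi(\alpha_i(v))$ is of absolute value $1$), hence multiplication by $\Chi_i$ is an isomorphism $\Sc^{\Delta_i}(O_i^n) \to \Sc^{\Delta_i}(O_i^n)$ — its inverse is multiplication by $\overline{\Chi_i}$. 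Composing with the surjectivity of the pushforward $(\beta_i)_* : \Sc^{\Delta_i,\R^\times}(O_i^n) \twoheadrightarrow \Sc^{\Delta_i,\R^\times}(B)$, which follows from Property \ref{NashSub}(ii) (or rather Corollary \ref{NashSubCor} together with surjectivity of the honest pushforward applied to the trivial fibration $N_i^n \times B \to B$), we conclude that $\Omega_i^n$ is onto. Finally $\widetilde\Omega_i^n$ differs from $\Omega_i^n$ only by multiplication by the nonvanishing function $\eta(\Delta_i(a))^{n-i}|\Delta_i(a)|^{n-i}$ on $B$, which is again an isomorphism of the relevant Schwartz spaces, so $\widetilde\Omega_i^n$ is onto as well.

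For part (ii), the identity $\widetilde\Omega^n = \widetilde\Omega^{i,n-i} \circ \widetilde\Omega_i^n$ should reduce, after stripping off the transfer factors $\eta(\sigma(a))|\sigma(a)|$ versus $\eta(\Delta_i(a))^{n-i}|\Delta_i(a)|^{n-i}$ and the factor $\widetilde\Omega^{i,n-i}$ carries, to the bare statement $\Omega^n = \Omega^{i,n-i} \circ \Omega_i^n$ possibly up to a power of $\Delta_i$; I would check the bookkeeping of these factors using $\sigma(x) = \prod_{j=1}^{n-1}\Delta_j^n(x)$ and the fact that on $S^i \times H^{n-i}$ the minors $\Delta_j^n$ for $j \le i$ are the minors of the $S^i$-block. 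The analytic content is then Fubini: writing $N^n$ as a product (up to measure zero, or exactly on the level of Haar measures) involving $N_i^n$ and the copy of $N^i \times N^{n-i}$ sitting block-diagonally, and checking that the character $\chi$ factors accordingly as $\chi(u) = \chi(u') \chi(u'')$ under $u = u' u''$ with $u' \in N_i^n$ and $u''$ in the block-diagonal subgroup — here one must be slightly careful because the twist $\theta$ involves the superdiagonal entry straddling position $(i,i+1)$, but that entry lies in $N_i^n$, so it is accounted for by $\Omega_i^n$ and does not appear in $\Omega^{i,n-i}$. The iterated integral then converges absolutely by Proposition \ref{OmegaWell} applied at each stage, which justifies the interchange.

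I expect the main obstacle to be the verification of the transfer-factor bookkeeping in part (ii): matching $\eta(\sigma(a))|\sigma(a)|$ against the product of $\eta(\Delta_i(a))^{n-i}|\Delta_i(a)|^{n-i}$ (from $\widetilde\Omega_i^n$) and the factor implicit in $\widetilde\Omega^{i,n-i}$ requires carefully tracking how a diagonal element $a \in A^n$ decomposes into its $A^i$ and $A^{n-i}$ parts and how each $\Delta_j$ evaluates on it, and getting the powers of $|\Delta_i|$ exactly right. The convergence/Fubini step is routine given Proposition \ref{OmegaWell}, and the surjectivity in part (i) is formal given the Schwartz-pushforward machinery; it is only the explicit identity of normalizing factors that demands genuine computation, and I would organize that computation around the multiplicativity $\sigma = \Delta_i^{?} \cdot (\text{lower minors}) \cdot (\text{shifted minors of the } H^{n-i} \text{ block})$.
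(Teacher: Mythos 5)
Your proposal is correct and follows essentially the same route as the paper's (very terse) proof: part (i) is exactly the appeal to Property \ref{NashSub}(ii) for the surjective submersion $\beta_i$ from the proof of Proposition \ref{OmegaWell}, with multiplication by $\Chi_i$ and by the transfer factor being isomorphisms of the Schwartz spaces, and part (ii), which the paper dismisses as straightforward, is the Fubini/character-factorization computation you outline. The bookkeeping you left open does close: for $a=\diag(a',a'')$ one has $\sigma(a)=\sigma_i(a')\,\Delta_i(a)^{n-i}\,\sigma_{n-i}(a'')$, so the exponent is exactly the $n-i$ appearing in the definition of $\widetilde{\Omega}_i^n$ and the factors match on the nose.
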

\begin{proof}
(i) follows from Property \ref{NashSub}, since the map $\beta_i$ from the proof of Proposition \ref{OmegaWell} is a surjective submersion.\\
(ii) is straightforward.
\end{proof}

\begin{prop}
$ \widetilde{\Omega}^{m,n}(\Sc(S^m\times H^{n}))=\Im
\cM_{\widetilde{\Omega}^{m}(\Sc(S^{m})),\widetilde{\Omega}^{n}(\Sc(H^{n}))}$.
\end{prop}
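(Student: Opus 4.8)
The plan is to reduce the statement about $\widetilde{\Omega}^{m,n}$ to a statement about completed tensor products and then invoke Corollary \ref{ImTensorIm}. First I would observe that $S^m \times H^n$ is (a product of) Nash manifolds, so by Proposition \ref{SchKer} we have a canonical identification $\Sc(S^m \times H^n) = \Sc(S^m) \ctp \Sc(H^n)$, and similarly $C^\infty(A^m \times A^n) = C^\infty(A^m) \ctp C^\infty(A^n)$. Under these identifications the operator $\widetilde{\Omega}^{m,n}$ is, by its very definition (it was constructed in \S\S\ref{SubsecNot} ``in a similar way to $\Omega_D^{n,\psi}$'' as an integral over $N^m \times N^n$ acting separately on the two factors, with the twisting character and transfer factor both multiplicative over the two factors), nothing but the completed tensor product of the two maps $\widetilde{\Omega}^m : \Sc(S^m) \to C^\infty(A^m)$ and $\widetilde{\Omega}^n : \Sc(H^n) \to C^\infty(A^n)$. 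That is, $\widetilde{\Omega}^{m,n} = \widetilde{\Omega}^m \ctp \widetilde{\Omega}^n$ after the identifications above. Establishing this identity cleanly is the first real task; it is essentially a Fubini argument together with the fact that the Haar measure on $N^m \times N^n$ is the product measure, but one must be careful about the relative-Schwartz spaces $\Sc^{\det}$ appearing in the domain of $\Omega$ and check that the tensor-product factorization is compatible with them.

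Granting the factorization $\widetilde{\Omega}^{m,n} = \widetilde{\Omega}^m \ctp \widetilde{\Omega}^n$, the conclusion is immediate from Corollary \ref{ImTensorIm}: taking $V_1 = C^\infty(A^m)$, $V_2 = C^\infty(A^n)$, $L_1 = \Sc(S^m)$, $L_2 = \Sc(H^n)$, $\phi_1 = \widetilde{\Omega}^m$, $\phi_2 = \widetilde{\Omega}^n$, that corollary gives
$$\Im(\widetilde{\Omega}^m \ctp \widetilde{\Omega}^n) = \Im\left(\cM_{\Im(\widetilde{\Omega}^m),\Im(\widetilde{\Omega}^n)}^{C^\infty(A^m),C^\infty(A^n)}\right),$$
and the right-hand side is by definition $\Im \cM_{\widetilde{\Omega}^m(\Sc(S^m)),\widetilde{\Omega}^n(\Sc(H^n))}$ (the map $\cM$ on subspaces of $C^\infty$ being the one from the last Notation in \S\S\ref{NucFre}, i.e. the composition with the product map $C^\infty(A^m) \ctp C^\infty(A^n) \to C^\infty(A^m \times A^n)$). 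To apply Corollary \ref{ImTensorIm} one needs $L_1, L_2$ to be nuclear \Fre spaces, which holds since $\Sc$ of a Nash manifold is nuclear \Fre.

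The main obstacle I anticipate is not the tensor-product formalism but the verification that $\widetilde{\Omega}^{m,n}$ genuinely factors as the completed tensor product of the two one-variable operators, rather than merely agreeing with it on the algebraic tensor product $\Sc(S^m) \otimes \Sc(H^n)$. Since all the spaces in sight are nuclear \Fre and the algebraic tensor product is dense in the completed one, and both $\widetilde{\Omega}^{m,n}$ and $\widetilde{\Omega}^m \ctp \widetilde{\Omega}^n$ are continuous, agreement on the dense subspace forces agreement everywhere; so the crux reduces to checking the identity on elementary tensors $\Psi_1 \otimes \Psi_2$, where it is the Fubini computation
$$\widetilde{\Omega}^{m,n}(\Psi_1\otimes\Psi_2)(a_1,a_2) = \widetilde{\Omega}^m(\Psi_1)(a_1)\,\widetilde{\Omega}^n(\Psi_2)(a_2),$$
valid because the action, the Haar measure, the character $\chi$, and the transfer factor $\eta(\sigma(\cdot))|\sigma(\cdot)|$ all split as products over the two blocks. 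One should also double-check that the continuity statements needed (so that $\widetilde{\Omega}^m \ctp \widetilde{\Omega}^n$ is even defined) are supplied by Proposition \ref{OmegaWell}. With these points in place the proof is short.
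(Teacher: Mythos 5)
Your proposal is correct and follows essentially the same route as the paper: the paper's proof consists precisely of the factorization $\widetilde{\Omega}^{m,n}|_{\Sc(S^m\times H^{n})}=(\widetilde{\Omega}^{m}|_{\Sc(S^m)} \ctp \widetilde{\Omega}^{n}|_{\Sc(H^{n})}) \circ \cM_{S^m,H^{n}}$ together with Corollary \ref{ImTensorIm}. The only difference is that the paper asserts the factorization as a fact, while you sketch its verification (Fubini on elementary tensors plus density and continuity), which is a reasonable elaboration rather than a different argument.
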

\begin{proof}
Follows from the fact that
$\widetilde{\Omega}^{m,n}|_{\Sc(S^m\times H^{n})}=
\widetilde{\Omega}^{m}|_{\Sc(S^m)} \hot
\widetilde{\Omega}^{n}|_{\Sc(H^{n})} \circ \cM_{S^m,H^{n}}$ and
Corollary \ref{ImTensorIm}.
\end{proof}
From the last two propositions we obtain the following corollary.
\begin{cor}  \label{FirstIngCor}
$\widetilde{\Omega}^n (\Sc(O_i^n)) = \Im
\cM_{\widetilde{\Omega}^{n-i}(\Sc(S^{n-i})),\widetilde{\Omega}^{i}(\Sc(H^{i}))}$.
\end{cor}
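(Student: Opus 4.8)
The plan is to chain together the two immediately preceding propositions; no new ideas are needed. First I would invoke Proposition \ref{FirstIng}(ii), the factorization $\widetilde{\Omega}^n = \widetilde{\Omega}^{i,n-i} \circ \widetilde{\Omega}_i^n$. Applying both sides to $\Sc(O_i^n)$ gives
$$\widetilde{\Omega}^n(\Sc(O_i^n)) = \widetilde{\Omega}^{i,n-i}\bigl(\widetilde{\Omega}_i^n(\Sc(O_i^n))\bigr).$$
Next I would use Proposition \ref{FirstIng}(i), which says that $\widetilde{\Omega}_i^n:\Sc(O_i^n)\to\Sc(S^i\times H^{n-i})$ is onto, so the inner term is all of $\Sc(S^i\times H^{n-i})$, whence $\widetilde{\Omega}^n(\Sc(O_i^n)) = \widetilde{\Omega}^{i,n-i}(\Sc(S^i\times H^{n-i}))$. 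Finally I would apply the preceding proposition with $m=i$ and $n$ replaced by $n-i$, which identifies $\widetilde{\Omega}^{i,n-i}(\Sc(S^i\times H^{n-i}))$ with $\Im\cM_{\widetilde{\Omega}^i(\Sc(S^i)),\widetilde{\Omega}^{n-i}(\Sc(H^{n-i}))}$. After transporting this equality along the coordinate-permuting identification $A^i\times A^{n-i}\cong A^{n-i}\times A^i$, it becomes the asserted identity $\widetilde{\Omega}^n (\Sc(O_i^n)) = \Im \cM_{\widetilde{\Omega}^{n-i}(\Sc(S^{n-i})),\widetilde{\Omega}^{i}(\Sc(H^{i}))}$.

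The only points requiring a little care are of the topological-vector-space bookkeeping sort set up in \S\ref{NucFre}: images and sums of subspaces are always to be taken with the quotient topology of Definition \ref{Rotem}, and one should note that since $\widetilde{\Omega}_i^n$ is onto, its image carries exactly the topology of $\Sc(S^i\times H^{n-i})$, so that replacing $\widetilde{\Omega}_i^n(\Sc(O_i^n))$ by $\Sc(S^i\times H^{n-i})$ inside $\widetilde{\Omega}^{i,n-i}(\cdot)$ is legitimate as an equality of subspaces, not merely of underlying vector spaces. I expect no genuine obstacle here: all of the analytic content sits in the two propositions being combined — the first resting on Property \ref{NashSub} (pushforward along the surjective submersion $\beta_i$) and the second on Corollary \ref{ImTensorIm} (images of completed tensor products of maps out of nuclear \Fre spaces) together with $\Sc(S^i\times H^{n-i})=\Sc(S^i)\hot\Sc(H^{n-i})$. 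The deduction of the corollary itself is a purely formal composition.
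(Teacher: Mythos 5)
Your derivation is exactly the paper's: the corollary is proved there by the one\--line remark ``from the last two propositions'', i.e.\ precisely your chaining of Proposition \ref{FirstIng}(i),(ii) with the proposition $\widetilde{\Omega}^{m,n}(\Sc(S^m\times H^{n}))=\Im\cM_{\widetilde{\Omega}^{m}(\Sc(S^{m})),\widetilde{\Omega}^{n}(\Sc(H^{n}))}$, and your bookkeeping point (surjectivity of $\widetilde{\Omega}_i^n$ so that its image, with the quotient topology of Definition \ref{Rotem} and Remark \ref{SubspaceRem}, is $\Sc(S^i\times H^{n-i})$ as a subspace and not merely as a vector space) is the correct justification for the substitution inside $\widetilde{\Omega}^{i,n-i}(\cdot)$.

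The one inaccurate point is your final sentence. What the chaining yields is $\Im\cM_{\widetilde{\Omega}^{i}(\Sc(S^{i})),\widetilde{\Omega}^{n-i}(\Sc(H^{n-i}))}$, and transporting along the coordinate\--permuting identification $A^i\times A^{n-i}\cong A^{n-i}\times A^i$ only reorders the two tensor factors, giving $\Im\cM_{\widetilde{\Omega}^{n-i}(\Sc(H^{n-i})),\widetilde{\Omega}^{i}(\Sc(S^{i}))}$; it does not interchange the roles of $S$ and $H$ attached to the sizes, so it cannot produce the printed $\Im\cM_{\widetilde{\Omega}^{n-i}(\Sc(S^{n-i})),\widetilde{\Omega}^{i}(\Sc(H^{i}))}$, which is in general a different subspace of $C^{\infty}(A^n)$. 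The discrepancy is a transposition of indices in the paper's statement of the corollary rather than a gap in your argument: since $O_i^n$ fibers over $S^i\times H^{n-i}$ via $\widetilde{\Omega}_i^n$, only the $S^i$, $H^{n-i}$ version follows, and only that version is needed downstream (the induction hypothesis covers either splitting). So state the corollary in the form your derivation actually produces instead of appealing to a permutation that does not do what you claim.
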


\subsubsection{Inversion Formula}\label{IngInvFor}

\begin{thm}[Jacquet]\label{SecondIng}
\begin{multline*}
\widetilde{\Omega}^{\overline{\psi}}(\Fou(f))(diag(a_1,...,a_n))=\\=c^{n(n-1)/2}\int...\int
\widetilde{\Omega}^{\psi} (f)(diag(p_1,..p_n)) \psi(-\sum_{i=1}^n
a_{n+1-i} p_i +\sum_{i=1}^{n-1} 1/(a_{n-i} p_i) ) dp_n ... dp_1.
\end{multline*}
Here, $c$ is a constant, we will discuss it in \S\S \ref{Sec_wil}. The integral here is just an iterated integral. In particular we
mean that the integral converges as an iterated integral.
\end{thm}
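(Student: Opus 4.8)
The plan is to establish the identity by reducing it, via an inductive unfolding, to a one-dimensional Fourier-type integral computation, following the strategy of \cite{Jac1} in the non-Archimedean case but paying attention to convergence of the iterated integrals. First I would recall that by Proposition \ref{OmegaWell} and Remark \ref{rem:RelSch} the Kloosterman integral $\widetilde{\Omega}^{\psi}(f)$ is a smooth function on $A^n$, and that the right-hand side of the asserted formula is a priori only an iterated integral; so a preliminary step is to check that each successive integration produces a function which is still integrable in the next variable. I would do this by combining the decay of $\widetilde{\Omega}^{\psi}(f)$ coming from $f \in \Sc^{\det}(S^n(D))$ (more precisely, the fact that $\widetilde{\Omega}^{\psi}(f)$ lies in $\Sc^{Id}(A)$ after the twist, as extracted from the composition displayed in the proof of Proposition \ref{OmegaWell}) with the oscillation of the exponential factors $\psi(-a_{n+1-i}p_i + 1/(a_{n-i}p_i))$, performing integration by parts in each $p_i$ near $0$ and near $\infty$.

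The main computation is the unfolding itself. Writing $\Fou = \Fou_{B,\psi}$ for the Fourier transform attached to the form $B^n_D(x,y) = \tr_\R(xwyw)$, I would plug the definition of $\Fou(f)$ into $\widetilde{\Omega}^{\overline\psi}(\Fou(f))(a)$ and interchange the order of the $N(D)$-integration (defining $\widetilde{\Omega}^{\overline\psi}$) with the Fourier integral over $H^n(D)$. The key algebraic point, exactly as in \cite{Jac1}, is that conjugating the self-dual pairing by the unipotent variable $u$ and by the diagonal torus element $a$ converts the quadratic form $B(\cdot,\cdot)$ into a sum of terms which, after a change of variables in $N(D)$ absorbing $u$, decouple into: (a) the ``diagonal'' contribution producing the phase $\psi(-\sum a_{n+1-i}p_i)$ where $p = (p_1,\dots,p_n)$ is the new diagonal integration variable coming from the image of $\Fou(f)$ under the torus coordinates, and (b) the ``off-diagonal'' contribution which, after integrating out the unipotent directions one superdiagonal at a time, produces exactly the reciprocal phases $\psi(\sum_{i=1}^{n-1} 1/(a_{n-i}p_i))$ together with the overall constant $c^{n(n-1)/2}$, one factor of $c$ per root. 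The number $n(n-1)/2$ is the number of positive roots, matching the number of entries of $u$ integrated out, which makes the bookkeeping transparent. I would set this up as an induction on $n$: peeling off the last row and column of $H^n$ realizes $H^n(D)$ as a fibration over $H^{n-1}(D)$, the character $\chi$ and the form $B$ restrict compatibly, and the inductive hypothesis handles the $(n-1)\times(n-1)$ block while a single one-dimensional Gaussian/Fourier lemma over $D$ (a Weil-type computation over $\R$ or $\R\oplus\R$) handles the newly exposed superdiagonal entry, contributing one factor $c$ and one reciprocal phase $\psi(1/(a_{n-1}p_{n-1}))$ after substituting the last diagonal coordinate.

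The one genuinely Archimedean subtlety — and the step I expect to be the main obstacle — is justifying all the interchanges of integration. In the non-Archimedean case the integrals are essentially finite sums locally and Fubini is automatic; here I must argue that the combined $N(D) \times H^n(D)$ integral, or rather the partially-unfolded integrals at each stage of the induction, are absolutely convergent after inserting a suitable auxiliary cutoff, or else that the iterated integral can be regularized and the regularization removed. I would handle this by working not with $f$ directly but with its decomposition along the fibers of $\beta = (\Delta_1,\dots,\Delta_n)$ as in the proof of Proposition \ref{OmegaWell}, so that the torus variable $p$ genuinely ranges over $A$ with Schwartz-type decay, and then use repeated integration by parts against the oscillatory kernel to convert the merely conditionally convergent iterated integral into an absolutely convergent one. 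Once convergence and the interchange are secured, the algebraic identity is a finite composition of the base case, and the theorem follows. The constant $c$ is left unspecified here and pinned down in \S\S\ref{Sec_wil} as promised in the statement.
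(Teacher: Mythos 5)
Your overall strategy---induction on $n$, unfolding through intermediate Kloosterman integrals, and a Weil-type formula for the Gaussian in the off-diagonal variables---is the same as the paper's (which itself follows \cite{Jac1}). The genuine gap is precisely the step you yourself flag as the main obstacle: making the iterated integral and the unfolding rigorous in the Archimedean case is the actual content of the theorem here (the statement explicitly includes convergence of the iterated integral), and your proposal only gestures at generic devices (decay plus oscillation, integration by parts near $0$ and $\infty$, auxiliary cutoffs, ``regularization to be removed'') without carrying any of them out. Moreover, oscillation is not the mechanism the paper uses and is unlikely to suffice as described: the paper encodes the right-hand side as the Jacquet transform $\cJ$, a composition of one-variable Fourier transforms and phase multiplications with inductively defined domains $\fJ_n^{[i]}\subset \Sc^{\Delta_{n-1}}(\overline{A})$, and the convergence claim becomes part (i) of Theorem \ref{SecondIng2}, namely $\wO(\Sc(H))\subset\fJ$. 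This is proved \emph{together with} the identity by induction: one Jacquet step applied to $\wO(f)$ is identified (via Proposition \ref{FirstIng} and the partial inversion formula) with an intermediate Kloosterman integral $\wO^{1,n-1}$ of Schwartz-along-the-fibers data, so each successive integration is an absolutely convergent Fourier transform; no conditional convergence, stationary-phase estimate, or removal of a regularization ever enters. Without an argument of this structural kind you have not proved the convergence assertion, and hence not the theorem.

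Two further points where your sketch diverges from what actually happens. First, the paper never interchanges the $N(D)$-integration with the Fourier integral over $H^n(D)$: the partial inversion formula (Proposition \ref{prop_par_inv}) is proved by computing both sides as explicit tempered distributions $\xi_{A,C}$ and $\zeta_{A,C}$ and applying the Weil formula for the \emph{partial} Fourier transform on the block $(H_i\times H_{n-i})^{\bot'}$ (Corollary \ref{wil_cor}); this is an identity of distributions evaluated on a Schwartz function, so no Fubini on a non-absolutely-convergent double integral is needed. Second, your per-step bookkeeping is off: peeling off one row and column exposes an off-diagonal block of rank $n-1$ over $D$, so each induction step invokes the Weil formula for a Hermitian form of rank $n-1$, contributing $c(D,\psi)^{\,n-1}$ together with the $\eta_D(\det)$ and $|\det|$ factors (which is why the normalized transform $\wO$ appears), and the reciprocal phase arises from the inverse form evaluated at the vector $\eps$ coming from the character---not from ``a single one-dimensional Gaussian handling the newly exposed superdiagonal entry, contributing one factor $c$.'' Your total $c^{n(n-1)/2}$ is right, but the mechanism producing it in your sketch is not.
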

The proof is essentially the same as in the p-adic case (see \cite[Section 7]{Jac1}).
For the sake of completeness we repeat it in \S \ref{sec_inv_for}.
\subsubsection{Key Lemma}\label{IngKeyLem}

\begin{lemma}[Key Lemma]\label{KeyLem}
Consider the action of $N$ on $\Sc(H)$ to be the standard action twisted by $\chi$.
Then
$$\Sc(H)=\Sc(U)+\Fou(\Sc(U))+\n \Sc(H).$$
\end{lemma}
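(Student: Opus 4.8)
The plan is to reduce the Key Lemma to the ``dual'' statements about Schwartz functions collected in \S\ref{SecPrel}, namely Theorem \ref{DualUncerPrel} (the dual uncertainty principle) and Corollary \ref{cor:NegCoinvPrel} (coinvariants in Schwartz functions for unipotent actions with non-trivial stabilizer character). First I would analyze the structure of the closed set $Z^n(D)\subset H^n(D)$, the locus where some proper principal minor $\Delta_i$ vanishes. The idea is to stratify $H$ (or rather $Z$) into $N\times N$-invariant pieces so that on each stratum one of the two mechanisms applies: either the restriction of the character $\chi$ to the stabilizer $N_z$ of a point $z$ in the stratum is non-trivial, so Corollary \ref{cor:NegCoinvPrel} lets us absorb the Schwartz functions supported there into $\n\Sc(H)$ modulo Schwartz functions on the complement; or, on the ``bad'' sub-stratum where $\chi|_{N_z}$ is trivial for all $z$, the geometry of the relevant linear subspace $L$ (the conormal data, or the ambient coordinates cut out by the vanishing minors) is such that $L^\perp\not\subseteq L$ with respect to the form $B$, so that Theorem \ref{DualUncerPrel} provides the decomposition into $\Sc(U)+\Fou(\Sc(U))$.

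Concretely, I would proceed by induction on $n$ and peel off $Z$ from outside in. The first step is to handle points of $Z$ at which the stabilizer character is non-trivial: by Corollary \ref{cor:NegCoinvPrel} applied with $G=N\times N$ (or $N$ acting by the twisted action), $X=H$, and $Z$ replaced by the union of those strata, every Schwartz function is a sum of one supported away from that part of $Z$ and one in $\n\Sc(H)$. This is exactly the role of the third summand $h\in\n\Sc(H)$ in the Key Lemma (equivalently the function annihilating all equivariant distributions in Lemma \ref{IntroKeyLem}), and it is the Archimedean refinement — not needing to pass to the closure — that makes Corollary \ref{cor:NegCoinvPrel} usable here rather than just the distributional statement in the remark following it. After this reduction we are left with Schwartz functions supported on (a neighbourhood of) the residual locus $Z_0\subset Z$ where $\chi|_{N_z}$ is trivial for every $z$; the task becomes to write such functions as $f+g$ with $f\in\Sc(U)$ and $\Fou(g)\in\Sc(U)$.

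For that residual locus I would identify $Z_0$ with (a union of pieces contained in) a linear subspace $L$ of $H$ — the condition that all principal minors $\Delta_i$, $1\le i\le n-1$, be ``maximally degenerate'' in the appropriate sense forces the support into such an $L$ after the unipotent twist has been used up — and then verify the hypothesis $L^\perp\not\subseteq L$ of Theorem \ref{DualUncerPrel} for the form $B(x,y)=\tr_\R(xwyw)$. Since $w$ is the longest Weyl element, $B$ pairs the $(i,j)$ entry with the $(n+1-j,n+1-i)$ entry, so computing $L^\perp$ is a bookkeeping exercise on matrix positions; the point is that the anti-diagonal symmetry of $B$ together with the fact that $L$ is cut out by vanishing of \emph{upper-left} minors makes $L^\perp$ land in a complementary region, giving $L^\perp\not\subseteq L$ as long as $n\ge 2$. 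Applying Theorem \ref{DualUncerPrel} then yields $\Sc(H)=\Sc(H-L)+\Fou(\Sc(H-L))$, and since $H-L\subseteq U$ (any point outside $L$ has some non-vanishing proper minor) we get $\Sc(H-L)\subseteq\Sc(U)$, which closes the argument. I expect the main obstacle to be the combinatorial/geometric heart of the stratification: precisely describing the strata of $Z$, computing the stabilizers $N_z$ and the restriction of $\chi$ to them, and — on the residual strata — pinning down the linear subspace $L$ and checking $L^\perp\not\subseteq L$ uniformly; getting these pieces to fit together (so that every stratum is covered by exactly one of the two mechanisms, and the inductive hypothesis on smaller $n$ can be invoked for the pieces isomorphic to products $S^i\times H^{n-i}$) is where the real work lies, whereas the functional-analytic gluing via Property \ref{pCosheaf} is routine.
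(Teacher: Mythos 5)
Your two ingredients are the right ones (the dual uncertainty principle applied to a linear subspace containing the relevant locus, and Corollary \ref{cor:NegCoinvPrel} on the irrelevant locus, with Jacquet's description of relevant orbits pinning the relevant part of $Z$ inside the anti-diagonal subspace), but the way you combine them has a genuine gap. Your closing claim ``$H-L\subseteq U$ (any point outside $L$ has some non-vanishing proper minor)'' is false: the linear subspace $L$ (essentially $Z'=\{x\in Z:\ x_{ij}=0 \text{ for } i+j<n+1,\ x_{i,n+1-i}\ \text{constant real}\}$) only contains the \emph{relevant} points of $Z$, whereas $Z$ itself is much larger — e.g.\ any matrix whose first row and column vanish has all $\Delta_i=0$ but generically lies far outside $L$. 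So $U\subsetneq H-L$, the inclusion $\Sc(H-L)\subseteq\Sc(U)$ fails, and the argument does not close. The missing step is exactly the paper's: after the dual uncertainty principle gives $\Sc(H)=\Sc(U')+\Fou(\Sc(U'))$ with $U'=H-Z'$, you must further decompose $\Sc(U')=\Sc(U)+\n\,\Sc(U')$ via Corollary \ref{cor:NegCoinvPrel} (all points of $Z\cap U'$ are irrelevant, and $Z\cap U'$ is closed in $U'$), and then push this through the Fourier transform using that $\Fou$ commutes with the twisted $N$-action, so that $\Fou(\n\,\Sc(U'))\subseteq\n\,\Sc(H)$; only then do all three summands $\Sc(U)+\Fou(\Sc(U))+\n\,\Sc(H)$ appear.

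A secondary problem is your first reduction: you apply Corollary \ref{cor:NegCoinvPrel} with $X=H$ and ``$Z$ replaced by the union of the irrelevant strata,'' but that union is not Zariski closed in $H$ (its closure contains relevant points such as $a\,w_n$), so the hypotheses of the corollary are not met; the paper avoids this by working on the open set $U'$, where the irrelevant locus \emph{is} closed. Relatedly, doing ``coinvariants first, uncertainty second'' leaves you trying to apply Theorem \ref{DualUncerPrel} to the subspace of functions vanishing on the irrelevant locus, which is not what that theorem says — it is a statement about all of $\Sc(V)$ relative to a linear $L$. Reversing the order (uncertainty on the linear $Z'$ first, coinvariants on $U'$ second, Fourier-equivariance to absorb the $\n$-terms) fixes both issues; also note that no induction on $n$ is needed for the Key Lemma itself — the induction lives in the proof of the main theorem, not here. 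Your dimension-count idea for $Z'^{\perp}\nsubseteq Z'$ is fine for $n>2$, but $n=2$ needs the separate explicit check since there $\dim Z'=\dim H/2$.
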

For proof see \S \ref{SecPfKeyLem}.

\subsection{Proof of the main result} \label{SubSecPfMain}$ $

We prove Theorem \ref{ReformMain} by induction. The base $n=1$ is obvious. Thus, from now on
we assume that $n \geq 2$ and that Theorem \ref{ReformMain}
holds for all dimensions smaller than $n$.

\begin{prop}
$$\widetilde{\Omega}_{\R \oplus \R}(\Sc(O_i(\R
\oplus \R))) = \widetilde{\Omega}_{\C}(\Sc(O_i(\C))).$$
\end{prop}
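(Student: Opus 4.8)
The statement to prove is $\widetilde{\Omega}_{\R\oplus\R}(\Sc(O_i(\R\oplus\R))) = \widetilde{\Omega}_{\C}(\Sc(O_i(\C)))$, i.e.\ a smooth matching for Schwartz functions supported on the open set $O_i$. The plan is to reduce this to lower-dimensional instances of the main theorem via Corollary \ref{FirstIngCor}, which identifies $\widetilde{\Omega}^n(\Sc(O_i^n))$ with the image of the multiplication map $\cM_{\widetilde{\Omega}^{n-i}(\Sc(S^{n-i})),\widetilde{\Omega}^{i}(\Sc(H^{i}))}$. Applying this identification on both sides (for $D = \R\oplus\R$ and $D = \C$), the equality we must show becomes
$$\Im \cM_{\widetilde{\Omega}_{\R\oplus\R}^{n-i}(\Sc(S^{n-i}(\R\oplus\R))),\widetilde{\Omega}_{\R\oplus\R}^{i}(\Sc(H^{i}(\R\oplus\R)))} = \Im \cM_{\widetilde{\Omega}_{\C}^{n-i}(\Sc(S^{n-i}(\C))),\widetilde{\Omega}_{\C}^{i}(\Sc(H^{i}(\C)))}.$$

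First I would invoke the induction hypothesis: since $1 \le i \le n-1$, both $i$ and $n-i$ are strictly less than $n$, so Theorem \ref{ReformMain} holds in those dimensions. Part (ii) of the induction hypothesis gives $\widetilde{\Omega}_{\R\oplus\R}^{n-i}(\Sc(S^{n-i}(\R\oplus\R))) = \widetilde{\Omega}_{\C}^{n-i}(\Sc(S^{n-i}(\C)))$ as subspaces of $C^\infty(A^{n-i})$, and part (i) gives $\widetilde{\Omega}_{\R\oplus\R}^{i}(\Sc(H^{i}(\R\oplus\R))) = \widetilde{\Omega}_{\C}^{i}(\Sc(H^{i}(\C)))$ as subspaces of $C^\infty(A^{i})$. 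Since the two multiplication maps are built from the same underlying map $\cM_{A^{n-i},A^i}: C^\infty(A^{n-i}) \ctp C^\infty(A^i) \to C^\infty(A^{n-i}\times A^i)$ precomposed with the inclusion of the tensor product of the respective image subspaces, and those image subspaces now coincide on each factor, the two images are literally the image of the same map. Hence they are equal, which is what we want. I should be careful to track the transfer factor $\gamma$ (encoded in the passage from $\Omega$ to $\widetilde{\Omega}$ via $\eta$ and $\sigma$): the reason the computation is clean is precisely that the factors $\eta(\Delta_i)^{n-i}|\Delta_i|^{n-i}$ defining $\widetilde{\Omega}_i$ are the same real-analytic functions on $O_i$ regardless of $D$, so the identification of Corollary \ref{FirstIngCor} is compatible with the two sides of the desired equality; this is implicitly what makes the transfer factor on $A^n$ factor correctly through the product structure $A^{n-i}\times A^i$.

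The main obstacle is not conceptual but bookkeeping: one must make sure that the subspace structures (in the sense of Definition \ref{Rotem}, with quotient topologies on images and sums) are respected, so that "equal as subspaces of $C^\infty$" on the factor level really does yield "equal as subspaces" after applying $\cM$. This is handled by Corollary \ref{ImTensorIm} together with Remark \ref{SubspaceRem}: the image of a tensor product of continuous linear maps from nuclear \Fre spaces depends only on the images of the individual maps, so replacing $\Sc(S^{n-i}(\R\oplus\R))$ by $\Sc(S^{n-i}(\C))$ (which have the same image under $\widetilde{\Omega}^{n-i}$) does not change the image of $\cM$. A second, minor point to verify is that $\widetilde{\Omega}^n(\Sc(O_i^n(\C)))$ is genuinely a subspace of the target space in which $\widetilde{\Omega}^n(\Sc(O_i^n(\R\oplus\R)))$ also sits, i.e.\ that both land in $\widetilde{\Omega}^n(\Sc(H^n))$-compatible function spaces on $A^n$; this is immediate since $O_i^n \subset H^n$ in both cases and $\widetilde{\Omega}^n$ is defined uniformly. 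With these points in place the proof is a short formal argument.
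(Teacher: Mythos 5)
Your argument is correct and is essentially the paper's own proof: the paper deduces the proposition in one line from Corollary \ref{FirstIngCor} applied separately for $D=\R\oplus\R$ and $D=\C$ together with the induction hypothesis (parts (i) and (ii) in dimensions $i,n-i<n$), with Corollary \ref{ImTensorIm} and Remark \ref{SubspaceRem} supplying exactly the bookkeeping you describe. Your parenthetical aside that the factor $\eta(\Delta_i)^{n-i}|\Delta_i|^{n-i}$ is ``the same regardless of $D$'' is strictly speaking false (since $\eta_{\C}=\sign$ while $\eta_{\R\oplus\R}=1$), but it is also unnecessary: the compatibility of the transfer factors with the product structure is already contained in Corollary \ref{FirstIngCor}, which is applied for each $D$ separately, so the slip does not affect the proof.
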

\begin{proof}
Follows  from Corollary \ref{FirstIngCor} and the induction
hypothesis.
\end{proof}

\begin{cor}
$$\widetilde{\Omega}_{\R \oplus \R}(\Sc(U(\R \oplus \R))) =
\widetilde{\Omega}_{\C}(\Sc(U(\C))).$$
\end{cor}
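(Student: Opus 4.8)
The plan is to derive this corollary from the preceding Proposition by a partition-of-unity argument. First I would observe that $U^n(D)=\bigcup_{i=1}^{n-1}O_i^n(D)$ is a finite cover of the Nash manifold $U^n(D)$ by open Nash submanifolds, so Property \ref{pCosheaf} gives that every Schwartz function on $U^n(D)$ is a finite sum $\sum_i f_i$ with $f_i\in\Sc(O_i^n(D))$; equivalently, viewing all of these as subspaces of $\Sc(H^n(D))$ via Property \ref{pOpenSet}, we have $\Sc(U^n(D))=\sum_{i=1}^{n-1}\Sc(O_i^n(D))$ in the sense of Definition \ref{Rotem}.

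Next I would apply the map $\wO_D^n$, which by Proposition \ref{OmegaWell} and Remark \ref{rem:RelSch} is continuous and linear on $\Sc(H^n(D))$. Since taking the image of a subspace commutes with taking a finite sum of subspaces (Remark \ref{SubspaceRem}), this yields
$$\wO_D^n(\Sc(U^n(D)))=\sum_{i=1}^{n-1}\wO_D^n(\Sc(O_i^n(D))).$$
Now the Proposition immediately above states that $\wO_{\R\oplus\R}(\Sc(O_i(\R\oplus\R)))=\wO_{\C}(\Sc(O_i(\C)))$ for every $i$; summing these equalities over $i=1,\dots,n-1$ and using the displayed identity once for $D=\R\oplus\R$ and once for $D=\C$ gives $\wO_{\R\oplus\R}(\Sc(U(\R\oplus\R)))=\wO_{\C}(\Sc(U(\C)))$.

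Regarding topologies: all the spaces occurring are nuclear \Fre spaces — images of nuclear \Fre spaces under continuous linear maps, and finite sums of such, all sitting inside the nuclear \Fre space $C^{\infty}(A^n)$ — so by the last paragraph of Remark \ref{SubspaceRem} it suffices to check the equalities as plain sets of functions, which is what the above does. I do not expect any genuine obstacle here; the only point requiring a little care is the bookkeeping of subspace and quotient topologies, which is precisely what Remark \ref{SubspaceRem} is set up to handle.
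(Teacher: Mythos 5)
Your proposal is correct and follows essentially the same route as the paper: the paper's proof is exactly "the previous proposition plus partition of unity (Property \ref{pCosheaf})", which is the decomposition $\Sc(U)=\sum_i\Sc(O_i)$ you use, followed by applying $\wO$ and summing the equalities. Your additional bookkeeping of topologies via Remark \ref{SubspaceRem} is a correct elaboration of what the paper leaves implicit.
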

\begin{proof}
Follows from the the previous proposition and partition of unity
(property \ref{pCosheaf}).
\end{proof}

\begin{cor}
Part (i) of Theorem \ref{ReformMain} holds. Namely, $
\widetilde{\Omega}_{\R \oplus \R}(\Sc(H(\R \oplus \R))) =
\widetilde{\Omega}_{\C}(\Sc(H( \C))).$
\end{cor}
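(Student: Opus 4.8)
The plan is to combine the three main ingredients that were assembled in the previous subsections, exactly as in the non-Archimedean argument of \cite{Jac1}, using the topological tools for Schwartz functions recorded in \S\ref{SecPrel}. We want to show $\widetilde{\Omega}_{\R\oplus\R}(\Sc(H(\R\oplus\R)))=\widetilde{\Omega}_{\C}(\Sc(H(\C)))$. By symmetry it suffices to prove one inclusion, say $\supseteq$ (the other being identical after exchanging the roles of $D=\R\oplus\R$ and $D=\C$); in fact the argument below is manifestly symmetric in $D$, so both inclusions come out at once.

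First I would apply the Key Lemma (Lemma \ref{KeyLem}) to write $\Sc(H(D))=\Sc(U(D))+\Fou(\Sc(U(D)))+\n\Sc(H(D))$ for each of the two algebras $D$. Then I treat the three summands separately. For the first summand, the previous Corollary already gives $\widetilde{\Omega}_{\R\oplus\R}(\Sc(U(\R\oplus\R)))=\widetilde{\Omega}_{\C}(\Sc(U(\C)))$. For the third summand, the twisted action of $\n$ has been built into $\Omega$ so that $\widetilde{\Omega}(\n\Sc(H(D)))=0$: indeed $\Omega$ factors through the space of $(N,\chi)$-coinvariants, hence $\n\Sc(H)$ maps to zero (this is the statement "$\Omega(h)=0$" from Lemma \ref{IntroKeyLem}, and it is immediate from the definition of $\Omega$ as an integral against $\chi$ over $N$). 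So the only content is the middle summand $\Fou(\Sc(U(D)))$, and here I would invoke the Inversion Formula (Theorem \ref{SecondIng}): it shows that if $f$ matches $g$ (with the appropriate transfer factor), then $\Fou(f)$ matches $\Fou(g)$ up to the constant $c$ — more precisely, $\widetilde{\Omega}^{\overline\psi}(\Fou(f))$ is determined by $\widetilde{\Omega}^{\psi}(f)$ via an integral transform that does not depend on $D$. Consequently $\widetilde{\Omega}_{\R\oplus\R}(\Fou(\Sc(U(\R\oplus\R))))$ and $\widetilde{\Omega}_{\C}(\Fou(\Sc(U(\C))))$ are the images, under one and the same integral operator, of $\widetilde{\Omega}_{\R\oplus\R}(\Sc(U(\R\oplus\R)))$ and $\widetilde{\Omega}_{\C}(\Sc(U(\C)))$ respectively, which agree by the Corollary; hence they agree.

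Putting the three pieces together: given $\Psi\in\Sc(H(\C))$, decompose $\Psi=f+\Fou(g)+h$ with $f,g\in\Sc(U(\C))$ and $h\in\n\Sc(H(\C))$. Find $f'\in\Sc(U(\R\oplus\R))$ with $\widetilde{\Omega}_{\R\oplus\R}(f')=\widetilde{\Omega}_{\C}(f)$ and $g'\in\Sc(U(\R\oplus\R))$ with $\widetilde{\Omega}_{\R\oplus\R}(g')=\widetilde{\Omega}_{\C}(g)$; by the Inversion Formula $\widetilde{\Omega}_{\R\oplus\R}(\Fou(g'))=\widetilde{\Omega}_{\C}(\Fou(g))$. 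Then $\Phi:=f'+\Fou(g')\in\Sc(H(\R\oplus\R))$ satisfies $\widetilde{\Omega}_{\R\oplus\R}(\Phi)=\widetilde{\Omega}_{\C}(f)+\widetilde{\Omega}_{\C}(\Fou(g))=\widetilde{\Omega}_{\C}(\Psi)$, using $\widetilde{\Omega}_{\C}(h)=0$. This shows $\widetilde{\Omega}_{\C}(\Sc(H(\C)))\subseteq\widetilde{\Omega}_{\R\oplus\R}(\Sc(H(\R\oplus\R)))$, and the reverse inclusion is symmetric.

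The step I expect to be the main obstacle — and the reason the Key Lemma is stated in its strong, closure-free form rather than the weak approximate form discussed in the introduction — is making sure all of this happens at the level of honest equalities of Schwartz functions, not merely up to closure. Concretely: the Inversion Formula only gives an \emph{iterated} integral, so one must check it genuinely defines a continuous operator compatible with the decomposition, and the decomposition $\Psi=f+\Fou(g)+h$ must be available as a \emph{topological} direct-sum decomposition so that applying $\widetilde{\Omega}$ term by term is legitimate; this is where nuclearity (Propositions \ref{NFSSubQu}, \ref{SchKer} and their corollaries) and the precise form of Property \ref{pCosheaf} get used. Once the Key Lemma and Inversion Formula are in hand in the stated form, the remaining bookkeeping is routine, and this Corollary is essentially a formal consequence.
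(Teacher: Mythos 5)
Your argument is correct and is essentially the paper's own proof: the Key Lemma decomposition $\Sc(H)=\Sc(U)+\Fou(\Sc(U))+\n\Sc(H)$, the previous corollary for the $\Sc(U)$ part, the vanishing of $\widetilde{\Omega}$ on $\n\Sc(H)$, and the inversion formula (Theorem \ref{SecondIng}) to transfer the $\Fou(\Sc(U))$ part. One small caveat: the inversion operator is not quite independent of $D$ — it carries the constant $c(D,\psi)^{n(n-1)/2}$ and exchanges $\psi$ with $\overline{\psi}$ — but since the images in question are linear subspaces and the $\Sc(U)$-statement holds for every nontrivial character, this changes nothing; likewise no topological direct-sum structure is needed, only linearity of $\widetilde{\Omega}$ and the fact that the image of a sum of subspaces is the sum of the images.
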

\begin{proof}
By the previous Corollary and Theorem \ref{SecondIng},
$$\widetilde{\Omega}_{\R \oplus \R}(\Fou(\Sc(U(\R \oplus \R)))) =
\widetilde{\Omega}_{\C}(\Fou(\Sc(U(\C)))).$$
Clearly, $\widetilde{\Omega}_{\R \oplus \R}(\n \Sc(H(\R \oplus
\R)))=\widetilde{\Omega}_{\C}(\n \Sc(H(\C)))=0$. Hence, by Remark \ref{SubspaceRem}
$$\widetilde{\Omega}_{\R \oplus \R}(\Sc(U(\R \oplus \R))+\Fou(\Sc(U(\R \oplus \R)))+\n \Sc(H(\R \oplus \R)))\\
=\widetilde{\Omega}_{\C}(\Sc(U(\C))+\Fou(\Sc(U(\C)))+\n \Sc(H(\C))),$$
where we again consider the action of $N$ on $\Sc(H)$ to be twisted by $\chi$.
Therefore, by the Key Lemma
$$ \widetilde{\Omega}_{\R \oplus \R}(\Sc(H(\R \oplus
\R))) = \widetilde{\Omega}_{\C}(\Sc(H( \C))).$$
\end{proof}
It remains to prove part (ii) of Theorem \ref{ReformMain}.
\begin{proof}[Proof of part (ii) of Theorem \ref{ReformMain}]
By Property \ref{PropDixMal}, 
$$\Sc(S(\R \oplus \R))= \Sc(\R^{\times})\Sc(S(\R \oplus \R)),$$
and hence
$$\Sc(S(\R \oplus \R))= \Sc(\R^{\times})\Sc(H(\R \oplus \R)),$$
where the action of $\Sc(\R^{\times})$ on $\Sc(H(\R \oplus \R))$
is given via $\det : H(\R \oplus \R) \to \R$.

Hence $$\widetilde{\Omega}_{\R \oplus \R}(\Sc(S(\R \oplus \R))) =
\Sc(\R^{\times}) \widetilde{\Omega}_{\R \oplus \R}(\Sc(H(\R \oplus
\R))).$$ By part (i) $$\Sc(\R^{\times}) \widetilde{\Omega}_{\R
\oplus \R}(\Sc(H(\R \oplus \R))) = \Sc(\R^{\times})
\widetilde{\Omega}_{\C}(\Sc(H(\C))).$$ As before,
$$\Sc(\R^{\times})
\widetilde{\Omega}_{\C}(\Sc(H(\C))) =
\widetilde{\Omega}_{\C}(\Sc(\R^{\times})\Sc(H(\C)))=
\widetilde{\Omega}_{\C}(\Sc(S(\C))).$$
\end{proof}
\begin{rem}
One can give an alternative proof, that does not use
Property \ref{PropDixMal}, in the following way.
Define maps $\widetilde{\Omega}':
\Sc(H \times \R^{\times}) \to C^\infty(A \times \R^{\times})$
similarly to $\widetilde{\Omega}$, and not involving the second coordinate.
From (i), using \S
\ref{NucFre}, we get that $\Im \widetilde{\Omega}'_\C= \Im
\widetilde{\Omega}'_{\R \oplus \R}$. Using the graph of $\det$ we
can identify $S$ with a closed subset of $H \times \R^{\times}$
and $A$ with a closed subset of $A \times \R^{\times}$. By
Property \ref{Extension}, the restriction map $\Sc(H \times
\R^{\times}) \to \Sc(S)$ is onto and hence
$\widetilde{\Omega}(\Sc(S))=\Im res \circ \widetilde{\Omega}'$,
where $res:C^\infty(A \times \R^{\times}) \to C^\infty(A)$ is the
restriction. This implies (ii).

In fact, this alternative proof of (ii) is obtained from the previous proof by replacing Property \ref{PropDixMal}
with its weaker version that states
(in the notations of property \ref{PropDixMal}) that the map
$\Sc(M) \ctp \Sc(N) \to \Sc(M)$ is onto. This is much simpler
version since it follows directly from Property \ref{Extension} and Proposition \ref{SchKer}.
\end{rem}
\section{Proof of the inversion formula}\label{sec_inv_for}
In this section we adapt the proof of Theorem \ref{SecondIng} given in \cite{Jac1} to  the Archimedean  case.   The proof is by induction. The induction step is based on analogous formula for the intermediate Kloostermann integral which is based on the Weil formula.

In \S\S \ref{Sec_fou} we give notations for various Fourier transforms on $H$. In \S\S \ref{Sec_wil} we recall the Weil formula and consider its special case which is relevant for us. In \S\S \ref{Sec_Jac} we introduce the Jacquet transform and the intermediate Jacquet transform which appears on the right hand side of the inversion formulas. In \S\S\ref{Sec_par} we prove the intermediate inversion formula. In \S\S \ref{Sec_inv} we prove the  inversion formula.
\subsection{Fourier transform}\label{Sec_fou}
\begin{itemize}
\item We denote by $\Fou':=\Fou'_{H_n}: \Sc(H_n)\to \Sc(H_n)$ the Fourier transform w.r.t. the trace form.
\item Note that $\Fou_{H_n}=ad(w) \circ \Fou'_{H_n}= \Fou'_{H_n} \circ ad(w)$.
\item We denote by $\Fou'_{H_i \times H_{n-i}}: \Sc(H_n)\to \Sc(H_n)$ the partial Fourier transform w.r.t. the trace form on $H_i \times H_{n-i}$.
\item We denote by $(H_i \times H_{n-i})^{\bot'} \subset H_n $ the orthogonal compliment to $H_i \times H_{n-i}$ w.r.t. the trace form.
\item We denote by $\Fou'_{{H_i \times H_{n-i}}^{\bot'}} : \Sc(H_n)\to \Sc(H_n)$ the partial Fourier transform w.r.t. the trace form on ${H_i \times H_{n-i}}^{\bot'}$.
\item Note that $\Fou'_{H_n}= \Fou'_{{H_i \times H_{n-i}}^{\bot'}} \circ \Fou'_{{H_i \times H_{n-i}}}= \Fou'_{{H_i \times H_{n-i}}} \circ \Fou'_{{H_i \times H_{n-i}}^{\bot'}}$.

\end{itemize}
\subsection{The Weil formula} \label{Sec_wil}$ $\\
Let $\psi$ be a non-trivial additive character of $\R$.
Recall the one dimensional Weil formula:

\begin{prop}\label{prop:WeilFor}
Let $ a \in \R^\times$. Consider the function $\xi:D \to \R$ defined by $\xi(x)=\psi(a x \bar{x})$ as a distribution on $D$.
Then $\Fou^*(\xi)=\zeta$, where $\zeta$ is a distribution defined by the function $\zeta(x)=|a|^{-1} \eta_{D}(a)c(D,\psi)\psi(-x \bar{x}/a)$.
\end{prop}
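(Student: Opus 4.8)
The statement is the classical Gauss--Fresnel integral formula, and the plan is to obtain it by Gaussian regularization followed by a one-variable complex Gaussian computation. First I would note that $x\mapsto\psi(ax\bar x)$ is bounded, hence defines a tempered distribution $\xi\in\Sc^*(D)$, and that for $\epsilon>0$ the function $\xi_\epsilon(x):=\psi(ax\bar x)e^{-\epsilon q_0(x)}$, where $q_0$ is a fixed positive definite quadratic form on $D$, is a Schwartz function with $\xi_\epsilon\to\xi$ in $\Sc^*(D)$ as $\epsilon\to 0^+$. Since $\Fou^*$ is continuous on $\Sc^*(D)$ and its restriction to distributions given by Schwartz functions is induced by the ordinary Fourier transform on $\Sc(D)$, it suffices to compute $\lim_{\epsilon\to 0^+}\Fou(\xi_\epsilon)$ and check that the limit is the bounded function asserted in the statement; in particular this will also show that $\Fou^*(\xi)$ is represented by a function.

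Next I would diagonalize the real quadratic form $q(x):=x\bar x$ on the two-dimensional real space $D$ and factor the Fourier integral. For $D=\C$ one has $q(x)=x_1^2+x_2^2$ in the coordinates $x=x_1+ix_2$; for $D=\R\oplus\R$ one has $q(x)=x_1x_2=\tfrac14(s^2-t^2)$ with $s=x_1+x_2$, $t=x_1-x_2$. Writing $\psi(x)=e^{2\pi i\lambda x}$, in linear coordinates adapted to the bilinear form $B$ that defines $\Fou$ on $D$ (and to the self-dual Haar measure), $\Fou(\xi_\epsilon)$ becomes a product of two one-variable integrals of the shape $\int_\R e^{-zt^2}e^{2\pi i\eta t}\,dt=\sqrt{\pi/z}\,e^{-\pi^2\eta^2/z}$, valid for $\re z>0$ with the principal branch, with $z=\epsilon\mp 2\pi i\lambda a$, the sign being that of the corresponding diagonal entry of $q$. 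Letting $\epsilon\to 0^+$, the factor $\sqrt{\pi/z}$ tends to $(\mp 2\pi i\lambda a)^{-1/2}\sqrt\pi=|2\pi\lambda a|^{-1/2}\sqrt{\pi}\cdot(\text{an }8\text{th root of unity})$ and $e^{-\pi^2\eta^2/z}$ tends to an oscillatory exponential. Reassembling the two factors: the moduli multiply to the factor $|a|^{-1}$ times an $a$-independent, $\psi$-dependent positive constant; the oscillatory parts recombine — using that $q$ is self-dual with respect to $B$ up to the scaling built into $B$ — into precisely $\psi(-x\bar x/a)$; and the two root-of-unity phases multiply to a scalar.

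The only point that is more than bookkeeping is the behaviour of those phases, which is exactly the Weil index. For $D=\C$ the two one-variable quadratic terms have the \emph{same} sign, so the phase is $e^{\mp i\pi\,\sgn(\lambda a)/4}$ squared, producing a factor proportional to $\sgn(a)$; absorbing the remaining $a$-independent scalar into $c(\C,\psi)$ yields $|a|^{-1}\eta_\C(a)c(\C,\psi)\psi(-x\bar x/a)$, since $\eta_\C=\sgn$. For $D=\R\oplus\R$ the two quadratic terms have \emph{opposite} signs, so the two phases are complex conjugate and cancel, leaving a positive real scalar which we name $c(\R\oplus\R,\psi)$; this agrees with $\eta_{\R\oplus\R}=1$. (For $D=\R\oplus\R$ one can also bypass regularization entirely: integrating $\int_{\R^2}\psi(ax_1x_2)\psi(x_1y_2+x_2y_1)\,dx_1\,dx_2$ over $x_1$ produces a delta distribution in $x_2$, and integrating that against $\psi(x_2y_1)$ gives $|a|^{-1}c\cdot\psi(-y_1y_2/a)$ directly by Fourier inversion.) I would then simply record that $\zeta$ is the bounded function displayed in the statement. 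The main obstacle throughout is matching the normalization conventions — the Haar measure, the form $B$, the character $\psi$ — and, genuinely, pinning down the branch of $z^{-1/2}$ in the limit $\epsilon\to 0^+$; once these are fixed, the identity is forced.
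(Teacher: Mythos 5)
Your argument is correct, but it is worth knowing that the paper does not prove this proposition at all: it is stated as the classical one-dimensional Weil formula ("Recall the one dimensional Weil formula"), and the line immediately following it says that the identity may be taken as the \emph{definition} of the constant $c(D,\psi)$; the only thing the paper then verifies by direct computation is Proposition \ref{prop:WeilCor}, i.e.\ the values $c(\R\oplus\R,\psi)=1$, $c(\C,\psi)^2=-1$, $c(\C,\psi)c(\C,\overline{\psi})=1$. Your regularization argument supplies exactly what is needed to make that point of view legitimate, namely that $\Fou^*(\xi)$ is represented by a bounded function whose dependence on $a$ factors as $|a|^{-1}\eta_D(a)\psi(-x\bar x/a)$ times an $a$-independent scalar --- this $a$-independence is the genuine content, since otherwise "defining" $c(D,\psi)$ by the formula would be circular --- and as a by-product your phase bookkeeping (two equal Fresnel phases for $D=\C$, two conjugate ones for $D=\R\oplus\R$) also yields Proposition \ref{prop:WeilCor}. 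Two small points to tighten if you write this up: to pass from pointwise convergence of $\Fou(\xi_\epsilon)$ to convergence in $\Sc^*(D)$ you should record the uniform bound $\lvert\sqrt{\pi/z}\rvert\le\sqrt{\pi}\,(2\pi|\lambda a|)^{-1/2}$ and $\lvert e^{-\pi^2\eta^2/z}\rvert\le 1$ for $\re z>0$, so that dominated convergence applies against any Schwartz test function; and the identification of the oscillatory part as precisely $\psi(-x\bar x/a)$ (rather than a rescaled argument) does require fixing the bilinear form on $D$ to be the trace pairing $\psi(\tr_{D/\R}(x\bar y))$ with its self-dual measure, which is the convention implicit in the paper and in Corollary \ref{wil_cor}, so your caveat about normalization is exactly the right one to flag.
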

One can take this as a definition of $c(D,\psi)$.

The following proposition follows by a straightforward computation.
\begin{prop}\label{prop:WeilCor} $ $\\
(i) $c(\R \oplus \R,\psi)=1$\\
(ii) $c(\C,\psi)^2=-1$\\
(iii) $c(\C,\psi)c(\C,\overline{\psi})=1$
\end{prop}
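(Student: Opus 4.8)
The plan is to deduce parts (ii) and (iii) formally from Proposition \ref{prop:WeilFor} (applied for both $\psi$ and $\overline{\psi}$, and composed with itself) together with Fourier inversion, and to prove part (i) by a direct elementary computation. Throughout, write $\xi_b$ for the tempered distribution $x\mapsto\psi(b\,x\overline{x})$ on $D$, so that Proposition \ref{prop:WeilFor} reads $\Fou_\psi^*(\xi_a)=|a|^{-1}\eta_D(a)\,c(D,\psi)\,\xi_{-1/a}$ (note $\psi(-x\overline{x}/a)=\xi_{-1/a}$).

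For (i), take $D=\R\oplus\R$ and coordinates $x=(s,t)$, so that $x\overline{x}$ is the real number $st$ and the symmetric bilinear form defining $\Fou$ on $D$ is the split form $\langle(s,t),(s',t')\rangle=st'+s't$, whose self-dual Haar measure is Lebesgue measure. Then $\xi_a(s,t)=\psi(a\,st)$ factors over the two copies of $\R$: up to the self-dual normalization the operation $f\mapsto\int f(s)\psi(a\,s\,\cdot)\,ds$ is a rescaled one-dimensional Fourier transform, so transforming in $s$ produces the delta distribution at $s'=-at$ with weight $|a|^{-1}$, and the remaining integration in $t$ returns $|a|^{-1}\psi(-s't'/a)=|a|^{-1}\psi(-x'\overline{x'}/a)$ with no extra unimodular factor and no dependence on $\sign(a)$. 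Hence $c(\R\oplus\R,\psi)=1$ and $\eta_{\R\oplus\R}=1$; this also gives (iii) for $D=\R\oplus\R$ trivially.

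For (iii) in general (in particular for $D=\C$), apply $\Fou_{\overline{\psi}}^*$ to the identity $\Fou_\psi^*(\xi_a)=|a|^{-1}\eta_D(a)c(D,\psi)\xi_{-1/a}$. The left side equals $\xi_a$ by Fourier inversion $\Fou_{\overline{\psi}}^*\circ\Fou_\psi^*=\mathrm{Id}$, valid since the self-dual measures for $\psi$ and $\overline{\psi}$ agree. On the right side, $\xi_{-1/a}=\overline{\psi}\bigl((1/a)x\overline{x}\bigr)$, so Proposition \ref{prop:WeilFor} for the character $\overline{\psi}$ with parameter $1/a$ gives $\Fou_{\overline{\psi}}^*(\xi_{-1/a})=|a|\,\eta_D(a)\,c(D,\overline{\psi})\,\xi_a$, using $\eta_D(1/a)=\eta_D(a)$. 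Comparing the two sides yields $c(D,\psi)\,c(D,\overline{\psi})=1$. For (ii), instead apply $\Fou_\psi^*$ twice: using Proposition \ref{prop:WeilFor} with parameter $-1/a$ for the second application one gets $\Fou_\psi^{*2}(\xi_a)=\eta_D(a)\,\eta_D(-1/a)\,c(D,\psi)^2\,\xi_a=\eta_D(-1)\,c(D,\psi)^2\,\xi_a$, since $\eta_D$ is a character of $\R^\times$; on the other hand $\Fou_\psi^{*2}$ acts on distributions as pullback by $x\mapsto-x$, which fixes $\xi_a$ because $(-x)\overline{(-x)}=x\overline{x}$. Hence $\eta_D(-1)\,c(D,\psi)^2=1$, and for $D=\C$ we have $\eta_\C=\sign$, so $\eta_\C(-1)=-1$ and $c(\C,\psi)^2=-1$.

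There is no serious obstacle here: everything reduces to Proposition \ref{prop:WeilFor}, Fourier inversion, and the factorized computation in (i). The only points requiring care are the bookkeeping of the self-dual Haar measures and of the symmetric bilinear forms used to define $\Fou$ on $D=\C$ and on $D=\R\oplus\R$, and the (standard) fact that $\xi_a$ is a well-defined tempered distribution whose Fourier transform is computed via the regularization $\xi_a=\lim_{\eps\to0^+}\psi(a\,x\overline{x})e^{-\eps\,x\overline{x}}$, which is exactly what legitimizes Proposition \ref{prop:WeilFor} and the manipulations above.
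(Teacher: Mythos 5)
Your proof is correct. The paper itself gives no argument beyond asserting that the proposition ``follows by a straightforward computation'', which presumably means evaluating the (regularized) Gaussian integrals directly: for $D=\R\oplus\R$ the split computation you do in (i), and for $D=\C$ an explicit evaluation of $c(\C,\psi)$ as a fourth root of unity (e.g. $\pm i$ depending on the sign of $\psi$), from which (ii) and (iii) are immediate. Your route differs in that you only compute directly in the split case, and you obtain (ii) and (iii) for $D=\C$ purely formally from Proposition \ref{prop:WeilFor}: (iii) by applying $\Fou^*_{\overline{\psi}}$ and using $\Fou^*_{\overline{\psi}}\circ\Fou^*_{\psi}=\mathrm{Id}$, and (ii) by applying $\Fou^*_{\psi}$ twice and using that $\Fou^{*2}_{\psi}$ is pullback by $x\mapsto -x$ (valid since the form is symmetric), which fixes $\xi_a$ and yields $\eta_D(-1)\,c(D,\psi)^2=1$. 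This is a clean trade-off: you never need the value of the complex Gaussian, only the character identities $\eta_D(a)\eta_D(1/a)=1$ and the standard Fourier inversion facts for self-dual measures, all of which you invoke correctly; the only bookkeeping left implicit (the choice of symmetric form on $D$ and its self-dual measure) is exactly the kind of normalization the paper also leaves implicit.
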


Proposition \ref{prop:WeilFor} gives us the following corollary.

\begin{cor}
Let $V$ be a free module over $D$ equipped with a volume form. We have a natural Fourier transform $\Fou^*:\Sc^*(V) \to \Sc^*(V^*).$ Let $Q$  be a hermitian form on $V$. Consider the function $\xi:V \to \R$ defined by $\xi(v)=\psi(Q(v))$ as a distribution on $V$. Let $Q^{-1}$ be a hermitian norm on $V^*$ which is the inverse of $Q$. Let $\det(Q)$ be the determinant of $Q$ with respect to the volume form on $V$. Let $\zeta$ be a distribution defined by the function $$\zeta(x)=|\det(Q)|^{-1}(\eta_{D}(\det(Q)c(D,\psi))^{\dim V}\psi(-Q^{-1}(x)).$$ Then $\Fou^*(\xi)=\zeta.$ \end{cor}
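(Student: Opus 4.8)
The plan is to diagonalise $Q$ over $D$ and reduce to the one-dimensional Weil formula (Proposition \ref{prop:WeilFor}). The first step is to recall that any non-degenerate hermitian form over $D$ (where $D=\C$ or $D=\R\oplus\R$) admits an orthogonal basis: there is a $D$-basis $e_1,\dots,e_m$ of $V$, with $m=\dim V$, and scalars $a_1,\dots,a_m\in\R^\times$, such that $Q(\sum_i x_ie_i)=\sum_i a_i x_i\overline{x_i}$. Over $\C$ this is Gram--Schmidt; over $\R\oplus\R$ a hermitian form is just a perfect pairing between the two idempotent eigenspaces, so one may in fact take all $a_i=1$. After rescaling the $e_i$ by a common positive real one may assume the basis is compatible with the chosen volume form on $V$; in this normalisation $\det(Q)=\prod_i a_i$, the dual basis $e_1^*,\dots,e_m^*$ is compatible with the dual volume form on $V^*$, and $Q^{-1}(\sum_i y_ie_i^*)=\sum_i a_i^{-1}y_i\overline{y_i}$.

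In these coordinates $V\cong D^m$ and the bounded function $\xi$ is simply the product $\xi(x)=\prod_{i=1}^m\psi(a_ix_i\overline{x_i})$, i.e. the product of the pullbacks to $D^m$ of the one-variable functions $\xi_i(t)=\psi(a_it\overline{t})$ on $D$. Hence, by Fubini (interpreting the integrals as in the one-variable oscillatory sense used in Proposition \ref{prop:WeilFor}, or equivalently using that the Fourier transform on a product is the external product of the Fourier transforms on the factors), $\Fou^*(\xi)$ is the product over $i$ of the one-variable Fourier transforms $\Fou^*(\xi_i)$. Proposition \ref{prop:WeilFor} evaluates each of these: $\Fou^*(\xi_i)$ is the distribution given by the function $t\mapsto|a_i|^{-1}\eta_D(a_i)c(D,\psi)\,\psi(-t\overline{t}/a_i)$.

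Multiplying the factors back together shows that $\Fou^*(\xi)$ is represented by the function
\[
y\;\longmapsto\;\Big(\prod_i|a_i|\Big)^{-1}\Big(\prod_i\eta_D(a_i)\Big)\,c(D,\psi)^{\,m}\,\psi\!\Big(-\sum_i a_i^{-1}y_i\overline{y_i}\Big),
\]
and using the identifications above --- $\prod_i|a_i|=|\det(Q)|$, $\prod_i\eta_D(a_i)=\eta_D(\det(Q))$ since $\eta_D$ is a character, $\sum_i a_i^{-1}y_i\overline{y_i}=Q^{-1}(y)$, and $m=\dim V$ --- this is precisely $\zeta$. The one place that requires genuine care, rather than Proposition \ref{prop:WeilCor}-style multiplicativity, is the volume-form bookkeeping: one should check that the orthogonal basis can be taken compatibly with $\vol$, that $\det(Q)$ defined this way does not depend on the residual freedom (the stabiliser of $\vol$ in $\GL_m(D)$ changes a Gram determinant by a norm from $D^\times$, so that $|\det(Q)|$ and $\eta_D(\det(Q))$ are well defined once the basis is fixed compatibly with $\vol$), and that the dual volume form on $V^*$ is indeed the one for which $\Fou^*$ decomposes as the iterated one-variable transform. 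Given these checks, the corollary follows.
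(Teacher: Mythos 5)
Your proof is correct and is essentially the paper's (implicit) argument: the paper offers no proof beyond "Proposition \ref{prop:WeilFor} gives us the following corollary," i.e.\ exactly the reduction you carry out by diagonalising $Q$ in a volume-compatible orthogonal basis and taking the external product of one-dimensional Weil formulas. The only slip is cosmetic: a common positive real rescaling of all the $e_i$ need not achieve compatibility with $\vol$, but scaling a single basis vector by a suitable element of $D^\times$ does (keeping the form diagonal with real entries), and the rest of your bookkeeping then goes through as written.
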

\begin{cor} \label{wil_cor}
Let $(A,B) \in S^i\times S^{n-i}$. Consider the function $\xi:{H_i \times H_{n-i}}^{\bot'} \to \R$ defined by $\xi \left[\begin{pmatrix}0 & \bar u^{t} \\
u & 0 \\
\end{pmatrix} \right]=\psi(BuA\bar u^{t})$ as a distribution on $V$. Consider also the function $\zeta:{H_i \times H_{n-i}}^{\bot'} \to \R$ defined by $$\zeta \left[\begin{pmatrix}0 & \bar u^{t} \\
u & 0 \\
\end{pmatrix} \right]=(\eta(\det A)/|\det A|)^{n-i} (\eta(\det B)/|\det B|)^{i} c(D,\psi)^{(n-i)i}\psi(B^{-1}\bar u^{t} A^{-1} u)$$ as a distribution on $V$.

Then $( \Fou'_{{H_i \times H_{n-i}}^{\bot'}})^*(\xi)=\zeta.$
\end{cor}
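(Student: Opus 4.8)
The plan is to reduce Corollary~\ref{wil_cor} to the Weil-formula corollary that immediately precedes it, by unwinding the identification of the orthogonal complement ${H_i \times H_{n-i}}^{\bot'}$ with a free $D$-module equipped with a hermitian form. First I would observe that, writing an element of $H_n$ in block form according to the decomposition of the index set $\{1,\dots,n\}$ into $\{1,\dots,i\}$ and $\{i+1,\dots,n\}$, an element of ${H_i \times H_{n-i}}^{\bot'}$ is precisely a matrix of the shape $\begin{pmatrix}0 & \bar u^{t} \\ u & 0\end{pmatrix}$ with $u \in M_{(n-i)\times i}(D)$ (the hermitian condition forces the lower-left and upper-right blocks to be conjugate-transpose of each other, and the trace form pairs this space with itself non-degenerately). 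Thus $V := {H_i \times H_{n-i}}^{\bot'}$ is a free $D$-module of $D$-rank $(n-i)i$, with coordinate $u$, and the volume form is the one induced by the trace form (so that $\Fou'_{V}$ is exactly the self-dual Fourier transform appearing in the previous corollary).

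Next I would identify the quadratic function. For $(A,B) \in S^i \times S^{n-i}$, the map $u \mapsto \psi(\tr(B u A \bar u^{t}))$ (real trace understood) is of the form $\psi(Q(v))$ for the hermitian form $Q$ on $V$ given by $Q(u) = \tr(B u A \bar u^{t})$; this is the hermitian form on $\Hom_D(D^i, D^{n-i})$ obtained as the tensor product $A \otimes B$ of the hermitian forms $A$ on $D^i$ and $B$ on $D^{n-i}$ (up to the standard conventions matching $\bar u^{t}$ with the conjugate-dual). Its inverse is $Q^{-1}(x) = \tr(B^{-1} \bar x^{t} A^{-1} x)$ in the dual coordinate $x$, identified with $V$ again via the trace form. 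The determinant of $Q = A \otimes B$ with respect to the trace-form volume is, by multiplicativity of determinants under tensor product, $\det(A)^{n-i}\det(B)^{i}$ up to a sign/normalization which I would pin down by a direct check on diagonal $A, B$; consequently $|\det Q| = |\det A|^{n-i}|\det B|^{i}$ and $\eta_D(\det Q) = \eta_D(\det A)^{n-i}\eta_D(\det B)^{i}$, while $(\dim_D V) = (n-i)i$, so $c(D,\psi)^{\dim_D V} = c(D,\psi)^{(n-i)i}$. Plugging these into the formula $\zeta(x) = |\det Q|^{-1}(\eta_D(\det Q)c(D,\psi))^{\dim_D V}\psi(-Q^{-1}(x))$ from the preceding corollary gives exactly the asserted $\zeta$, and since $(\Fou'_{{H_i \times H_{n-i}}^{\bot'}})^* = \Fou^*_V$ under the above identification, the conclusion follows.

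The main obstacle is bookkeeping of normalizations rather than any conceptual difficulty: one must verify that the volume form on $V$ coming from the trace form on $H_n$ agrees with the self-dual volume form used in the general Weil corollary, that the identification of $V$ with its own dual via the trace form sends $Q$ to $Q^{-1}$ with no stray scalar, and that $\det(A\otimes B)$ really equals $\det(A)^{n-i}\det(B)^{i}$ with respect to these normalizations (including the correct power, since $A$ acts on a $D$-module of rank $i$ and $B$ on one of rank $n-i$). I would handle all of these by reducing to the case where $A$ and $B$ are diagonal matrices over $\R$ (which is dense and on which both sides are manifestly multiplicative and reduce to a product of one-dimensional instances of Proposition~\ref{prop:WeilFor}), and then invoke continuity of Fourier transform of the relevant family of distributions to conclude in general. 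No sign subtlety beyond those already recorded in Proposition~\ref{prop:WeilCor} enters, because $\eta_D$ and $c(D,\psi)$ are exactly the invariants that appear.
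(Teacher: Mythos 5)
Your proposal is correct and takes essentially the same route as the paper, which likewise treats Corollary \ref{wil_cor} as an immediate instance of the preceding multi-dimensional Weil formula: identify $(H_i \times H_{n-i})^{\bot'}$ with the space of $(n-i)\times i$ matrices $u$ over $D$ carrying the hermitian form $Q(u)=\tr(B u A \bar u^{t})$ of $D$-rank $(n-i)i$ and determinant $\det(A)^{n-i}\det(B)^{i}$, and plug into that formula. Your fallback of checking the normalization on diagonal $A,B$ via products of one-dimensional instances of Proposition \ref{prop:WeilFor} is exactly the right way to pin down the constants; note it produces $\eta_D(\det Q)=\eta(\det A)^{n-i}\eta(\det B)^{i}$ to the first power, which is what the corollary asserts (rather than the literal $(\eta_D(\det Q))^{\dim V}$ one would read off the misparenthesized display of the general corollary), so rely on the diagonal computation rather than on that display when matching constants and signs.
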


\subsection{Jacquet transform}\label{Sec_Jac}
\begin{defn}
Let $\psi$ be a non-trivial additive character of $\R$. Let $0 \leq i \leq n.$
\begin{itemize}
\item We define $\cJ_i':C^\infty(S^i \times
S^{n-i}) \to C^\infty(S^{i} \times
S^{n-i})$ by $\cJ_i'(f)(A,B)=f(A,B)\psi(wB^{-1}w \eps A^{-1} \eps^t)$. Here $\eps$
is the matrix with $n-i$ rows and $i$ columns whose first row is the row
$(0, 0, . . . , 0, 1)$
and all other rows are zero.
\item We define
$\cT_{i}:C^\infty(S^i \times
S^{n-i}) \to C^\infty(S^{n-i} \times
S^{i})$ by $\cT_i(f)(A,B)=f(B,A)$.

\item We denote by $\fJ_{i,n-i}:=\Sc^{\Delta_i,\R^{\times}}(S^i \times
H^{n-i}) \cap \Fou_{H^{n-i},\psi}^{-1} (\cT_{i}
^{-1}(\cJ_{i}'^{-1} (\Sc^{\Delta_{n-i},\R^{\times}}(S^{n-i} \times
H^{i}))
))
$
\item We define the partial Jacquet transform $\cJ_{i}: \fJ_{i,n-i} \to \Sc^{\Delta_{n-i},\R^{\times}}(S^{n-i} \times
H^{i}))$ by $$\cJ _{i} :=\Fou_{H^{i},\psi} \circ \cT_{i} \circ \cJ'_{i} \circ \Fou_{H^{n-i},\psi}|_{\fJ_{i,n-i}}.$$
\item Denote by $\overline{A}$ the set of diagonal matrices in $H$.
\item We denote $\Fou_n : \Sc^{\Delta_{n-1}}(\overline{A}) \to \Sc^{\Delta_{n-1}}(\overline{A})$ the Fourier transform w.r.t. the last co-ordinate.
\item We define $${\cJ_n^{(i)}}':\Sc^{\Delta_{n-1}}(\overline{A}) \to \C^{\infty}(A)$$
by $${\cJ_n^{(i)}}'(f)(a_1,...,a_n)=f(a_1,...,a_{i-1},a_n,a_{i}..,a_{n-1})\psi(1/a_n a_{n-1}).$$

\item We define $\cJ_n^{(i)}:\Sc^{\Delta_{n-1}}(\overline{A}) \to \C^{\infty}(A)$ by $\cJ_n^{(i)}={\cJ_n^{(i)}}' \circ \Fou_n$ for $i<n$ 

\item We define inductively a sequence of subspaces $\fJ_n^{[i]} \subset \C^{\infty}(A)$
and operators $\cJ_n^{[i]}: \fJ_n^{[i]} \to \C^{\infty}(A)$ in the following way $\fJ_n^{[1]}= \Sc^{\Delta_{n-1}}(\overline{A})$, $\cJ_n^{[1]}= \Fou_n$, $\fJ_n^{[i]}= \Sc^{\Delta_{n-1}}(\overline{A}) \cap (\cJ_n^{(i)})^{-1}(\fJ_n^{[i-1]})$ and $\cJ_n^{[i]}= \cJ_n^{[i-1]} \circ \cJ_n^{(n+1-i)}.$
\item We define the Jacquet space $\fJ := \fJ_n$ to be $\fJ_n^{[n]}$ and the Jacquet transform $\cJ := \cJ_n:\fJ \to C^{\infty}(A)$ to be $\cJ_n^{[n]}.$

\end{itemize}
\end{defn}

\subsection{The partial inversion formula}\label{Sec_par} $ $\\
In this subsection we prove an analog of Proposition 8 of \cite{Jac1}, namely
\begin{prop}\label{prop_par_inv}$ $
\\
(i) $\wO_i(\Sc(H)) \subset \fJ_{i,n-i}$
\\
(ii) $\cJ_{i} \circ \wO_i^\psi|_{\Sc(H)} =  c(D,\psi)^{n(n-i)}\wO_{n-i}^{\bar \psi} \circ \Fou_{H} $
\end{prop}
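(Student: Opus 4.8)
The plan is to deduce both statements from a single explicit computation with iterated integrals, following the proof of \cite[Proposition~8]{Jac1}; in the Archimedean case the formal identity is the same as in the non-Archimedean one, so the new content is the analytic justification of the manipulations, which is provided by the relative Schwartz spaces $\Sc^{\phi,N}$ and the pushforward maps of \S\ref{Schwartz}. To set up, parametrize $O_i^n$ by the isomorphism $N_i^n\times(S^i\times H^{n-i})\cong O_i^n$ sending $(u_X,(A,B))$ to $\overline{u}_X{}^t\diag(A,B)u_X$, where $u_X=\left(\begin{smallmatrix}Id_i&X\\0&Id_{n-i}\end{smallmatrix}\right)$ and $X$ ranges over $i\times(n-i)$ matrices over $D$. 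A direct multiplication gives
\[
\overline{u}_X{}^t\diag(A,B)u_X=\begin{pmatrix}A&AX\\\overline{X}^tA&\overline{X}^tAX+B\end{pmatrix},
\]
so $\Delta_i$ of the right-hand side equals $\det A$ and $\chi(u_X)=\psi(X_{i,1}+\overline{X_{i,1}})$, the unique superdiagonal entry of $u_X$ lying in the off-diagonal block being $x_{i,i+1}=X_{i,1}$. I also use the factorization $\Fou_H=\ad(w)\circ\Fou'_{(H^i\times H^{n-i})^{\bot'}}\circ\Fou'_{H^i\times H^{n-i}}$ of \S\ref{Sec_fou}, noting that the conjugation $\ad(w)$ is exactly what converts the $(i,n-i)$-block decomposition underlying $\wO_i$ into the $(n-i,i)$-block decomposition underlying $\wO_{n-i}$; and I expand $\cJ_i=\Fou_{H^i}\circ\cT_i\circ\cJ'_i\circ\Fou_{H^{n-i}}$ and the right-hand side $\wO_{n-i}^{\overline{\psi}}\circ\Fou_H$ through the analogous $(n-i)$-parametrization of $O_{n-i}^n$.

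\emph{The computation.} Starting from $\wO_i^\psi(\Phi)(A,B)=\eta(\Delta_i)^{n-i}|\Delta_i|^{n-i}\int_X\Phi\left(\begin{smallmatrix}A&AX\\\overline{X}^tA&\overline{X}^tAX+B\end{smallmatrix}\right)\psi(X_{i,1}+\overline{X_{i,1}})\,dX$, I carry out, in this order: (1) the partial Fourier transform in the $H^{n-i}$-block (that is, in $B$), interchanged with the $X$-integral; (2) the substitution $C=\overline{X}^tAX+B$, which turns the $B$-integral into $(\Fou'_{H^{n-i}}\Phi)\left(\begin{smallmatrix}A&AX\\\overline{X}^tA&Y\end{smallmatrix}\right)$, with $Y\in H^{n-i}$ the dual variable, times a quadratic phase $\psi(-\langle\overline{X}^tAX,Y\rangle)$ in $X$; (3) multiplication by the $\cJ'_i$-phase $\psi(wB^{-1}w\,\eps A^{-1}\eps^t)$; (4) a change of variables in $X$ followed by evaluation of the resulting oscillatory Gaussian integral over $X$: completing the square and applying the Weil formula in the form of Corollary \ref{wil_cor} rewrites it as an integral of the partial Fourier transform $\Fou'_{(H^i\times H^{n-i})^{\bot'}}\Phi$ against a Gaussian, the linear term of the square becoming the character $\overline{\chi}$ of the $(n-i)$-parametrization, the constant term cancelling the $\cJ'_i$-phase, and the Weil formula contributing a power of $c(D,\psi)$ as well as powers of $\eta$ and $|\det|$; (5) the block swap $\cT_i$, the Fourier transform $\Fou_{H^i}$ in the new $H^i$-variable, and the bookkeeping of the residual conjugations by Weyl elements into the $\ad(w)$ above. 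Comparing the outcome term by term with the $(n-i)$-parametrized expansion of $\wO_{n-i}^{\overline{\psi}}(\Fou_H\Phi)$ gives (ii), provided one checks that the transfer factors $\eta(\Delta_i)^{n-i}|\Delta_i|^{n-i}$ and $\eta(\Delta_{n-i})^i|\Delta_{n-i}|^i$ built into $\wO_i$ and $\wO_{n-i}$ absorb exactly the $\eta$- and absolute-value output of Corollary \ref{wil_cor} --- which is the very reason those normalizations are chosen in \S\ref{SubsecNot} --- leaving only the Weil constant $c(D,\psi)^{n(n-i)}$.

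\emph{Part (i), and the main difficulty.} Part (i) falls out of the same computation: unwinding the definitions of $\fJ_{i,n-i}$ and $\cJ_i$, the membership $\wO_i^\psi(\Phi)\in\fJ_{i,n-i}$ amounts to $\cJ'_i\cT_i\Fou_{H^{n-i}}(\wO_i^\psi\Phi)$ lying in $\Sc^{\Delta_{n-i},\R^\times}(S^{n-i}\times H^i)$, and steps (1)--(4) identify this element with $c(D,\psi)^{n(n-i)}\Fou_{H^i}^{-1}\bigl(\wO_{n-i}^{\overline{\psi}}(\Fou_H\Phi)\bigr)$; since $\Fou_H\Phi\in\Sc(H)$, Proposition \ref{OmegaWell}(ii) --- with the facts that multiplication by $\eta(\Delta_{n-i})^i|\Delta_{n-i}|^i$ and the operator $\Fou_{H^i}$ preserve $\Sc^{\Delta_{n-i},\R^\times}(S^{n-i}\times H^i)$ --- places it in the required Schwartz space, and then (ii) holds as stated. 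The step I expect to be the main obstacle is making (1)--(2) and (4)--(5) rigorous in the Archimedean setting: the parametrization of $O_i^n$ degenerates as $\det A\to 0$, so the Fubini interchanges and the ``Fourier transform computed in stages'' cannot be performed merely pointwise in $(A,B)$ but must be carried out in the topology of the relative Schwartz spaces $\Sc^{\Delta_i,\R^\times}$, using Property \ref{NashSub} and Corollary \ref{NashSubCor} to guarantee that each intermediate object is genuinely Schwartz along the relevant fibres and that every integral converges with continuous dependence on the parameters. Once these analytic facts are in place, the remaining identity is precisely the algebraic one already established in \cite[Proposition~8]{Jac1}.
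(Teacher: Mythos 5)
Your proposal is correct and follows essentially the same route as the paper: both adapt Jacquet's Proposition~8 by writing $\wO_i^\psi(\Phi)(A,B)$ as an explicit integral over the off-diagonal block, taking the partial Fourier transform in the $H^{n-i}$-variable, and letting the Weil formula in the form of Corollary~\ref{wil_cor} convert the resulting Gaussian phase into the $(n-i)$-side data, with part (i) falling out of the identity exactly as you say. The only real difference is presentational: the paper disposes of the analytic issue you defer to relative Schwartz spaces by fixing $(A,C)$ with nonvanishing determinants and proving the identity of tempered distributions $\xi_{A,C}=c(D,\psi)^{n(n-i)}\Fou(\zeta_{A,C})$ via composed partial Fourier transforms (Proposition~\ref{prop_par_inv2}), so the oscillatory ``completion of the square'' is never performed as an iterated integral and only Fubini with absolute convergence is needed.
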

This proposition is equivalent to the following one
\begin{prop} \label{prop_par_inv2}
$$ \cJ'_{i} \circ \Fou_{H^{n-i},\psi}\circ\wO_i^\psi|_{\Sc(H)}=  c(D,\psi)^{n(n-i)}\cT_{i}
^{-1}\circ(\Fou_{H^{i},\psi} )^{-1}\circ\wO_{n-i}^{\bar \psi} \circ \Fou.$$
\end{prop}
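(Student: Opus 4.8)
\emph{Strategy.} The plan is to follow, essentially line by line, Jacquet's proof of Proposition~8 of \cite{Jac1}; the only genuinely new points are analytic. First note that Proposition~\ref{prop_par_inv2} is merely Proposition~\ref{prop_par_inv}(ii) rearranged: by the definition of the partial Jacquet transform in \S\ref{Sec_Jac} one has $\cJ_i=\Fou_{H^i,\psi}\circ\cT_i\circ\cJ'_i\circ\Fou_{H^{n-i},\psi}$ with $\Fou_{H^i,\psi}$ and $\cT_i$ invertible, and the role of part~(i) of Proposition~\ref{prop_par_inv} is precisely to guarantee that in this rearrangement every operator is applied inside its domain, i.e. that $\wO_i(\Sc(H))\subset\fJ_{i,n-i}$. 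I would obtain the latter exactly as in the proof of Proposition~\ref{OmegaWell}: one has $\wO_i(\Sc(H))\subset\Sc^{\Delta_i,\R^{\times}}(S^i\times H^{n-i})$, and then the relative Schwartz calculus of \S\ref{Schwartz} (Property~\ref{NashSub}, Corollary~\ref{NashSubCor}, Remark~\ref{rem:RelSch}) shows that $\cJ'_i\circ\Fou_{H^{n-i},\psi}$ and $\cT_i\circ(\Fou_{H^i,\psi})^{-1}$ carry the relevant relative Schwartz spaces into the declared ones; alternatively, once the identity is established the membership becomes automatic, since the right-hand side $\wO_{n-i}^{\bar\psi}\circ\Fou$ is visibly in $\Sc^{\Delta_{n-i},\R^{\times}}(S^{n-i}\times H^i)$.

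\emph{The computation.} For $f\in\Sc(H)$ and $a=\diag(A,B)$ with $A\in S^i$, $B\in H^{n-i}$, parametrize $N_i^n$ by $u=\begin{pmatrix}\mathrm{Id}_i&X\\ 0&\mathrm{Id}_{n-i}\end{pmatrix}$, so that $\ou^t a u=\begin{pmatrix}A&AX\\ \bar X^t A&\bar X^t A X+B\end{pmatrix}$ and $\chi(u)=\psi(x+\bar x)$ for the corner entry $x$ of $X$; this exhibits $\wO_i^{\psi}(f)(A,B)$ as $\eta(\det A)^{n-i}|\det A|^{n-i}$ times an absolutely convergent $X$-integral. Apply $\Fou_{H^{n-i},\psi}$ (partial Fourier transform in the $H^{n-i}$-block) followed by $\cJ'_i$ (multiplication by $\psi(wB^{-1}w\eps A^{-1}\eps^t)$). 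Using the convergence estimates behind Proposition~\ref{OmegaWell} to justify Fubini, interchange the $X$-integration with the $H^{n-i}$-integration; since $f(\ou^t a u)$ depends on $B$ only through $C:=\bar X^t A X+B$, after the volume-preserving substitution $B\mapsto C$ the combined operation ``integrate over $X$, translate by $\bar X^t A X$'' becomes a partial Fourier transform over $(H_i\times H_{n-i})^{\bot'}$ of a function of the shape $\begin{pmatrix}0&\bar u^t\\ u&0\end{pmatrix}\mapsto\psi(\text{Hermitian in }u)$. Corollary~\ref{wil_cor} now applies and replaces this exponential by $(\eta(\det A)/|\det A|)^{n-i}(\eta(\det B)/|\det B|)^{i}c(D,\psi)^{i(n-i)}\psi(B^{-1}\bar u^t A^{-1}u)$. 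The sign and absolute-value normalizations recombine with those carried by $\wO_i$, by $\wO_{n-i}$, and by the Jacobians of the orbit parametrizations $\ou^t\diag(\cdot)u$, while the residual exponential $\psi(B^{-1}\bar u^t A^{-1}u)$, together with the exponential produced by $\cJ'_i$, the remaining partial Fourier transform on $H^i$ recorded by $(\Fou_{H^i,\psi})^{-1}$, and the block swap $\cT_i^{-1}$, reassembles precisely into $\wO_{n-i}^{\bar\psi}$ applied to $\Fou_H f$ --- the passage $\psi\rightsquigarrow\bar\psi$ being the sign flip of the character under Fourier transform. Collecting the $c(D,\psi)$-powers contributed by the successive appeals to the Weil formula and simplifying by Proposition~\ref{prop:WeilCor} (notably $c(\C,\psi)c(\C,\bar\psi)=1$ and $c(\C,\psi)^2=-1$) yields the constant $c(D,\psi)^{n(n-i)}$.

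\emph{Main obstacle.} The part not already contained in \cite{Jac1} is purely analytic. Every intermediate object above involves a multiplication by a quadratic exponential, which is \emph{not} a Schwartz function, so one must track with care which relative Schwartz space $\Sc^{\Delta_\bullet,\R^{\times}}(\cdot)$ each partial transform inhabits, and one must justify interchanging integrals that converge only as iterated integrals. The cleanest way around the latter is to prove the identity first as an equality of tempered distributions --- which is legitimate because the Weil formula (Proposition~\ref{prop:WeilFor}, Corollary~\ref{wil_cor}) is itself stated for distributions --- and then upgrade it to an equality of relative Schwartz functions by continuity, once the domain statement in Proposition~\ref{prop_par_inv}(i) has been secured. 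I expect this bookkeeping of relative Schwartz spaces, rather than the algebraic identity, to be where the real work lies.
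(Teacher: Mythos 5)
Your proposal is correct and follows essentially the same route as the paper: the paper also writes both sides as families of tempered distributions $\xi_{A,B}$, $\zeta_{A,B}$ on $H$, computes them via the explicit integral formula for $\wO_i^\psi$ and the partial Fourier transform (Corollary \ref{cor:par_trans}), and concludes by a single application of the Weil formula for the off-diagonal block (Corollary \ref{wil_cor}), which is exactly your ``prove it first as an identity of distributions, then read it off for Schwartz functions'' plan. The only cosmetic difference is that the paper's appeal to the Weil formula occurs once (after applying $ad(w_n)$ and $\Fou'_{H_i\times H_{n-i}}$ to $\zeta_{A,C}$), rather than through ``successive'' appeals, but the bookkeeping you describe is the same.
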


For its proof we will need some auxiliary results.
\begin{lemma}
Let $f \in {\Sc(H)}$ be a Schwartz  function. Then
$$\wO_i^\psi(f)(A,B)=\eta(\det(A))^{n-i} |\det(A)|^{-(n-i)} \int_{} f \left[ \begin{pmatrix}A & X \\
\bar X^t &B+ X^t A^{-1}X\\
\end{pmatrix} \right]\psi[\tr(\eps A^{-1} X)+\tr( \bar X^{t} A^{-1 }\eps^t)]dX$$
\end{lemma}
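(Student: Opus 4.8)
The plan is to unwind the definitions on both sides and reduce the identity to a direct computation involving only a change of variables in the integral defining $\wO_i^\psi(f)$. Recall that by definition $\wO_i^\psi(f)(A,B) = \eta(\Delta_i(A,B))^{n-i}|\Delta_i(A,B)|^{n-i}\Omega_i^{n,\psi}(f)(A,B)$, and $\Omega_i^{n,\psi}(f)$ is the integral of $f(\ou^t\, \diag(A,B)\, u)\chi(u)$ over $u \in N_i^n$. An element of $N_i^n$ has the block form $u = \begin{pmatrix} Id_i & Y \\ 0 & Id_{n-i}\end{pmatrix}$, so $\ou^t\, \diag(A,B)\, u = \begin{pmatrix} A & A\overline{Y}^{\,?} \\ \cdots & \cdots \end{pmatrix}$; one computes this product explicitly and sees that the $(1,1)$-block is $A$, the $(1,2)$ and $(2,1)$ blocks are linear in $Y$ (and $\overline Y$), and the $(2,2)$-block is $B + Y^t A Y$-type. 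The character $\chi(u)$ contributes the factor $\psi$ of the appropriate trace of entries of $Y$ just below the diagonal. Thus the first step is to write out $\Omega_i^{n,\psi}(f)$ in these block coordinates and substitute $X = AY$ (so that $dX = |\det A|^{?}\,dY$ up to the appropriate power, which accounts for the extra $|\det(A)|^{-(n-i)}$ against the $|\Delta_i|^{n-i} = |\det A|^{n-i}$ prefactor), landing on the stated formula.

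More precisely, first I would fix the block parametrization: for $u \in N_i^n$ write $u = \begin{pmatrix}Id_i & Z \\ 0 & Id_{n-i}\end{pmatrix}$ with $Z$ an $i \times (n-i)$ matrix over $D$; then $\ou = \begin{pmatrix}Id_i & \overline Z \\ 0 & Id_{n-i}\end{pmatrix}$ and a short multiplication gives
\[
\ou^t\, \diag(A,B)\, u = \begin{pmatrix} A & A Z \\ \overline Z^t A & B + \overline Z^t A Z\end{pmatrix}.
\]
Here $\Delta_i$ evaluated on this matrix equals $\det A$, which explains why the prefactor $\eta(\Delta_i(A,B))^{n-i}|\Delta_i(A,B)|^{n-i}$ becomes $\eta(\det A)^{n-i}|\det A|^{n-i}$. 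Second, I would identify the character contribution: $\chi(u) = \psi\big(\sum(\text{entries just above the diagonal})\big)$; the only diagonal crossing between the two blocks involves the bottom-left entry of $Z$ (in the right row/column), and this is exactly the term $\psi[\tr(\eps A^{-1}X) + \tr(\overline X^t A^{-1}\eps^t)]$ once we change variables. Third, substitute $X := AZ$, i.e. $Z = A^{-1}X$ and $\overline Z = \overline{A}^{-1}\overline X = \overline{A^{-1}}\,\overline X$; since $A$ is Hermitian $\overline A^t = A$, so $\overline Z^t A Z = \overline X^t \overline{A}^{-t} A A^{-1} X = \overline X^t A^{-1} X$ (using $\overline A^{-t} = A^{-1}$), matching the $(2,2)$-block $B + \overline X^t A^{-1}X$ in the stated formula. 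The Jacobian of $X = AZ$ over the space of $i \times (n-i)$ matrices over $D$ contributes $|\det A|^{-(n-i)}$ relative to $dZ$ (each of the $n-i$ columns is multiplied by $A$, giving $|\det_\R A|^{(n-i)}$ but $\Delta_i^n = \det A$ and the real determinant of multiplication by $A$ on $D^i$ is $|N_{D/\R}\det A|^{?}$ — one must be careful here with the $D = \C$ versus $D = \R\oplus\R$ normalization, using the convention for $\eta_D$ and the measure normalizations fixed in \S\S\ref{SubsecNot}), and combining with the prefactor $|\det A|^{n-i}$ produces the net $|\det A|^{-(n-i)}$ in front of the integral. I also need to match the sign character: $\eta(\det A)^{n-i}$ appears because each column substitution contributes $\eta_D(\det A)$ and we have $n-i$ of them.

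The main obstacle I expect is bookkeeping the measure and the sign/absolute-value normalizations correctly when passing from $dZ$ (Haar on $N_i^n$, over $D$) to $dX$: one has to be precise about how the self-dual or standard measures on $D$-modules interact with multiplication by a Hermitian matrix $A$, and about whether the determinant in question is $\det A$ as a $D$-matrix or the induced $\R$-linear determinant; the factor $\eta(\det A)^{n-i}|\det A|^{-(n-i)}$ versus naive expectations must be reconciled using Proposition \ref{prop:WeilFor}'s conventions and the definitions of $\eta_D$ and of the standard Haar measures. Everything else is a mechanical block-matrix multiplication and change of variables. Once the integrand and measure are verified to match, the lemma follows immediately, and I would then use it (together with Corollary \ref{wil_cor} and the Weil formula) as the computational heart of the induction step toward Proposition \ref{prop_par_inv2}.
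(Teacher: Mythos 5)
Your proposal is exactly the direct block computation that the paper itself dismisses as ``straightforward'': parametrize $N_i^n$ by the off-diagonal block $Z$, compute $\ou^t\,\mathrm{diag}(A,B)\,u=\begin{pmatrix}A & AZ\\ \bar Z^t A & B+\bar Z^t A Z\end{pmatrix}$, identify $\chi(u)$ with $\psi[\tr(\eps A^{-1}X)+\tr(\bar X^t A^{-1}\eps^t)]$ after the substitution $X=AZ$, and compare measures --- so the approach is the same and essentially correct. The one point you left unresolved (and stated inconsistently) is the measure bookkeeping: since $D$ is two-dimensional over $\R$, the real Jacobian of $Z\mapsto X=AZ$ on $i\times(n-i)$ matrices over $D$ is $|N_{D/\R}(\det A)|^{n-i}=|\det A|^{2(n-i)}$, so $dZ=|\det A|^{-2(n-i)}dX$, which combined with the factor $\eta(\det A)^{n-i}|\det A|^{n-i}$ coming from the definition of $\wO_i$ (note $\Delta_i$ of the transformed matrix is $\det A$) yields precisely the stated prefactor $\eta(\det A)^{n-i}|\det A|^{-(n-i)}$; in particular the sign $\eta(\det A)^{n-i}$ comes solely from the definition of $\wO_i$, not from the change of variables (which contributes only a positive Jacobian), and your accounting ``Jacobian gives $|\det A|^{-(n-i)}$, times the prefactor $|\det A|^{n-i}$, gives $|\det A|^{-(n-i)}$'' does not multiply out correctly as written.
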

The proof is straightforward.
\begin{cor} \label{cor:par_trans}
Let $f \in {\Sc(H)}$ be a Schwartz  function. Then
\begin{multline*}\Fou_{H^{n-i},\psi}\circ\wO_i^\psi(f)(A,w_{n-i}Cw_{n-i})=
\eta(\det(A))^{n-i} |\det(A)|^{-(n-i)}
\\
\int f \left[ \begin{pmatrix}A & X \\
\bar X^t &B\\
\end{pmatrix}\right] \psi[\tr(\eps A^{-1} X)+\tr( \bar X^{t}  A^{-1 } \eps^t)+\tr(C X^t A^{-1}X)-Tr(CB)]dX dB\end{multline*}
\end{cor}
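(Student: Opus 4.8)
The final statement I need to prove is Corollary~\ref{cor:par_trans}, which expresses the partial Fourier transform (with respect to $H^{n-i}$) of $\wO_i^\psi(f)$ as an explicit integral. The plan is to start from the formula for $\wO_i^\psi(f)(A,B)$ provided by the preceding Lemma and simply apply the partial Fourier transform in the $B$-variable. Concretely, $\wO_i^\psi(f)(A,B)$ is given as $\eta(\det A)^{n-i}|\det A|^{-(n-i)}$ times an integral over $X$ of $f\bigl[\begin{pmatrix} A & X \\ \bar X^t & B+X^tA^{-1}X\end{pmatrix}\bigr]$ against the character $\psi[\tr(\eps A^{-1}X)+\tr(\bar X^t A^{-1}\eps^t)]$. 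I would apply $\Fou_{H^{n-i},\psi}$, which on the $B$-slot is (up to normalization) integration against $\psi(-\tr(w_{n-i}C w_{n-i}\cdot B))$ or rather against the trace form on $H_{n-i}$ evaluated at the appropriate dual variable; the statement chooses to write the dual variable as $w_{n-i}Cw_{n-i}$, presumably so that later the $\cJ'_i$-twist lines up cleanly.

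The key steps, in order, are as follows. First I would recall the precise normalization of $\Fou_{H^{n-i},\psi}$ with respect to the trace form on $H^{n-i}(D)$, so that $\Fou_{H^{n-i},\psi}(h)(C) = \int h(B)\,\psi(-\tr(B\cdot(\text{dual of }C)))\,dB$ with the self-dual measure; the appearance of $w_{n-i}$ conjugation is exactly the discrepancy between the trace pairing and the pairing built into $\Fou_{H^{n-i},\psi}$ (cf.\ the relation $\Fou_{H_n} = \ad(w)\circ\Fou'_{H_n}$ noted in \S\ref{Sec_fou}). Second, I would substitute the Lemma's formula into this integral and interchange the order of integration in $B$ and $X$ — this is legitimate because $f$ is Schwartz and the inner integral converges absolutely with Schwartz-type control in the parameters (this is the content of Proposition~\ref{OmegaWell} and the fiber-wise Schwartz formalism of \S\ref{Schwartz}). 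Third, I would perform the change of variables $B \mapsto B - X^t A^{-1} X$ in the $B$-integral to remove the shift inside $f$; the Jacobian is $1$ since this is a translation on the vector space $H^{n-i}(D)$, and it converts the integrand $f\bigl[\begin{pmatrix} A & X \\ \bar X^t & B+X^tA^{-1}X\end{pmatrix}\bigr]$ into $f\bigl[\begin{pmatrix} A & X \\ \bar X^t & B\end{pmatrix}\bigr]$ while the character $\psi(-\tr(C B))$ becomes $\psi(-\tr(CB) + \tr(C X^tA^{-1}X))$ after accounting for the $w_{n-i}$-conjugation, which is exactly the $\tr(CX^tA^{-1}X) - \tr(CB)$ appearing in the target formula.

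Putting these together, after the change of variables the combined character is $\psi[\tr(\eps A^{-1}X) + \tr(\bar X^t A^{-1}\eps^t) + \tr(CX^tA^{-1}X) - \tr(CB)]$, and the prefactor $\eta(\det A)^{n-i}|\det A|^{-(n-i)}$ is untouched, yielding precisely the claimed identity. I expect the main obstacle to be purely bookkeeping rather than conceptual: getting the $w_{n-i}$-conjugation and the sign conventions in $\Fou_{H^{n-i},\psi}$ exactly right, so that the dual variable really is $C$ (or $w_{n-i}Cw_{n-i}$ as written) and the cross term comes out with the correct sign $+\tr(CX^tA^{-1}X)$ and the correct $-\tr(CB)$. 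A secondary point to be careful about is the justification of Fubini: one should phrase it using the continuity of $\wO_i^\psi$ as a map into $\Sc^{\Delta_i,\R^\times}(S^i\times H^{n-i})$ together with the fact that $\Fou_{H^{n-i},\psi}$ is continuous on that space, so that the interchange and the change of variables are all performed on honest Schwartz-along-the-fibers functions. Once the normalizations are pinned down, the computation is a one-line translation substitution and the corollary follows. \proofend
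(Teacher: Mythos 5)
Your proposal is correct and is exactly the computation the paper intends (it omits the proof, presenting the corollary as immediate from the preceding lemma): substitute the lemma's formula, note that evaluating $\Fou_{H^{n-i},\psi}$ (defined via the form $\tr(xw_{n-i}yw_{n-i})$) at $w_{n-i}Cw_{n-i}$ turns the pairing into the plain trace pairing against $C$, interchange the $B$- and $X$-integrals, and translate $B\mapsto B-X^tA^{-1}X$ to produce the cross term $\tr(CX^tA^{-1}X)-\tr(CB)$. Only a small slip in wording: the cross term comes from the translation, not from the $w_{n-i}$-conjugation, which only accounts for the evaluation point; otherwise the argument and its justification (Schwartz-along-the-fibers control for Fubini) match the paper's route.
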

\begin{notation}
$ $\\
(i)
Let $\xi_{A,B} \in \Sc^*(H)$ be the distribution defined by $$\xi_{A,B}(f)= \cJ'_{i} \circ \Fou_{H^{n-i},\psi}\circ\wO_i^\psi(f)(A,B).$$
(ii)
Let $\zeta_{A,B} \in \Sc^*(H)$ be the distribution defined by $$\zeta_{A,B}(f)= \cT_{i}
^{-1}\circ(\Fou_{H^{i},\psi} )^{-1}\circ\wO_{n-i}^{\bar \psi}(f)(A,B).$$
\end{notation}

\begin{proof}[Proof of Proposition \ref{prop_par_inv2}]
We have to show that $$\xi_{A,B}=  c(D,\psi)^{n(n-i)}\Fou(\zeta_{A,B})$$
Let $f \in {\Sc(H)}$ be a Schwartz  function. Denote $m:=n-i.$ By Corollary \ref{cor:par_trans}
\begin{multline*}\xi_{A,C}(f)=\eta(\det(A))^{n-i} |\det(A)|^{-(n-i)}\psi(w_{n-i}C^{-1}w_{n-i} \eps A^{-1} {\eps^t})
\\
\int f \left[ \begin{pmatrix}A & X \\
\bar X^t &B\\
\end{pmatrix}\right] \psi[\tr(\eps A^{-1} X)+\tr( \bar X ^{t} A^{-1} \eps^t)+\tr(w_{n-i}Cw_{n-i} X^t A^{-1}X)-Tr(w_{n-i}Cw_{n-i}B)]dX dB
\end{multline*}
and
\begin{multline*}
\zeta_{A,C}(f)=
\eta(\det(C))^{i} |\det(C)|^{-i} \times
\\
\int_{} f \left[ \begin{pmatrix}C & X \\
\bar X^t &B\\
\end{pmatrix}\right] \psi[-\tr(\eps C^{-1} X+ \bar X^{t}  C^{-1 } \eps^t+w_{i}Aw_{i} X^t C^{-1}X-w_{i}Aw_{i}B)]dX dB.
\end{multline*}

Therefore
\begin{multline*}
ad(w_n)(\zeta_{A,C})(f)=
\eta(\det(C))^{i} |\det(C)|^{-i} \times
\\
\int_{} f \left[ \begin{pmatrix}B & X \\
\bar X^t &w_{m}Cw_{m}\\
\end{pmatrix}\right] \psi[-\tr(\eps C^{-1} w_{m}\bar X^{t}w_{m}+w_{m} X w_{m} C^{-1 } \eps^t+A_{} X w_{m}C^{-1}w_{m}\bar X^t- AB)]dX dB.
\end{multline*}
Thus

\begin{multline*}
\Fou'_{H_i \times H_m} (ad(w_n)(\zeta_{A,C}))(f)=
\eta(\det(C))^{i} |\det(C)|^{-i} \times
\\
\int_{} f \left[ \begin{pmatrix}A & X \\
\bar X^t &B\\
\end{pmatrix}\right] \psi[-\tr(\eps C^{-1} w_{m}\bar X^{t}w_{m}+w_{m} X w_{m} C^{-1 } \eps^t+A_{} X w_{m}C^{-1}w_{m}\bar X^t+w_{m}Cw_{m}B)]dX dB.
\end{multline*}

Therefore by Corollary \ref{wil_cor}  $$\Fou'_{{H_i \times H_m}^{\bot'}} (\Fou'_{H_i \times H_m} (ad(w_n)(\zeta_{A,C})))(f)=  c(D,\psi)^{n(n-i)}\xi_{A,C}(f).$$
\end{proof}
\subsection{Proof of the inversion formula} \label{Sec_inv}$ $\\
The inversion formula (Theorem \ref{SecondIng}) is equivalent to the following theorem.
\begin{thm}  \label{SecondIng2}
$ $
\\
(i) $\wO(\Sc(H)) \subset \fJ$.
\\
(ii) $\cJ_{} \circ \wO^\psi|_{\Sc(H)} =  c(D,\psi)^{n(n-1)/2}\wO^{\bar \psi} \circ \Fou_{H}$.
\end{thm}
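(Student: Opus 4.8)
The plan is to prove Theorem \ref{SecondIng2} by induction on $n$, using the partial inversion formula (Proposition \ref{prop_par_inv}) as the induction step, exactly mirroring the structure of Jacquet's argument in \cite[Section 7]{Jac1}. The base case $n=1$ is trivial since the ``Jacquet transform'' $\cJ_1$ is just the single Fourier transform $\Fou_1$ on $\overline A \cong A$, and in this case $\wO$ is essentially the identity (there is no integral over $N$), so both (i) and (ii) amount to the one-dimensional Fourier transform statement, with $c(D,\psi)^{0}=1$.

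For the induction step, I would first observe that $\cJ_n$ factors as an iterated composition, and more precisely that on the relevant subspace we have an identity of the form $\cJ_n \circ \wO^\psi = (\text{transform built from } \cJ_{n-i}) \circ \cJ_i \circ \wO_i^\psi$, where $\wO_i^\psi$ is the intermediate Kloosterman integral landing in $\Sc^{\Delta_i,\R^\times}(S^i \times H^{n-i})$. By Proposition \ref{prop_par_inv}(ii), $\cJ_i \circ \wO_i^\psi = c(D,\psi)^{n(n-i)} \wO_{n-i}^{\bar\psi} \circ \Fou_H$. Then one applies the induction hypothesis (Theorem \ref{SecondIng2} in dimensions $i$ and $n-i$) to rewrite the Jacquet transforms of $\wO^{i}$ and $\wO^{n-i}$ in terms of Fourier transforms, and one uses the multiplicativity $\wO^n = \wO^{i,n-i} \circ \wO_i^n$ from Proposition \ref{FirstIng}(ii), together with the compatibility of $\Fou_H$ with the factorization $H_n \supset H_i \times H_{n-i}$ via the partial Fourier transforms of \S\S \ref{Sec_fou}. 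The bookkeeping of the constants must then match: one needs $c(D,\psi)^{n(n-i)} \cdot c(D,\psi)^{i(i-1)/2} \cdot c(D,\psi)^{(n-i)(n-i-1)/2} = c(D,\psi)^{n(n-1)/2}$, together with the interplay of $\psi$ versus $\bar\psi$ handled by Proposition \ref{prop:WeilCor}(iii); I would take $i=n-1$ to keep this clean, so that the recursion peels off one variable at a time and the exponents telescope as $(n-1) + (n-1)(n-2)/2 = n(n-1)/2$.

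Part (i), the statement that $\wO(\Sc(H)) \subset \fJ$, needs to be tracked in parallel with (ii): each application of an intermediate operator $\cJ_n^{(i)}$ must be shown to land in the previously-constructed subspace $\fJ_n^{[i-1]}$, which follows from Proposition \ref{prop_par_inv}(i) (asserting $\wO_i(\Sc(H)) \subset \fJ_{i,n-i}$) together with the induction hypothesis that $\wO^{n-i}(\Sc(H^{n-i})) \subset \fJ_{n-i}$ and $\wO^{i}(\Sc(S^i)) \subset \fJ_i$. Concretely, I would prove by a sub-induction on $k$ that $\wO(\Sc(H)) \subset \fJ_n^{[k]}$ and that $\cJ_n^{[k]} \circ \wO^\psi$ equals the appropriate partially-inverted integral, unwinding the definitions of $\cJ_n^{(i)}$, $\cJ_n^{[i]}$, and ${\cJ_n^{(i)}}'$ and matching them against the explicit formula for $\wO_i^\psi$ from the lemma preceding Corollary \ref{cor:par_trans}.

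The main obstacle I expect is not conceptual but a matter of careful symmetry bookkeeping: making sure the Weyl-element conjugations $ad(w)$ implicit in $\Fou_H = ad(w)\circ \Fou'_H$, the matrices $\eps$ and $w_{n-i}$ appearing in the definitions of $\cJ_i'$ and ${\cJ_n^{(i)}}'$, and the substitution $B \mapsto B + X^t A^{-1} X$ in the explicit Kloosterman integral all compose correctly through the induction. In particular one must verify that the ``twist by $\psi(1/a_n a_{n-1})$'' in ${\cJ_n^{(i)}}'$ is precisely the residual term produced by Proposition \ref{prop_par_inv2} when $i = n-1$, i.e. that the partial inversion formula in the codimension-one case reduces to exactly the one-variable recursion encoded in $\cJ_n^{(n)}$. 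Once this matching of explicit formulas is checked, convergence of the iterated integrals is inherited from Proposition \ref{prop_par_inv} (which already asserts the relevant functions are Schwartz along the appropriate fibers) and from Proposition \ref{OmegaWell}, so no new analytic input beyond what is in \S\S \ref{Sec_par} is required.
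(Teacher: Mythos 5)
Your proposal follows essentially the same route as the paper: induction on $n$, peeling off one variable via the intermediate integral and the partial inversion formula (Proposition \ref{prop_par_inv}) with $i=n-1$, combined with the multiplicativity $\wO=\wO^{1,n-1}\circ\wO_{n-1}$ from Proposition \ref{FirstIng}(ii), the identification of ${\cJ_n^{(i)}}'$ with the residual factor $\cT_{n-1}\circ\cJ'_{n-1}$ (the paper's equations (\ref{eq0})--(\ref{eq2})), the induction hypothesis packaged as Lemma \ref{ind_step}, and the same telescoping of constants $(n-1)+(n-1)(n-2)/2=n(n-1)/2$. The explicit-formula matching you flag as the main thing to verify is exactly what the paper asserts in (\ref{eq0}), so your plan is correct and faithful to the paper's argument.
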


The proof is by induction.
We will need the following straightforward lemma.

\begin{lem}\label{ind_step}
 The induction hypotheses implies that\\
 (i) $\wO^{1,n-1}({\Sc^{\Delta_1}(S^1\times H^{n-1})}) \subset \fJ_n^{[n-1]}$ \\
 (ii) $$
\wO^{1,n-1,\bar \psi}\circ\Fou_{H^{n-1},\psi} = c(D,\psi)^{(n-1)(n-2)/2}\cJ_n^{[n-1]} \wO^{1,n-1,\psi}|_{\Sc^{\Delta_1}(S^1\times H^{n-1})}$$
\end{lem}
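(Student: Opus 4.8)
\textbf{Plan of proof of Lemma \ref{ind_step}.}
The statement is a routine ``unpacking'' of the induction hypothesis (Theorem \ref{SecondIng2} in dimensions $<n$) combined with the definitions of the intermediate Jacquet transform $\cJ_n^{[i]}$ and the operators $\cJ_n^{(i)}$. First I would observe that $S^1 \times H^{n-1}$ fibers over $S^1 = \R^\times$ (the first diagonal entry), and that $\wO^{1,n-1}$ is, by definition, essentially $\wO^1 \hot \wO^{n-1}$ precomposed with the multiplication map $\cM_{S^1,H^{n-1}}$; concretely $\wO^{1,n-1,\psi}(\Psi)(a_1,a_2,\dots,a_n) = \wO^{n-1,\psi}\bigl(\Psi(a_1,\cdot)\bigr)(a_2,\dots,a_n)$ up to the explicit factor coming from $\widetilde{\phantom{a}}$, since $S^1$ contributes trivially ($\wO^1 = \mathrm{id}$ after the normalization). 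Thus the content of the lemma is really Theorem \ref{SecondIng2}(i),(ii) for $H^{n-1}$, applied fiberwise in the $a_1$-variable, reconciled with the way $\fJ_n^{[n-1]}$ and $\cJ_n^{[n-1]}$ are built out of $\fJ_{n-1}$ and $\cJ_{n-1}$.

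The second step is to match the combinatorial recipes. By construction $\fJ_n^{[1]} = \Sc^{\Delta_{n-1}}(\overline A^n)$ and $\cJ_n^{[i]} = \cJ_n^{[i-1]} \circ \cJ_n^{(n+1-i)}$, while $\fJ_{n-1}$ is built the same way on $\overline A^{n-1}$. Comparing the defining formulas for $\cJ_n^{(i)}$ (with $i$ ranging over $2,\dots,n$, i.e. avoiding the ``last'' slot relevant to dimension $n-1$) with those for $\cJ_{n-1}^{(i-1)}$, one sees that, under the identification $\overline A^n \cong \R^\times \times \overline A^{n-1}$ sending $(a_1,\dots,a_n)$ to $(a_1;a_2,\dots,a_n)$, the operator $\cJ_n^{(i)}$ restricted to functions independent of the structure tying $a_1$ to the rest is precisely $\cJ_{n-1}^{(i-1)}$ acting on the second factor, and similarly $\Fou_n$ (Fourier in the last coordinate) is the same on both. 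Hence $\fJ_n^{[n-1]}$ is exactly $\Sc^{\Delta_1}(\R^\times)$-linear combinations (in the $a_1$ variable) of elements of $\fJ_{n-1}$ in the $(a_2,\dots,a_n)$ variables, and $\cJ_n^{[n-1]}$ acts as $\mathrm{id} \otimes \cJ_{n-1}$. This is the step where one must be careful: the $\Sc^{\Delta_i}$-decorations on the spaces must be tracked so that the fiberwise statement genuinely globalizes to a statement about $\Sc^{\Delta_1}(S^1 \times H^{n-1})$, which is handled using Corollary \ref{NashSubCor} and Proposition \ref{SchKer} together with Corollary \ref{OntoTensorOnto} for the nuclear \Fre tensor products.

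Granting these two reductions, part (i) follows immediately: by the induction hypothesis $\wO^{n-1}(\Sc(H^{n-1})) \subset \fJ_{n-1}$, hence applying this in each fiber and using that $\wO^{1,n-1}$ lands in $\Sc^{\Delta_1,\R^\times}$-valued functions, we get $\wO^{1,n-1}(\Sc^{\Delta_1}(S^1 \times H^{n-1})) \subset \fJ_n^{[n-1]}$. For part (ii), apply Theorem \ref{SecondIng2}(ii) in dimension $n-1$ fiberwise: $\wO^{n-1,\bar\psi} \circ \Fou_{H^{n-1},\psi} = c(D,\psi)^{(n-1)(n-2)/2}\, \cJ_{n-1} \circ \wO^{n-1,\psi}$ on $\Sc(H^{n-1})$; then reintroduce the $a_1$-variable, noting that $\Fou_{H^{n-1},\psi}$ and the operators only touch the $H^{n-1}$-factor, and that the normalization factors $\eta(\Delta_i(a))^{\bullet}|\Delta_i(a)|^{\bullet}$ hidden in the tildes are compatible (this is where the exponent bookkeeping $(n-1)(n-2)/2$ for the constant appears unchanged, since the extra factor from passing dimension $n-1$ to the $(1,n-1)$-block is absorbed in the definition of $\wO^{1,n-1}$). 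The main obstacle, such as it is, is purely notational: keeping the three layers of decoration — the Schwartz-along-fibers spaces $\Sc^{\Delta_j}$, the transfer-factor normalizations in $\wO$ versus $\Omega$, and the index shifts between the dimension-$n$ and dimension-$(n-1)$ Jacquet recipes — consistent with each other; there is no analytic difficulty beyond what Proposition \ref{OmegaWell} and \S\ref{NucFre} already provide.
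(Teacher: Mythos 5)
Your unpacking is correct and is exactly the argument the paper intends: the paper gives no proof (it labels the lemma straightforward), and the intended content is precisely your observation that $\wO^{1,n-1}$, $\Fou_{H^{n-1},\psi}$ and $\cJ_n^{[n-1]}$ act only in the last $n-1$ variables with $a_1$ as a spectator, that under $(a_1,\dots,a_n)\mapsto(a_1;a_2,\dots,a_n)$ one has $\cJ_n^{[n-1]}=\mathrm{id}\otimes\cJ_{n-1}$, and that the lemma is then the dimension-$(n-1)$ inversion formula applied fiberwise (with the relative Schwartz decorations tracked as you indicate). One harmless slip: the operators entering $\cJ_n^{[n-1]}$ are $\cJ_n^{(i)}$ for $i=2,\dots,n-1$ (plus the initial $\Fou_n$), not $i=2,\dots,n$; only $\cJ_n^{(1)}$, which ties in the first variable, is omitted.
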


\begin{proof}[Proof of Theorem \ref{SecondIng2}]
First let us prove  (i).
It is easy to see that
\begin{equation}\label{eq0}
\wO^{1,n-1,\psi}|_{\Sc^{\Delta_1}(S^1\times H^{n-1})} \circ \cT_{n-1} \circ \cJ'_{n-1}|_{ \Fou_{H^{1},\psi}(\fJ_{n-1,1})}={\cJ_n^{(i)}}' \circ\wO^{1,n-1,\psi} |_{ \Fou_{H^{1},\psi}(\fJ_{n-1,1})}
\end{equation}
This implies that
\begin{equation}\label{eq1}
\wO^{1,n-1}|_{\Sc^{\Delta_1}(S^1\times H^{n-1})} \circ \cT_{n-1} \circ \cJ'_{n-1}\circ \Fou_{H^{1},\psi}\circ \wO_{n-1}|_{\Sc(H)}={\cJ_n^{(i)}}' \circ \Fou_{n} \circ\wO^{1,n-1,\psi} \circ \wO_{n-1}|_{\Sc(H)}
\end{equation}

By Proposition \ref{FirstIng} this implies
\begin{equation}\label{eq2}
\wO^{1,n-1}|_{\Sc^{\Delta_1}(S^1\times H^{n-1})} \circ \cT_{n-1} \circ \cJ'_{n-1}\circ \Fou_{H^{1},\psi}\circ \wO_{n-1}|_{\Sc(H)}={\cJ_n^{(i)}}' \circ \Fou_{n} \circ \wO|_{\Sc(H)}
\end{equation}
This together with Lemma \ref{ind_step}  implies (i).

Now let us prove (ii). By Propositions \ref{FirstIng}
and \ref{prop_par_inv} we have
\begin{multline} \label{eq3}
\wO^{\bar \psi} \circ \Fou_{H}=\wO^{1,n-1,\bar \psi}\circ \wO^{\bar \psi}_{1}\circ \Fou_{H}=  c(D,\psi)^{(n-1)}\wO^{1,n-1,\bar \psi}\circ\cJ_{n-1} \circ \wO_{n-1}^\psi|_{\Sc(H)}=\\= c(D,\psi)^{(n-1)}\wO^{1,n-1,\bar \psi}\circ\Fou_{H^{n-1},\psi} \circ \cT_{n-1} \circ \cJ'_{n-1} \circ \Fou_{H^{1},\psi}\circ \wO_{n-1}^\psi|_{\Sc(H)}
\end{multline}

(ii) follows now from (\ref{eq2}), (\ref{eq3}), and Lemma \ref{ind_step}.
\end{proof}
\section{Proof of the Key Lemma} \label{SecPfKeyLem}
%
%
\setcounter{lemma}{0}

We will use the following notation and lemma.

\begin{notation}
Denote
$$Z':= \{x
\in Z | x_{ij} = 0 \text{ for  }i+j < n+1  \text{ and }
x_{i,n+1-i} = x_{j,n+1-j} \in \R \text{ for any } 1 \leq i,j \leq n
\} .$$
Denote also $U':=H -Z'$.
\end{notation}

%

\begin{notation}
We call a matrix $x \in H$ \textit{relevant} if $\chi|_{N_x} \equiv
1$, and irrelevant otherwise.
\end{notation}

\begin{lemma}[\cite{Jac1}, \S3, \S5] \label{lem:RelForm}
Every relevant orbit in $H^n(D)$ has a unique representative of the form
\begin{equation} \label{eq:RelForm0}
\begin{pmatrix}
  a_1 w_{m_1} & 0 & ... & 0 \\
  0 & a_2 w_{m_2} & ... & 0 \\
  ...& ... & ... & ... \\
  0 & 0 & ... & a_n w_{m_n}
\end{pmatrix}
\end{equation}
where $m_1+...+m_j=n$, $a_1,...,a_j \in \bR$, and if $\det(g)=0$ then $\Delta_{n-1}(g) \neq 0$.
\end{lemma}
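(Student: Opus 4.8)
The statement is quoted from \cite[\S 3, \S 5]{Jac1}, and the plan is to reproduce that argument in the Archimedean setting. The essential point is that the classification of $N$-orbits on $H^n(D)$, and the computation of the restriction of $\chi$ to the stabilizer of a point, are purely algebraic: they only use row/column operations and the unipotence of $N$. Hence the entire combinatorial skeleton (which compositions $n=m_1+\dots+m_j$ and which involutions occur, and the stabilizer computations) is identical for $D=\C$, $D=\R\oplus\R$ and the $p$-adic case; the sole $\R$-specific feature is that the scalars $a_i$ are real numbers carrying a sign, and the normal form (\ref{eq:RelForm0}) already records exactly this datum.

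\textbf{Reduction to a single block (induction on $n$).} First I would record the basic invariant: the leading principal minors $\Delta_1,\dots,\Delta_n$ are $N$-invariant, since if $u\in N^n$ is written in $(i,n-i)$ block form its top-left block $u_{11}$ is $i\times i$ unipotent and the top-left $i\times i$ corner of $\ou^t x u$ equals $\overline{u_{11}}^{\,t}\,(\text{corner of }x)\,u_{11}$. Given $x$, let $k<n$ be the largest index with $\Delta_k(x)\neq 0$. The leading $k\times k$ corner $x'$ is then nondegenerate, and conjugating by the unipotent radical $N_k^n$ (which acts as $\ou^t x u$ and lets one solve away the off-diagonal blocks by the Hermitian identity $\overline{(x')^{-1}}^{\,t}=(x')^{-1}$) brings $x$ to block-diagonal form $\left(\begin{smallmatrix} x' & 0\\ 0 & z\end{smallmatrix}\right)$ with $x'\in S^k(D)$ and $z\in H^{n-k}(D)$; comparing $N$-invariant minors gives $\Delta_1(z)=\dots=\Delta_{n-k-1}(z)=0$. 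Now $N^k$ (embedded in the top-left corner) acts on $x'$ without disturbing $z$, so by the induction hypothesis $x'$ is $N^k$-equivalent to a matrix of type (\ref{eq:RelForm0}), and the whole problem is reduced to the block $z$, a Hermitian $(n-k)\times(n-k)$ matrix all of whose proper leading minors vanish. (If $\det x=0$ then either $z$ is $1\times 1$, whence $\Delta_{n-1}(x)\neq 0$ as in the statement, or $z$ is a larger degenerate such block — and the argument below shows that case is irrelevant, which is exactly why the condition ``$\det g=0\Rightarrow\Delta_{n-1}(g)\neq 0$'' appears.)

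\textbf{The one-block case and the role of $\chi$.} For a Hermitian $m\times m$ matrix $z$ with $\Delta_1(z)=\dots=\Delta_{m-1}(z)=0$ I would run Gaussian elimination inside $N^m$, organized around the first nonzero entry of the first row, to clear all entries strictly below the anti-diagonal; the minor hypotheses force the relevant pivots along the anti-diagonal to be nonzero, so this brings $z$ to an anti-diagonal Hermitian matrix, and one checks that the anti-diagonal entries of such a matrix are themselves $N^m$-invariant. Finally one computes the stabilizer of this anti-diagonal form and restricts $\chi$ to it, exactly as in \cite[\S 3, \S 5]{Jac1}: the restriction is trivial precisely when all the anti-diagonal entries coincide and equal a single \emph{real} scalar $a$, i.e.\ precisely when $z=a\,w_m$ (and for such $z$ the stabilizer is the subgroup cut out inside $N^m$ by the ``purely imaginary'' conditions on the super-diagonal coordinates, on which $\chi$ is visibly trivial). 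This simultaneously shows that a relevant $z$ must be of the form $a\,w_m$ and that these forms are indeed relevant; reassembling with the previous paragraph yields the representative (\ref{eq:RelForm0}).

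\textbf{Uniqueness, and the main obstacle.} Uniqueness is read off from invariants: the vanishing pattern of $(\Delta_1(g),\dots,\Delta_n(g))$ recovers the composition $(m_1,\dots,m_j)$ (the $\Delta_k$ are nonzero exactly at the partial sums $m_1,\,m_1+m_2,\dots$), and the value of $\Delta_{m_1+\dots+m_t}(g)$ together with the additional $N$-invariant (generalized) minors that become available once the earlier minors vanish — e.g.\ when $\Delta_1(g)=0$ the entry $g_{21}$ is itself $N$-invariant — pins down each $a_i\in\R$ including its sign. The step I expect to be genuinely delicate is the Gaussian-elimination bookkeeping in the one-block case: keeping the elimination inside $N$, not reintroducing nonzero leading minors, and tracking the transversal ``imaginary'' directions that make up the stabilizer over $\R$ and $\C$. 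Everything else is a faithful transcription of Jacquet's argument.
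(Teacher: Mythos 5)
The weak point is exactly the step you yourself flag as delicate, and it is not mere bookkeeping: the claim that for a Hermitian block $z$ with $\Delta_1(z)=\dots=\Delta_{m-1}(z)=0$ the minor hypotheses force the elimination pivots onto the anti-diagonal, so that $z$ becomes anti-diagonal under $N^m$, is false. Take $m=4$ and
\[
z_0=\begin{pmatrix}0&0&0&1\\0&1&0&0\\0&0&1&0\\1&0&0&0\end{pmatrix}.
\]
All proper leading minors of $z_0$ vanish and $\det z_0\neq 0$, yet $z_0$ is not $N$-equivalent to any anti-diagonal matrix: the locus $\{z_{11}=z_{12}=0\}$ is $N$-stable, and on it one computes $(\overline{u}^{\,t}zu)_{22}=z_{22}$, so the $(2,2)$ entry is an orbit invariant; it equals $1$ for $z_0$ but $0$ for every anti-diagonal $4\times 4$ matrix. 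Consequently your one-block argument only treats the orbits that happen to reach anti-diagonal form; it never shows that the remaining orbits (that of $z_0$, and more generally all $wa$ whose permutation part is not the longest element of a standard Levi) are \emph{irrelevant} — and that is precisely the substantive content of the lemma, not a side case. The same gap recurs in the singular situation: you assert that a degenerate block of size $>1$ ``is shown irrelevant by the argument below,'' but the argument below is the same elimination-to-anti-diagonal step, which does not apply to such blocks either.

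The paper's proof (following Jacquet) is structured differently and avoids this. It first puts \emph{every} orbit in a Bruhat-type normal form $wa$: for $D=\R\oplus\R$ this is the usual Bruhat decomposition, and for $D=\C$ it is Springer's result for the involution $g\mapsto w_n\overline{g}^{-t}w_n$. It then exploits relevance root-theoretically: for each pair of positive roots $\alp_1,\alp_2$ with $w\alp_2=-\alp_1$ one exhibits an explicit element of the stabilizer of $wa$ and evaluates $\chi$ on it; triviality forces $w$ to send simple roots that become negative to negatives of simple roots, hence $w$ is the longest Weyl element of a standard Levi $M$ and $a\in Z(M)$, which is exactly the block shape (\ref{eq:RelForm0}); uniqueness comes for free from uniqueness in the Bruhat decomposition, rather than from ad hoc invariants. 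A separate short argument treats $\det=0$ and produces the condition $\Delta_{n-1}\neq 0$. Your reduction to a single block via the largest nonvanishing leading minor is fine (and the passage of relevance to the blocks, which you use implicitly, does hold), but within a block you would still need the analogue of this root-theoretic analysis — or some other proof that non-anti-diagonalizable one-block orbits are irrelevant; the Gaussian elimination as described cannot replace it.
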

For the sake of completeness we will repeat the proof here.
\begin{proof}
Step 1. Proof for $S^n(\R \oplus \R)$\\
Let $W_n$ denote the group of permutation matrices.
By Bruhat decomposition, every orbit has a unique representative of the form $wa$ with $w \in W_n$ and $a \in A^n$.
If this element is relevant, then for every pair of positive roots $(\alp_1, \alp_2)$ such that
$w\alp_2 = - \alp_1$, and for $u_{i} \in  N_{\alp_i}(\R)$ (where $N_{\alp_i}$ denotes the one-dimensional subgroup of N corresponding to ${\alp_i}$) we have
\begin{equation} \label{eq:rel}
u_1^t wa u_2 = wa  \Rightarrow \chi(u_1,u_2) = 0.
\end{equation}
This condition implies that $\alp_1$ is simple if and only if $\alp_2$ is simple. Thus $w$
and its inverse have the property that if they change a simple root to a negative one,
then they change it to the opposite of a simple root. Let $S$ be the set of simple roots $\alp$
such that $w \alp$ is negative. Then $S$ is also the set of simple roots $\alp$ such that $w^{-1}\alp$ is
negative and $wS = −S$. Let $M$ be the standard Levi subgroup determined by $S$. Thus
$S$ is the set of simple roots of $M$ for the torus $A$, $w$ is the longest element of the Weyl group of $M$,
and $w^2 = 1$. This being so, if $\alp_2$ is simple, then condition (\ref{eq:rel}) implies $\alp_2(a) = 1$.
Thus $a$ is in the center of $M$. Hence $wa$ is of the form (\ref{eq:RelForm0}).

Step 2. Proof for $S^n(\C)$.\\
Every orbit has a unique representative of the form $wa$ with
$w \in W_n$, and diagonal $a \in  GL_n(\C)$ (for proof see e.g. \cite[Lemma 4.1(i)]{Spr}, for the involution $g \mapsto w_n\overline{g}^{-t}w_n$, where $w_n \in W_n$ denotes the longest element). Since $wa \in S$, we have $w = w^t$ and hence $w^2 = 1$  and $waw = \overline{a}$.

Suppose that $\alp$ is a simple
root such that $w\alp = - \bet$ where $\bet$ is positive. For $u_{\alp}\in N_{\alp}$, define
$$u_{\bet}  := w \overline{a}^{-1} \ou_{\alp}^{-t} \overline{a} w  \in N_{\bet}.$$
Then
$$\ou_{\bet}^twau_{\alp} = wa = \ou_{\alp}^twau_{\bet}.$$
There exists an element $u_{\alp + \bet} \in N_{\alp + \bet}$ (i.e. $u_{\alp + \bet} = 1 $ if ${\alp + \bet}$ is not a root) such that
$u: = u_{\alp + \bet}u_{\alp}u_{\bet}$ satisfies
$\ou^t wa u = wa$.
If $wa$ is relevant, this relation implies
$\chi(u_{\alp}u_{\bet}) = 1$.

Thus $\beta$ is simple. Since $w^2 = 1$, we
see that, as before, there is a standard Levi subgroup $M$ such that w is the longest
element in its Weyl group, and $a \in Z(M)\cap A^n$.

Step 3. Proof for $H^n(D) - S^n(D)$.\\
Let $s \in H^n(D)$ with $\det(s)=0$ be relevant. Then $s = u^twb$ with $u \in N(D)$, $w \in W_n$ and $b$ upper triangular. If a column of $s$ of index $i<n$ would be zero, then the row with index $i$ would also be zero, and hence $s$ would be irrelevant. Hence $b_{1,1}\neq 0$ and acting on $s$ by $N(D)$ we can bring $b$ to the form
$ b = \begin{pmatrix}
  b' & 0 \\
  0 & 0
\end{pmatrix},$
where $b'$ is diagonal and invertible.
In particular, the last row of $b$ is zero. We may replace $s$ by $wb\ou^{-1}$.  The last row of $b\ou^{-1}$ is again zero. Since the rows of $wb\ou^{-1}$ with index less than $n$ cannot be zero, $w$ must have the form
$ w = \begin{pmatrix}
  w' & 0 \\
  0 & 0
\end{pmatrix}.$ The theorem follows now from the 2 previous cases.
\end{proof}

Since $Z$ and $Z'$ are $N$-invariant we obtain
\begin{cor}
Every relevant $x \in Z$ lies in $Z'$.
\end{cor}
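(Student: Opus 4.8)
The plan is to combine the normal form of Lemma~\ref{lem:RelForm} with the $N$-invariance of the leading minors $\Delta_i$ and of the set $Z'$. Concretely, I would take a relevant $x\in Z$ and apply Lemma~\ref{lem:RelForm} to its $N$-orbit: the orbit contains a representative $g$ of the form~\eqref{eq:RelForm0}, say $g=\diag(a_1w_{m_1},\dots,a_rw_{m_r})$ with $m_1+\dots+m_r=n$, all $a_k\in\R$, and with the additional property that $\Delta_{n-1}(g)\neq 0$ whenever $\det g=0$. The functions $\Delta_i$ are $N$-invariant — the leading $i\times i$ block of $\ou^t x u$ equals $\ou_1^t x_{(i)} u_1$, where $x_{(i)}$ and $u_1$ denote the leading $i\times i$ blocks of $x$ and $u$ and $u_1$ is unipotent — so $g\in Z$ as well, i.e.\ $\Delta_1(g)=\dots=\Delta_{n-1}(g)=0$. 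In particular $\Delta_{n-1}(g)=0$, which by the stated property forces $\det g\neq 0$, hence $a_k\neq 0$ for every $k$.

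Next I would exclude having more than one block. If $r\ge 2$ then $m_1<n$ and $\Delta_{m_1}(g)=\det(a_1w_{m_1})=\pm a_1^{m_1}\neq 0$, contradicting $g\in Z$. Hence $r=1$ and $g=aw_n$ for some $a\in\R^\times$. Such a matrix manifestly lies in $Z'$: all entries $g_{ij}$ with $i+j<n+1$ vanish, all anti-diagonal entries $g_{i,n+1-i}$ equal the real number $a$, and $g\in Z$ because the leading $i\times i$ block of $w_n$ is singular for $i<n$. Finally, $Z'$ is $N$-invariant: the twisted action $x\mapsto\ou^t x u$ fixes each anti-diagonal entry and preserves the vanishing of all entries strictly above the anti-diagonal, by the same triangularity computation used for $\Delta_i$. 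Therefore $x\in Z'$ as well, which is the assertion.

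The whole argument is little more than unwinding the definitions of $Z$, $Z'$ and of a relevant matrix, once Lemma~\ref{lem:RelForm} is in hand; the only step that needs a moment's care is the $N$-invariance of $Z'$, which I would verify directly from the formula $x\mapsto\ou^t x u$ together with the upper-triangularity of $u$. I do not expect any genuine obstacle here beyond this bookkeeping.
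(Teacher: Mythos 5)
Your argument is correct and is exactly the paper's (largely implicit) proof: the paper deduces the corollary from Lemma \ref{lem:RelForm} together with the $N$-invariance of $Z$ and $Z'$, and your write-up simply fills in the details — the representative of a relevant orbit meeting $Z$ must be $aw_n$ with $a\in\R^{\times}$ (using the clause that $\det(g)=0$ forces $\Delta_{n-1}(g)\neq 0$), and $Z'$ is preserved by $x\mapsto\ou^t x u$. No gaps.
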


%
%

Using Corollary
\ref{cor:NegCoinvPrel} we obtain
\begin{cor}\label{RelSuppZ'}
Recall that we consider the action of $N$ on $\Sc(H)$ to be the standard action twisted by $\chi$. Then
$\Sc(U')=\Sc(U) + \n \Sc(U').$
\end{cor}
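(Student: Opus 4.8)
The plan is to deduce this immediately from Corollary \ref{cor:NegCoinvPrel}, applied to the action of $N$ on the Zariski open subset $X:=U'=H-Z'$, taking the closed invariant subset in that corollary to be $Z\cap U'=Z\setminus Z'$.

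First I would verify the hypotheses of Corollary \ref{cor:NegCoinvPrel}. The group $N$ is unipotent. The set $Z'$ is cut out inside $H$ by linear equations (the ones defining its ``shape'', together with the reality conditions $x_{i,n+1-i}\in\R$) intersected with $Z=\{\Delta_1=\dots=\Delta_{n-1}=0\}$, so $Z'$ is Zariski closed and $U'=H-Z'$ is a Nash manifold on which $N$ acts, the action preserving $U'$ because it preserves both $Z$ and $Z'$. Hence $Z\cap U'$ is a Zariski closed $N$-invariant subset of $U'$. Finally, for $z\in Z\cap U'$ we have $z\in Z$ and $z\notin Z'$, so by the Corollary above (every relevant element of $Z$ lies in $Z'$) the point $z$ is irrelevant, which by definition means $\chi|_{N_z}\not\equiv 1$.

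Applying Corollary \ref{cor:NegCoinvPrel} with the unitary character $\chi$ (and recalling that $\Sc(X)\otimes\chi$ with the tensor action is the same as $\Sc(X)$ with the standard action twisted by $\chi$) we obtain
$$\Sc(U')=\Sc\bigl(U'-(Z\cap U')\bigr)+\n\Sc(U').$$
Since $Z'\subseteq Z$, one has $U'-(Z\cap U')=(H-Z')\cap(H-Z)=H-Z=U$, so this reads $\Sc(U')=\Sc(U)+\n\Sc(U')$, as claimed; here $\Sc(U)\subset\Sc(U')$ via Property \ref{pOpenSet}.

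I do not expect a genuine obstacle: the only substantive input is the irrelevance of every point of $Z\setminus Z'$, and this is exactly what the preceding corollary supplies, while the remaining points (Zariski closedness of $Z\cap U'$ in $U'$ and its $N$-invariance) are routine. If anything needs care, it is only bookkeeping of the convention that the action of $N$ on $\Sc(H)$ is the one twisted by $\chi$, so that $\n\Sc(U')$ is understood in the twisted sense, matching $\n(\Sc(U')\otimes\chi)$ in Corollary \ref{cor:NegCoinvPrel}.
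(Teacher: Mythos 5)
Your proof is correct and is exactly the paper's argument: the paper deduces this corollary directly from Corollary \ref{cor:NegCoinvPrel} applied to the $N$-action on $U'$ with closed subset $Z\cap U'=Z\setminus Z'$, whose points are all irrelevant by the preceding corollary, so that $\chi|_{N_z}$ is non-trivial there and $U'-(Z\cap U')=U$. Your verification of the hypotheses (unipotence of $N$, invariance and closedness of $Z\cap U'$, and the twisted-action bookkeeping) just makes explicit what the paper leaves implicit.
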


%
%

\begin{lemma}
$Z' \nsupseteq Z'^{\bot}$.
\end{lemma}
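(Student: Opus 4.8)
The plan is to recognise $Z'$ as an honest linear subspace of $H$, to compute its $B$-orthogonal complement explicitly, and then to write down by hand a vector lying in $Z'^\bot$ but not in $Z'$; the desired non-inclusion — which is precisely the hypothesis needed to invoke Theorem~\ref{DualUncerPrel} with $L:=Z'$ — follows at once.

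First I would check that the two conditions defining $Z'$ already force $x\in Z$, so that $Z'$ coincides with the linear subspace $L\subseteq H$ cut out by ``$x_{ij}=0$ for $i+j<n+1$'' and ``$x_{1,n}=x_{2,n-1}=\dots=x_{n,1}\in\R$''. Indeed, if $x_{ij}=0$ whenever $i+j<n+1$, then for each $1\le i\le n-1$ the first $n-i\ge1$ rows of the top-left $i\times i$ block of $x$ vanish, so $\Delta_i(x)=0$ and hence $x\in Z$. Thus $Z'=L$; in particular $Z'^\bot$ is well defined, and since $Z'=L$ it suffices to produce $y\in L^\bot$ with $y\notin L$.

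Next I would compute $L^\bot$ with respect to $B(x,y)=\tr_\R(xwyw)$. Since conjugation by $w=w_n$ is the $180^\circ$ rotation of a matrix, one has $(wyw)_{kl}=y_{n+1-k,\,n+1-l}$ and hence $B(x,y)=\sum_{k,l}\tr_\R\big(x_{kl}\,y_{n+1-l,\,n+1-k}\big)$. Pairing $y$ against a basis of $L$ — the below-anti-diagonal entries, with their Hermitian conjugates, and the common anti-diagonal value $w=\sum_k E_{k,n+1-k}\in L$ — and using non-degeneracy of $(\alpha,\beta)\mapsto\tr_\R(\alpha\beta)$ on $D$ together with $\tr_\R(\bar\alpha)=\tr_\R(\alpha)$, one obtains
$$L^\bot=\Big\{\,y\in H:\ y_{ij}=0\text{ for }i+j<n+1,\ \ \textstyle\sum_{k}\tr_\R\big(y_{k,n+1-k}\big)=0\,\Big\}.$$
The decisive point is the mismatch on the anti-diagonal: lying in $L^\bot$ only constrains the \emph{sum} of the anti-diagonal entries, whereas lying in $L$ forces them all to \emph{coincide} and be real.

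Finally, using $n\ge2$, I would choose $\xi\in D$ with $\xi+\bar\xi=0$ and $\xi\ne0$ (the ``imaginary line'' $i\R$ in $\C$, respectively $\{(t,-t):t\in\R\}$ in $\R\oplus\R$) and let $y\in H$ be the matrix with $y_{1,n}=\xi$, $y_{n,1}=\bar\xi=-\xi$, and all other entries $0$. Then $y$ is Hermitian, vanishes above the anti-diagonal, and $\sum_k\tr_\R(y_{k,n+1-k})=\tr_\R(\xi+\bar\xi)=0$, so $y\in L^\bot=Z'^\bot$; but its anti-diagonal entries are not all equal (they would have to be a common real $\lambda$ with $\lambda=\xi$ and $\lambda=-\xi$, forcing $\xi=0$), so $y\notin L=Z'$. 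This gives $Z'\nsupseteq Z'^\bot$. I foresee no real difficulty; the only care needed is the bookkeeping for $L^\bot$ — handling the Hermitian symmetry and treating $D=\C$ and $D=\R\oplus\R$ uniformly through the reduced trace $\tr_\R\colon D\to\R$. (As a sanity check, for $n\ge3$ a crude dimension count already settles it, since $\dim Z'=\binom n2+1<\tfrac12\dim H$; only the boundary case $n=2$, where these dimensions agree, genuinely needs the explicit vector.)
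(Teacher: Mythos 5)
Your proof is correct. You identify $Z'$ with the linear subspace of $H$ cut out by the support condition $x_{ij}=0$ for $i+j<n+1$ together with a constant real anti-diagonal (the vanishing of the minors $\Delta_1,\dots,\Delta_{n-1}$ being automatic from the support condition), compute its $B$-orthogonal complement, and exhibit the explicit witness $y$ with $y_{1,n}=\xi$, $y_{n,1}=-\xi$, where $\xi+\bar\xi=0$, $\xi\neq 0$; this $y$ lies in $Z'^{\bot}$ but not in $Z'$, uniformly for all $n\geq 2$. The paper argues differently: for $n>2$ it only uses the dimension count $\dim Z'<\tfrac{1}{2}\dim H$, so that $Z'^{\bot}$ cannot be contained in $Z'$, and only in the boundary case $n=2$, where $\dim Z'=\dim Z'^{\bot}$, does it compute anything, namely that $B$ restricted to $Z'$ is not identically zero (the value $2ac$), whence $Z'\neq Z'^{\bot}$ and therefore $Z'\nsupseteq Z'^{\bot}$. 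Thus the paper trades your explicit description of $Z'^{\bot}$ for a case split consisting of a one-line dimension count plus a non-isotropy check, while your argument is uniform in $n$, produces a concrete element of $Z'^{\bot}\setminus Z'$, and treats $D=\C$ and $D=\R\oplus\R$ on the same footing via $\tr_\R$; your parenthetical dimension remark reproduces exactly the paper's generic-case argument.
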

\begin{proof}
For $n>2$ this is obvious since $\dim Z' < \frac{n^2}{2} =
\frac{\dim H}{2}$.

For $n=2$, $\dim Z' = \frac{n^2}{2} = \frac{\dim H}{2}$. Hence it
is enough to show that $Z' \neq (Z')^{\bot}$. Now
$$B \left (\begin{pmatrix}
  0 & a \\
  a & b
\end{pmatrix}   , \begin{pmatrix}
  0 & c \\
  c & d
\end{pmatrix}  \right ) = 2 ac ,$$
which is not identically 0.
\end{proof}

\begin{cor}\label{Cor2}
$\Sc(H)=\Sc(U')+\Fou(\Sc(U')).$
\end{cor}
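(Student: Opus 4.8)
The plan is to obtain Corollary \ref{Cor2} as an immediate instance of the Dual Uncertainty Principle (Theorem \ref{DualUncerPrel}), applied to the ambient vector space $V=H=H^n(D)$, the non-degenerate symmetric bilinear form $B=B^n_D$, the character $\psi$, and the subspace $L=Z'$.

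First I would note that $Z'$ is genuinely a linear subspace of $H$: the conditions defining it — vanishing of the entries $x_{ij}$ with $i+j<n+1$, and equality of all anti-diagonal entries $x_{i,n+1-i}$ to one common real scalar — are linear, and since $n\geq 2$ the case $i=j=1$ of the vanishing condition already forces $x_{11}=0$, hence $\Delta_1(x)=0$, so the linear variety they cut out lies inside $Z=H-U$. Thus $Z'$ is exactly this linear subspace, its orthogonal complement $Z'^{\bot}$ with respect to $B^n_D$ is well defined, and $U'=H-Z'$ is the complement of a linear subspace, which is the form of hypothesis Theorem \ref{DualUncerPrel} requires.

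Next I would invoke the Lemma proved immediately above, which states $Z'\nsupseteq Z'^{\bot}$, i.e. $L^{\bot}\nsubseteq L$ in the notation of Theorem \ref{DualUncerPrel} — precisely the hypothesis of the Dual Uncertainty Principle. Applying that theorem gives $\Sc(H)=\Sc(H-Z')+\Fou_{B^n_D,\psi}(\Sc(H-Z'))$; since $H-Z'=U'$ and the Fourier transform $\Fou$ used throughout this section is $\Fou_H=\Fou_{B^n_D,\psi}$ (see §\ref{sec_inv_for}), this is exactly $\Sc(H)=\Sc(U')+\Fou(\Sc(U'))$, as claimed.

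There is no real obstacle here: the entire content is packaged in Theorem \ref{DualUncerPrel} together with the preceding Lemma. The only points deserving a word of verification are that $Z'$ is a linear subspace (so that the theorem applies at all) and that the bilinear form implicit in the Lemma is the same $B^n_D$ that normalizes $\Fou$ (so that the non-containment provided by the Lemma matches the non-containment demanded by Theorem \ref{DualUncerPrel}); both are immediate from the definitions.
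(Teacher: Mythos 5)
Your proof is correct and is exactly the paper's argument: Corollary \ref{Cor2} is deduced from the preceding lemma ($Z'\nsupseteq Z'^{\bot}$) together with the dual uncertainty principle, Theorem \ref{DualUncerPrel}, applied to $V=H$, $B=B^n_D$, $L=Z'$, with $U'=H-Z'$ by definition. One small slip in your side-verification that $Z'$ is a linear subspace: $\Delta_1(x)=0$ alone does not place $x$ in $Z$, since $Z$ is the locus where \emph{all} of $\Delta_1,\dots,\Delta_{n-1}$ vanish; the correct (and still easy) observation is that if $x_{kl}=0$ whenever $k+l<n+1$, then every permutation term of $\Delta_i$ with $i\le n-1$ contains such an entry (a nonzero term would force $i(i+1)=\sum_k\bigl(k+\sigma(k)\bigr)\ge i(n+1)$), so all these minors vanish and the linear conditions by themselves cut out a subspace contained in $Z$, which is $Z'$.
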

\begin{proof}
Follows from the previous lemma and Theorem \ref{DualUncerPrel}.
\end{proof}

\begin{proof}[Proof of the Key Lemma (Lemma \ref{KeyLem})]
By Corollaries \ref{RelSuppZ'} and \ref{Cor2},
\begin{multline*}
\Sc(H) = \Sc(U')+\Fou(\Sc(U'))= \Sc(U) + \n \Sc(U') +
\Fou(\Sc(U) + \n \Sc(U')) =\\= \Sc(U) + \n \Sc(U') +
\Fou(\Sc(U)) + \n \Fou(\Sc(U')) =\\
=  \Sc(U) + \Fou(\Sc(U)) +
\n(\Sc(U')+\Fou(\Sc(U'))) \subset \Sc(U) + \Fou(\Sc(U)) +
\n(\Sc(H)).
\end{multline*}

The opposite inclusion is obvious.
\end{proof}

\section{Non-regular Kloostermann integrals} \label{sec:SingOrbitInt}
\setcounter{lemma}{0}

In this section we define Kloostermann integrals over relevant non-regular orbits. We prove that if two functions match  then their non-regular Kloostermann integrals also equal, up to a matching factor.
We also prove that if all regular Kloostermann integrals of a function vanish then all Kloostermann integrals of this function vanish. In the non-Archimedean case this was done in \cite{JacSing} and the proofs we give  here are very similar.

Recall that $g\in H^n(D)$ is called relevant if the character $\chi$ is trivial on the stabilizer $N(D)_g$ of $g$.
For every relevant $g\in H^n(D)$ and every $\Psi \in \Sc^{\det,\R^{\times}}(S^n(D)) $
we define
$$ \Omega_{D}^{n,\psi}(\Psi,g):= \int_{N/N_g} \Psi(\ou^t a u) \chi(u)du.$$
Recall the description of relevant orbits given in Lemma \ref{lem:RelForm}:
every relevant orbit in $H^n(D)$ has a unique element of the form
\begin{equation} \label{eq:RelForm}
g = \begin{pmatrix}
  a_1 w_{m_1} & 0 & ... & 0 \\
  0 & a_2 w_{m_2} & ... & 0 \\
  ...& ... & ... & ... \\
  0 & 0 & ... & a_n w_{m_n}
\end{pmatrix},
\end{equation}
where $m_1+...+m_j=n$, $a_1,...,a_j \in \bR$, and if $\det(g)=0$ then $\Delta_{n-1}(g) \neq 0$.
In particular, $H^n(\C)$ and $H^n(\R \oplus \R)$ have the same set of representatives of regular orbits.

\begin{notation}
We extend the definition of the transfer factor $\gamma$ to all $g$ of the form (\ref{eq:RelForm}) by
\begin{equation} 
\text{For }g = \begin{pmatrix}
  x & 0 \\
  0 & y
\end{pmatrix} \in S^i(\C)\times H^{n-i}(\C) \quad \gamma(g) = \gamma(x)\gamma(y) \sign(\det(x))^i
\end{equation}
\begin{equation} 
\gamma(a w_n) = \gamma(-a^{-1}w_{n-1}, \overline{\psi})c(\C,\psi)^{n(n-1)/2}\sign(\det(-a^{-1}w_{n-1}))
\end{equation}

\begin{remark}
 Since $c(\C,\psi)^2=-1$ and $c(\C,\psi) c(\C,\overline{\psi}) =1$, we have $\gamma(a w_{n+8}) = \gamma(a w_n)$ and for $1 \leq n \leq 8,$ $\gamma(a w_n)$ is determined by the sequence
$$ 1, \,\, c(\C,\psi) \sign(-a), \,\, \sign(a),\,\, 1, \,\, -1, \,\,  c(\C,\psi) \sign(-a), \,\, \sign(-a), \,\, 1.$$
In particular $\gamma(g)$ is always a fourth root of unity.
\end{remark}
\end{notation}

\begin{theorem}\label{thm:SingMatching}
 Let $\Phi \in \Sc^{\det,\R^{\times}}(H^n(\R \oplus \R )) $ and $\Psi \in \Sc^{\det,\R^{\times}}(H^n(\C))$.
Suppose that $$ \Omega_{\R\oplus \R}^{n,\psi}(\Phi)= \gamma \Omega_{\C}^{n,\psi}(\Psi).$$
Then for any $g$ of the form (\ref{eq:RelForm}) we have $$\Omega_{\R\oplus \R}^{n,\psi}(\Phi,g)= \gamma(g,\psi) \Omega_{\C}^{n,\psi}(\Psi,g).$$
\end{theorem}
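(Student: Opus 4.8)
The plan is to adapt Jacquet's argument from \cite{JacSing} and argue by induction on $n$, following the two recursive clauses in the definition of the extended transfer factor $\gamma$. For $n=1$ the only relevant orbit is the regular one, so there is nothing to prove. Let $n\geq 2$ and assume the theorem — and the evident analogue in which $H^n(D)$ is replaced by a product $S^{n_1}(D)\times\cdots\times S^{n_k}(D)\times H^m(D)$ — in all smaller dimensions. By Lemma \ref{lem:RelForm} a relevant $g$ of the form (\ref{eq:RelForm}) is of one of two kinds, which I treat separately: the \emph{reducible} kind, $g=\diag(x,y)$ with $x\in S^i(D)$, $0<i<n$, and $y\in H^{n-i}(D)$ again of the form (\ref{eq:RelForm}) — this includes the case $\det g=0$, which by Lemma \ref{lem:RelForm} forces $\Delta_{n-1}(g)\neq 0$, hence $i=n-1$ and $y=0\in H^1(D)$ — and the \emph{irreducible} kind, $g=aw_n$, which lies in $Z^n(D)$ since every proper leading minor of $w_n$ vanishes.

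\textbf{Reducible $g$.} Each $\Delta_j$ is invariant under the action $x\mapsto\ou^t x u$, so the whole orbit $Ng$ lies in $O_i^n(D)$. Performing the $N$-integration defining $\Omega^n(\Phi,g)$ in two stages — first over $N_i^n$, then over $N/N_i^n$ — factors it through the intermediate Kloosterman integral $\Omega_i^n$, as in Proposition \ref{FirstIng}(ii), so that $\Omega^n(\Phi,g)$ becomes the product of a singular $\Omega^i$-integral at $x$ with a singular $\Omega^{n-i}$-integral at $y$, both of $\Omega_i^n(\Phi)$. The hypothesis $\Omega_{\R\oplus\R}(\Phi)=\gamma\,\Omega_\C(\Psi)$, restricted to the locus $\Delta_i\neq 0$ of $A^n$, is precisely a matching on $S^i\times H^{n-i}$ of $\Omega_i^n(\Phi)$ and $\Omega_i^n(\Psi)$: the regular transfer factor $\gamma(a)=\prod_k\sign(a_1\cdots a_k)$ factorises under $A^n\cong A^i\times A^{n-i}$ into the product of the $i$- and $(n-i)$-dimensional ones times $\sign(a_1\cdots a_i)^{\,i}$, which along our orbit is $\sign(\det x)^{\,i}$. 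Applying the induction hypothesis to each factor gives the matching of the two singular $\Omega^i$- and $\Omega^{n-i}$-integrals at $x$ and at $y$; multiplying and collecting the normalisations reproduces $\gamma(\diag(x,y))=\gamma(x)\gamma(y)\sign(\det x)^i$, and this case is complete.

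\textbf{Irreducible $g=aw_n$.} I would first establish, for this single orbit, an inversion-type identity: a formula expressing $\Omega^{n,\psi}(\Phi,aw_n)$ in terms of $\Omega^{n-1,\overline{\psi}}$, evaluated at $-a^{-1}w_{n-1}$, of a suitable partial Fourier transform of $\Phi$ restricted to the top-left block $H^{n-1}(D)\subset H^n(D)$, up to an explicit factor (powers of $c(D,\psi)$, of $\eta_D$, and of the absolute value of a determinant). This should be proved just like the partial inversion formula of \S\ref{Sec_par}: parametrise the orbit $N(aw_n)$, integrate out part of the variable in $N/N_{aw_n}$, and identify the remaining integral as the Fourier transform of a Hermitian Gaussian, to which the one-dimensional Weil formula (Proposition \ref{prop:WeilFor}) and its matrix version (Corollary \ref{wil_cor}) apply; this is the source of the exponent $n(n-1)/2$. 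Granting the identity, the hypothesis that $\Phi$ matches $\Psi$, combined with Theorem \ref{SecondIng} — which turns $\psi$-matching into $\overline{\psi}$-matching up to a constant, uniformly in $D$ — shows that the corresponding transform of $\Phi$ matches that of $\Psi$; applying the induction hypothesis in dimension $n-1$ at $-a^{-1}w_{n-1}$ and collecting constants yields exactly $\gamma(aw_n)=\gamma(-a^{-1}w_{n-1},\overline{\psi})\,c(\C,\psi)^{n(n-1)/2}\sign(\det(-a^{-1}w_{n-1}))$, which closes the induction.

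\textbf{Main obstacle.} The conceptual skeleton is Jacquet's; the work is twofold and technical. First, one must set up the two structural identities above — the factorisation of $\Omega^n(\Phi,g)$ through $\Omega_i^n$ at a non-regular point, and the inversion-type identity at $aw_n$ — rigorously within the Archimedean Schwartz framework of \S\ref{Schwartz}: absolute convergence of the iterated integrals, Fubini-type interchanges, and application of the Weil formula against the spaces $\Sc^{\det,\R^{\times}}$, the same care already needed in \S\ref{sec_inv_for}. Second, and this is where most of the labour lies, one must track through the nested inductions every power of $c(D,\psi)$, of $\eta_D$, of a determinant, and of $\sign\circ\det$, together with each $\psi\leftrightarrow\overline{\psi}$ switch introduced by $\Fou$, so that the total constant comes out to exactly $\gamma(g,\psi)$; the stated facts that $\gamma(aw_{n+8})=\gamma(aw_n)$ and that $\gamma(g)$ is always a fourth root of unity give a convenient running check.
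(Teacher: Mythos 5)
Your overall skeleton is the paper's (and Jacquet's): induction on $n$, the block-diagonal case via intermediate Kloosterman integrals, and the case $g=aw_n$ via a Fourier/inversion argument; the reducible case as you describe it is essentially identical to the paper's treatment. But in the irreducible case you have only postulated the crucial identity, and the way you propose to obtain it and to use it does not quite work as stated. The paper's actual device is Lemma \ref{lem:SimpInv}: for $\Phi\in\Sc(H^n(D))$,
$$\Omega_D(\Phi,aw_n)=|a|^{-n^2+1}\int \Omega_D^{n,\overline{\psi}}\Bigl(\Fou(\Phi),\begin{pmatrix}-a^{-1}w_{n-1}&0\\0&b\end{pmatrix}\Bigr)\,db,$$
and this is proved by an elementary Fourier duality between the affine subspaces $ae+V$ and $aw_n+V^{\perp}$ (Lemmas \ref{lem:FirstInt}, \ref{lem:SecInt}); no Weil formula and no constant $c(D,\psi)^{n(n-1)/2}$ appear at this stage. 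The factor $c(\C,\psi)^{n(n-1)/2}$ in the definition of $\gamma(aw_n)$ enters only because $\Fou(\Phi)$ matches $\Fou(\Psi)$ with respect to $\overline{\psi}$ up to exactly that constant, which is Theorem \ref{SecondIng}; so your attribution of this exponent to a Gaussian/Weil computation along the orbit $N(aw_n)$ double-locates the constant. Moreover, after the identity one does not "apply the induction hypothesis in dimension $n-1$ at $-a^{-1}w_{n-1}$" directly: to do that you would first have to know that the restricted partial Fourier transforms of $\Phi$ and $\Psi$ match as $(n-1)$-dimensional data with respect to $\overline{\psi}$, which you assert but do not derive. The paper avoids this by applying the already-proved $n$-dimensional reducible-case lemma to the pair $(\Fou(\Phi),\Fou(\Psi))$ at the points $\diag(-a^{-1}w_{n-1},b)$ and then integrating over $b$; the $(n-1)$-dimensional induction hypothesis is invoked only inside that reducible-case lemma. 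If you try to flesh out your version you will be forced back into exactly this structure, so the missing identity and the missing matching statement are the real content, not bookkeeping.

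A second, smaller omission: your argument takes Fourier transforms of $\Phi\in\Sc^{\det,\R^{\times}}(H^n(D))$ and feeds such functions into Theorem \ref{SecondIng} and the inversion-type identity, but these are stated for genuine Schwartz functions, and $\Fou$ is not defined on the relative space. The paper disposes of this at the outset: if a counterexample existed in $\Sc^{\det,\R^{\times}}$, multiply $\Phi$ and $\Psi$ by $f\circ\det$ with $f\in C_c^{\infty}(\R)$ equal to $1$ at $\det(g)$; since $\det$ is $N$-invariant this preserves both the hypothesis and the failure at $g$, and reduces to $\Phi\in\Sc(H^n(\R\oplus\R))$, $\Psi\in\Sc(H^n(\C))$. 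You list "Schwartz framework issues" among the obstacles but the needed step is this specific (easy) cutoff reduction, and it should be stated before any Fourier transform is taken.
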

For proof see \S\S\ref{subsec:PfSingMatching}.

By substituting $0$ in place of $\Phi$ or $\Psi$ we obtain the folowing corollary
\begin{cor}[Density]\label{cor:SingDensity}
Let $\Phi \in \Sc^{\det,\R^{\times}}(H^n(D)) $. Suppose that $ \Omega_{D}(\Phi)=0$. Then $\Omega_{D}(\Phi,g)= 0$ for any relevant $g \in D$.
\end{cor}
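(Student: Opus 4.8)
The corollary is meant to be a formal consequence of Theorem \ref{thm:SingMatching}. The idea is to take one of the two functions in Theorem \ref{thm:SingMatching} to be zero. Suppose $\Phi \in \Sc^{\det,\R^{\times}}(H^n(D))$ with $\Omega_D(\Phi) = 0$. There are two cases according to whether $D = \R \oplus \R$ or $D = \C$.

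\textbf{Case $D = \R\oplus\R$.} Apply Theorem \ref{thm:SingMatching} with this $\Phi$ and with $\Psi = 0 \in \Sc^{\det,\R^{\times}}(H^n(\C))$. The hypothesis of that theorem is $\Omega_{\R\oplus\R}^{n,\psi}(\Phi) = \gamma\,\Omega_{\C}^{n,\psi}(\Psi)$; since $\Psi = 0$ we have $\Omega_{\C}^{n,\psi}(\Psi) = 0$, and by assumption $\Omega_{\R\oplus\R}^{n,\psi}(\Phi) = 0$, so the hypothesis holds (both sides vanish). The conclusion of Theorem \ref{thm:SingMatching} then gives, for every $g$ of the form (\ref{eq:RelForm}), $\Omega_{\R\oplus\R}^{n,\psi}(\Phi,g) = \gamma(g,\psi)\,\Omega_{\C}^{n,\psi}(0,g) = 0$. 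Since by Lemma \ref{lem:RelForm} every relevant $g \in H^n(D)$ is $N(D)$-conjugate to an element of the form (\ref{eq:RelForm}), and $\Omega_D(\Phi,g)$ depends only on the orbit of $g$, this shows $\Omega_{\R\oplus\R}(\Phi,g) = 0$ for all relevant $g$.

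\textbf{Case $D = \C$.} Symmetrically, apply Theorem \ref{thm:SingMatching} with $\Phi = 0 \in \Sc^{\det,\R^{\times}}(H^n(\R\oplus\R))$ and with the given $\Psi := \Phi \in \Sc^{\det,\R^{\times}}(H^n(\C))$. Again both sides of the hypothesis $\Omega_{\R\oplus\R}^{n,\psi}(0) = \gamma\,\Omega_{\C}^{n,\psi}(\Psi)$ vanish — the left since $\Phi = 0$, the right by assumption $\Omega_\C(\Psi) = 0$ — so the hypothesis is satisfied. The conclusion yields $0 = \Omega_{\R\oplus\R}^{n,\psi}(0,g) = \gamma(g,\psi)\,\Omega_\C^{n,\psi}(\Psi,g)$ for all $g$ of the form (\ref{eq:RelForm}); since $\gamma(g,\psi)$ is a fourth root of unity (in particular nonzero) by the Remark preceding Theorem \ref{thm:SingMatching}, we get $\Omega_\C^{n,\psi}(\Psi,g) = 0$, and then by $N(\C)$-invariance of the orbital integral and Lemma \ref{lem:RelForm} this holds for all relevant $g$.

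\textbf{Main point to check.} There is essentially no obstacle here: the only thing to be careful about is that Theorem \ref{thm:SingMatching} as stated is phrased asymmetrically (with $\Phi$ on the $\R\oplus\R$ side and $\Psi$ on the $\C$ side), so to cover both cases of the corollary one must plug the given function into the correct slot and the zero function into the other. In the $\C$-case one also uses the non-vanishing of the transfer factor $\gamma(g,\psi)$ to divide it out; this is precisely the content of the Remark that $\gamma(g)$ is always a fourth root of unity. Everything else is just the observation that orbital integrals $\Omega_D(\Phi,g)$ are constant on $N(D)$-orbits, so it suffices to treat representatives of the form (\ref{eq:RelForm}), which is exactly the set of $g$ handled by Theorem \ref{thm:SingMatching}.
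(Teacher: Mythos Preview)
Your proof is correct and follows exactly the approach the paper takes: the corollary is obtained from Theorem \ref{thm:SingMatching} by substituting $0$ for $\Phi$ or for $\Psi$, and you have spelled out the two cases (including the use of the non-vanishing of $\gamma(g,\psi)$) precisely as intended.
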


For the proof of
Theorem \ref{thm:SingMatching}
we will need the following lemma, which is a more elementary version of the inversion formula.

\begin{lemma}\label{lem:SimpInv}
Let $n>1$. For any $\Phi \in \Sc(H^n(D))$, define the function
$f_{\Phi}$ on $\R^{\times}$ by $f_{\Phi}(a):=\Omega_D(\Phi,aw_n)$. Then $f_{\Phi} \in \Sc(\R^{\times})$ and

$$f_{\Phi}(a)=|a|^{-n^2+1}\int \Omega_D^{n,\overline{\psi}}(\Fou(\Phi), \begin{pmatrix}
       -a^{-1}w_{n-1} & 0\\
        0 & b
      \end{pmatrix}) db.$$
\end{lemma}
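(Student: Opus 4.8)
The statement to be proven is Lemma~\ref{lem:SimpInv}, which is a $1$-variable shadow of the full inversion formula (Theorem~\ref{SecondIng2}/Proposition~\ref{prop_par_inv}) specialized to the most degenerate relevant orbit $aw_n$ on the bottom and the almost-regular orbit $\mathrm{diag}(-a^{-1}w_{n-1},b)$ on the top. The plan is to deduce it directly from the partial inversion formula Proposition~\ref{prop_par_inv} with $i = n-1$, together with the definition of the non-regular Kloosterman integral $\Omega_D(\Phi,g)$ and the partial transform computation in Corollary~\ref{cor:par_trans}. First I would record that for $g = aw_n$ the stabilizer $N_g$ is computed explicitly: writing $u_1^t (aw_n) u_2 = aw_n$ forces $u_1 = w_n \bar u_2^{-t} w_n$, so $N/N_g \cong N$ embeds via $u \mapsto (\,\cdot\,)$ and the integral $\Omega_D(\Phi,aw_n) = \int_{N} \Phi(\bar u^t (aw_n) u)\chi(u)\,du$ is (up to the explicit normalizing power of $|a|$ coming from $\widetilde\Omega$ versus $\Omega$) exactly the value at $a$ of a specialization of $\widetilde\Omega^\psi(\Phi)$ along the torus. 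In other words $f_\Phi(a)$ is, up to the transfer-factor/modulus bookkeeping, the iterated Jacquet-transformed integral $\cJ_n \circ \widetilde\Omega^\psi(\Phi)$ evaluated at the point of $A$ corresponding to $aw_n$.

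Second, I would apply Proposition~\ref{prop_par_inv}(ii) with $i = n-1$, $n-i = 1$: $\cJ_{n-1}\circ \widetilde\Omega_{n-1}^\psi|_{\Sc(H)} = c(D,\psi)^{n}\,\widetilde\Omega_1^{\bar\psi}\circ\Fou_H$. Composing both sides with $\widetilde\Omega^{n-1,1}$ (respectively $\widetilde\Omega^{1,n-1}$) on the left and using $\widetilde\Omega^n = \widetilde\Omega^{i,n-i}\circ\widetilde\Omega_i^n$ from Proposition~\ref{FirstIng}(ii), this turns the $n$-fold Kloosterman integral of $\Phi$ at $aw_n$ into the $1$-fold Kloosterman integral of $\Fou(\Phi)$ at $\mathrm{diag}(-a^{-1}w_{n-1},b)$, integrated over $b$ — precisely the right-hand side of the lemma — once I identify the $\psi$-phase $\psi(wB^{-1}w\eps A^{-1}\eps^t)$ and the power of $|\det A|$ appearing in $\cJ'_{n-1}$ and in $\widetilde\Omega_{n-1}$ with $|a|^{-n^2+1}$ in the statement. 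Concretely, for $A = aw_{n-1}$ (or rather $-a^{-1}w_{n-1}$ after the substitution built into the Weil/Jacquet normalization) one has $\det A = \pm a^{n-1}$ and the accumulated modulus factors $\eta(\det A)^{?}|\det A|^{?}$ from $\widetilde\Omega_{n-1}$, $\wO_{n-i}$ and $\cJ'_{n-1}$ multiply to $|a|^{-(n^2-1)}$; this is a bookkeeping exercise of the same flavor as the ones already carried out in \S\ref{Sec_par}. I would also invoke Proposition~\ref{OmegaWell} / Property~\ref{NashSub} to justify that $f_\Phi \in \Sc(\R^\times)$: indeed $a \mapsto \Omega_D(\Phi,aw_n)$ is obtained from $\Phi$ by a sequence of pushforwards along Nash submersions (the $N$-orbit map onto the $aw_n$-orbit) landing in $\Sc(\R^\times)$.

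Third, for the convergence of the $b$-integral and the meaning of ``iterated integral'', I would point to the same mechanism as in Theorem~\ref{SecondIng}: the inner object $\Omega_D^{n,\bar\psi}(\Fou(\Phi),\mathrm{diag}(-a^{-1}w_{n-1},b))$ is, as a function of $b$ (a diagonal variable after further reduction), Schwartz on the relevant torus by the $n=1$ case of Proposition~\ref{OmegaWell}, hence integrable; this is exactly the content that is packaged in Proposition~\ref{prop_par_inv}(i), namely $\wO_i(\Sc(H)) \subset \fJ_{i,n-i}$, which guarantees all the partial Fourier transforms and phase multiplications stay within the Jacquet domain.

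\textbf{Main obstacle.} The genuinely delicate point is not the formula itself — it falls out of Proposition~\ref{prop_par_inv} with $i=n-1$ — but the precise reconciliation of normalizations: matching the explicit scalar $|a|^{-n^2+1}$ and the implicit sign/fourth-root-of-unity factor against the product of (a) the modulus characters $\eta(\det A)^{n-i}|\det A|^{-(n-i)}$ appearing in the Lemma before Corollary~\ref{cor:par_trans}, (b) the factors $(\eta(\det A)/|\det A|)^{n-i}$ built into $\cJ'_{i}$ through Corollary~\ref{wil_cor}, and (c) the substitution $A \mapsto -A^{-1}$ hidden in passing from $\widetilde\Omega_{n-1}$ to its Jacquet transform. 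Getting these to collapse to the clean exponent $-n^2+1 = -(n-1)(n+1)$ requires care but is purely computational. A secondary, milder point is spelling out $N/N_{aw_n}$ and checking the Haar-measure normalization on it matches the one implicitly used when $\widetilde\Omega^n$ is viewed as an iterated integral; this is routine given Lemma~\ref{lem:RelForm} and the Bruhat-cell description of the $aw_n$-orbit.
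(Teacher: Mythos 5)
Your plan rests on reading $f_\Phi(a)=\Omega_D(\Phi,aw_n)$ as an evaluation of the regular/intermediate machinery --- ``the iterated Jacquet-transformed integral $\cJ_n\circ\wO^{\psi}(\Phi)$ evaluated at the point of $A$ corresponding to $aw_n$'', and then specializing Proposition \ref{prop_par_inv} with $i=n-1$. This is where the argument breaks: $aw_n$ does not correspond to any point of $A$, and in fact $\Delta_i(aw_n)=0$ for all $1\le i\le n-1$, so $aw_n$ lies in $Z$, outside every $O_i$. Consequently neither $\wO^n$ nor any composition $\wO^{i,n-i}\circ\wO_i^n$ (Proposition \ref{FirstIng}) sees the orbit of $aw_n$ pointwise: the non-regular integral is taken over $N/N_{aw_n}$, a lower-dimensional orbit inside the closed set $Z$, whereas Proposition \ref{prop_par_inv} is an identity of functions on $S^{n-i}\times H^{i}$ built entirely from the regular data of $\Phi$. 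There is no point at which the left-hand side of the lemma appears as a value of either side of the partial inversion formula, so the lemma does not ``fall out'' of it. Bridging the gap would require an asymptotic/limiting statement expressing degenerate orbital integrals through regular ones --- but such density is exactly Corollary \ref{cor:SingDensity}, which in the paper is deduced from Theorem \ref{thm:SingMatching}, whose proof uses the present lemma; so that route is circular as it stands. (A smaller inaccuracy: for $g=aw_n$ the stabilizer $N_g$ is a positive-dimensional subgroup --- in the split case a full copy of $N^n(\R)$, in the Hermitian case the unipotent part of the unitary group of $w_n$ --- so $N/N_g$ is not isomorphic to $N$; this matters for the measure bookkeeping you defer.)

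The paper's proof is instead elementary and self-contained, independent of \S\ref{Sec_par}: after a change of variables, the orbit integral defining $f_\Phi(a)$ becomes an integral of $\Phi$ over the affine subspace $aw_n+V$ against the character $\psi(\langle a^{-1}e,\cdot\rangle)$ (Lemma \ref{lem:FirstInt}), while the $b$-integrated right-hand side becomes an integral of $\Fou(\Phi)$ over the complementary affine subspace $ae+V^{\bot}$ against $\psi(\langle a^{-1}w,\cdot\rangle)$ (Lemma \ref{lem:SecInt}); the identity then reduces to the single Fourier-duality statement $\delta_{ae+V}\,g_{-a^{-1}}=\Fou^*\bigl(\delta_{-a^{-1}w_n+V^{\bot}}\,f_a\bigr)$ between tempered distributions supported on complementary affine subspaces. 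The membership $f_\Phi\in\Sc(\R^{\times})$ is likewise proved directly, by writing $f_\Phi$ as a partial Fourier transform of $\Phi|_{\Span(w_n)\oplus V}$ restricted along the closed embedding $a\mapsto(a,\phi(a^{-1}))$, rather than by appealing to Proposition \ref{OmegaWell}, which again concerns the regular integrals only. If you want to salvage your approach you would have to prove a limit formula relating the degenerate orbital integral to boundary behaviour of $\wO_{n-1}$, which is substantially more work than the direct computation.
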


\subsection{Proof of Lemma \ref{lem:SimpInv}}\label{subsec:PfSimpInv} $ $
\begin{notation}
$ $
\begin{itemize}
\item We denote $V:=\{\{a_{i,j}\}\in H|a_{i,j}=0 \text{ if }i+j\leq n+1\}\subset H$. 
\item Note that $V^\bot=\{\{a_{i,j}\}\in H|a_{i,j}=0 \text{ if }i+j < n +1 \}\subset H$. 
\item We denote $e:=\{e_{i,j}\}\in H.$ where $e_{i,j}=\delta_{i+j,n}$.
\end{itemize}
The following two lemmas follow from change of variables.
\begin{lemma} \label{lem:FirstInt}
We have
$$f_{\Phi}(a)=|a|^{(n-n^2)/2}\int_{v \in V} \Phi(aw_n+v) \psi(<a^{-1}e,v>) dv$$
\end{lemma}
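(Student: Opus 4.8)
The plan is to unwind the definition of $f_\Phi$ and reduce it, by a single change of variables, to an integral over the affine subspace $aw_n+V$, identifying the orbit of $aw_n$ under $N$ with $aw_n+V$.

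First, by Lemma \ref{lem:RelForm} the element $g:=aw_n$ (the case of one Weyl block, $m_1=n$) is relevant for every $a\in\R^\times$, and its stabilizer $N_g=\{u\in N:\ \overline{u}^t w_n u=w_n\}$ does not depend on $a$. By definition
$$f_\Phi(a)=\Omega_D(\Phi,aw_n)=\int_{N/N_g}\Phi(\overline{u}^t (aw_n) u)\,\chi(u)\,du ,$$
the integral over $N/N_g$ taken with the quotient of the standard Haar measures. The geometric input I would establish is that the orbit map $u\mapsto \overline{u}^t(aw_n)u$ sends $N$ onto $aw_n+V$ and induces an algebraic isomorphism $N/N_g\xrightarrow{\ \sim\ }V$, $uN_g\mapsto v:=\overline{u}^t(aw_n)u-aw_n$. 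The inclusion $\overline{u}^t(aw_n)u\in aw_n+V$ is a direct matrix computation: writing $\overline{u}^t(aw_n)u=a(\overline{u}^t w_n)u$, the entries $(w_n u)_{ij}$ and $(\overline{u}^t w_n)_{ij}$ vanish unless $i+j\ge n+1$ and agree with those of $w_n$ when $i+j=n+1$, while left multiplication by $\overline{u}^t$ only pushes the support further into the region $i+j>n+1$; hence $\overline{u}^t(aw_n)u-aw_n\in V$. Since $N$ is unipotent, its orbit on $H$ is Zariski closed; comparing dimensions, $\dim N\cdot(aw_n)=\dim N-\dim N_g=\tfrac{n(n-1)}{2}=\dim V$, so this closed irreducible subvariety of the affine space $aw_n+V$ equals all of it, and the orbit map $N/N_g\to aw_n+V$ is an isomorphism (hence a Nash diffeomorphism).

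Then I would carry out the change of variables. Since $\overline{u}^t(aw_n)u-aw_n=a(\overline{u}^t w_n u-w_n)$, the substitution factors as $v=a\,v_1$ with $v_1:=\overline{u}^t w_n u-w_n\in V$ independent of $a$; this contributes a Jacobian factor $|a|^{\dim V}=|a|^{(n^2-n)/2}$, and one checks that the remaining Jacobian of $uN_g\mapsto v_1$, from the invariant measure on $N/N_g$ to Lebesgue measure on $V$, is constant in $u$ — this is where unipotence enters, the higher-order terms in $u$ being triangular with respect to the natural grading of $V$ — equal to $1$ for the standard normalizations. It remains to rewrite the twist: $\chi$ is trivial on $N_g$ by relevance, so it descends to a function on $N/N_g\cong V$, and a direct computation using $B(x,y)=\tr_\R(xw_n y w_n)$ and the explicit matrices $e$ (with $e_{ij}=\delta_{i+j,n}$) and $w_n$ shows this function is $v\mapsto\psi(\langle a^{-1}e,v\rangle)$; concretely, the superdiagonal entries of $\overline{u}^t w_n u$ reproduce $\sum_i(u_{i,i+1}+\overline{u}_{i,i+1})$. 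Substituting yields
$$f_\Phi(a)=|a|^{(n-n^2)/2}\int_{v\in V}\Phi(aw_n+v)\,\psi(\langle a^{-1}e,v\rangle)\,dv ,$$
the integral converging absolutely because the restriction of a Schwartz function on $H$ to the affine subspace $aw_n+V$ is Schwartz.

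The conceptual steps are easy; the main obstacle is the bookkeeping. One must verify that the perturbation $\overline{u}^t(aw_n)u-aw_n$ lands exactly in $V$ and that the orbit fills $aw_n+V$ (equivalently, that the differential $X\mapsto\overline{X}^t w_n+w_n X$ surjects onto $V$), pin the Jacobian to precisely $|a|^{(n-n^2)/2}$ with no stray constant, and match $\chi(u)$ with $\psi(\langle a^{-1}e,v\rangle)$ under the trace form $B$. This combinatorial verification — tracking which entries sit above, on, and below the anti-diagonal — is presumably why the authors only say the statement "follows from change of variables".
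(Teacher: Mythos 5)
Your proposal is correct and is exactly the argument the paper intends: the paper's own ``proof'' is the single remark that the lemma ``follows from change of variables,'' and your identification of the orbit $N\cdot(aw_n)$ with $aw_n+V$, the Jacobian bookkeeping giving $|a|^{\dim V}=|a|^{(n^2-n)/2}$, and the matching of $\chi(u)$ with $\psi(\langle a^{-1}e,v\rangle)$ via the form $B$ are precisely the details being left to the reader. So you take the same route, just spelled out more fully than the paper does.
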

\begin{lemma} \label{lem:SecInt}
We have
%
%
%
%
%

$$\int \Omega_D^{n, \psi}(\Phi, \begin{pmatrix}
       a^{}w_{n-1} & 0\\
        0 & b
      \end{pmatrix}) db=|a|^{-(n+n^2)/2+1}\int_{v \in V^{\bot}} \Phi(a^{} e+v) \psi(<a^{-1}w,v>) dv.$$

\end{lemma}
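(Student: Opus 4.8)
The target is Lemma \ref{lem:SecInt}, and the plan is to unfold its left-hand side and then invoke Lemma \ref{lem:FirstInt} one dimension lower. Write $g_b:=\begin{pmatrix}aw_{n-1}&0\\0&b\end{pmatrix}$ and use the factorization $N=N^n(D)=N^{n-1}\ltimes N_{n-1}^n$, where $N^{n-1}$ is embedded in the upper-left block (with $1$ in the corner) and $N_{n-1}^n$ is the abelian group of $\mu_c:=\begin{pmatrix}I_{n-1}&c\\0&1\end{pmatrix}$, $c\in D^{n-1}$. Computing the stabilizer of the block-diagonal $g_b$ (using $\det(aw_{n-1})\neq0$) gives $N_{g_b}=N^{n-1}_{aw_{n-1}}$, sitting in the upper-left block, so $N/N_{g_b}\cong(N^{n-1}/N^{n-1}_{aw_{n-1}})\times N_{n-1}^n$ compatibly with Haar measures. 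With $u=u'\mu_c$ and $X:=\overline{u'}^{\,t}(aw_{n-1})u'\in H^{n-1}(D)$ one computes
$$\overline{u}^{\,t}g_bu=\begin{pmatrix}X&Xc\\\overline{c}^{\,t}X&\overline{c}^{\,t}Xc+b\end{pmatrix},\qquad\chi(u)=\chi^{n-1}(u')\,\psi\!\bigl(c_{n-1}+\overline{c_{n-1}}\bigr),$$
$\chi^{n-1}$ being the analogous character on $N^{n-1}$, whence $\Omega_D^{n,\psi}(\Phi,g_b)=\int_{c\in D^{n-1}}\Omega_D^{n-1,\psi}(\Psi_{c,b},aw_{n-1})\,\psi(c_{n-1}+\overline{c_{n-1}})\,dc$ with $\Psi_{c,b}(Y):=\Phi\begin{pmatrix}Y&Yc\\\overline{c}^{\,t}Y&\overline{c}^{\,t}Yc+b\end{pmatrix}$, the pullback of $\Phi$ along an affine embedding $H^{n-1}(D)\hookrightarrow H^n(D)$, hence an element of $\Sc(H^{n-1}(D))$.

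Now apply Lemma \ref{lem:FirstInt} in dimension $n-1$ to $\Psi_{c,b}$ for fixed $c,b$: with $V_{n-1}\subset H^{n-1}(D)$ and $e_{n-1}\in H^{n-1}(D)$ denoting the analogues of $V$ and $e$, this turns $\Omega_D^{n-1,\psi}(\Psi_{c,b},aw_{n-1})$ into $|a|^{((n-1)-(n-1)^2)/2}\int_{V_{n-1}}\Psi_{c,b}(aw_{n-1}+v')\,\psi(\langle a^{-1}e_{n-1},v'\rangle)\,dv'$. Put $Y:=aw_{n-1}+v'$; then $Y$ is invertible because $w_{n-1}v'$ is strictly upper triangular. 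In block form $ae=\begin{pmatrix}aw_{n-1}&0\\0&0\end{pmatrix}$, the nonzero entries of $e$ being exactly those on the anti-diagonal $i+j=n$; hence $\Psi_{c,b}(Y)=\Phi(ae+v)$ with $v:=\begin{pmatrix}v'&Yc\\\overline{c}^{\,t}Y&\overline{c}^{\,t}Yc+b\end{pmatrix}$, and $v$ is supported on the entries with $i+j\ge n+1$, i.e.\ $v\in V^{\bot}$. Since $Y$ is invertible, $(v',c,b)\mapsto v$ is a bijection $V_{n-1}\times D^{n-1}\times\R\to V^{\bot}$, of the expected dimension $\tfrac{(n-1)(n-2)}{2}+2(n-1)+1=\tfrac{n(n+1)}{2}=\dim_{\R}V^{\bot}$. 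Integrating over $b\in\R$ gives
$$\int\Omega_D^{n,\psi}(\Phi,g_b)\,db=|a|^{((n-1)-(n-1)^2)/2}\int_{b\in\R}\int_{c\in D^{n-1}}\int_{v'\in V_{n-1}}\Phi(ae+v)\,\psi\!\bigl(\langle a^{-1}e_{n-1},v'\rangle+c_{n-1}+\overline{c_{n-1}}\bigr)\,dv'\,dc\,db.$$

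It remains to run the change of variables $(v',c,b)\mapsto v$. Its Jacobian is block-triangular — identity blocks for $v'$ and $b$, and the $\R$-linear map $c\mapsto Yc$ on $D^{n-1}$ for the $c$-block — and since $\det_D Y=\det_D(aw_{n-1})\,\det_D(I+a^{-1}w_{n-1}v')=\pm a^{n-1}$ (the second determinant is $1$ by nilpotency), this Jacobian equals the constant $|a|^{2(n-1)}$ for $D=\C$ and for $D=\R\oplus\R$ alike, so $dv'\,dc\,db=|a|^{-2(n-1)}\,dv$. A direct computation with the form $B$ — among the entries of $v$ with $i+j=n+1$ only the $(1,n)$ and $(n,1)$ ones involve $c$, namely $ac_{n-1}$ and $a\overline{c_{n-1}}$ — yields $\langle a^{-1}w,v\rangle=\langle a^{-1}e_{n-1},v'\rangle+c_{n-1}+\overline{c_{n-1}}$, so the argument of $\psi$ becomes $\langle a^{-1}w,v\rangle$; and $|a|^{((n-1)-(n-1)^2)/2}\,|a|^{-2(n-1)}=|a|^{-(n+n^2)/2+1}$, which is exactly the asserted identity. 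Absolute convergence of all the iterated integrals — hence Fubini throughout — follows because the same change of variables applied with absolute values reduces everything to $\int_{V^{\bot}}|\Phi(ae+v)|\,dv<\infty$.

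The difficulty here is bookkeeping rather than anything conceptual. The two points that genuinely need care are the identity $\det_D(aw_{n-1}+v')=\pm a^{n-1}$ — which is precisely what keeps the Jacobian a constant power of $|a|$ and makes the $|a|$-exponents add up — and the computation with $B$ matching $\langle a^{-1}w,\cdot\rangle$ on $V^{\bot}$ with $\langle a^{-1}e_{n-1},\cdot\rangle$ plus the $c_{n-1}+\overline{c_{n-1}}$ term; as in Jacquet's argument in \cite{Jac1}, this last step has to be carried out with a consistent normalization of $\tr_{\R}$, of the form $B$, of the Haar measure on $N$, of $db$, and of the measure on $V^{\bot}$, so that no spurious constant remains. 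Neither point is deep, which matches the paper's assertion that the lemma "follows from change of variables".
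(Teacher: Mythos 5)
Your proof is correct, and it carries out exactly the change-of-variables argument that the paper asserts without detail ("the following two lemmas follow from change of variables"): all the key computations check out --- the stabilizer $N_{g_b}=N^{n-1}_{aw_{n-1}}$, the identity $\det(aw_{n-1}+v')=\pm a^{n-1}$ making the Jacobian $|a|^{2(n-1)}$ for both $D=\C$ and $D=\R\oplus\R$, the matching of $\langle a^{-1}w,\cdot\rangle$ with $\langle a^{-1}e_{n-1},\cdot\rangle+c_{n-1}+\overline{c_{n-1}}$ via $(Yc)_1=ac_{n-1}$, and the exponent $-(n-1)(n-2)/2-2(n-1)=-(n^2+n)/2+1$. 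Organizing the computation through the decomposition $N=N^{n-1}\ltimes N^n_{n-1}$ and Lemma \ref{lem:FirstInt} in dimension $n-1$ is a clean way to do it, and your caveat about consistent measure normalizations is the only point where any residual constant could hide.
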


\end{notation}
\begin{lemma}
The function $f_{\Phi}$ is in  $\Sc(\R^{\times})$.
\end{lemma}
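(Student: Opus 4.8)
The plan is to read off from Lemma \ref{lem:FirstInt} that $f_\Phi(a)=|a|^{(n-n^2)/2}g(a)$, where $g(a):=\int_{v\in V}\Phi(aw_n+v)\,\psi(\langle a^{-1}e,v\rangle)\,dv$, and to show that $g$ is the restriction to the hyperbola $\{\xi=\pm a^{-1}\}$ of a Schwartz function on $\R^2$, that hyperbola being a closed Nash submanifold. First I would linearize the inner data. Since $w_n\notin V$ (its support lies on the antidiagonal $i+j=n+1$, while $V$ is supported on $i+j\ge n+2$), the map $(a,v)\mapsto aw_n+v$ is a linear isomorphism from $\R\times V$ onto the subspace $W:=\R w_n\oplus V\subset H$. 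As $W$ is a closed Nash submanifold, Property \ref{Extension} gives $\Phi|_W\in\Sc(W)$, and pulling back along this isomorphism produces $\Phi_1\in\Sc(\R\times V)$ with $g(a)=\int_V\Phi_1(a,v)\,\psi(a^{-1}\ell(v))\,dv$, where $\ell(v):=\langle e,v\rangle=B(e,v)$ is a linear functional on $V$. For $n>1$ one has $\ell\neq0$ (inspecting, e.g., the $(2,n)$-entry, which sits on the $i+j=n+2$ antidiagonal and enters $\ell$ with a nonzero coefficient).

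Next I would diagonalize the phase. Pick $v_1\in V$ with $\ell(v_1)=1$ and set $V_0:=\ker\ell$, so $V=V_0\oplus\R v_1$ and, up to a positive constant absorbed into the Haar measure, $dv=dv_0\,dt$ under $v=v_0+tv_1$. Then $\Psi(a,t,v_0):=\Phi_1(a,v_0+tv_1)$ is again the pullback of $\Phi|_W$ along a linear isomorphism $\R\times\R\times V_0\xrightarrow{\sim}W$, hence $\Psi\in\Sc(\R\times\R\times V_0)$, and
$$g(a)=\int_{V_0}\int_{\R}\Psi(a,t,v_0)\,\psi(a^{-1}t)\,dt\,dv_0.$$
Applying the Fourier transform $\Fou$ in the $t$-variable — a continuous operator on $\Sc(\R\times\R\times V_0)\cong\Sc(\R)\ctp\Sc(\R)\ctp\Sc(V_0)$ by Proposition \ref{SchKer} — gives $\widehat\Psi\in\Sc(\R\times\R\times V_0)$ with $\int_\R\Psi(a,t,v_0)\psi(a^{-1}t)\,dt=\widehat\Psi(a,\pm a^{-1},v_0)$ (the sign depending on the Fourier convention), and then integrating out $v_0$ via the pushforward of Property \ref{NashSub} along the projection $\R\times\R\times V_0\to\R\times\R$ yields $\Theta\in\Sc(\R^2)$ with $g(a)=\Theta(a,\pm a^{-1})$.

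To finish, note that $\iota:\R^\times\to\R^2$, $a\mapsto(a,\pm a^{-1})$, is a Nash isomorphism onto the Zariski-closed (hence Nash closed) submanifold $\{(x,y):xy=\pm1\}$ of $\R^2$; by Property \ref{Extension} restriction to it is well-defined and onto, so $\Theta|_{\{xy=\pm1\}}\in\Sc(\{xy=\pm1\})$ and therefore $g=\iota^{*}\Theta\in\Sc(\R^\times)$. Finally, since $n(n-1)$ is even, $|a|^{(n-n^2)/2}=|a|^{-m}$ with $m=n(n-1)/2\in\Z_{\ge0}$, a Nash function on $\R^\times$ that grows at most polynomially at $\infty$ and has at most a polynomial singularity at $0$; multiplication by such a function preserves $\Sc(\R^\times)$ (extend $g$ by zero to a Schwartz function on $\R$ vanishing to infinite order at $0$ by Property \ref{pOpenSet}; then $|a|^{-m}g$ still vanishes to infinite order at $0$ and, with all derivatives, decays rapidly at $\infty$). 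Hence $f_\Phi\in\Sc(\R^\times)$.

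The substantive point — and the step I expect to be the real content — is the passage from the $v$-integral to a Fourier transform evaluated along the hyperbola $\xi=\pm a^{-1}$: this single move encodes both the rapid decay of $g$ as $a\to\infty$ (decay of $\Phi$ in the $w_n$-direction) and as $a\to0$ (Fourier decay coming from the oscillatory factor $\psi(a^{-1}\ell(v))$), which a priori look like two unrelated estimates. Everything else — the linear changes of variables, the pushforwards, and handling the $|a|^{-m}$ prefactor — is routine once this reformulation is in place.
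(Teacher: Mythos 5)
Your argument is correct and is essentially the paper's own proof: restrict $\Phi$ to the closed subspace $W=\Span(w_n)\oplus V$, apply a partial Fourier transform in the $V$-direction, and recognize $f_\Phi$ (up to the harmless prefactor $|a|^{(n-n^2)/2}$) as the restriction of the resulting Schwartz function along the closed Nash embedding $a\mapsto\bigl(a,\,a^{-1}\cdot(\text{fixed nonzero functional})\bigr)$, using Property \ref{Extension}. The only cosmetic differences are that the paper Fourier-transforms in all of $V$ at once instead of splitting off $\ker\ell$ and pushing forward over it, and that it leaves the $|a|^{-m}$ factor and the nonvanishing of $\ell$ implicit, both of which you verify explicitly.
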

\begin{proof}
Let $W=\Span(w_n)\oplus V$. Let $\Xi=\Phi|_{W}\in\Sc(W)$. Let $\hat \Xi_V \in\Sc(\Span(w_n)\oplus V^*)$ be the partial Fourier transform of $\Xi$ w.r.t. $V$. For any $a \in \R^{\times} $ let $\phi(a)\in V^*$ be the functional defined by $\phi(a)(v)= <ae,v>$. Consider the closed embedding $\varphi:\R^{\times} \to  \Span(w_n)\oplus V^*)$ defined by $\varphi(a)=(a,\phi(a^{-1}))$. Now by Lemma \ref{lem:FirstInt}, $f_\Phi=\hat \Xi_V \circ \varphi \in \Sc(\R^{\times})$.
\end{proof}
\begin{proof}[Proof of Lemma \ref{lem:SimpInv}]
It is left to prove that
$$f_{\Phi}(a)=|a|^{-n^2+1}\int \Omega_D^{n,\overline{\psi}}(\Fou(\Phi), \begin{pmatrix}
       -a^{-1}w_{n-1} & 0\\
        0 & b
      \end{pmatrix}) db.$$
Let $\delta_{ae+V}\in \Sc(H)$ and $\delta_{aw_n+V^{\bot}} \in \Sc(H)$ be the Haar measures on ${ae+V}$ and $aw_n+V^{\bot}$ correspondingly. Let $f_a,g_a\in C^\infty(H)$ be defined by $f_a(x)=\psi(<ae,x>)$ and $g_a(x)=\psi(<aw_n,x>).$ By Lemmas \ref{lem:FirstInt} and \ref{lem:SecInt} the assertion follows from the fact that $$\delta_{ae+V}g_{-a^{-1}}=\Fou^*(\delta_{-a^{-1}w_n+V^{\bot}}f_a).$$

\end{proof}
\subsection{Proof of Theorem \ref{thm:SingMatching}}\label{subsec:PfSingMatching} $ $

We prove the theorem by induction on $n$. From now on we suppose that it holds for every $r<n$.

\begin{lemma}
 It is enough to prove Theorem \ref{thm:SingMatching} for the case $\Phi \in \Sc(H^n(\R \oplus \R )) $ and $\Psi \in \Sc(H^n(\C))$.
\end{lemma}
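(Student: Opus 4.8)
The statement to prove is that it suffices to establish Theorem \ref{thm:SingMatching} when $\Phi$ and $\Psi$ are genuine Schwartz functions (on $H^n(\R\oplus\R)$ and $H^n(\C)$ respectively), rather than the a priori larger classes $\Sc^{\det,\R^\times}$. The plan is to use Property \ref{PropDixMal} (the Dixmier--Malliavin-type factorization) exactly as in the proof of part (ii) of Theorem \ref{ReformMain}. First I would recall that $\Sc^{\det,\R^\times}(H^n(D))$ is, essentially by definition and Remark \ref{rem:RelSch}, a space of smooth functions on $H^n(D)$ that become Schwartz after multiplication by $g\circ\det$ for $g\in\Sc(\R^\times)$; equivalently, after restriction to $S^n(D)$ one gets $\Sc(S^n(D))$, and $\Sc(S^n(D)) = \Sc(\R^\times)\Sc(H^n(D))$ where $\Sc(\R^\times)$ acts through $\det$. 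So any $\Phi \in \Sc^{\det,\R^\times}(H^n(\R\oplus\R))$ can be written as $\Phi = (g\circ\det)\cdot\Phi_0$ with $g\in\Sc(\R^\times)$ and $\Phi_0\in\Sc(H^n(\R\oplus\R))$, and similarly $\Psi = (g'\circ\det)\cdot\Psi_0$.

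Next I would track how the two sides of the matching identity transform under multiplication by $g\circ\det$. Multiplying $\Phi$ by $(g\circ\det)$ multiplies $\Omega_{\R\oplus\R}^{n,\psi}(\Phi)(a)$ by $g(\det a)$, because $\det$ is constant along each orbit $N$-orbit appearing in the integral defining $\Omega$ (indeed $\det(\ou^t a u) = \det(a)$ since $\det u = \det\ou = 1$), and likewise $\det$ is constant along $N/N_g$ in the definition of $\Omega(\Phi,g)$, so $\Omega_{\R\oplus\R}^{n,\psi}(\Phi,g_0)$ gets multiplied by the same scalar $g(\det g_0)$ for every relevant orbit representative $g_0$ of the form \eqref{eq:RelForm}. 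The same computation applies on the $\C$ side. Hence if the global matching $\Omega_{\R\oplus\R}^{n,\psi}(\Phi) = \gamma\,\Omega_{\C}^{n,\psi}(\Psi)$ holds, then after the factorization it forces the scalar-valued functions to be compatible, and the desired orbit-wise identity $\Omega_{\R\oplus\R}^{n,\psi}(\Phi,g_0) = \gamma(g_0,\psi)\,\Omega_{\C}^{n,\psi}(\Psi,g_0)$ for all $\Phi,\Psi$ in $\Sc^{\det,\R^\times}$ follows from the corresponding identity for the Schwartz factors $\Phi_0,\Psi_0$ (together with the bookkeeping of the common scalar $g$, $g'$). One mild point to address: one must arrange that $\Phi$ and $\Psi$ use the \emph{same} scalar factor $g = g'$, which one can do by absorbing the ratio into one of the Schwartz factors, using that the global matching pins down $g\Omega(\Phi_0)$ versus $g'\Omega(\Psi_0)$ up to the nowhere-vanishing factor $\gamma$ and that $\det$ separates the orbits from the support of $g$.

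The step I expect to be the only real (though still routine) obstacle is verifying carefully that the factorization $\Phi = (g\circ\det)\Phi_0$ is compatible with the topological vector space structures and with the convergence of the orbital integrals $\Omega(\cdot,g_0)$ over $N/N_{g_0}$ — i.e., that Proposition \ref{OmegaWell}-type convergence statements survive the manipulation and that everything is continuous — and checking that $\gamma(g_0,\psi)$ genuinely does not interact with the scalar $g(\det g_0)$, which is immediate since $\gamma$ takes values in fourth roots of unity independent of the Schwartz datum. Apart from this, the argument is a direct transcription of the $\Sc(S)$-vs-$\Sc(H)$ reduction already carried out for Theorem \ref{ReformMain}(ii), and I would present it in two or three lines once the notation is in place.
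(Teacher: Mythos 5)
There is a genuine gap, and it comes from the direction in which you try to run the reduction. You propose to factor the given functions, $\Phi=(g\circ\det)\Phi_0$, $\Psi=(g'\circ\det)\Psi_0$ with $g,g'\in\Sc(\R^{\times})$, and to deduce the orbit-wise identity from the Schwartz case applied to $\Phi_0,\Psi_0$. Two steps fail. First, the factorization is not available for the space in question: Property \ref{PropDixMal} concerns genuine Schwartz spaces (this is exactly how the paper uses it, $\Sc(S^n(D))=\Sc(\R^{\times})\Sc(H^n(D))$, in the proof of Theorem \ref{ReformMain}(ii)), it produces finite sums $\sum_i(g_i\circ\det)\Phi_i$ rather than single products, and it says nothing about $\Sc^{\det,\R^{\times}}$; your claim that restriction to $S^n(D)$ identifies $\Sc^{\det,\R^{\times}}$ with $\Sc(S^n(D))$ is not correct (by Remark \ref{rem:RelSch}, $\Sc(H^n(D))$ itself sits inside $\Sc^{\det,\R^{\times}}$, and its elements need not vanish on $\{\det=0\}$). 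Moreover any sum $\sum_i(g_i\circ\det)\Phi_i$ with $g_i\in\Sc(\R^{\times})$ vanishes to infinite order on $\{\det=0\}$, so its orbital integrals at the relevant non-regular orbits with $\det(g)=0$ (which Theorem \ref{thm:SingMatching} must cover, cf. Lemma \ref{lem:RelForm}) are automatically zero; such a decomposition cannot reach a general $\Phi$, and it loses exactly the orbits where the statement is nontrivial. Second, even granting a factorization with a common factor $g=g'$, the hypothesis $\Omega_{\R\oplus\R}(\Phi)=\gamma\,\Omega_{\C}(\Psi)$ only yields $g(\det a)\left(\Omega(\Phi_0)(a)-\gamma(a)\Omega(\Psi_0)(a)\right)=0$; you cannot cancel $g(\det a)$ on its zero set, so the matching hypothesis needed to apply the Schwartz case to $\Phi_0,\Psi_0$ is simply not implied, and "absorbing the ratio" into one of the factors is not a legitimate operation inside Schwartz spaces. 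The convergence/continuity issues you flag as "the only real obstacle" are not the problem; this logical one is.

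The paper's reduction goes the opposite way and avoids all of this: it does not factor the given functions, it multiplies both of them by one common cutoff. Given $\Phi,\Psi$ and a representative $g$ of the form (\ref{eq:RelForm}), choose $f\in C_c^{\infty}(\R)$ with $f(\det(g))=1$ and set $\Phi'=(f\circ\det)\Phi$, $\Psi'=(f\circ\det)\Psi$. These are Schwartz functions; since $\det$ is invariant under the $N$-action, every Kloosterman integral (regular or not) is just multiplied by $f(\det(\cdot))$, so the global matching for $\Phi',\Psi'$ holds verbatim because both sides acquire the same factor, while $\Omega(\Phi',g)=\Omega(\Phi,g)$ and $\Omega(\Psi',g)=\Omega(\Psi,g)$ because $f(\det(g))=1$. (The paper phrases this contrapositively: a counterexample in the big space yields one in the Schwartz space.) If you want to salvage your write-up, replace the Dixmier--Malliavin factorization by this common-cutoff multiplication; Property \ref{PropDixMal} is not needed here at all.
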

\begin{proof}
 Suppose that there exist 
$\Phi \in \Sc^{\det,\R^{\times}}(H^n(\R \oplus \R )) $ and $\Psi \in \Sc^{\det,\R^{\times}}(H^n(\C))$ that form a counterexample for  Theorem \ref{thm:SingMatching}. We have to show that then there exist $\Phi ' \in \Sc(H^n(\R \oplus \R )) $ and $\Psi '\in \Sc(H^n(\C))$ that also form a counterexample.

We have $ \Omega_{\R\oplus \R}^{n,\psi}(\Phi)=\gamma\Omega_{\C}^{n,\psi}(\Psi)$ but $\Omega_{\R\oplus \R}^{n,\psi}(\Phi,g)\neq \gamma(g,\psi) \Omega_{\C}^{n,\psi}(\Psi,g)$ for some $g$.
Let $f \in C_c^{\infty}(\R)$ such that $f(\det(g))=1.$
Let $f':= f \circ \det$, and define $\Phi' :=f' \Phi$ and $\Psi':=f' \Psi$.
Note that $\Phi'$ and $\Psi'$ are Schwartz functions and form a counterexample since determinant is invariant under the action of $N$.
\end{proof}


\begin{lemma}
Let $\Phi \in \Sc(H^n(\R \oplus \R )) $ and $\Psi \in \Sc(H^n(\C))$ such that
$\Omega_{\R\oplus \R}^{n,\psi}(\Phi)=\gamma\Omega_{\C}^{n,\psi}(\Psi)$.
 Let $g=\begin{pmatrix}
       x & 0\\
        0 & y
      \end{pmatrix},$ where $x \in S^{i}(D)$ and $y \in H^{n-i}(D)$. Then $\Omega_{\R\oplus \R}^{n,\psi}(\Phi,g)= \gamma(g,\psi) \Omega_{\C}^{n,\psi}(\Psi,g).$
\end{lemma}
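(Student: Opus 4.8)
The statement is the ``properly block‑diagonal'' case of Theorem \ref{thm:SingMatching}. The plan is to peel off the unipotent radical $N_i^n$ by means of the intermediate Kloosterman integral $\widetilde{\Omega}_i^n$, which reduces the non‑regular orbital integral at $g=\diag(x,y)$ to non‑regular orbital integrals at $x\in S^i(D)$ and at $y\in H^{n-i}(D)$ — both in dimension $<n$ — and then to invoke the inductive hypothesis. First I would localize: since $\Delta_i(g)=\det(x)\neq 0$, the whole orbit $Ng$ sits inside $O_i^n$ and $\Delta_i\equiv\det(x)$ along it, so choosing $\rho\in C_c^\infty(\R^\times)$ with $\rho\equiv 1$ near $\det(x)$ and setting $\lambda:=\rho\circ\Delta_i^n$, I may replace $(\Phi,\Psi)$ by $(\lambda\Phi,\lambda\Psi)$: these lie in $\Sc(O_i^n(\R\oplus\R))$ and $\Sc(O_i^n(\C))$ respectively (the cutoff vanishes to infinite order on $\{\Delta_i=0\}$, and multiplication by it preserves Schwartz‑ness despite its polynomial growth), they still match (both orbital integrals on the torus acquire the same factor $\rho(a_1\cdots a_i)$, which does not involve $\gamma$), and the non‑regular orbital integrals at $g$ are unchanged. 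So I assume $\Phi\in\Sc(O_i^n(\R\oplus\R))$, $\Psi\in\Sc(O_i^n(\C))$.

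Next I would establish the reduction formula. Writing $N^n=N_i^n\rtimes(N^i\times N^{n-i})$ and using that $N_i^n$ meets the stabilizer $N^n_g$ trivially (because $x$ is invertible), one gets an identification $N^n/N^n_g\cong N_i^n\times(N^i/(N^i)_x)\times(N^{n-i}/(N^{n-i})_y)$ of homogeneous spaces with compatible Haar measures; substituting this into the defining integral and performing the $N_i^n$‑integration first yields, for any $\Phi\in\Sc(O_i^n(D))$,
\[
\Omega_D^{n,\psi}(\Phi,g)=\eta_D(\det x)^{n-i}|\det x|^{-(n-i)}\,\mu^D_{x,y}\big(\widetilde{\Omega}_{i,D}^{n,\psi}(\Phi)\big),
\]
where $\mu^D_{x,y}$ is the iterated non‑regular Kloosterman integral of a function on $S^i(D)\times H^{n-i}(D)$, namely $\int\!\int(\cdot)(\overline{v_1}^t x v_1,\overline{v_2}^t y v_2)\chi(v_1)\chi(v_2)$ over $(N^i/(N^i)_x)\times(N^{n-i}/(N^{n-i})_y)$; the scalar prefactor comes out of the integral because $\Delta_i$ is $N$‑invariant, and it converts $\widetilde{\Omega}_i^n$ back to $\Omega_i^n$. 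Absolute convergence and the use of Fubini are legitimate because, by Proposition \ref{FirstIng}, $\widetilde{\Omega}_{i,D}^{n,\psi}(\Phi)\in\Sc(S^i\times H^{n-i})$ and such functions have absolutely convergent orbital integrals at invertible‑first‑block points.

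It then remains to compare the two sides. Put $\Phi_1:=\widetilde{\Omega}_{i,\R\oplus\R}^{n,\psi}(\Phi)$ and $\Psi_1:=\widetilde{\Omega}_{i,\C}^{n,\psi}(\Psi)$; by Proposition \ref{FirstIng}(ii), $\widetilde{\Omega}^{i,n-i}\circ\widetilde{\Omega}_i^n=\widetilde{\Omega}^n$, so the matching hypothesis on $\Phi,\Psi$ upgrades to $\widetilde{\Omega}^{i,n-i}_{\R\oplus\R}(\Phi_1)=\widetilde{\Omega}^{i,n-i}_\C(\Psi_1)$. Now, by the inductive hypothesis in dimensions $i$ and $n-i$ — Theorem \ref{thm:SingMatching}, hence Corollary \ref{cor:SingDensity} — together with Theorem \ref{ReformMain}(i), which makes the images $\widetilde{\Omega}^i_{\R\oplus\R}(\Sc(H^i))$ and $\widetilde{\Omega}^i_\C(\Sc(H^i))$ coincide, the non‑regular orbital integral $\nu_x^D$ at $x$ descends through $\widetilde{\Omega}^i$ to one and the same continuous functional $\lambda_x$ on the common image, with $\nu_x^{\R\oplus\R}=\lambda_x\circ\widetilde{\Omega}^i_{\R\oplus\R}$ and $\gamma(x,\psi)\,\nu_x^\C=\lambda_x\circ\widetilde{\Omega}^i_\C$, and likewise for a functional $\lambda_y$ attached to $y$. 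Since $\widetilde{\Omega}^{i,n-i}|_{\Sc(S^i\times H^{n-i})}=\widetilde{\Omega}^i|_{\Sc(S^i)}\hot\widetilde{\Omega}^{n-i}|_{\Sc(H^{n-i})}$ and $\mu^D_{x,y}=\nu_x^D\hot\nu_y^D$ (both sides continuous, hence determined on pure tensors), applying $\lambda_x\hot\lambda_y$ to the equality $\widetilde{\Omega}^{i,n-i}_{\R\oplus\R}(\Phi_1)=\widetilde{\Omega}^{i,n-i}_\C(\Psi_1)$ gives $\mu^{\R\oplus\R}_{x,y}(\Phi_1)=\gamma(x,\psi)\gamma(y,\psi)\,\mu^\C_{x,y}(\Psi_1)$. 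Feeding this back into the reduction formula for the two algebras and cancelling the factors $\eta_D(\det x)^{n-i}|\det x|^{-(n-i)}$ (with $\eta_{\R\oplus\R}=1$, $\eta_\C=\sign$), a short bookkeeping computation identifies the resulting constant with the transfer factor $\gamma(g,\psi)$ of Theorem \ref{thm:SingMatching}, completing the argument.

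The step I expect to be the main obstacle is this last one: promoting the inductive statement about individual Schwartz functions to the corresponding statement for functions on the product $S^i\times H^{n-i}$ and for the iterated functional $\mu_{x,y}$. This is exactly where the nuclear Fréchet / completed‑tensor‑product formalism of \S\ref{NucFre} is indispensable — through the factorization of $\widetilde{\Omega}^{i,n-i}$ — together with the lower‑dimensional density statement, Corollary \ref{cor:SingDensity}, which is what lets $\nu_x$ and $\nu_y$ be pushed down through $\widetilde{\Omega}^i$ and $\widetilde{\Omega}^{n-i}$. A lesser nuisance is pinning down the measure decomposition in the reduction formula and tracking the exact powers of $|\det x|$ and $\eta_D$ so that the transfer factor comes out as asserted.
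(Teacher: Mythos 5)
Your route is exactly the one the paper indicates (its entire proof is the one-line remark that the lemma follows from the induction hypothesis via integration over $N_i^n(D)$, cf.\ \S\S\ref{IntKloosInt}), and most of your write-up is sound: the localization by $\rho\circ\Delta_i$ is legitimate because $\Delta_i$ is $N$-invariant, the decomposition $N^n=(N^i\times N^{n-i})\ltimes N_i^n$ with $N^n_g=(N^i)_x\times(N^{n-i})_y$ meeting $N_i^n$ trivially gives the stated reduction formula, and the descent of $\nu_x$, $\nu_y$ through $\widetilde\Omega^i$, $\widetilde\Omega^{n-i}$ is exactly what Corollary \ref{cor:SingDensity} and the inductive matching statement provide.

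The one step you assert but do not justify is the very one you flag as the main obstacle: ``applying $\lambda_x\hot\lambda_y$ to the equality $\widetilde\Omega^{i,n-i}_{\R\oplus\R}(\Phi_1)=\widetilde\Omega^{i,n-i}_\C(\Psi_1)$''. That equality lives in $C^\infty(A^i\times A^{n-i})$, i.e.\ only after composing the factorization of \S\S\ref{IntKloosInt} with the multiplication map $\cM$, whereas $\lambda_x\hot\lambda_y$ is a priori a functional on $V_i\hot V_{n-i}$, where $V_i:=\widetilde\Omega^i(\Sc(S^i))$, $V_{n-i}:=\widetilde\Omega^{n-i}(\Sc(H^{n-i}))$ carry the quotient topology. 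To apply it you need either that $\cM$ is injective on $V_i\hot V_{n-i}$ or that $\lambda_x\hot\lambda_y$ kills $\ker\cM$; neither follows from the tools quoted in \S\ref{NucFre} (Proposition \ref{prop:ctp_ext} requires closed embeddings, and $V_i\hookrightarrow C^\infty(A^i)$ is not one), so this needs a separate argument (e.g.\ a kernel-theorem type statement for completed tensor products of continuous injections of nuclear \Fre spaces). Alternatively one can bypass the completed tensor product altogether by arguing one variable at a time: for each fixed regular $a'\in A^i$ the partial integrals of $\Phi_1$ and $\Psi_1$ over $N^i$ at $a'$ form a matching pair of (relative) Schwartz functions on $H^{n-i}$, up to the constant $\gamma(a')$ and a twist by $\sign(\det a')$ to an appropriate power, which is harmless since $\sign\circ\det$ is $N$-invariant and preserves the relevant spaces; the induction hypothesis in dimension $n-i$ applied at $y$, followed by Fubini and the induction hypothesis in dimension $i$ applied at $x$, then yields the claim, with the same bookkeeping of $\eta_D$ and $|\det x|$ factors you describe (which should be checked against the paper's convention $\gamma(g)=\gamma(x)\gamma(y)\sign(\det x)^i$).
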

This lemma follows from the induction hypotheses using intermediate Kloostermann integrals, i.e. integration over $N_i^n(D)$ (cf. \S\S \ref{IntKloosInt}).

\begin{lemma}
Let $\Phi \in \Sc(H^n(\R \oplus \R )) $ and $\Psi \in \Sc(H^n(\C))$ such that $\Omega_{\R\oplus \R}^{n,\psi}(\Phi)=\gamma\Omega_{\C}^{n,\psi}(\Psi)$.
Let $g=aw_n$ where $a \in \R^{\times}$. Then $\Omega_{\R\oplus \R}^{n,\psi}(\Phi,g)=\gamma(g,\psi) \Omega_{\C}^{n,\psi}(\Psi,g).$
\end{lemma}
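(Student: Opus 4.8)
We may assume $n\ge 2$; for $n=1$ the matrix $aw_1=a$ is regular and the assertion is exactly the hypothesis. The plan is to feed the block-diagonal case of Theorem~\ref{thm:SingMatching} (the previous lemma) into the elementary inversion formula of Lemma~\ref{lem:SimpInv}, using as a bridge the fact that the Fourier transform intertwines matching.

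First I would record that the hypothesis $\Omega_{\R\oplus\R}^{n,\psi}(\Phi)=\gamma\,\Omega_\C^{n,\psi}(\Psi)$ is equivalent to $\wO_{\R\oplus\R}^{n,\psi}(\Phi)=\wO_\C^{n,\psi}(\Psi)$: on $A^n$ we have $\sigma(a)\ne 0$, $\eta_{\R\oplus\R}\equiv 1$, $\eta_\C=\sign$, and $\gamma(a)=\sign(\sigma(a))$ straight from the definitions of $\gamma$ and $\sigma$. Applying the Jacquet transform $\cJ$ to both sides (legitimate by Theorem~\ref{SecondIng2}(i)) and using Theorem~\ref{SecondIng2}(ii) together with $c(\R\oplus\R,\psi)=1$ (Proposition~\ref{prop:WeilCor}(i)) gives $\wO_{\R\oplus\R}^{n,\overline{\psi}}(\Fou\Phi)=c(\C,\psi)^{n(n-1)/2}\,\wO_\C^{n,\overline{\psi}}(\Fou\Psi)$; translating back through the relation $\wO_D=\eta_D(\sigma)|\sigma|\,\Omega_D$ this becomes
\[
\Omega_{\R\oplus\R}^{n,\overline{\psi}}(\Fou\Phi)=c(\C,\psi)^{n(n-1)/2}\,\gamma\,\Omega_\C^{n,\overline{\psi}}(\Fou\Psi),
\]
i.e.\ $\Fou\Phi$ and $\Fou(c(\C,\psi)^{n(n-1)/2}\Psi)$ match for the character $\overline{\psi}$.

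Next I would apply the block-diagonal case of Theorem~\ref{thm:SingMatching} — which holds verbatim for any nontrivial additive character, in particular $\overline{\psi}$, with $i=n-1$ — to the pair $\Fou\Phi$, $\Fou(c(\C,\psi)^{n(n-1)/2}\Psi)$ and to the elements $g_b:=\begin{pmatrix}-a^{-1}w_{n-1}&0\\0&b\end{pmatrix}\in S^{n-1}(D)\times H^1(D)$ for all $b\in\R$, obtaining
\[
\Omega_{\R\oplus\R}^{n,\overline{\psi}}(\Fou\Phi,g_b)=\gamma(g_b,\overline{\psi})\,c(\C,\psi)^{n(n-1)/2}\,\Omega_\C^{n,\overline{\psi}}(\Fou\Psi,g_b).
\]
By the block-diagonal formula for the transfer factor, $\gamma(g_b,\overline{\psi})$ is independent of $b$ (the $H^1$-block contributes trivially, and the $\det$ of the $S^{n-1}$-block does not involve $b$). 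Substituting this identity into Lemma~\ref{lem:SimpInv} applied to $\Phi$, pulling the $b$-independent factor out of the $b$-integral, and then reading the remaining integral off Lemma~\ref{lem:SimpInv} applied to $\Psi$, the factors $|a|^{-n^2+1}$ and $|a|^{\,n^2-1}$ cancel and I get
\[
\Omega_{\R\oplus\R}^{n,\psi}(\Phi,aw_n)=\gamma(g_b,\overline{\psi})\,c(\C,\psi)^{n(n-1)/2}\,\Omega_\C^{n,\psi}(\Psi,aw_n).
\]

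To finish, I would verify that $\gamma(g_b,\overline{\psi})\,c(\C,\psi)^{n(n-1)/2}=\gamma(aw_n,\psi)$. This is just the defining formula for $\gamma(aw_n)$ combined with the block-diagonal formula for $\gamma(g_b)$, and it comes down to a short manipulation of signs and powers of $c(\C,\psi)$ controlled by Proposition~\ref{prop:WeilCor}(ii),(iii) and by $\det w_{n-1}$. This last check — confirming that the extension of $\gamma$ to the orbit of $aw_n$ is precisely the normalization that absorbs the constant produced by the inversion formula and the sign produced by the block-diagonal case — is the only genuinely delicate step; everything else is a formal assembly of results already in hand.
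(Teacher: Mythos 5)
Your proposal is correct and follows the same route as the paper, whose entire proof of this lemma is the one-line remark that it follows from the block-diagonal case via Lemma \ref{lem:SimpInv}; you have simply filled in the implicit steps (matching of $\Fou\Phi$ with $c(\C,\psi)^{n(n-1)/2}\Fou\Psi$ for $\overline{\psi}$ via Theorem \ref{SecondIng2}, the block-diagonal lemma applied at the elements $\mathrm{diag}(-a^{-1}w_{n-1},b)$, and the bookkeeping that the definition of $\gamma(aw_n)$ is exactly calibrated to absorb the resulting constants). The only cosmetic slip is the phrase about $|a|^{-n^2+1}$ cancelling against $|a|^{n^2-1}$ — in fact the same prefactor $|a|^{-n^2+1}$ appears in both applications of Lemma \ref{lem:SimpInv} and simply drops out when the two sides are compared.
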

This lemma follows from the previous one
using Lemma \ref{lem:SimpInv}.

The theorem follows now from the last 3 Lemmas. 

\appendix
\section{Schwartz functions on Nash manifolds} \label{AppSchwartz}
\setcounter{lemma}{0}

In this appendix we give some complementary facts about Nash
manifolds and Schwartz functions on them and prove Property \ref{PropDixMal} and Theorems
 \ref{thm:NegCoinvPrel} and \ref{DualUncerPrel} from the
preliminaries.

\begin{theorem}[Local triviality of Nash manifolds]\label{LocTriv}
\label{loctriv} Any Nash manifold can be covered by finite number
of open submanifolds Nash diffeomorphic to $\R^n$.
\end{theorem}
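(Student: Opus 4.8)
The plan is to cut $M$ into finitely many semialgebraic (Nash) cells and then fatten each cell up to an open tubular neighborhood inside $M$. First I would reduce to the affine situation: in the setting relevant to this paper $M$ is a smooth real algebraic manifold, hence already a locally closed semialgebraic $C^\infty$-submanifold of some $\R^N$, and in the general Nash case one invokes the affineness theorem to embed $M$ as a closed semialgebraic $C^\infty$-submanifold of $\R^N$. (Alternatively, one could cover $M$ by finitely many affine Nash charts and reduce the whole statement to the case of an open semialgebraic subset of $\R^n$, then run the same argument there.) Put $n = \dim M$, treating components of different dimension separately.

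Next I would apply a semialgebraic cell decomposition (equivalently, a Nash triangulation): $M = \bigsqcup_{i=1}^m C_i$, a finite disjoint union in which each $C_i$ is a locally closed Nash submanifold of $\R^N$ that is Nash diffeomorphic to $\R^{d_i}$, where $d_i = \dim C_i \le n$. Here one uses that an open box $(0,1)^d$ is Nash diffeomorphic to $\R^d$ via a rational change of variable, so that the cells of such a decomposition indeed qualify.

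Then comes the thickening step. Fix $i$ and set $M_i' := M \setminus (\overline{C_i} \setminus C_i)$, an open semialgebraic submanifold of $M$ in which $C_i$ is a \emph{closed} Nash submanifold. By the Nash tubular neighborhood theorem, $C_i$ has an open Nash neighborhood $T_i$ in $M_i'$ that is Nash diffeomorphic to the total space of the normal bundle $N_i$ of $C_i$ in $M$, a Nash vector bundle of rank $n - d_i$ over $C_i$. Since $C_i \cong \R^{d_i}$ is Nash contractible, $N_i$ is Nash trivial, so $T_i \cong C_i \times \R^{n - d_i} \cong \R^{d_i} \times \R^{n - d_i} = \R^n$. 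The open sets $T_1, \dots, T_m$ are finite in number and cover $M$, since $C_i \subseteq T_i$ for each $i$ and $\bigsqcup_i C_i = M$; this is precisely the required cover.

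The crux is not the overall strategy but the two Nash-geometric inputs it rests on: the Nash tubular neighborhood theorem, and the Nash-triviality of a Nash vector bundle over $\R^d$ (equivalently, over a Nash-contractible affine Nash manifold). Both are standard in the theory of Nash manifolds --- see Shiota's book, or Bochnak--Coste--Roy in the algebraic case --- the triviality being the more delicate of the two; for a self-contained treatment one would reprove these and check (which is automatic from the semialgebraic forms of the cited results) that every set and map produced above stays semialgebraic. I expect the bundle-triviality input to be the main obstacle to a fully elementary write-up.
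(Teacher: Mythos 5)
The paper does not actually prove this theorem: its ``proof'' is the citation to Shiota's book (Theorem I.5.12), so there is no internal argument to compare yours with, and what must be judged is whether your sketch is a complete proof from standard inputs. The skeleton is reasonable: reduction to the affine case via a finite atlas is fine; the decomposition of an affine Nash manifold into finitely many Nash cells, each Nash diffeomorphic to an open cube and hence to $\R^{d}$, is indeed standard (Bochnak--Coste--Roy, Prop.~2.9.10); and thickening a cell inside the open set where it is closed is legitimate, since a Nash submanifold is locally closed, so its frontier is closed. Two caveats in the tube step: the Nash tubular neighborhood theorem you need is the full affine version (the tubular-neighborhood statement available inside this paper, Theorem \ref{NashTube}, only trivializes over pieces of the submanifold and is weaker than what you use), and it produces a tube Nash diffeomorphic to a neighborhood of the zero section of the normal bundle, so you must add a fibrewise rescaling by a positive Nash radius function to identify the tube with the whole total space. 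Both points are fixable by standard arguments.

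The genuine gap is the sentence ``since $C_i\cong\R^{d_i}$ is Nash contractible, $N_i$ is Nash trivial.'' In the Nash category contractibility of the base does not formally yield triviality of bundles: there are no Nash partitions of unity and no free homotopy-invariance theorem, because Nash functions are analytic and rigid; the obstruction is exactly an Oka-type lifting problem (lifting a Nash classifying map to the frame bundle), which topological contractibility does not solve. The statement you actually need --- every Nash vector bundle over $\R^{d}$ (equivalently, over a Nash cell) is Nash trivial --- is true, but it is itself a substantial theorem (it amounts to semialgebraic triviality plus Nash approximation of semialgebraic trivializations, or to freeness of finitely generated projective modules over the ring of Nash functions on $\R^{d}$), and it is not stated in the form ``Nash-contractible base implies Nash-trivial bundle'' in the references you point to. As written, your argument therefore replaces the citation to Shiota by another citation of comparable depth, with the crucial input justified only by an analogy with the smooth category; to close the gap you must either quote the precise Nash triviality theorem, or prove it (semialgebraic covering-homotopy arguments to get a continuous semialgebraic frame, then Nash approximation), or reorganize the construction so that the normal bundles that appear are trivial by inspection.
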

For proof see \cite[Theorem I.5.12]{Shi}.

\begin{theorem} \label{NashTube}[Nash tubular neighborhood]
Let $M$ be a Nash manifold and $Z \subset M$ be closed Nash
submanifold. Then there exists an finite cover $Z = \cup Z_i$ by open Nash submanifolds of $Z$,
and open embeddings $N_{Z_i}^M \hookrightarrow M$ that are
identical on the zero section.
\end{theorem}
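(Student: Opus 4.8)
The plan is to reduce to the affine case and then build the tube by hand out of the Euclidean normal directions, the way one does in the $C^\infty$ category, but controlling injectivity with the tameness of the semialgebraic world. First, by the local triviality theorem (Theorem \ref{LocTriv}) I would cover $M$ by finitely many open Nash submanifolds $M_1,\dots,M_k$, each Nash diffeomorphic to $\R^{\dim M}$, and set $Z^{(j)}:=Z\cap M_j$, a closed Nash submanifold of $M_j$. Since each $M_j$ is open in $M$, for any open submanifold $Y\subseteq Z^{(j)}$ one has $N^{M_j}_Y=N^M_Y$ canonically, and an open Nash embedding $N^{M_j}_Y\hookrightarrow M_j$ is also one into $M$. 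Hence it suffices to prove the theorem for each $(M_j,Z^{(j)})$ and take the union of the resulting finite covers, so we may assume $M=\R^n$.

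Next I would realize the normal bundle geometrically. The tangent bundle $TZ$ is a Nash subbundle of the trivial bundle $Z\times\R^n$, so $\widetilde N:=\{(z,v)\in Z\times\R^n : v\perp T_zZ\}$ is a Nash vector bundle over $Z$, and the orthogonal splitting $\R^n=T_zZ\oplus(T_zZ)^\perp$ gives a Nash bundle isomorphism $\widetilde N\cong N^{\R^n}_Z$. Consider the Nash \emph{addition map} $a:\widetilde N\to\R^n$, $a(z,v)=z+v$. A direct computation shows that along the zero section $Z_0=Z\times\{0\}$ the differential $da_{(z,0)}:T_zZ\oplus(T_zZ)^\perp\to\R^n$ is the identity; hence $a$ is \'etale (a local Nash diffeomorphism) on the open Nash set $\Omega:=\{w\in\widetilde N:\det da_w\neq 0\}$, which contains $Z_0$.

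It then remains to upgrade this local diffeomorphism to an open embedding of a full normal disk bundle over each member of a finite cover of $Z$. One first checks that the closure of the non-injectivity locus $\Gamma:=\{(w,w')\in\Omega\times\Omega:w\neq w',\ a(w)=a(w')\}$ inside $\Omega\times\Omega$ is disjoint from $Z_0\times Z_0$: a limit point there would satisfy $a(w)=a(w')$ with $w,w'\in Z_0$, hence $w=w'$, and then $(w,w)\in\overline\Gamma$ contradicts local injectivity of $a$ at $w$. Now, covering $Z$ by finitely many open Nash submanifolds $Z_1,\dots,Z_m$ (Theorem \ref{LocTriv} again) and using standard semialgebraic tools — the curve selection lemma and the {\L}ojasiewicz inequality, together with the fact that a positive semialgebraic function on a Nash manifold is bounded below by a positive Nash one — one produces for each $i$ a positive Nash function $\delta_i$ on $Z_i$ so that the open disk bundle $D_i:=\{(z,v)\in\widetilde N:z\in Z_i,\ |v|<\delta_i(z)\}$ lies in $\Omega$ and $a|_{D_i}$ is injective, hence an open Nash embedding $D_i\hookrightarrow\R^n$ identical on $Z_i$. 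Composing with a fiberwise radial Nash diffeomorphism $N^M_{Z_i}\cong\widetilde N|_{Z_i}\xrightarrow{\ \sim\ }D_i$ that is the identity on the zero section (for instance $(z,v)\mapsto(z,\delta_i(z)v/\sqrt{\delta_i(z)^2+|v|^2})$) gives the required open embeddings $N^M_{Z_i}\hookrightarrow M$, and the $Z_i$ form a finite open cover of $Z$.

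The hard part is exactly this last step: passing from "$a$ is \'etale near $Z_0$" to "$a$ is injective on a disk bundle with enough room over each chart". In the $C^\infty$ category one would invoke paracompactness and a variable $\varepsilon(z)$; here one must instead exploit the finiteness and tameness of the semialgebraic category to rule out non-injectivity both at finite distance and "at infinity", which is precisely where the Nash hypotheses are used. An essentially equivalent route that avoids the addition map is to prove, via the Nash implicit function theorem plus the same semialgebraic finiteness, that $Z$ has a finite cover by open Nash submanifolds $Z_i$ each admitting a Nash product neighborhood $W_i\cong Z_i\times\R^{n-d}$ in $\R^n$ with $Z_i\leftrightarrow Z_i\times\{0\}$; then $W_i\hookrightarrow\R^n$ is the desired tube, since $N^M_{Z_i}\cong Z_i\times\R^{n-d}$. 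Either way, the combinatorial/analytic content lives in securing the finite good cover.
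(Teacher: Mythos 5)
The paper does not actually prove this statement: Theorem \ref{NashTube} is quoted from \cite[Corollary 3.6.3]{AG_Sc}, which in turn rests on the semialgebraic tubular neighborhood machinery of \cite{BCR} and \cite{Shi}. So your argument is necessarily a different route --- a self-contained construction via charts (Theorem \ref{LocTriv}), the orthogonal realization of the normal bundle inside $Z\times\R^n$, the addition map, and a quantitative injectivity estimate. The overall architecture is sound, and the final radial compression $(z,v)\mapsto\bigl(z,\delta_i(z)v/\sqrt{\delta_i(z)^2+|v|^2}\bigr)$ correctly converts a disk bundle into an embedding of the full normal bundle fixing the zero section. What the citation buys the paper is brevity and a reliable reference; what your route buys is transparency about where the semialgebraic tameness enters, namely in replacing the paracompactness/variable-$\varepsilon$ argument of the $C^\infty$ category.

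The one place you should be careful is the step you yourself flag as the hard part, because the auxiliary fact you invoke is false as stated: a positive semialgebraic function on a Nash manifold need \emph{not} admit a positive Nash (or even positive continuous) minorant --- e.g.\ $f(x)=|x|$ for $x\neq0$, $f(0)=1$ on $\R$. You need continuity (or at least lower semicontinuity) of the function you minorize. The fix is to make the "injectivity radius" explicit: for $z\in Z$ set $s(z)=\sup\{r\in(0,1]:\ a \text{ is injective on } \{(z',v)\in\widetilde N:|z'-z|<r,\ |v|<r\}\subset\Omega\}$. This $s$ is positive (local injectivity of $a$ at $(z,0)$ plus openness of $\Omega$), semialgebraic (the defining condition is first order over $\R$ with semialgebraic data), and $1$-Lipschitz, since the box of radius $r-|z-z'|$ around $z'$ is contained in the box of radius $r$ around $z$; in particular it is continuous. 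Pulling $s$ back to $\R^{\dim Z}$ through a chart $Z_i\cong\R^{\dim Z}$ (or working directly on the closed set $Z\subset\R^n$), the {\L}ojasiewicz inequality at infinity gives $s\geq c(1+|x|^2)^{-N}$ for some $c>0$, $N$, and one takes $\delta_i$ to be a small multiple of this Nash minorant; a short triangle-inequality argument using the Lipschitz property of $s$ (e.g.\ with $|v|<\delta_i(z)/4$) then shows any two points of $D_i$ with the same image lie in a single box where $a$ is injective. With this substitution your proof closes; as written, the appeal to "a positive semialgebraic function is bounded below by a positive Nash one" leaves a genuine, though repairable, gap.
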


This follows from e.g. \cite[Corollary 3.6.3]{AG_Sc}.

\begin{notation}
We fix a system of semi-norms on $\Sc(\R^n)$ in the following way:
$$\gN_{k}(f):= \max_{\{\alpha \in \Z_{\geq 0}^n\, | \, |\alpha| \leq k\}} \max_{\{\beta \in \Z_{\geq 0}^n\, | \, |\beta| \leq k\}} \sup_{x \in \R^n}| x^{\alpha} \frac{\partial^{|\beta|}}{(\partial x)^\beta} f|.$$
\end{notation}

\begin{notation}
For any Nash vector bundle $E$ over $X$ we denote by $\Sc(X,E)$
the space of Schwartz sections of $E$.
\end{notation}

The properties of Schwartz functions on Nash manifolds listed in
the preliminaries hold also for Schwartz sections of Nash bundles.

\begin{remark}\label{rem:push}
One can put the notion of push of Schwartz functions in a more invariant setting.
Let $\phi:X \to Y$ be a morphism of Nash manifolds. Let $E$ be a bundle on $Y$. Let $E'$ be a bundle on $X$ defined by $E':=\phi^*(E \otimes D_Y^{-1})\otimes D_X$, where $D_X$ and $D_Y$ denote the bundles of densities on $X$ and $Y$. Then we have a well defined map $\phi_*:\Sc(X,E') \to \Sc(Y,E)$.
\end{remark}

\subsection{Analog of Dixmier-Malliavin theorem}$ $

In this subsection we prove Property \ref{PropDixMal}. Let us remind its formulation.
\begin{thm}\label{ThmDixMal}
Let $\phi:M \to N$ be a Nash map of Nash manifolds. Then
multiplication defines an onto map $\Sc(M) \otimes \Sc(N)
\twoheadrightarrow \Sc(M)$.
\end{thm}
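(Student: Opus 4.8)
The plan is to reduce the statement to the classical Dixmier--Malliavin theorem on $\R$ via the standard reductions for Schwartz spaces on Nash manifolds. First I would observe that the statement is local on $M$ in the sense of Property \ref{pCosheaf}: if $M = \bigcup_{i=1}^k U_i$ is a finite cover by open Nash submanifolds, and the multiplication map $\Sc(U_i) \otimes \Sc(N) \to \Sc(U_i)$ is onto for each $i$, then using the partition of unity $1 = \sum \lambda_i$ with $\lambda_i f \in \Sc(U_i)$ for $f \in \Sc(M)$, any $f \in \Sc(M)$ decomposes as $f = \sum \lambda_i f$, and each $\lambda_i f$ (extended by zero) is a product of an element of $\Sc(U_i)$ with something pulled back from $N$; since $\Sc(U_i) \subset \Sc(M)$ as a subspace, this shows surjectivity on $M$. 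By Theorem \ref{LocTriv} we may thus assume $M \cong \R^m$, and by the same cosheaf argument applied to a cover of $N$ (pulling back along $\phi$), together with the fact that $\Sc(N) \subset \Sc(\R^\ell)$ is a quotient by Property \ref{Extension} once $N$ is embedded as a closed submanifold of some $\R^\ell$, we may assume $N \cong \R^\ell$ as well — absorbing $\phi$ into a Nash map $\R^m \to \R^\ell$.

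So the crux is: given a Nash (in particular, a map with polynomially bounded derivatives) map $\phi : \R^m \to \R^\ell$, show that multiplication $\Sc(\R^m) \otimes \Sc(\R^\ell) \to \Sc(\R^m)$, $f \otimes g \mapsto f \cdot (g \circ \phi)$, is onto. Here I would invoke the classical Dixmier--Malliavin theorem in the following form: every Schwartz function on $\R$ is a finite sum of products $h_1 \cdot h_2$ with $h_1, h_2 \in \Sc(\R)$ — equivalently, $\Sc(\R) = \Sc(\R) \cdot \Sc(\R)$ — and more relevantly, every $f \in \Sc(\R)$ can be written as $f = \sum_j u_j \cdot (v_j \circ q)$ where $q(x) = x$, i.e. we need a version adapted to composition with $\phi$. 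The cleanest route is: since $\phi$ has polynomially bounded derivatives of all orders, the function $(1 + |\phi(x)|^2)^N$ grows at most polynomially in $x$ and is a pullback $g_N \circ \phi$ where $g_N(y) = (1+|y|^2)^N$ is smooth of polynomial growth on $\R^\ell$ (though not Schwartz). To genuinely land inside $\Sc(\R^\ell)$ one instead argues with the Fourier/convolution version of Dixmier--Malliavin: realizing $\Sc(\R^m)$ as a smooth module and writing $f = \sum \partial^\alpha(\rho_\alpha) * (\text{something})$, then transporting the convolution structure through $\phi$.

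Concretely, the cleanest implementation I would pursue: embed $\Graph(\phi) = \{(x,\phi(x))\} \subset \R^m \times \R^\ell$ as a closed Nash submanifold; by Property \ref{Extension} restriction $\Sc(\R^m \times \R^\ell) \to \Sc(\Graph(\phi)) \cong \Sc(\R^m)$ is onto, and by Proposition \ref{SchKer} $\Sc(\R^m \times \R^\ell) = \Sc(\R^m) \ctp \Sc(\R^\ell)$. Hence it suffices to show that the image of the multiplication map $\Sc(\R^m) \otimes \Sc(\R^\ell) \to \Sc(\R^m)$ contains the restriction to the graph of $\Sc(\R^m) \hot \Sc(\R^\ell)$; but that restriction map \emph{is} precisely $F \mapsto F(x,\phi(x))$, which on elementary tensors $f \otimes g$ gives $f(x) g(\phi(x))$. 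Since elementary tensors span a dense subspace and the restriction is onto, and since the multiplication map has the same image as this restriction (both being the closure of the span of the $f \cdot (g\circ\phi)$), we would be done — \emph{provided} we know the image is already closed, i.e. that no actual closure is needed. This is exactly the subtle point, and here is where I expect the main obstacle to lie: upgrading the ``dense image'' statement to an ``onto'' statement. The paper flags precisely this kind of difficulty (the contrast between $\overline{\Sc(V-L)+\Fou(\Sc(V-L))} = \Sc(V)$ and the genuine equality) and says the proofs ``remind in their spirit the proof of the classical Borel Lemma.'' So I anticipate the actual argument uses the genuine Dixmier--Malliavin theorem (not just its ``closure'' version) on $\R$: write $f \in \Sc(\R^m)$ using DM in the $\phi$-direction by pulling back a partition coming from $\Sc(\R^\ell)$, controlling seminorms via the polynomial bounds on $\phi$ and the nuclearity machinery of \S\ref{NucFre} (Corollaries \ref{OntoTensorOnto}, \ref{CorOnto}) to conclude that the map on completed tensor products is onto, hence so is the one on algebraic tensor products after identifying images. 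The hard part will be making the seminorm estimates for $f \cdot (g \circ \phi)$ uniform enough to invoke the open mapping theorem for nuclear Fréchet spaces.
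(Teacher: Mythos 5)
Your reductions (localizing on $M$ via Property \ref{pCosheaf} and Theorem \ref{LocTriv}, replacing $N$ by some $\R^\ell$ via a closed Nash embedding and Property \ref{Extension}, and passing to the graph of $\phi$ inside $\R^m\times\R^\ell$) all match the paper's argument. But the heart of the theorem --- producing an actual finite-sum decomposition rather than a density statement --- is exactly what your proposal leaves open, and neither of your suggested ways of closing it works as stated. In your graph argument you restrict \emph{elementary} tensors $f\otimes g$ of $\Sc(\R^m)\hot\Sc(\R^\ell)$ to the graph; since these only span a dense subspace of $\Sc(\R^m\times\R^\ell)$, this yields only the closure statement, as you yourself note. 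Your fallback --- seminorm estimates plus ``the open mapping theorem for nuclear Fr\'echet spaces'' --- cannot repair this: the open mapping theorem takes surjectivity as a hypothesis and never upgrades a dense image to a surjection (and the image of the algebraic tensor product is not presented as a Fr\'echet subspace to which such machinery would apply), so no amount of uniform seminorm control along these lines proves the theorem.

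The missing idea is to avoid decomposing into pure tensors altogether. First treat a linear projection $\pr:\R^{m+\ell}\to\R^\ell$: the genuine Dixmier--Malliavin theorem in convolution form (Corollary \ref{DMcor}, $\Sc(L)*\Sc(V)=\Sc(V)$ for a subspace $L\subset V$ acting by translations) gives, after applying Fourier transform (which turns convolution along the subspace into multiplication by pullbacks under the dual projection), a finite decomposition $F=\sum_j F_j\cdot(g_j\circ\pr)$ with $F_j\in\Sc(\R^{m+\ell})$ \emph{arbitrary} Schwartz functions on the product (not elementary tensors) and $g_j\in\Sc(\R^\ell)$. Then, for a general Nash map $\phi:\R^m\to\R^\ell$, write a given $f\in\Sc(\R^m)$ as $F|_{\Graph(\phi)}$ for some $F\in\Sc(\R^m\times\R^\ell)$ (restriction to the closed Nash submanifold $\Graph(\phi)\cong\R^m$ is onto by Property \ref{Extension}), apply the projection case to $F$, and restrict each term: $F_j|_{\Graph(\phi)}\in\Sc(\R^m)$ and $(g_j\circ\pr)|_{\Graph(\phi)}=g_j\circ\phi$, so $f=\sum_j \bigl(F_j|_{\Graph(\phi)}\bigr)\cdot(g_j\circ\phi)$. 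This yields surjectivity of the map on the algebraic tensor product directly, with no completed tensor products, no nuclearity, and no closure to remove; the nuclear Fr\'echet machinery of \S\ref{NucFre} is not needed here.
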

First let us remind the formulation of the classical Dixmier-Malliavin theorem.
\begin{thm}[see \cite{DM}]
Let a Lie group $G$ acct continuously on a \Fre space $E$. Then $C^\infty_c(G) E=E^\infty,$ where $E^\infty$ is the subspace of smooth vectors in $E$ and $C^\infty_c(G)$ acts on $E$ by integrating the action of $G$.
\end{thm}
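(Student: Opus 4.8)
The plan is to realize the multiplication map as the module action in the classical Dixmier--Malliavin theorem, for a suitable smooth representation of a vector group on $\Sc(M)$.

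\textbf{Reduction to $N=\R^m$.} Since $N$ is affine (a standard fact about Nash manifolds), choose a closed Nash embedding $j\colon N\hookrightarrow \R^m$ and put $\Phi:=j\circ\phi\colon M\to\R^m$, with Nash components $\Phi_1,\dots,\Phi_m$. Restriction along $j$ carries $\Sc(\R^m)$ into $\Sc(N)$ (the easy direction of Property \ref{Extension}), so $\Phi^*\Sc(\R^m)\subseteq\phi^*\Sc(N)$, and hence the image of the map $\Sc(M)\otimes\Sc(\R^m)\to\Sc(M)$, $f\otimes g\mapsto f\cdot(g\circ\Phi)$, is contained in the image of the map in the theorem. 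Thus it suffices to show this map is onto $\Sc(M)$. (That it is well defined at all — that $(g\circ\Phi)f\in\Sc(M)$ for $g\in\Sc(\R^m)$, $f\in\Sc(M)$, which also gives the trivial reverse inclusion $\Im\subseteq\Sc(M)$ — follows because $g\circ\Phi$ is smooth with all derivatives of polynomial growth, $\Phi$ being Nash, and multiplication by such a function preserves $\Sc(M)$ continuously.)

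\textbf{A smooth action of $\R^m$.} Let $G:=\R^m$ act on the nuclear \Fre space $E:=\Sc(M)$ by
$$(\pi(t)f)(x):=e^{i\langle t,\Phi(x)\rangle}f(x),\qquad t\in\R^m,\ x\in M.$$
I claim $\pi$ is a continuous representation all of whose vectors are smooth, i.e. $E^\infty=E$. The formal derivatives in $t$ are $\partial_t^\alpha\pi(t)f=i^{|\alpha|}\pi(t)(\Phi^\alpha f)$, and $\Phi^\alpha f\in\Sc(M)$ since $\Phi^\alpha$ is Nash; so the point is to justify differentiation termwise by dominated-convergence estimates and to establish equicontinuity of $\{\pi(t):t\in K\}$ for compact $K\subset\R^m$. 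Both reduce, in local Nash charts $\cong\R^n$ (Theorem \ref{LocTriv}), to bounds of the form $|\partial^\gamma e^{i\langle t,\Phi\rangle}|\le C_{K,\gamma}(1+|x|)^{d_\gamma}$, which hold because $\Phi_1,\dots,\Phi_m$ and all their derivatives have polynomial growth. This verification — that $\Sc(M)$ is a topological module over the algebra of Nash functions of polynomial growth, with the estimates uniform in the parameter $t$ — is the step I expect to demand the most care, although it is entirely elementary.

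\textbf{Applying Dixmier--Malliavin.} Since $\pi$ is a smooth representation, the classical Dixmier--Malliavin theorem (recalled above) gives $C_c^\infty(\R^m)\cdot\Sc(M)=\Sc(M)$, where $\varphi\cdot f:=\int_{\R^m}\varphi(t)\,\pi(t)f\,dt$ is a Bochner integral in $\Sc(M)$. Evaluating at a point, $(\varphi\cdot f)(x)=\bigl(\int_{\R^m}\varphi(t)e^{i\langle t,\Phi(x)\rangle}\,dt\bigr)f(x)=\widehat\varphi(\Phi(x))\,f(x)$, where $\widehat\varphi\in\Sc(\R^m)$ is the Fourier transform of $\varphi$ in the appropriate normalization. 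Hence every $h\in\Sc(M)$ is a finite sum of functions $(\widehat\varphi\circ\Phi)\cdot f$ with $\widehat\varphi\in\Sc(\R^m)$ and $f\in\Sc(M)$, i.e. lies in the image of $\Sc(M)\otimes\Sc(\R^m)\to\Sc(M)$. Together with the reduction above, this proves the theorem.
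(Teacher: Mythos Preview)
Your proposal is not a proof of the stated theorem---that is the classical Dixmier--Malliavin theorem, which the paper simply cites from \cite{DM} without proof---but rather of Theorem~\ref{ThmDixMal}, the Nash-manifold analog that immediately follows it. Assuming that was the intent, your argument is correct and takes a genuinely different route from the paper's.

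The paper's proof of Theorem~\ref{ThmDixMal} goes through three reductions: first $M=\R^{n+k}$, $N=\R^n$, $\phi=$ projection, handled by applying classical DM to the \emph{translation} action of $\R^n$ on $\Sc(\R^{n+k})$ and then Fourier-transforming convolution into multiplication (Corollary~\ref{DMcor}); then a general Nash map $\phi:\R^k\to\R^n$ via the graph trick and surjectivity of restriction (Property~\ref{Extension}); finally general $M,N$ by partition of unity and local triviality (Property~\ref{pCosheaf}, Theorem~\ref{loctriv}). You instead globalize on the target side by a single closed embedding $N\hookrightarrow\R^m$, and then apply DM directly to the \emph{character} action $\pi(t)f=e^{i\langle t,\Phi\rangle}f$ of $\R^m$ on $\Sc(M)$; since the integrated action is already $(\widehat\varphi\circ\Phi)\cdot f$, the Fourier transform is absorbed into the definition of $\pi$ and neither the graph trick nor any localization of $M$ is needed. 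Your route is cleaner and more conceptual; the cost is the (elementary but honest) verification that $\pi$ is a smooth representation, plus the implicit assumption that $N$ admits a closed Nash embedding into some $\R^m$, i.e.\ is affine. That is harmless for the applications in this paper, where everything is real-algebraic, but for a general Nash $N$ one would first have to cover $N$ by affine opens via Property~\ref{pCosheaf}, at which point the two approaches partially converge.
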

\begin{cor}\label{DMcor}
Let $L \subset V$ be finite dimensional linear spaces, and let $L$ act on $V$ by translations. Then $\Sc(L) * \Sc(V)=\Sc(V)$, where $*$ means convolution.
\end{cor}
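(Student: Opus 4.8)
The plan is to deduce this directly from the Dixmier--Malliavin theorem quoted above, applied to the Lie group $G := L$ acting on the \Fre space $E := \Sc(V)$ by translations: $(\tau_l f)(v) := f(v-l)$ for $l \in L$, $v \in V$ (which makes sense since $L \subset V$). Granting that this is a continuous representation on a \Fre space all of whose vectors are smooth, the theorem gives $C_c^\infty(L)\cdot \Sc(V) = \Sc(V)^\infty = \Sc(V)$, where $C_c^\infty(L)$ acts by integrating the translations, i.e. precisely by convolution. Since $C_c^\infty(L) \subset \Sc(L)$, this already yields $\Sc(V) = C_c^\infty(L)*\Sc(V) \subseteq \Sc(L)*\Sc(V)$, so it remains only to check the reverse inclusion $\Sc(L)*\Sc(V) \subseteq \Sc(V)$, together with the two representation-theoretic facts above.

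For the inclusion $\Sc(L)*\Sc(V)\subseteq \Sc(V)$ I would realize convolution as the composition
$$\Sc(L)\ctp\Sc(V) \;\xrightarrow{\ \sim\ }\; \Sc(L\times V) \;\xrightarrow{\ S^*\ }\; \Sc(L\times V)\;\xrightarrow{\ \pr_*\ }\; \Sc(V),$$
where the first isomorphism is Proposition \ref{SchKer}, the map $S^*$ is pullback by the Nash automorphism $S(l,v) = (l,v-l)$ of $L\times V$ (pullback by a Nash diffeomorphism preserves Schwartz functions), and $\pr_*$ is pushforward along the projection $\pr:L\times V\to V$, a Nash submersion, so that $\pr_*$ is well defined and continuous by Property \ref{NashSub}. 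On a pure tensor $f\otimes g$ this composite is exactly $v\mapsto\int_L f(l)g(v-l)\,dl = (f*g)(v)$, and since finite sums of pure tensors are dense it agrees with convolution on all of $\Sc(L)\ctp\Sc(V)$; in particular $\Sc(L)*\Sc(V)\subseteq\Sc(V)$.

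It remains to see that $\Sc(V)$, with the translation action of $L$, is a continuous representation whose smooth vectors fill the whole space. Joint continuity of $L\times\Sc(V)\to\Sc(V)$ is a routine estimate with the standard seminorms $\gN_k$ on $\Sc(\R^n)$ (after identifying $V\cong\R^n$), using that $\tau_l f \to f$ in $\Sc(V)$ as $l\to 0$ uniformly for $f$ in a bounded set. For smoothness of each orbit map $c_f:L\to\Sc(V)$, $l\mapsto\tau_l f$, the candidate derivative in a direction $\xi\in L$ is $-\partial_\xi f\in\Sc(V)$, and one checks that the difference quotients converge to it in every seminorm $\gN_k$ by a one-variable Taylor-with-remainder argument, the remainder being controlled because all the derivatives of $f$ lie in $\Sc(V)$ and hence stay uniformly bounded after multiplication by any fixed polynomial. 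Iterating, $c_f$ is $C^\infty$, so $\Sc(V) = \Sc(V)^\infty$, as needed.

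The main obstacle is this last paragraph: verifying that translation is genuinely a smooth \Fre representation, i.e. controlling the difference quotients of the orbit map uniformly across all Schwartz seminorms at once; once that is granted the rest is formal. Alternatively one can simply invoke the well-known fact that translation on the Schwartz space of a finite-dimensional vector space is a smooth representation, and skip the explicit estimate.
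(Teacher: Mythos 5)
Your proposal is correct and is exactly the argument the paper intends: the corollary is stated as an immediate consequence of the quoted Dixmier--Malliavin theorem, applied to the translation action of $L$ on the \Fre space $\Sc(V)$, where all vectors are smooth and the integrated action is convolution. Your additional verifications (that $\Sc(L)*\Sc(V)\subseteq\Sc(V)$ via Proposition \ref{SchKer}, pullback by a Nash automorphism, and Property \ref{NashSub}, and that translation is a smooth representation on $\Sc(V)$) are the routine facts the paper leaves implicit, and they are handled correctly.
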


\begin{proof}[Proof of Theorem \ref{ThmDixMal}]
Step 1. The case $N= \R^{n},\, M=\R^{n+k}, \, \phi$ is the projection.\\
Follows from Corollary \ref{DMcor} after applying Fourier transform.

Step 2. The case $N= \R^{n},\, M=\R^{k}, \, \phi$ -  general.\\
Identify $N$ with the graph of $\phi$ in $N \times M$. The assertion follows now from the previous step using Property \ref{Extension}.

Step 3. The general case.\\
Follows from the previous step using Property \ref{pCosheaf} and Theorem \ref{loctriv}.
\end{proof}

\subsection{Coinvariants in Schwartz functions}\label{AppCoinv}$ $


\begin{defn}
Let a Nash group $G$ act on a Nash manifold $X$. A {\bf tempered $G$-equivariant bundle} $E$ over $X$ is a Nash bundle $E$ with an equivariant structure $\phi:a^*(E) \to p^*(E)$ (here $a:G\times X\to X$ is the action map and  $p:G\times X\to X$ is the projection) such that $\phi$
corresponds to a tempered section of the bundle $\Hom(a^*(E), p^*(E))$ (for the definition of tempered section see e.g. \cite{AG_Sc}), and for any element $\alpha$ in the Lie algebra of $G$ the derivation map $a(\alpha):C^\infty(X,E) \to C^\infty(X,E)$ preserves the sub-space of Nash
sections of $E$.
\end{defn}

In this subsection we prove the following generalization of Theorem \ref{thm:NegCoinvPrel}.
\begin{thm} \label{NegCoinv}
Let a connected algebraic group $G$ act on a real algebraic manifold $X$.
Let $Z$ be a $G$-invariant Zariski closed subset of $X$. Let $\g$ be the Lie algebra of $G$.
Let $E$ be a  tempered $G$-equivariant bundle over $X$.
Suppose that for any $z \in Z$ and $k \in \Z_{\geq 0}$ we have
$$(E|_z \otimes \Sym^k(CN_{z,Gz}^X) \otimes ((\Delta_G)|_{G_z}/\Delta_{G_z}))_{\g_z} =0.$$
Then $$(\Sc(X,E)/\Sc(X-Z,E))_\g=0.$$
\end{thm}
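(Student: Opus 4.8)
The plan is to reduce the global statement to a local one near the closed set $Z$, and then use a filtration argument on the normal jets along the orbits. First I would stratify $Z$ by $G$-orbit dimension: since $Z$ is Zariski closed and $G$-invariant, it has a finite $G$-invariant stratification $Z = \bigsqcup Z_j$ into smooth $G$-invariant locally closed subsets (e.g. by taking successive singular loci), each of which is a Nash manifold with a transitive-on-fibers action in the normal direction of an orbit. Working one stratum at a time from the smallest, it suffices by an induction on the stratification (using Property \ref{Extension} to lift Schwartz sections off the smaller strata, and the fact that coinvariants is right exact) to prove the statement in the case where $Z$ itself is a smooth Nash $G$-invariant submanifold of $X$.

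In that case I would invoke the Nash tubular neighborhood theorem (Theorem \ref{NashTube}) to replace a neighborhood of $Z$ by the total space of the normal bundle $N_Z^X$, and use Property \ref{pOpenSet} together with Property \ref{pCosheaf} to identify the quotient $\Sc(X,E)/\Sc(X-Z,E)$ with the space of Taylor jets of $E$-sections along $Z$, i.e. $\prod_{k\ge 0}$ (or rather the corresponding inverse-limit/filtered object built from) $\Sc(Z, E|_Z \otimes \Sym^k(CN_Z^X))$. Concretely this quotient carries a descending filtration whose $k$-th graded piece is $\Sc(Z, E|_Z \otimes \Sym^k(CN_Z^X))$ with its natural $G$-action. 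Since taking $\g$-coinvariants is right exact, it is enough to show each graded piece has vanishing $\g$-coinvariants, i.e. $\Sc(Z, E|_Z \otimes \Sym^k(CN_Z^X))_\g = 0$, and then to handle the passage from the graded pieces to the whole filtered object — this last point is the place where, in spirit, a Borel-lemma type argument is needed, exactly as the introduction advertises, since a priori vanishing on each graded piece only gives density.

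For the graded piece: $Z$ is a union of $G$-orbits, so by another application of Property \ref{Extension} / Frobenius reciprocity for the fibration $G \to G/G_z \cong Gz$, the space $\Sc(Gz, E|_{Gz}\otimes \Sym^k(CN_{Gz}^X))$ is $\g$-coinvariant-trivial as soon as the fiber representation $E|_z \otimes \Sym^k(CN_{z,Gz}^X)$, twisted by the modulus character $(\Delta_G)|_{G_z}/\Delta_{G_z}$ coming from comparing Haar measures on $G$ and on the orbit, has vanishing $\g_z$-coinvariants — which is exactly the hypothesis. One more bookkeeping step is needed to pass from a single orbit to the whole smooth stratum $Z$: here one fibers $Z$ over its orbit space (or argues that the family of stabilizers is, up to the tempered structure, locally constant along the Nash stratum) and uses that $\Sc$ of a total space is controlled fiberwise via Property \ref{NashSub}. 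The main obstacle I expect is precisely the reassembly step from the associated graded to the full jet space: getting genuine equality of coinvariants rather than mere density requires the dual-uncertainty/dual-Borel type argument developed in the appendix, i.e. summing a series of corrections that is convergent in the Schwartz topology by the nuclear \Fre estimates, using the control of transversal derivatives by the (unipotent, hence well-behaved) action of the stabilizer on the normal space. All the other steps — the stratification, the tubular-neighborhood identification, the orbit-by-orbit vanishing — are essentially formal given the cited properties.
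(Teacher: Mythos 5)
Your overall architecture matches the paper's: stratify $Z$, pass along each smooth stratum to the jet filtration coming from a Nash tubular neighborhood, prove vanishing of $\g$-coinvariants of each graded piece $\Sc(Z,E|_Z\otimes\Sym^k(CN_Z^X))$ from the pointwise hypothesis, and recognize that the passage from the graded pieces back to the full filtered object is the delicate point. You are also right that this last passage rests on a Borel-lemma-type statement; in the paper it is handled by Lemma \ref{Borel_lem} (completeness: $\Sc_X(Z,E)\cong\lim_{\ot}\Sc_X(Z,E)/F^i$) combined with the purely algebraic Lemma \ref{gr0all0_inf}, which uses only finite-dimensionality of $\g$ — no convergence estimates in the Schwartz topology beyond the Borel lemma itself are needed, so your "summing a convergent series of corrections" is the right spirit but not quite the mechanism.

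The genuine gap is the step you dismiss as bookkeeping: deducing $\Sc(Z,E|_Z\otimes\Sym^k(CN_Z^X))_\g=0$ for a whole smooth stratum from the stabilizer-wise hypothesis. There is no fibration $Z\to Z/G$ to which Property \ref{NashSub} or \ref{Extension} could be applied: orbits need not be closed in the stratum, the orbit space need not be separated, let alone Nash, and "stabilizers locally constant along the stratum" is not available in general. The paper's fix is to build the stratification from the start via Chevalley's theorem (Theorem \ref{strat_fac}) so that each stratum carries a \emph{factorisable} action, i.e. $(g,x)\mapsto(gx,x)$ is a submersion of $G\times Z_i$ onto a Nash image $Y\subset Z_i\times Z_i$; $Y$ then replaces the non-existent orbit space, and $G\times Z_i$ becomes a Nash family of torsors over $Y$. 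The resulting statement, Theorem \ref{fac_co_inv}, is the real content of this step: its proof needs tempered families of representations, the computation $\g\Sc(G,D_G)=\Sc(G,D_G)_0$ (Lemma \ref{g_coinv_simp}), surjectivity of pushforwards (Property \ref{NashSub}), and nuclear \Fre $\ctp$ arguments, and it is also where the modulus twist $(\Delta_G)|_{G_z}/\Delta_{G_z}$ is correctly produced through the density bundles. Your single-orbit Frobenius-reciprocity reduction plus "fiber over the orbit space" does not supply any of this, so as written the final assembly step of your argument is unjustified, even though the rest of the plan is essentially the paper's.
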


For the proof of this theorem we will need some auxiliary results.
\begin{lem}\label{gr0all0}
Let $V$ be a representation of a Lie algebra $\g$. Let $F$ be a finite $\g$-invariant filtration of $V$. Suppose $gr_F(V)_\g=0$. Then $V_\g=0$.
\end{lem}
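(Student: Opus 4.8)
The plan is to prove this by induction on the length $m$ of the filtration $0 = F_0 \subset F_1 \subset \cdots \subset F_m = V$. The key (and essentially only) tool is the right-exactness of the coinvariants functor $M \mapsto M_\g = M/\g M$ on $\g$-modules: for a short exact sequence $0 \to A \to B \to C \to 0$ of $\g$-modules the sequence $A_\g \to B_\g \to C_\g \to 0$ is exact, as one sees by identifying the kernel of the surjection $B_\g \twoheadrightarrow C_\g$ with $(A + \g B)/\g B$, which is precisely the image of $A_\g \to B_\g$.

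For the base case $m \le 1$ the module $V$ is concentrated in a single filtration degree, so $V = gr_F(V)$ and hence $V_\g = gr_F(V)_\g = 0$ by hypothesis. For the inductive step, apply right-exactness to the short exact sequence $0 \to F_{m-1} \to V \to V/F_{m-1} \to 0$. Here $V/F_{m-1} = F_m/F_{m-1}$ is the top graded piece of $gr_F(V)$, so $(V/F_{m-1})_\g = 0$; and $F_0 \subset \cdots \subset F_{m-1}$ is a $\g$-invariant filtration of $F_{m-1}$ of length $m-1$ whose associated graded is a direct summand of $gr_F(V)$ and hence still has vanishing coinvariants, so $(F_{m-1})_\g = 0$ by the induction hypothesis. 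The exact sequence $(F_{m-1})_\g \to V_\g \to (V/F_{m-1})_\g \to 0$ then forces $V_\g = 0$, completing the induction.

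Since this is a routine d\'evissage argument, I do not expect any genuine obstacle; the only points needing (minor) care are verifying the right-exactness of $(-)_\g$ and checking that the truncated filtration $F_0 \subset \cdots \subset F_{m-1}$ again meets the hypotheses of the lemma (its associated graded being a summand of $gr_F(V)$), both of which are immediate.
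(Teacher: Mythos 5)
Your proof is correct and is precisely the d\'evissage the paper has in mind: the paper dismisses the lemma as ``evident by induction on the length of the filtration,'' and your argument via right-exactness of the coinvariants functor applied to $0 \to F_{m-1} \to V \to V/F_{m-1} \to 0$ is exactly that induction, spelled out. No issues.
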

The proof is evident by induction on the length of the filtration.
\begin{lem}\label{gr0all0_inf}
Let $V$ be a representation of a finite dimensional Lie algebra $\g$. Let $F_i$ be a countable decreasing $\g$-invariant filtration of $V$. Suppose $\bigcap F^i(V)=0$, $F^0(V)=V$ and that the canonical map $V \to \lim \limits_\ot(V/F^i(V))$ is an isomorphism. Suppose also that $gr_F^i(V)_\g=0$. Then $V_\g=0$.
\end{lem}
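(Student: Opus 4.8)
The plan is to imitate the proof of the classical Borel lemma by successive approximation. The first move is to fix once and for all a finite basis $\alpha_1,\dots,\alpha_d$ of $\g$ --- this is where finite-dimensionality of $\g$ enters, and it is essential: it lets me package infinitely many corrections into finitely many ``coefficient vectors'' $w_1,\dots,w_d$, so that the end product is a genuine finite element $\sum_k\alpha_k w_k$ of $\g V$. I would first note that the hypothesis $gr_F^i(V)_\g=0$ is the same as the relation $\g F^i(V)+F^{i+1}(V)=F^i(V)$ for every $i$; equivalently, for each $r\in F^i(V)$ there exist $w_1,\dots,w_d\in F^i(V)$ with $r-\sum_{k=1}^d\alpha_k w_k\in F^{i+1}(V)$.

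Now fix $v\in V=F^0(V)$; the goal is to produce $w_1,\dots,w_d\in V$ with $v=\sum_k\alpha_k w_k$. Iterating the relation above, I would construct $w_k^{(j)}\in F^j(V)$ (for $k=1,\dots,d$ and $j\ge 0$) so that for every $N$
$$v-\sum_{k=1}^d\alpha_k\Bigl(\sum_{j=0}^{N-1}w_k^{(j)}\Bigr)\in F^N(V).$$
For each fixed $k$ the partial sums $s_N^{(k)}:=\sum_{j<N}w_k^{(j)}$ are Cauchy for the filtration topology (their differences lie in the $F^m(V)$), so by the completeness hypothesis --- the isomorphism $V\cong\lim\limits_\ot(V/F^i(V))$ --- they converge to some $w_k\in V$.

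It then remains to verify $v=\sum_k\alpha_k w_k$. Here I would use that each $F^N(V)$ is closed in the filtration topology (being an open subspace) and that each $\alpha_k$ acts continuously and preserves the filtration, by $\g$-invariance; consequently $w_k-s_N^{(k)}\in F^N(V)$ and $\alpha_k(w_k-s_N^{(k)})\in F^N(V)$. Splitting
$$v-\sum_{k=1}^d\alpha_k w_k=\Bigl(v-\sum_{k=1}^d\alpha_k s_N^{(k)}\Bigr)-\sum_{k=1}^d\alpha_k\bigl(w_k-s_N^{(k)}\bigr)$$
exhibits the left-hand side as an element of $F^N(V)$ for all $N$, hence of $\bigcap_N F^N(V)=0$. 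So $v=\sum_k\alpha_k w_k\in\g V$, and since $v$ was arbitrary, $V_\g=V/\g V=0$.

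I expect the only real obstacle to be the bookkeeping that keeps the answer inside $\g V$ rather than merely inside some completion of it: fixing the basis of $\g$ at the start is what resolves this, after which the completeness assumption on $V$ does the analytic work, exactly as in the Borel lemma. The verifications that the $F^N(V)$ are closed and that the $\g$-action is continuous for the filtration topology are routine consequences of $\g$-invariance of the filtration.
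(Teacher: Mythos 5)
Your proof is correct and follows essentially the same successive-approximation argument as the paper: an inductive construction of solutions modulo $F^N(V)$ followed by a passage to the limit using the completeness hypothesis, with finite-dimensionality of $\g$ guaranteeing the limit lands in $\g V$. The only cosmetic difference is that you fix a basis of $\g$ and take limits of the $d$ coefficient sequences in $V$, whereas the paper phrases the same step as the isomorphism $\g\otimes V\cong\lim\limits_\ot\g\otimes(V/F^i(V))$.
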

This lemma is standard and we included its prove for the sake of completeness.
\begin{proof}
We have to prove that the map $\g \otimes V \to V$ is onto. Let $v \in V$. We will construct in an inductive way a sequence of vectors $w_i \in \g \otimes V/F^i(V)$ s.t. their image under the action map $\g \otimes V/F^i(V) \to V/F^i(V)$ coincides with the image of $v$ under the quotient map $V \to V/F^i(V)$. Define $w_0=0$. Suppose we have already defined $w_n$ and we have to define $w_{n+1}$. Let $w_{n+1}'$ be an arbitrary lifting of $w_{n}$ to $\g \otimes V/F^{n+1}(V).$ Let $v_{n+1}'$ be the image of $w_{n+1}'$
under the action map $\g \otimes V/F^{n+1}(V) \to V/F^{n+1}(V)$ and let $v_{n+1}$ be the image of
 $v$ under the quotient map $V \to V/F^{n+1}(V)$. Let $dv=v_{n+1}-v_{n+1}'$. Clearly $dv$ lies in $F^{n}(V)/F^{n+1}(V).$ Let $dw$ be its lifting to $\g \otimes (F^{n}(V)/F^{n+1}(V)).$ Denote $w_{n+1}=w_{n+1}'+dw.$

Since $\g$ is finite dimensional, the canonical map $\g \otimes V \to \lim\limits _\ot \g \otimes  (V/F^i(V))$ is an isomorphism. Therefore there exists a unique $w \in \g \otimes V$ s.t. its image in $\g \otimes  (V/F^i(V))$ is $w_i$. Thus the image of $w$ under the map $\g \otimes V \to V$ is $v$.
\end{proof}
\begin{notation}
Let $Z$ be a locally closed semi-algebraic subset of a Nash manifold $X$. Let $E$ be a Nash bundle over $X$. Denote $$\Sc_X(Z,E):=\Sc(X-(\overline{Z}-Z))/\Sc(X-\overline{Z},E).$$
\end{notation}

\begin{lem}\label{strat_fil}
Let $X$ be a Nash manifold and $Z \subset X$ be a locally closed semi-algebraic subset. Let $E$ be a Nash bundle over $X$. Let $Z_i$ be a finite stratification of $Z$ by locally closed semi-algebraic subsets. Then $\Sc_X(Z,E)$ has a canonical filtration s.t. $$gr_i(\Sc_X(Z,E)) \cong \Sc_X(Z_i,E)).$$
\end{lem}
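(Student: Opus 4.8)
The plan is to reduce the statement to the basic case of a two-step stratification --- one closed stratum together with its open complement --- and then iterate. Concretely, suppose $Z' \subseteq Z$ is closed in $Z$ and put $Z'' := Z \setminus Z'$, so that $Z''$ is open in $Z$; both $Z'$ and $Z''$ are then locally closed semi-algebraic subsets of $X$. The key step will be to establish a short exact sequence of nuclear \Fre spaces
$$0 \longrightarrow \Sc_X(Z'',E) \longrightarrow \Sc_X(Z,E) \longrightarrow \Sc_X(Z',E) \longrightarrow 0 .$$
Granting this, the lemma follows by induction on the number $n$ of strata: I would order the strata so that $Z_1 \cup \dots \cup Z_{j-1}$ is closed in $Z$ for every $j$ (such an ordering exists for a semi-algebraic stratification --- e.g. any linear refinement of the partial order $Z_i \preceq Z_k \iff \overline{Z_i} \subseteq \overline{Z_k}$, using the frontier property), set $Y_j := Z_j \cup \dots \cup Z_n$, note that $Y_j$ is open in $Z$ with $Z_j$ closed in $Y_j$ and complement $Y_{j+1}$, and apply the displayed sequence with $(Z,Z',Z'') = (Y_j, Z_j, Y_{j+1})$. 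This exhibits $\Sc_X(Y_{j+1},E)$ as a closed subspace of $\Sc_X(Y_j,E)$ with quotient $\Sc_X(Z_j,E)$, so the decreasing chain $\Sc_X(Z,E) = \Sc_X(Y_1,E) \supseteq \Sc_X(Y_2,E) \supseteq \dots \supseteq \Sc_X(Y_{n+1},E) = 0$ is the desired filtration, with $j$-th subquotient canonically isomorphic to $\Sc_X(Z_j,E)$.

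To prove the short exact sequence I would work inside the open set $A := X \setminus (\overline{Z} \setminus Z)$, the largest open subset of $X$ in which $Z$ is closed, so that $\Sc_X(Z,E) = \Sc(A,E)/\Sc(A \setminus Z, E)$ by definition. One checks easily that $Z'$ is closed in $A$ (since $\overline{Z'} \cap A \subseteq \overline{Z} \cap A = Z$ and $\overline{Z'} \cap Z = Z'$) and that $Z''$ is closed in $A \setminus Z'$. The point that actually requires an argument is the \emph{excision} statement: if $Z'$ is closed in an open set $A$ and $A \subseteq A_1$ with $Z'$ still closed in $A_1$, then $\Sc(A,E)/\Sc(A\setminus Z',E) \cong \Sc(A_1,E)/\Sc(A_1\setminus Z',E)$; applying it with $A_1 := X \setminus (\overline{Z'}\setminus Z')$ identifies the middle quotient with $\Sc_X(Z',E)$. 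This I would deduce from Property \ref{pCosheaf} (cosheaf property) applied to the open cover $A_1 = A \cup (A_1 \setminus Z')$, together with Property \ref{pOpenSet}, which gives $\Sc(U,E) \cap \Sc(V,E) = \Sc(U \cap V, E)$ inside $\Sc(X,E)$ for open $U,V$ and, crucially, that each $\Sc$ of an open set is a \emph{closed} subspace. The same reasoning identifies $\Sc_X(Z'',E)$ with $\Sc(A\setminus Z',E)/\Sc(A\setminus Z,E)$. Since $\Sc(A\setminus Z, E) \subseteq \Sc(A\setminus Z', E) \subseteq \Sc(A, E)$ is a chain of closed subspaces of nuclear \Fre spaces (Proposition \ref{NFSSubQu}), the third isomorphism theorem yields the short exact sequence, with all maps continuous and the first one a closed embedding.

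The main obstacle is this excision/independence-of-ambient-open-set step: once that identification is in place, everything else is formal --- the exactness is the third isomorphism theorem, the topological-vector-space subtleties (closedness of images, subquotients being nuclear \Fre) are handled by the properties of Schwartz spaces and of nuclear \Fre spaces collected in \S\ref{SecPrel}, and the induction and the choice of ordering of the strata are routine. I would also remark that, strictly speaking, the filtration depends on the chosen admissible ordering of the strata; different admissible orderings yield different filtrations of $\Sc_X(Z,E)$, but all of them have the same associated graded, namely one copy of $\Sc_X(Z_i,E)$ for each $i$, which is what the statement asserts.
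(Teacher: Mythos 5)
Your proposal is correct and is essentially the expansion of the paper's (one-line) proof: the paper simply observes that the claim follows from Property \ref{pOpenSet}, i.e.\ from realizing each $\Sc_X(Z_i,E)$ as a subquotient of Schwartz spaces of open subsets, which is exactly the content of your excision step, the short exact sequence for a closed stratum and its open complement, and the admissible ordering of the strata (the ordering exists by the frontier condition, and in the paper's application the stratification indeed comes with such a filtration from Noetherian induction).
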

\begin{proof}
It follows immediately from property \ref{pOpenSet}.
\end{proof}

\begin{lem}
Let X be a Nash manifold and $Z \subset X$ be Nash submanifold. Then $\Sc_X(Z)$ has a canonical countable decreasing filtration satisfying
$\bigcap (\Sc_X(Z))^i=0$
s.t. $gr_i(\Sc_X(Z,E)) \cong \Sc(Z,\Sym^i(CN_Z^X)\otimes E).$
\end{lem}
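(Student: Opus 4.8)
The plan is to use the filtration by the order of vanishing along $Z$ and to identify its graded pieces through the leading term of the Taylor expansion in the normal directions. Since $\overline Z-Z$ is closed in $X$, replacing $X$ by $X-(\overline Z-Z)$ does not change $\Sc_X(Z,E)=\Sc(X,E)/\Sc(X-Z,E)$, so we may assume $Z$ is closed in $X$. For $i\geq 0$ let $F^i\subset\Sc(X,E)$ be the space of Schwartz sections all of whose derivatives of order less than $i$ vanish along $Z$; this is a coordinate-free condition. Then $F^0=\Sc(X,E)$, the $F^i$ form a decreasing sequence, and by Property~\ref{pOpenSet} we have $\bigcap_i F^i=\Sc(X-Z,E)$. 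Define $(\Sc_X(Z,E))^i$ to be the image of $F^i$ in $\Sc_X(Z,E)$; this is a canonical decreasing filtration with $\bigcap_i(\Sc_X(Z,E))^i=0$, and $gr_i(\Sc_X(Z,E))=F^i/F^{i+1}$.

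Next I would construct the leading coefficient map $\tau_i\colon F^i\to\Sc(Z,\Sym^i(CN_Z^X)\otimes E|_Z)$. For $f\in F^i$ the $i$-th order derivative tensor of $f$, restricted to $Z$, is a symmetric $i$-linear $E|_Z$-valued form on $T_X|_Z$; since the derivatives of $f$ of order $<i$ vanish along $Z$, this form is annihilated as soon as one of its arguments is tangent to $Z$, and hence descends to a section over $Z$ of $\Sym^i((N_Z^X)^*)\otimes E|_Z=\Sym^i(CN_Z^X)\otimes E|_Z$; by the same vanishing it does not depend on the choice of local coordinates or of local trivialization of $E$, so $\tau_i$ is well defined and canonical. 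It is continuous with values in Schwartz sections, since it factors through the restriction to $Z$ of the $i$-th order derivatives of $f$, which is continuous by Property~\ref{Extension} (valid for sections of Nash bundles). By construction $\ker\tau_i=F^{i+1}$, so $\tau_i$ induces an injection $gr_i(\Sc_X(Z,E))\hookrightarrow\Sc(Z,\Sym^i(CN_Z^X)\otimes E|_Z)$.

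It remains to prove that $\tau_i$ is onto, and this is where the real work lies. Using Property~\ref{pCosheaf} for the cover $X=\Omega\cup(X-Z)$ with $\Omega$ any open neighborhood of $Z$, one gets $\Sc_X(Z,E)\cong\Sc(\Omega,E)/\Sc(\Omega-Z,E)$ compatibly with the filtration, so $X$ may be replaced by a Nash tubular neighborhood of $Z$ (Theorem~\ref{NashTube}); combining this with local triviality of Nash manifolds (Theorem~\ref{LocTriv}) and of Nash bundles, together with Property~\ref{pCosheaf} once more, surjectivity of $\tau_i$ reduces to the model case $X=\R^m\times\R^k$, $Z=\R^m\times\{0\}$, $E$ trivial. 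In the model case it is explicit: given $g_\beta\in\Sc(\R^m)$ for multi-indices $\beta$ with $|\beta|=i$, the function $(x,y)\mapsto\rho(y)\sum_{|\beta|=i}g_\beta(x)y^\beta/\beta!$, with $\rho\in\Sc(\R^k)$ chosen so that $\rho(0)=1$, lies in $F^i$ and is sent by $\tau_i$ to $(g_\beta)_\beta$, by a short Leibniz-rule computation. A partition of unity subordinate to the cover (Property~\ref{pCosheaf}) then assembles the local lifts into a global element of $F^i$ mapping onto a prescribed section, giving $gr_i(\Sc_X(Z,E))\cong\Sc(Z,\Sym^i(CN_Z^X)\otimes E|_Z)$. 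The main obstacle is precisely this last step: one must check that the locally constructed lifts, extended by zero, are still Schwartz on $X$ and still vanish to order $i$ along $Z$ (both via Property~\ref{pOpenSet}), and that the chain of reductions (excision, tubular neighborhood, local triviality) is compatible with the canonical filtration and with $\tau_i$.
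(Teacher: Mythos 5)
Your argument is correct, and it is essentially the paper's: the paper simply cites the proof of Corollary 5.5.4 in \cite{AG_Sc}, which is exactly this construction (the filtration by order of vanishing along $Z$, identification of the graded pieces via the leading Taylor coefficient in the conormal directions, and surjectivity by reduction to the model case $\R^m\times\R^k$ through the Nash tubular neighborhood, local triviality and partition of unity). The only point worth adding is that the continuous bijection $gr_i(\Sc_X(Z,E))\to\Sc(Z,\Sym^i(CN_Z^X)\otimes E|_Z)$ is a topological isomorphism by the open mapping theorem for \Fre spaces.
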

\begin{proof}
It follows from the proof of Corollary 5.5.4. in \cite{AG_Sc}.
\end{proof}

\begin{lem} [E. Borel] \label{Borel_lem}
Let X be a Nash manifold and $Z \subset X$ be Nash submanifold. Then
the natural map  $$\Sc_X(Z,E) \to \lim_\ot(\Sc_X(Z,E))/\Sc_X(Z,E))^i)$$
is an isomorpihsm.
\end{lem}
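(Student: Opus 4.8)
The plan is to reduce the statement, by localization on $Z$, to the classical Borel lemma with Schwartz estimates and parameters. Injectivity of the map is immediate: its kernel equals $\bigcap_i(\Sc_X(Z,E))^i$, which vanishes by the preceding lemma; so only surjectivity onto $\lim_\ot$ needs proof --- the problem of realizing a compatible system of finite jets by an actual Schwartz section. First I would reduce to the case where $Z$ is closed in $X$: as $Z$ is a locally closed Nash submanifold, $\overline Z\setminus Z$ is Nash closed in $X$, and passing to the open Nash submanifold $X\setminus(\overline Z\setminus Z)$ changes neither $\Sc_X(Z,E)$ nor its filtration. For $Z$ closed, Property \ref{pOpenSet} identifies $\Sc(X-Z,E)$ with the Schwartz sections flat along $Z$, so that $\Sc_X(Z,E)$ is the space of $\infty$-jets along $Z$ of Schwartz sections of $E$, and $(\Sc_X(Z,E))^i$ is the subspace of jets vanishing to order $\ge i$ on $Z$.

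Next I would establish that the assertion is local on $Z$. If $X=U_1\cup\dots\cup U_\ell$ is a finite open Nash cover and $Z_j:=U_j\cap Z$, then, writing $V:=\Sc_X(Z,E)$ and embedding $V_j:=\Sc_{U_j}(Z_j,E|_{U_j})$ into $V$ by extension by zero (Property \ref{pOpenSet}), the partition of unity of Property \ref{pCosheaf} gives $V=\sum_j V_j$ and $V^i=\sum_j V_j^i$; hence $\bigoplus_j V_j/V_j^i\twoheadrightarrow V/V^i$ for every $i$, and a short diagram chase shows that the kernels of these surjections form an inverse system with surjective transition maps. By the Mittag--Leffler criterion $\lim_\ot$ is then exact on the corresponding sequences, so $\prod_j\bigl(\lim_\ot V_j/V_j^i\bigr)\twoheadrightarrow\lim_\ot V/V^i$; combined with surjectivity of each $V_j\to\lim_\ot V_j/V_j^i$ this proves surjectivity of $V\to\lim_\ot V/V^i$. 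Applying this to the cover supplied by the Nash tubular neighborhood theorem (Theorem \ref{NashTube}), augmented by $U_0:=X-Z$ (for which the assertion is vacuous), I may assume $X=N_Z^X$ with $Z$ its zero section; a further application to a finite Nash cover of $Z$ by opens that are Nash-diffeomorphic to $\R^k$ and over which $N_Z^X$ and $E$ trivialize (Theorem \ref{LocTriv} and local triviality of Nash bundles) then reduces everything to the model $X=\R^k\times\R^m$, $Z=\R^k\times\{0\}$, $E$ trivial of rank one.

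In the model case $V=\Sc_X(Z)$, and truncated Taylor expansion in the transverse variable, $f\mapsto(\partial_y^\alpha f|_{y=0})_{|\alpha|<i}$, identifies $V/V^i$ with $\bigoplus_{|\alpha|<i}\Sc(\R^k)$; surjectivity of this map on finite jets is elementary, e.g.\ by $f=\sum_{|\alpha|<i}\tfrac{g_\alpha(x)}{\alpha!}\,y^\alpha\rho(y)$ with $\rho$ a bump function equal to $1$ near $0$. Hence $\lim_\ot V/V^i=\prod_{\alpha\in\N^m}\Sc(\R^k)$, and the claim becomes: every sequence $(g_\alpha)_{\alpha\in\N^m}$ with $g_\alpha\in\Sc(\R^k)$ equals $(\partial_y^\alpha f|_{y=0})_\alpha$ for some $f\in\Sc(\R^{k+m})$. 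This is the Schwartz-valued Borel lemma, which I would prove as in the classical case: put $f(x,y)=\sum_\alpha\tfrac{g_\alpha(x)}{\alpha!}\,y^\alpha\rho(\lambda_\alpha y)$ with cut-off parameters $\lambda_\alpha\to\infty$ chosen rapidly enough (in terms of finitely many Schwartz seminorms of $g_\alpha$) that the series converges in $\Sc(\R^{k+m})$, and verify $\partial_y^\alpha f|_{y=0}=g_\alpha$ using that every positive-order derivative of $\rho$ vanishes at $0$. Finally, the continuous linear bijection thus obtained is a topological isomorphism by the open mapping theorem, all spaces in sight being nuclear \Fre (cf.\ \S\ref{NucFre}).

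The hard part is the estimate in the last step --- choosing the $\lambda_\alpha$ so that the tail $\sum_{|\alpha|\ge n}\tfrac{g_\alpha(x)}{\alpha!}\,y^\alpha\rho(\lambda_\alpha y)$ is small in each Schwartz seminorm on $\R^{k+m}$. This is precisely the technical heart of the classical Borel lemma, the only new feature being the need to keep track of the rapid decay in the $x$-variables carried by $g_\alpha\in\Sc(\R^k)$. The localization steps are routine; the one place requiring a little care is verifying that the transition maps between the kernels in the Mittag--Leffler step are surjective.
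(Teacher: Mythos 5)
Your proposal is correct and follows essentially the same route as the paper: reduce via the Nash tubular neighborhood theorem, local triviality and partition of unity to the standard pair $\R^k\times\{0\}\subset\R^k\times\R^m$, and there run the classical Borel construction with cut-offs $\rho(\lambda_\alpha y)$, keeping Schwartz estimates in the parameter variables. The extra details you supply (reduction to $Z$ closed, and the Mittag--Leffler argument justifying that surjectivity onto the inverse limit is local) are points the paper leaves implicit, and they are handled correctly.
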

\begin{proof}
Step 1. Reduction to the case when $X$ is a total space of a bundle over $Z$.\\
It follows immediately from Theorem \ref{NashTube}.

Step 2. Reduction to the case when $Z=\R^n$ is standardly embedded inside $X=\R^{n+k}$.\\
It follows immediately from Theorem \ref{LocTriv} and Property \ref{pCosheaf}.

Step 3. Proof for the case when $Z=\R^n$ standardly embedded inside $X=\R^{n+k}$.\\
It is the same as the proof of the classical Borel Lemma.
\end{proof}

\begin{definition}
We call an action of a Nash group $G$ on a Nash manifold $X$ factorisable if the map $\phi_{G,X}:G \times X \to X \times X$ defined by $(g,x) \mapsto (gx,x)$ has a Nash image and is a submersion onto it.
\end{definition}

\begin{thm} [Chevalley] \label{strat_fac}
Let a real algebraic group act on a real algebraic variety $X$. Then there exists a finite $G$-invariant smooth stratification $X_i$ of $X$ s.t. the action of $G$ on $X_i$ is factorisable.
\end{thm}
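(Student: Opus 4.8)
The plan is to build the stratification in two stages and then verify factorisability stratum by stratum. First I would reduce to the case that $X$ itself is smooth by peeling off the iterated singular loci of $X$; these are Zariski closed, and since every $g\in G$ acts on $X$ by an algebraic automorphism they are $G$-invariant, so this costs nothing. Next, on a smooth $G$-variety I would stratify by the dimension of the stabilizer: the function $x\mapsto \dim\g_x$, where $\g_x=\Ker(\g\to T_xX)$, is Zariski upper semicontinuous and takes finitely many values, so the sets $X_{\ge d}:=\{x:\dim\g_x\ge d\}$ are Zariski closed and the differences $X_d:=X_{\ge d}\setminus X_{\ge d+1}$ form a finite locally closed stratification. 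Each $X_d$ is $G$-invariant because $G_{gx}=gG_xg^{-1}$, so $\dim\g_x$ is constant along orbits. Finally I would refine $\{X_d\}$ to a smooth stratification $\{X_i\}$ by again removing iterated singular loci inside each $X_d$ (again $G$-invariant), so that each $X_i$ is a smooth locally closed Nash subvariety contained in some $X_d$, hence with \emph{constant} stabilizer dimension.

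It then remains to check that the action of $G$ on each such $X_i$ is factorisable, i.e. that $\phi:=\phi_{G,X_i}:G\times X_i\to X_i\times X_i$, $(g,x)\mapsto(gx,x)$, has a Nash image and is a submersion onto it (note $gx\in X_i$ since $X_i$ is $G$-invariant). A direct computation of the differential at $(g,x)$, using the right-invariant trivialization of $TG$, gives $d\phi(\alpha,v)=(\alpha(gx)+(dg)_x v,\;v)$ for $\alpha\in\g$, $v\in T_xX_i$; hence $\Ker d\phi_{(g,x)}\cong\g_{gx}$, which has dimension $d$ at every point of $G\times X_i$. Thus $\phi$ has constant rank $r=\dim G+\dim X_i-d$ on all of $G\times X_i$.

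By the constant rank theorem in the Nash category, $\phi$ is, locally on its source, a submersion onto a Nash submanifold of $X_i\times X_i$ of dimension $r$. By Tarski--Seidenberg the set $\phi(G\times X_i)$ is semialgebraic; being locally a $C^\infty$ submanifold of pure dimension $r$, it is a Nash submanifold, and $\phi$ is a submersion onto it. This gives factorisability of the $G$-action on $X_i$, and the collection of all the $X_i$ (over all $d$) is the desired finite $G$-invariant smooth stratification of $X$.

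The main obstacle, I expect, is the passage from the local information supplied by the constant rank theorem to the global statement in the definition of factorisable: one needs that a semialgebraic subset of $X_i\times X_i$ which is everywhere locally a smooth submanifold of a fixed dimension is genuinely a Nash submanifold, and that a map which is everywhere a local submersion onto this fixed submanifold is a submersion onto it. Both are standard in the semialgebraic/Nash category but must be invoked with care. The rest is the classical Chevalley stratification adapted to the real algebraic setting, the only real-field subtlety being that it is constancy of $\dim\g_x$ that controls the rank of $\phi$, which is why the stabilizer-dimension stratification must be performed before, and preserved under, the smoothing refinement.
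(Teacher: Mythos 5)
There is a genuine gap, and it sits exactly at the step you flagged as "standard": the passage from constant rank to "the image is a Nash submanifold and $\phi$ is a submersion onto it." The constant rank theorem only says that a small neighbourhood in the source maps onto a submanifold; it does not prevent several such local sheets from crossing at a single point of the global image (already the Nash map $t\mapsto(t^2-1,\,t(t^2-1))$ has constant rank $1$, yet its image, a nodal cubic, is not a submanifold). In your setting this is not a removable technicality, because stratifying by $\dim\g_x$ (plus smoothness) is genuinely too coarse. Take $G=\R^\times$ acting on $X=\R^2$ by $t\cdot(x,y)=(tx,\,t^{-2}y)$, and the stratum $X_0=\R^2\setminus\{0\}$: it is smooth, $G$-invariant, and the stabilizer dimension is constantly $0$, so $\phi_{G,X_0}$ has constant rank $3$. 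But at $p=((0,1),(0,1))$ the image contains the two distinct $3$-dimensional sheets $\{(tx,t^{-2}y,x,y):t\approx 1\}$ and $\{(tx,t^{-2}y,x,y):t\approx -1\}$ (the latter because $-1$ stabilizes $(0,1)$ but acts nontrivially nearby), and these sheets meet only along the $2$-dimensional set $\{x=x'=0\}$; their union is not a submanifold at $p$, so the action on $X_0$ is not factorisable. What constancy of $\dim\g_x$ fails to control is the jumping of the component group of the stabilizer (here $\{\pm1\}$ at $x=0$ versus trivial at $x\neq 0$), and more generally the possible presence of several local sheets of the image through one point; the rank computation, which is the part of your argument that is correct, sees none of this.

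The paper sidesteps the problem precisely by not committing to an explicitly described stratification: it invokes the classical Chevalley/generic-smoothness theorem to produce a Zariski open $U\subset G\times X$ on which $\phi_{G,X}$ is a submersion onto its smooth image, passes to a $G$-invariant open subset $X_0\subset X$ (using the equivariance $\phi(hg,x)=(h\cdot gx,\,x)$ to spread the generic good locus), and then concludes by Noetherian induction on the proper closed $G$-invariant complement. Your reductions (singular locus, invariance of $X_d$, the kernel computation $\Ker d\phi_{(g,x)}\cong\g_{gx}$) are fine, and the argument could be repaired in the paper's spirit by further shrinking each stratum generically and inducting; but as written, the key implication "constant rank $\Rightarrow$ Nash image and submersion onto it" is false, and no stratification defined only by $\dim\g_x$ can make it true.
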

\begin{proof}
By the classical  Chevalley Theorem there exists a Zariski open subset $ U \subset G \times X$ s.t. the map $\phi_{G,X}|_{U}$ is a submersion to its smooth image. Let $X_0 \subset X$ be the projection of $U$ to $X$. It is easy to see that $\phi_{G,X}|_{ G \times X_0}$  is a
submersion to its smooth image. The theorem now follows by Noetherian induction.
\end{proof}

\begin{thm} \label{fac_co_inv}
Let a Nash group $G$ act factorisably on a Nash manifold $X$ and $E$ be a tempered $G$-equivariant bundle over $X$. Suppose that for any $x \in X$ we have $$((E|_x \otimes ((\Delta_G)|_{G_x}/\Delta_{G_x})))_{\g_x} =0.$$  Then $$(\Sc(X,E))_\g=0.$$
\end{thm}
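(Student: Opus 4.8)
The plan is to use the factorisability hypothesis to present $X$, locally, as a product of the group $G$ by a transversal slice, and then to prove vanishing of $\g$-coinvariants by a Dixmier--Malliavin argument in the spirit of the classical Borel lemma --- the point being to produce genuine $\g$-primitives, not merely to establish density.

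First, the slice reduction. Write $R$ for the Nash image of $\phi_{G,X}$; since $\phi_{G,X}$ is a submersion onto $R$, its fibres have constant dimension, and the fibre over $(gx,x)$ is a coset of $G_x$, so $\dim G_x$ (hence $\dim\g_x$) is constant on $X$. Fix $x_0\in X$, choose a Nash submanifold $T\ni x_0$ transversal to $Gx_0$; after shrinking $T$, transversality to nearby orbits persists, so $\Psi\colon G\times T\to X$, $(g,t)\mapsto gt$, is a Nash submersion onto a $G$-invariant open set $U'\ni x_0$ whose fibres are finite unions of cosets of the stabilizers $G_t$. Using constancy of $\dim\g_t$ together with the $G$-equivariant structure of $E$, one gets a $G$-equivariant isomorphism $\Psi^{*}E\otimes D_{G\times T/U'}\cong p_T^{*}(E|_T\otimes\delta_T)$, where $p_T\colon G\times T\to T$ is the projection, $D_{G\times T/U'}$ the relative density bundle of $\Psi$, $\delta_T$ the line bundle on $T$ with fibre $|\det\g_t^{*}|$ at $t$, and $G$ acts on the right-hand side only by left translation on the $G$-factor (the stabilizer action on the fibre of $E$ is absorbed by the equivariant structure --- for the pullback of an associated bundle along $G\to G/H$ is equivariantly trivial; compare the computation in \cite{AG_HC}). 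By Proposition \ref{SchKer} this yields $\Sc(G\times T,\Psi^{*}E\otimes D_{G\times T/U'})\cong\Sc(G)\ctp\Sc(T,E|_T\otimes\delta_T)$ as $\g$-modules, with $\g$ acting through left-invariant vector fields on the $\Sc(G)$-tensorand alone, and by Property \ref{NashSub} the fibre integration $\Psi_{*}$ carries this space \emph{onto} $\Sc(U',E)$ and is $\g$-equivariant.

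Hence $\Sc(U',E)_\g$ is a quotient of $\bigl(\Sc(G)\ctp\Sc(T,E|_T\otimes\delta_T)\bigr)_\g=\Sc(G)_\g\ctp\Sc(T,E|_T\otimes\delta_T)$ (Corollary \ref{OntoTensorOnto}, plus closedness of $\g\Sc(G)$ in $\Sc(G)$). Since $\Sc(G)_\g\neq0$ in general --- it is one-dimensional already for $G=\R$ --- separation of variables is not enough; one must exploit $\ker\Psi_{*}$ together with the hypothesis. Concretely, $\Psi_{*}(\Phi\otimes w)$ at a point $gt\in U'$ is the integral of $\Phi$ over the coset $gG_t$ against the density prescribed by $w(t)\in E|_t\otimes\delta_t$, so it suffices to show that for every $\Phi\in\Sc(G)$ and $w\in\Sc(T,E|_T\otimes\delta_T)$ the element $\Phi\otimes w$ lies in $\g\Sc(G)\otimes\Sc(T,E|_T\otimes\delta_T)+\ker\Psi_{*}$. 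This is where the classical Dixmier--Malliavin theorem (Corollary \ref{DMcor}, applied to a Nash family of vector-space complements of $\g_t$ in $\g$, available since $\dim\g_t$ is constant) enters: it splits $\Phi$, modulo $\g\Sc(G)$, into a low-order remainder whose $G_t$-averages are explicitly controlled, and the assumption $\bigl(E|_t\otimes(\Delta_G|_{G_t}/\Delta_{G_t})\bigr)_{\g_t}=0$ --- once $\delta_T$ is matched with the relative modulus $\Delta_G|_{G_t}/\Delta_{G_t}$ --- says exactly that, fibrewise, these averages already lie in $\g_t\cdot(\text{fibre})$, hence, after spreading $t$ over $T$ and the $G_t$-cosets over $G$, in $\g\Sc(U',E)$. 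Patching the local statements over a finite $G$-invariant cover of $X$ by such $U'$ (Property \ref{pCosheaf}; finiteness by Noetherian induction, each $U'=G\cdot T$ being Zariski open) completes the proof.

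I expect the main obstacle to be this last, Borel-lemma-flavoured step: upgrading the \emph{pointwise} vanishing of the $\g_x$-coinvariants of $E|_x\otimes(\text{relative modulus})$ to the \emph{global}, closure-free identity $\Sc(X,E)=\g\Sc(X,E)$. Dualizing reduces the weaker statement $\overline{\g\Sc(X,E)}=\Sc(X,E)$ to the non-existence of $G$-invariant distributions on $X$, but removing the closure genuinely requires the constructive content of Dixmier--Malliavin. Alongside it, the Nash-uniform bookkeeping of the density and modulus line bundles $D_{G\times T/U'}$, $\delta_T$, $\Delta_G$, $\Delta_{G_t}$, the precise form of $\Psi_{*}$, and the existence of a finite $G$-invariant cover are the technical points needing care.
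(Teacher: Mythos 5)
Your overall strategy (separate the group direction from a transversal direction, then use the fibrewise hypothesis to kill what survives) is in the right spirit, but the decisive step is not actually proved, and the way you set it up reintroduces the very closure problem the theorem is about. First, the reduction ``it suffices to show that every pure tensor $\Phi\otimes w$ lies in $\g\Sc(G)\otimes\Sc(T,\cdot)+\ker\Psi_*$'' is illegitimate: $\Sc(G\times T,\cdot)\cong\Sc(G)\ctp\Sc(T,\cdot)$ is a \emph{completed} tensor product, pure tensors only span a dense subspace, and the subspace $\g\Sc(G\times T,\cdot)+\ker\Psi_*$ is not known to be closed --- indeed its closedness is essentially equivalent to what you are trying to prove. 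Second, the mechanism you invoke at the crucial point does not exist as stated: Dixmier--Malliavin (Corollary \ref{DMcor}) gives factorizations $\Sc(L)*\Sc(V)=\Sc(V)$, but there is no statement that it ``splits $\Phi$ modulo $\g\Sc(G)$ into a low-order remainder whose $G_t$-averages are explicitly controlled,'' and you give no argument for how the purely algebraic, fibrewise vanishing $\bigl(E|_t\otimes(\Delta_G|_{G_t}/\Delta_{G_t})\bigr)_{\g_t}=0$ spreads, with Schwartz-uniform control in $t$, to surjectivity of the action map on sections over the whole family of stabilizer cosets. In the paper this is exactly the content of a separate chain of lemmas: the closure-free computation $\g\,\Sc(G,D_G)=\Sc(G,D_G)_0$ (proved as in \cite[Prop.\ 4.0.11]{AG_RhamShap}, not via Dixmier--Malliavin), nuclear Fr\'echet exactness (Proposition \ref{prop:ctp_ext}) to pass to $G\times X$, and a finite-dimensional ``linear algebra in Nash families'' lemma (cover $X$ by the open sets where a fixed coordinate subspace of $\g\otimes E$ surjects onto the fibre, then use partition of unity), assembled through Corollaries \ref{g_fam_coinv_0} and \ref{tor_fam_coinv_0}. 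You also silently use closedness of $\g\Sc(G)$ in $\Sc(G)$, which is again the nontrivial input above, not a formality.

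There are further gaps in the geometric setup. The paper deliberately avoids slices: it uses factorisability only to say that $b:G\times X\to X\times X$, $b(g,x)=(x,gx)$, is a submersion onto its Nash image $Y$, making $G\times X$ a Nash family of torsors over $Y$ with a tempered family of representations coming from the equivariant structure on $E$; no choice of transversal $T$ is ever made. Your slice construction needs: a Nash (not just smooth) slice with $G\cdot T$ open, the fibres of $\Psi$ being finite unions of stabilizer cosets (false in general for non-proper actions without further work), temperedness of the equivariant trivialization of $\Psi^*E$, and a \emph{finite} $G$-invariant cover of $X$ by such sets --- your justification ``by Noetherian induction, each $U'=G\cdot T$ being Zariski open'' fails because $G\cdot T$ is only open in the Nash topology, not Zariski open, and finiteness of semialgebraic open covers is not automatic. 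None of these points is addressed, whereas the torsor-over-$Y$ formulation sidesteps them all. So the proposal identifies the right difficulty but does not close it, and the tools it names (Dixmier--Malliavin, slices) are not the ones that make the closure-free statement go through.
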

For the proof see section \ref{pf_fac_co_inv} below.

Now we ready to prove Theorem \ref{NegCoinv}.
\begin{proof}[Proof of Theorem \ref{NegCoinv}]$ $\\
Step 1. Reduction to the case that the action of $G$ on $Z$ is factorisable.\\
It follows from Theorem \ref{strat_fac} and Lemmas \ref{strat_fil} and \ref{gr0all0}.

Step 2. Reduction to the case that the action of $G$ on $Z$ is factorisable and $Z=X$.\\
It follows from Theorems \ref{Borel_lem} and \ref{gr0all0_inf}.

Step 3. Proof for the case that the action of $G$ on $Z$ is factorisable and $Z=X$.\\
It follows from Theorem \ref{fac_co_inv}.
\end{proof}
\subsubsection{Proof of Theorem \ref{fac_co_inv}}\label{pf_fac_co_inv}$ $
\begin{notation}
Let $\phi:X \to Y$ be a map of (Nash) manifolds.\\
(i) Denote $D_Y^X:=D_\phi :=\phi^*(D_Y^*)\otimes D_X$.\\
(ii) Let $E \to Y$ be a (Nash) bundle. Denote $\phi^?(E)=\phi^*(E)\otimes D_Y^X$.
\end{notation}
\begin{rem}
Note that\\
(i)  If $\phi$ is a submersion then for all $y \in Y$ we have $D_Y^X|_{\phi^{-1}(y)} \cong D_{\phi^{-1}(y)}$.\\
(ii) If $\phi$ is a submersion then by Remark \ref{rem:push} we have a well defined map $\phi_*: \Sc^*(X,\phi^?(E)) \to \Sc^*(Y,E).$\\
(iii) If a Lie group $G$ acts on a smooth manifold $X$ and $E$ is a $G$-equivariant vector bundle (i.e. we have a map $p^*(E) \to a^*(E)$, where $p:G \times X \to X$ is the projection and $a:G \times X \to X$ is the action) then we also have a natural map $p^?(E) \to a^?(E).$ If $G$, $X$ and $E$ are Nash and the actions of $G$ on $X$ and $E$ are Nash then the map $p^?(E) \to a^?(E)$ is Nash. If the action of $G$ on $E$ is tempered then the  map $p^?(E) \to a^?(E)$ corresponds to a tempered section of  $\Hom(p^?(E),a^?(E))$.
\end{rem}

\begin{notation}
Let $G$ be a Nash group. We denote $$\Sc(G,D_G)_0:=\{ f \in \Sc(G,D_G) | \int_G f=0\}.$$
\end{notation}

\begin{lem}\label{g_coinv_simp}
Let $G$ be a connected Nash group and $\g$ be its Lie algebra. Then
$\g \Sc(G,D_G) = \Sc(G,D_G)_0.$
\end{lem}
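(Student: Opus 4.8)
Since $G$ is a Lie group it is parallelizable and orientable; set $n:=\dim G$, fix a nowhere-vanishing Nash $n$-form $\mathrm{vol}$ on $G$ (e.g.\ a left Haar form), and write $\Omega^k=\Omega^k(G)$ for the bundle of differential $k$-forms, so that $D_G\cong\Omega^n$ and $\Sc(G,D_G)\cong\Sc(G,\Omega^n)$. Let $X_1,\dots,X_n$ be the vector fields generating the action of $\g$ on $G$ by left translations; they form a global frame of $TG$. For a top-degree form $\omega$ and $\alpha\in\g$ we have $\alpha\cdot\omega=L_{X_\alpha}\omega=d\,\iota_{X_\alpha}\omega$, and since $\{\iota_{X_i}\mathrm{vol}\}_{i=1}^n$ is a global frame of $\Omega^{n-1}$, the map $\Sc(G)^{\oplus n}\to\Sc(G,\Omega^{n-1})$, $(g_i)\mapsto\sum_i g_i\,\iota_{X_i}\mathrm{vol}$, is onto. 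Hence
$$\g\,\Sc(G,D_G)=d\,\Sc(G,\Omega^{n-1}),\qquad \Sc(G,D_G)_0=\ker\Bigl(\textstyle\int_G:\Sc(G,\Omega^n)\to\C\Bigr).$$
The inclusion $\g\,\Sc(G,D_G)\subseteq\Sc(G,D_G)_0$ is now immediate, as $\int_G$ is invariant under the $G$-action on top forms and therefore annihilates $\g\,\Sc(G,\Omega^n)$. So the content of the lemma is the reverse inclusion, i.e.\ that the top Schwartz--de Rham cohomology $H^n_{\Sc}(G):=\Sc(G,\Omega^n)/d\,\Sc(G,\Omega^{n-1})$ is one-dimensional and detected by integration. (This is the de Rham theorem for Schwartz forms; it can also be quoted from \cite{AG_RhamShap}. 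For completeness I would give a direct argument.)

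\textbf{The Euclidean case.}
For $G\cong\R^n$ one may take $X_i=\partial_i$, and the claim becomes $\sum_{i=1}^n\partial_i\Sc(\R^n)=\Sc(\R^n)_0$. For $n=1$: if $\int_\R f=0$ then $g(x):=\int_{-\infty}^x f$ also equals $-\int_x^\infty f$, so $g$ and all its derivatives decay faster than any polynomial; hence $g\in\Sc(\R)$ and $\partial g=f$ (with the evident uniform version giving Schwartz dependence on parameters). For the inductive step, fix $\chi\in\Sc(\R)$ with $\int_\R\chi=1$; given $f\in\Sc(\R^n)$ with $\int f=0$, put $\bar f(x'):=\int_\R f(x',x_n)\,dx_n\in\Sc(\R^{n-1})$. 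Then $f(x)-\bar f(x')\chi(x_n)$ has vanishing $x_n$-integral for every $x'$, hence is $\partial_n$ of a Schwartz function by the uniform one-dimensional statement, while $\bar f\in\sum_{i<n}\partial_i\Sc(\R^{n-1})$ by induction, so $\bar f(x')\chi(x_n)\in\sum_{i<n}\partial_i\Sc(\R^n)$. (Alternatively one can organize this via Proposition~\ref{SchKer}, writing the Schwartz--de Rham complex of $\R^n$ as the $n$-fold completed tensor power of $(\Sc(\R)\xrightarrow{\partial}\Sc(\R))$, whose differential has closed image $\Sc(\R)_0$, and applying Proposition~\ref{prop:ctp_ext}.)

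\textbf{Globalization, and the main obstacle.}
For a general connected Nash group $G$, cover it by finitely many open Nash submanifolds Nash-diffeomorphic to $\R^n$ (Theorem~\ref{LocTriv}) and induct on the number of charts by a Mayer--Vietoris argument: the Schwartz--de Rham complexes form a cosheaf in the sense of Property~\ref{pCosheaf}, which produces the short exact Mayer--Vietoris sequences of complexes, and the resulting long exact sequence together with the inductive hypothesis gives $H^n_{\Sc}(G)\cong\C$ via $\int_G$ (keeping track of the connected components of the pairwise intersections, each contributing a copy of $\C$). The delicate point --- and the reason the statement is not purely formal --- is that all of this must be carried out \emph{without passing to closures}: the images of the differentials occurring here are closed subspaces (being kernels of continuous functionals, or obtained from closed-image maps through Propositions~\ref{NFSSubQu} and~\ref{prop:ctp_ext}), so the quotients and completed tensor products involved remain nuclear \Fre spaces and the Banach open mapping theorem applies at each step. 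By contrast $\overline{\g\,\Sc(G,D_G)}=\Sc(G,D_G)_0$ holds for a trivial reason --- it is dual to the fact that a left-invariant generalized function on the connected group $G$ is constant --- so the entire difficulty lies precisely in removing the closure, which is exactly the phenomenon the paper is built around.
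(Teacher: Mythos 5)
Your proposal is correct, and its skeleton is the same as the paper's: the evident inclusion $\g\,\Sc(G,D_G)\subset\Sc(G,D_G)_0$ plus the reduction to the statement that the $\g$-coinvariants of $\Sc(G,D_G)$ are one-dimensional and detected by integration, i.e. the top-degree Schwartz--de Rham statement. The difference is that the paper stops there and invokes that this fact "is proved in the same way as Proposition 4.0.11 of \cite{AG_RhamShap}", whereas you supply a self-contained proof: the explicit antiderivative construction on $\R^n$ and a chart/Mayer--Vietoris induction via Theorem \ref{LocTriv} and Properties \ref{pOpenSet}, \ref{pCosheaf}. Two remarks on your sketch. First, the parenthetical that each connected component of a pairwise intersection "contributes a copy of $\C$" is not available from your induction (these intersections are arbitrary open semialgebraic sets, not unions of fewer charts), but it is also not needed: exactness of the Mayer--Vietoris sequence in degrees $n$ and $n+1$ reduces the step to knowing that every component of the intersection carries a compactly supported bump form of nonzero integral, and connectedness of the union then forces the image in $H^n_{\Sc}(U)\oplus H^n_{\Sc}(V)$ to be exactly the kernel of total integration, so the cokernel is $\C$. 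Second, no open-mapping or nuclearity argument is actually required in the globalization: once the Mayer--Vietoris short exact sequence of Schwartz de Rham complexes is established from Properties \ref{pOpenSet} and \ref{pCosheaf} (together with the remark that these properties hold for Schwartz sections of Nash bundles), the long exact sequence is pure linear algebra, and the only analytic input --- the one place where "no closures" is genuinely proved --- is your Euclidean antiderivative step. With these cosmetic adjustments your argument is complete, and it buys a proof of the cited de Rham-type ingredient rather than a reference to it.
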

\begin{proof}
The inclusion
$\g \Sc(G,D_G) \subset \Sc(G,D_G)_0$ is evident.
The theorem follows now from the fact that $\dim \Sc(G,D_G)_{\g}=1$, which is proved in the same way as Proposition 4.0.11 in \cite{AG_RhamShap}.
\end{proof}

\begin{notation}
Let $G$ be a Nash group, $X$ be a Nash manifold and $E$ be a Nash bundle over $X$. Let $p:G\times X \to X$ be the projection. Denote by $\Sc(G \times X,p^?(E))_{0,X}$ the kernel of the map $p_*:\Sc(G \times X,p^?(E)) \to \Sc(X,E)$. In cases when there is no ambiguity we will denote it just by $\Sc(G \times X,p^?(E))_{0}.$
\end{notation}
\begin{lem}
Let $G$ be a Nash group, $X$ be a Nash manifold and $E$ be a Nash bundle over $X$. Let $p:G\times X \to X$ be the projection.  Then $$\Sc(G \times X,p^?(E))_0 \cong \Sc(G,D_G)_0 \ctp \Sc (X,E).$$
\end{lem}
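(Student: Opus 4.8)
The plan is to make the twisted bundle $p^?(E)$ explicit, use the K\"unneth formula for Schwartz sections to rewrite $\Sc(G\times X,p^?(E))$ as a completed tensor product, identify $p_*$ with a tensor product of maps, and then invoke the exactness of $-\ctp\Sc(X,E)$ on short exact sequences of nuclear \Fre spaces (Proposition \ref{prop:ctp_ext}).

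First I would compute $p^?(E)$. Writing $q:G\times X\to G$ for the projection complementary to $p$, the splitting $T(G\times X)\cong q^*T_G\oplus p^*T_X$ gives $D_{G\times X}\cong q^*(D_G)\otimes p^*(D_X)$, hence $D_{G\times X}^X=p^*(D_X^*)\otimes D_{G\times X}\cong q^*(D_G)$ and therefore $p^?(E)=p^*(E)\otimes D_{G\times X}^X\cong q^*(D_G)\otimes p^*(E)$. Since this is the external product of the bundle $D_G$ on $G$ and the bundle $E$ on $X$, the bundle version of Proposition \ref{SchKer} (the properties of Schwartz functions hold for Schwartz sections of Nash bundles) gives a topological isomorphism
$$\Sc(G\times X,p^?(E))\;\cong\;\Sc(G,D_G)\ctp\Sc(X,E).$$

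Next I would identify $p_*$ under this isomorphism. By Remark \ref{rem:push} the map $p_*$ is integration along the fibres of $p$, i.e. integration over $G\times\{x\}$ of a $q^*(D_G)$-valued section; on a decomposable section $f\otimes s$ with $f\in\Sc(G,D_G)$ and $s\in\Sc(X,E)$ this yields $\bigl(\int_G f\bigr)s$. As $p_*$ and $\bigl(\int_G\bigr)\ctp\id_{\Sc(X,E)}$ are both continuous and agree on the dense subspace of decomposable sections, they coincide; here $\int_G:\Sc(G,D_G)\to\C$ is the continuous, surjective integration functional, whose kernel is $\Sc(G,D_G)_0$ by definition. The sequence $0\to\Sc(G,D_G)_0\to\Sc(G,D_G)\xrightarrow{\int_G}\C\to 0$ is then a short exact sequence of nuclear \Fre spaces in which the first arrow is a closed embedding (the kernel of a continuous functional is closed, and it is nuclear \Fre by Proposition \ref{NFSSubQu}).

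Finally, applying Proposition \ref{prop:ctp_ext} with $L=\Sc(X,E)$ produces the exact sequence
$$0\to\Sc(G,D_G)_0\ctp\Sc(X,E)\to\Sc(G,D_G)\ctp\Sc(X,E)\xrightarrow{\;p_*\;}\Sc(X,E)\to 0$$
with closed first embedding, using $\C\ctp\Sc(X,E)=\Sc(X,E)$. Hence $\Sc(G\times X,p^?(E))_0=\ker p_*\cong\Sc(G,D_G)_0\ctp\Sc(X,E)$, as required. I expect the only real bookkeeping to be the identification of $p^?(E)$ with $q^*(D_G)\otimes p^*(E)$ in the first step; the rest is formal once Propositions \ref{SchKer} and \ref{prop:ctp_ext} are available, so there is no genuine obstacle — this lemma is simply the elementary input needed for Theorem \ref{fac_co_inv}.
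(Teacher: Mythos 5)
Your proposal is correct and follows essentially the same route as the paper: identify $\Sc(G\times X,p^?(E))$ with $\Sc(G,D_G)\ctp\Sc(X,E)$, check that $p_*$ agrees with $\int_G\ctp\,\id$ on the dense subspace of decomposable tensors, and apply Proposition \ref{prop:ctp_ext} to the exact sequence $0\to\Sc(G,D_G)_0\to\Sc(G,D_G)\to\C\to 0$. The only difference is that you spell out the identification $p^?(E)\cong q^*(D_G)\otimes p^*(E)$ and the closedness of the embedding, which the paper leaves implicit.
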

\begin{proof}
The sequence $$0\to \Sc(G,D_G)_0 \to \Sc(G,D_G) \to \C \to 0$$ is exact. Therefore by Proposition \ref{prop:ctp_ext}
the  sequence $$0\to \Sc(G,D_G)_0 \ctp \Sc(X,E) \to \Sc(G,D_G) \ctp \Sc(X,E) \to \Sc(X,E) \to 0$$
is also exact.
 Thus it is enough to show that the map $\Sc(G,D_G) \ctp \Sc(X,E) \to \Sc(X,E)$ corresponds to the map $p_*:\Sc(G \times X,p^?(E)) \to \Sc(X,E)$ under the identification $\Sc(G,D_G) \ctp \Sc(X,E) \cong \Sc(G \times X,p^?(E))$. Since those maps are continuous it is enough to check that they are the same on the image of $\Sc(G,D_G) \otimes \Sc(X,E)$, which is
evident.
\end{proof}

\begin{cor}
Let $G$ be a Nash group, $X$ be a Nash manifold and $E$ be a Nash bundle over $X$. Let $\g$ be the Lie algebra of $G$. Let $p:G\times X \to X$ be the projection. Let $G$ act on $\Sc(G \times X,p^?(E))$ by acting on the $G$ coordinate. Then
$\g \Sc(G \times X,p^?(E)) = \Sc(G \times X,p^?(E))_0.$
\end{cor}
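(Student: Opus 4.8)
The plan is to obtain the statement by tensoring (in the completed sense) the already‑proved scalar identity of Lemma~\ref{g_coinv_simp} with $\Sc(X,E)$, exploiting the fact that the $\g$‑action on $\Sc(G\times X,p^?(E))$ is concentrated in the $G$‑factor. First I would record the identifications in play: since $p^?(E)=p^*(E)\otimes D_{G}^{G\times X}\cong D_G\boxtimes E$ (external tensor product, the first factor living over $G$), Proposition~\ref{SchKer} gives a canonical isomorphism $\Sc(G\times X,p^?(E))\cong\Sc(G,D_G)\ctp\Sc(X,E)$ under which the $G$‑action (acting on the $G$‑coordinate) becomes $\tau\ctp\id_{\Sc(X,E)}$, where $\tau$ is the translation action of $G$ on $\Sc(G,D_G)$; hence the induced $\g$‑action is $\rho\ctp\id_{\Sc(X,E)}$ for $\rho\colon\g\otimes\Sc(G,D_G)\to\Sc(G,D_G)$ the infinitesimal action. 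Moreover, under the same identification $p_*$ becomes $(\int_G)\ctp\id_{\Sc(X,E)}$, which is exactly the content of the preceding Lemma: $\Sc(G\times X,p^?(E))_0\cong\Sc(G,D_G)_0\ctp\Sc(X,E)$.

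Then I would compute the left‑hand side. By definition $\g\,\Sc(G\times X,p^?(E))$ is the image of the action map $\g\otimes\Sc(G\times X,p^?(E))\to\Sc(G\times X,p^?(E))$. Since $\g$ is finite dimensional, $\g\otimes(-)$ commutes with $\ctp$ (use $\g\otimes W\cong W^{\oplus\dim\g}$), so this map is identified with $\rho\ctp\id_{\Sc(X,E)}$, where $\rho\colon\g\otimes\Sc(G,D_G)\to\Sc(G,D_G)$ is a continuous linear map between nuclear \Fre spaces and $\id_{\Sc(X,E)}$ is the identity of the nuclear \Fre space $\Sc(X,E)$. Corollary~\ref{ImTensorIm} then applies and yields
$$\g\,\Sc(G\times X,p^?(E))=\Im\bigl(\rho\ctp\id_{\Sc(X,E)}\bigr)=\Im\Bigl(\cM_{\Im\rho,\,\Sc(X,E)}^{\Sc(G,D_G),\,\Sc(X,E)}\Bigr),$$
and by Lemma~\ref{g_coinv_simp} (using that $G$ is connected) $\Im\rho=\g\,\Sc(G,D_G)=\Sc(G,D_G)_0$.

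It remains to identify this image. The subspace $\Sc(G,D_G)_0$ is the kernel of the continuous functional $f\mapsto\int_Gf$, hence closed in $\Sc(G,D_G)$; applying Proposition~\ref{prop:ctp_ext} to the short exact sequence $0\to\Sc(G,D_G)_0\to\Sc(G,D_G)\to\C\to0$ shows that $\cM_{\Sc(G,D_G)_0,\Sc(X,E)}^{\Sc(G,D_G),\Sc(X,E)}$ is a closed embedding, so its image is precisely $\Sc(G,D_G)_0\ctp\Sc(X,E)$. By the preceding Lemma this equals $\Sc(G\times X,p^?(E))_0$, and combining with the previous display gives $\g\,\Sc(G\times X,p^?(E))=\Sc(G\times X,p^?(E))_0$, as claimed. (The inclusion ``$\subseteq$'' is in any case immediate: $p$ is $G$‑equivariant for the trivial action on the target, so $p_*$ intertwines the $\g$‑action with the trivial one and $p_*(\g\,\Sc(G\times X,p^?(E)))=\g\,\Sc(X,E)=0$.)

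All steps are routine; the two points that I expect to need a little care are (i) checking that the $G$‑, hence $\g$‑, action on $\Sc(G\times X,p^?(E))$ really is $\tau\ctp\id$, and that $p_*$ is $(\int_G)\ctp\id$, at the level of the equivariant structure on the bundle $p^?(E)$ — this is where the ``concentrated in the $G$‑factor'' phenomenon gets made precise — and (ii) the bookkeeping of nuclearity and closedness needed to legitimately invoke Corollary~\ref{ImTensorIm} and Proposition~\ref{prop:ctp_ext}: the finite dimensionality of $\g$ handles the commutation $\g\otimes(A\ctp B)\cong(\g\otimes A)\ctp B$, and the closedness of $\Sc(G,D_G)_0$ in $\Sc(G,D_G)$ handles the rest.
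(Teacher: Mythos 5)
Your argument is correct and is essentially the paper's own proof, which simply combines the two preceding lemmas ($\g\,\Sc(G,D_G)=\Sc(G,D_G)_0$ and $\Sc(G\times X,p^?(E))_0\cong\Sc(G,D_G)_0\ctp\Sc(X,E)$) with Proposition \ref{prop:ctp_ext}; you merely spell out the intermediate bookkeeping (Corollary \ref{ImTensorIm}, closedness of $\Sc(G,D_G)_0$, finite-dimensionality of $\g$). Only a cosmetic slip: in the paper's notation the relevant relative density bundle is $D_X^{G\times X}$, not $D_G^{G\times X}$, though your identification $p^?(E)\cong D_G\boxtimes E$ is the intended and correct one.
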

\begin{proof}
The corollary follows from the last two lemmas using Proposition \ref{prop:ctp_ext}.
\end{proof}

\begin{cor}\label{coinv_g}
Let $G$ be a connected Nash group and $\g$ be its Lie algebra. Let $G$ act on a Nash manifold $X$ and let $E$ be a tempered $G$-equivariant bundle over $X$. Let $p:G\times X \to X$ be the projection. Let $a:G\times X \to X$ be the action map.

Then $\g \Sc(X,E)$ is the image $a_*(\Sc(G \times X,a^?(E))_0)$ where $\Sc(G \times X,a^? (E))_0$ denotes the  image of $\Sc(G \times X, p^?(E))_0$ under the identification  $\Sc(G \times X,p^?(E)) \cong  \Sc(G \times X,a^?(E))$.
\end{cor}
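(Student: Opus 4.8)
The plan is to realize $\g\,\Sc(X,E)$ as the image, under the pushforward along the action map, of the kernel computed in the preceding corollary. First I would check that $a\colon G\times X\to X$, $(g,x)\mapsto gx$, is a surjective submersion: it admits the section $x\mapsto(e,x)$, and for fixed $g$ the map $x\mapsto gx$ is a diffeomorphism of $X$, so $da$ is everywhere surjective. Hence, by Remark \ref{rem:push} and the bundle version of Property \ref{NashSub}, the pushforward $a_*\colon\Sc(G\times X,a^?(E))\to\Sc(X,E)$ is well defined, continuous and onto.

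Next I would put on $G\times X$ the $G$-action $g_0\cdot(g,x)=(g_0g,x)$ — the ``action on the $G$-coordinate'' (by left translation) of the preceding corollary. For it $a$ is $G$-equivariant to the given action on $X$, since $a(g_0g,x)=g_0gx=g_0\cdot a(g,x)$; and since $E$ is tempered $G$-equivariant and $D_X$ is canonically so (via the action), the bundle $a^?(E)=a^*(E\otimes D_X^{-1})\otimes D_{G\times X}$ is a tempered $G$-equivariant bundle for this action, with $D_{G\times X}$ carrying its canonical left-translation equivariant structure. It follows that $a_*$ is $G$-equivariant, hence $\g$-equivariant, and since it is onto and $\g$ is finite dimensional (so that forming images and $\g$-spans commute, cf.\ Remark \ref{SubspaceRem}) one gets
\[
\g\,\Sc(X,E)=\g\cdot a_*\big(\Sc(G\times X,a^?(E))\big)=a_*\big(\g\cdot\Sc(G\times X,a^?(E))\big).
\]

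It then remains to identify the last $\g$-span with $\Sc(G\times X,a^?(E))_0$. The natural bundle isomorphism $\iota\colon p^?(E)\xrightarrow{\sim}a^?(E)$ of the Remark — the equivariant structure of $E\otimes D_X^{-1}$ tensored with $\id_{D_{G\times X}}$ — is, I claim, an isomorphism of $G$-equivariant bundles for the action $g_0\cdot(g,x)=(g_0g,x)$; this is precisely the cocycle (associativity) axiom of the equivariant structure. Granting it, $\iota$ intertwines the ``$G$-coordinate'' $\g$-actions on $\Sc(G\times X,p^?(E))$ and on $\Sc(G\times X,a^?(E))$, so the preceding corollary gives
\[
\g\cdot\Sc(G\times X,a^?(E))=\iota\big(\g\cdot\Sc(G\times X,p^?(E))\big)=\iota\big(\Sc(G\times X,p^?(E))_0\big)=\Sc(G\times X,a^?(E))_0,
\]
the last equality being the definition of $\Sc(G\times X,a^?(E))_0$ in the statement. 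Combining the two displays yields $\g\,\Sc(X,E)=a_*(\Sc(G\times X,a^?(E))_0)$.

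The hard part will be the verification that $\iota$ respects the $G$-coordinate actions, once one unwinds the definitions of $p^?$, $a^?$ and the relative-density bundles $D_a$, $D_p$, $D_{G\times X}$, $D_X$; every other step is formal given the earlier results. (Alternatively one can avoid $\iota$ by identifying $a_*\circ\iota$ with the map $\mu\otimes f\mapsto\int_G\mu(g)\,(g\cdot f)\,dg$ on $\Sc(G,D_G)\hot\Sc(X,E)$ and invoking Lemma \ref{g_coinv_simp}, the identity $\alpha\cdot\!\int_G\mu\,(g\cdot f)=\int_G(L_\alpha\mu)\,(g\cdot f)$ coming from Haar-invariance, and the Dixmier--Malliavin equality $\Sc(G,D_G)\,\Sc(X,E)=\Sc(X,E)$; the density bookkeeping is no easier this way.)
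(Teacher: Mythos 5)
Your proposal is correct and follows essentially the same route as the paper: push forward along the surjective submersion $a$ (Property \ref{NashSub}), use equivariance of $a_*$ for the $G$-coordinate action to get $\g\,\Sc(X,E)=a_*(\g\,\Sc(G\times X,a^?(E)))$, and conclude with the preceding corollary identifying $\g\,\Sc(G\times X,p^?(E))$ with $\Sc(G\times X,p^?(E))_0$. The only difference is bookkeeping — you equip $a^?(E)$ with a direct equivariant structure and verify that the identification $p^?(E)\cong a^?(E)$ intertwines the actions (the cocycle identity), whereas the paper transports the action through that identification and instead checks equivariance of $a_*$, a verification it likewise leaves as ``easy to see''.
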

\begin{proof}
Let $G$ act on $\Sc(G \times X,p^?(E))$ by acting on the $G$ coordinate.  The identification  $\Sc(G \times X,p^?(E))
 \cong  \Sc(G \times X,a^?(E))$  gives us an action of $G$ on $\Sc(G \times X,a^?(E))$. It is easy to see that  $a_*:\Sc(G \times X,a^?(E)) \to \Sc(X, E)$ is a morphism of
$G$-representations. By property  \ref{NashSub} $a_*$ is surjective. Therefore $(\g \Sc(X,E)) = a_*((\g \Sc(G \times X,a^?(E))).$  The assertion follows now by the previous corollary.
\end{proof}
\begin{definition}
A \textbf{Nash family of groups} over a Nash manifold $X$ is a surjective submersion $G \to X$, a Nash map $m: G \times_X G\to G$ and a Nash section $e:X \to G$ s.t. for any $x \in X$ the map $m|_{G|_x \times G|_x}$ gives a group structure on the fiber $G|_x$ and $e(x)$ is the unit of
this group.
\end{definition}
\begin{definition}
A \textbf{Nash family of Lie algebras} over a Nash manifold $X$ is a Nash bundle $\g \to X$, a Nash section $m$ of the bundle $\Hom(\g \otimes \g,\g)$ s.t. for any $x \in X$ the map $m(x): \g|_x \otimes \g|_x \to \g|_x$ gives a Lie algebra structure on the fiber $\g|_x$ .
\end{definition}
\begin{definition}
A Nash family of Lie algebras of a Nash family of groups  $G$ over a Nash Manifold $X$ is the bundle $e^{*}(N_{e(X)}^G)$ equipped with the natural structure of a Nash family of Lie algebras. We will denote it by $\Lie(G)$.
\end{definition}

\begin{notation}
Let $G$ be a Nash family of groups over a Nash manifold $X$. Let $E$ be  a bundle over $X$. Let $p:G \to X$ be the projection. Denote by $\Sc(G ,p^?(E) )_{0,G}$ the kernel of the map $p_*:\Sc(G,p^?(E))) \to \Sc(X,E)$. If there is no ambiguity we will denote it by $\Sc(G ,p^?(E) )_{0}$.
\end{notation}

\begin{lem}\label{fem_coinv_inc}
Let $G$ be a Nash family of groups over a Nash manifold $X$ and $\g$ be its family of Lie algebras. Then  the image of the natural map $\Sc(X,\g) \otimes \Sc(G,D_G) \to \Sc(G,D_G)$ is included in $\Sc(G,D_G)_0$.
\end{lem}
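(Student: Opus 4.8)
The plan is to reduce the statement to a single elementary fact — that the integral over a Lie group of the Lie derivative of a density along an invariant vector field is zero — and to apply it fibre by fibre. First I would recall that since $p\colon G\to X$ is a surjective submersion, Property \ref{NashSub} (in the form of Remark \ref{rem:push}) produces a continuous linear map $p_*\colon\Sc(G,D_G)=\Sc(G,p^?(D_X))\to\Sc(X,D_X)$, and $\Sc(G,D_G)_{0,G}$ is by definition its kernel. Being the kernel of a continuous map, $\Sc(G,D_G)_0$ is a closed subspace; hence the image of the natural map $\Sc(X,\g)\otimes\Sc(G,D_G)\to\Sc(G,D_G)$, which is spanned by the elementary products $v\cdot f$ with $v\in\Sc(X,\g)$ and $f\in\Sc(G,D_G)$, lies in $\Sc(G,D_G)_0$ as soon as each such $v\cdot f$ does.

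Next I would compute $p_*(v\cdot f)$ pointwise. By Property \ref{NashSub}(i) its value at a point $x\in X$ is the integral of the restriction $(v\cdot f)|_{G|_x}$ over the fibre $G|_x$, where one uses the identification $D_G|_{G|_x}\cong D_{G|_x}\otimes D_X|_x$ (the second factor being a line constant along the fibre) so that this integral indeed lands in $D_X|_x$. The section $v$ specialises to $v(x)\in\g|_x=\Lie(G|_x)$, and by construction the action of $\Sc(X,\g)$ on $\Sc(G,D_G)$ is, over $x$, the standard action of $\Lie(G|_x)$ on densities on the group $G|_x$ through the invariant vector field $\xi_x$ determined by $v(x)$. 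Since $\xi_x$ is vertical, $(v\cdot f)|_{G|_x}$ equals the Lie derivative $\mathcal{L}_{\xi_x}(f|_{G|_x})$ of the fibrewise density. Finally $\xi_x$ is complete, its flow being the one-parameter group of translations $g\mapsto g\exp(t\,v(x))$; as these are diffeomorphisms of $G|_x$ and the integral of a Schwartz density is invariant under diffeomorphisms, $\int_{G|_x}(\mathrm{fl}_t)^*(f|_{G|_x})=\int_{G|_x}f|_{G|_x}$ is independent of $t$, and differentiating at $t=0$ (legitimate for a Schwartz density) gives $\int_{G|_x}\mathcal{L}_{\xi_x}(f|_{G|_x})=0$. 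Thus $p_*(v\cdot f)$ vanishes at every $x$, i.e. $v\cdot f\in\Sc(G,D_G)_0$. This is exactly the ``easy inclusion'' appearing in the proof of Lemma \ref{g_coinv_simp}, now carried out in each fibre.

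The only delicate point — the one I would treat as the main, though mild, obstacle — is the density-bundle bookkeeping involved in restricting to a fibre: identifying $(v\cdot f)|_{G|_x}$ with a genuine Lie derivative of a density on $G|_x$ up to the constant twist by $D_X|_x$, and matching the value of $p_*$ with the resulting fibre integral. Once this is set up the vanishing is immediate, and in particular it does not require the fibres of $G\to X$ to be connected.
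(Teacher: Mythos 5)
Your argument is correct and is essentially the paper's: the paper reduces the statement to the case where $X$ is a point (i.e.\ checks it fibrewise), where it becomes the evident inclusion $\g\,\Sc(G,D_G)\subset\Sc(G,D_G)_0$ of Lemma \ref{g_coinv_simp}, which is exactly the fibre computation you carry out. Your explicit verification of the one-point case via the completeness of the invariant vector field and invariance of the integral under its flow, together with the $D_G|_{G|_x}\cong D_{G|_x}\otimes D_X|_x$ bookkeeping, just spells out what the paper leaves as ``evident.''
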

\begin{proof}
It follows immediately from the case when $X$ is one point and $E$ is $\C$ which follows from Lemma \ref{g_coinv_simp}.
\end{proof}
\begin{definition}
A Nash family of representations of a Nash family of Lie algebras $\g$ over a Nash manifold $X$ is a bundle $E$ over $X$ and a Nash section $a$ of the bundle $\Hom(\g \otimes E, E)$ s.t.  for any $x \in X$  the map $a(x): \g|_x \otimes E|_x \to E|_x$ gives a structure of a representation
of $\g|_x$ on the fiber $E|_x$.
\end{definition}
\begin{definition}
Let $G$ be a Nash family of groups over a Nash manifold $X$. Let $\g$ be its family of Lie algebras. Let $p:G \to X$  be the projection.
A tempered (finite dimensional) family of representations of $G$ is a pair $(E,a)$  where $E$ is a bundle over $X$ and $a$ is a tempered section of the bundle $\End(p^*E)$ s.t.  for any $x \in X$  the section $a|_{G|_x}$  gives a structure of a representation of $G|_x$ on
the fiber $E|_x$ and s.t. the differential of $a$ considered as a section of $\Hom(\g \otimes E, E)$ gives a structure of a Nash family of representations of $\g$ on $E$.
\end{definition}
Lemma \ref{fem_coinv_inc} gives us the following corollary.
\begin{cor}
Let $G$ be a Nash family of groups over a Nash manifold $X$ and $\g$ be its Lie algebra. Let $(E,a)$ be a tempered (finite dimensional) family of representations of $G$. Let $\phi$ denote the composition
$ \Sc(G,p^?(E)) \overset{a}{\to} \Sc(G,p^?(E)) \overset{p_*}{\to} \Sc(X,E)$. Then  the image of the natural map $\Sc(X,\g) \otimes \Sc(X,E) \to \Sc(X,E)$ is included in $\phi(\Sc(G,p*(E)\otimes D_G)|_0)$.
\end{cor}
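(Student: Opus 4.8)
The plan is to recognize $\phi$ as a surjective, $\g$-equivariant map and to observe that infinitesimal translations push forward to zero under $p_*$; the corollary is then formal. First I would endow $\Sc(G,p^?(E))$ with the action of the family $G$ by left translation in the $G$-coordinate of each fibre $G|_x$. Since the relative-density factor inside $p^?(E)$ is canonically $G$-equivariant for this action, and the $p^*(E)$-factor is fixed by it (left translation preserves each fibre $G|_x$, over which $p^*E$ is the constant vector space $E|_x$), the bundle $p^?(E)$ becomes a tempered $G$-equivariant bundle, so we obtain a continuous action of the family of Lie algebras $\g$ on $\Sc(G,p^?(E))$, i.e. a natural map $\Sc(X,\g)\otimes\Sc(G,p^?(E))\to\Sc(G,p^?(E))$, $(\beta,F)\mapsto\beta\cdot F$.

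Three things then need to be checked, in this order. \emph{(a) $\phi$ is onto.} The operator $a\colon\Sc(G,p^?(E))\to\Sc(G,p^?(E))$ of multiplication by the tempered section $a$ of $\End(p^*E)$ is invertible: its inverse is multiplication by $g\mapsto a(g)^{-1}=a(\iota(g))$, where $\iota\colon G\to G$ is fibrewise inversion, a Nash automorphism of the family, so $a\circ\iota$ is again tempered. Since $p_*$ is onto by Property~\ref{NashSub}(ii), $\phi=p_*\circ a$ is onto. \emph{(b) $\phi$ is $\g$-equivariant.} Unravelling definitions, $\phi(F)(x)=\int_{G|_x}a(g)F(g)$ is the integral of an $E|_x$-valued density along $G|_x$; substituting $g\mapsto\exp(t\beta(x))g$ and differentiating at $t=0$ pulls $a(\exp(t\beta(x)))$ out of the integral, yielding $\phi(\beta\cdot F)=\beta\cdot\phi(F)$, where on the right $\g$ acts on $\Sc(X,E)$ through $da$ --- which is exactly the action defining the natural map $\Sc(X,\g)\otimes\Sc(X,E)\to\Sc(X,E)$. \emph{(c) $\g\,\Sc(G,p^?(E))\subseteq\Sc(G,p^?(E))_0$.} The same substitution inside $p_*(\beta\cdot F)(x)=\int_{G|_x}(\beta\cdot F)$ shows the integral is independent of the translation parameter, hence has vanishing $t$-derivative; equivalently this is the bundle-valued refinement of Lemma~\ref{fem_coinv_inc}, deduced from Lemma~\ref{fem_coinv_inc} itself by trivializing $E$ locally on $X$ and globalizing via Property~\ref{pCosheaf}.

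Granting (a)--(c) the argument closes: by equivariance and linearity $\phi(\g\,\Sc(G,p^?(E)))=\g\,\phi(\Sc(G,p^?(E)))$, and by (a) $\phi(\Sc(G,p^?(E)))=\Sc(X,E)$; hence the image of $\Sc(X,\g)\otimes\Sc(X,E)\to\Sc(X,E)$, being $\g\,\Sc(X,E)=\phi(\g\,\Sc(G,p^?(E)))$, lies inside $\phi(\Sc(G,p^?(E))_0)$ by (c).

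The substance is light; the work is bookkeeping, and that is where I expect the only friction. One must make the left-translation $\g$-action on $\Sc(G,p^?(E))$ genuinely precise --- in particular check that the equivariant structure on $p^?(E)$ is \emph{tempered}, so that the action is continuous and the map $\Sc(X,\g)\otimes\Sc(G,p^?(E))\to\Sc(G,p^?(E))$ exists --- and verify that $g\mapsto a(g)^{-1}$ is a tempered section. Most of all one must track the relative-density identifications so that $p^?(E)$ restricts on each $G|_x$ to honest $E|_x$-valued densities, so that the two change-of-variables steps in (b) and (c) are literally ``Stokes for densities'' with no residual Jacobian; because everything is relative over $X$ these identifications must be handled with care even though each is elementary.
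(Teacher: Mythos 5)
Your argument is correct and follows essentially the route the paper intends: the paper states this corollary as an immediate consequence of Lemma \ref{fem_coinv_inc} without writing a proof, and your steps simply make that derivation explicit --- the translation/Leibniz computation giving $\phi(\beta\cdot F)=da(\beta)\,\phi(F)$, the vanishing $p_*(\beta\cdot F)=0$ being exactly the (bundle-valued) content of Lemma \ref{fem_coinv_inc}, and surjectivity of $\phi$ via Property \ref{NashSub}(ii) together with invertibility of multiplication by the tempered section $a$. The bookkeeping points you flag (temperedness of the translation equivariant structure and of $g\mapsto a(g)^{-1}$, the relative-density identifications) are genuine but routine and do not affect the argument.
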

\begin{lem}
Let $\g$ be a Nash family of Lie algebras over a Nash manifold $X$. Let $E$ be a Nash family of its representations. Consider $\Sc(X,\g)$ as a Lie algebra and $\Sc(X,E)$ as its representation. Suppose that for any $x \in X$ we have  $(E|_x)_{\g|_x}=0.$  Then $(\Sc(X,E))_{\Sc(X,\g)}=0.$
\end{lem}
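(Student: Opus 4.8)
The plan is to prove the equivalent statement that the action map $\Sc(X,\g)\otimes\Sc(X,E)\to\Sc(X,E)$ is surjective. The first step is to repackage the pointwise hypothesis as a statement about Nash bundle maps. The family of actions is given by a Nash section $a$ of $\Hom(\g\otimes E,E)$, i.e.\ a Nash morphism of Nash vector bundles $a:\g\otimes E\to E$; for each $x\in X$ the image of $a(x):\g|_x\otimes E|_x\to E|_x$ is exactly $\g|_x\cdot E|_x$, so the hypothesis $(E|_x)_{\g|_x}=0$ says precisely that $a$ is fibrewise surjective, hence a surjective morphism of Nash vector bundles. I would then use that such a morphism splits: there is a Nash morphism $s:E\to\g\otimes E$ with $a\circ s=\id_E$. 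Concretely one equips $\g\otimes E$ with a Nash Euclidean structure, notes that $\ker a$ is a Nash subbundle of constant rank (since $a$ has constant rank), and takes $s$ to be the inverse of the restriction of $a$ to the orthogonal complement of $\ker a$.

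The second, independent step is a Dixmier--Malliavin-type statement: the fibrewise multiplication map $m:\Sc(X,\g)\otimes\Sc(X,E)\to\Sc(X,\g\otimes E)$, sending $\alpha\otimes g$ to $x\mapsto\alpha(x)\otimes g(x)$, is onto. I would prove this by localizing. Using local triviality of Nash manifolds (Theorem \ref{LocTriv}) together with the cosheaf/partition-of-unity property for Schwartz sections (Property \ref{pCosheaf}) and extension by zero from open submanifolds (Property \ref{pOpenSet}), it suffices to treat $X\cong\R^n$ with $\g$ and $E$ trivial. Choosing Nash frames $(e_j)$ of $\g$ and $(f_k)$ of $E$, a Schwartz section of $\g\otimes E$ is $\sum_{j,k}c_{jk}\,e_j\otimes f_k$ with $c_{jk}\in\Sc(X)$, and Property \ref{PropDixMal} (applied to $\phi=\id_X$) lets us write $c_{jk}=\sum_l p^l_{jk}q^l_{jk}$, which exhibits the section as $m\bigl(\sum_{j,k,l}(p^l_{jk}e_j)\otimes(q^l_{jk}f_k)\bigr)$.

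Combining the two steps finishes the argument. Given $f\in\Sc(X,E)$, the section $s\circ f$ lies in $\Sc(X,\g\otimes E)$, so by the second step $s\circ f=\sum_i m(\alpha_i\otimes g_i)$ for some $\alpha_i\in\Sc(X,\g)$, $g_i\in\Sc(X,E)$. Applying $a$ and using $a\circ s=\id_E$ together with the tautology $a\bigl(m(\alpha_i\otimes g_i)\bigr)=\alpha_i\cdot g_i$ (the definition of the $\Sc(X,\g)$-action on $\Sc(X,E)$), we get $f=\sum_i\alpha_i\cdot g_i$, so $\Sc(X,\g)\cdot\Sc(X,E)=\Sc(X,E)$ and $(\Sc(X,E))_{\Sc(X,\g)}=0$.

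I expect the main point requiring care to be Step 1 in the Nash category: verifying that a fibrewise surjective Nash bundle morphism really does split Nash-ly (so that $\ker a$ is a genuine Nash subbundle and a Nash complement exists, e.g.\ via a Nash Euclidean structure). The Dixmier--Malliavin globalization in Step 2 is routine once one is comfortable with Property \ref{pCosheaf} for Nash bundles, and everything else is formal. Note also that this lemma is exactly the ``bundle family'' version of the $G$-orbit-wise statements used earlier (compare Theorem \ref{fac_co_inv}), and it is the remaining ingredient needed to complete the proof of Theorem \ref{NegCoinv}.
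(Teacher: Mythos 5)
Your proof is correct, and it follows the same overall strategy as the paper (localize to trivial bundles via Property \ref{pCosheaf}, invert the action map Nash-ly, then factor Schwartz functions), but it differs in how the fibrewise surjection $a:\g\otimes E\to E$ is inverted. You construct a single Nash section $s$ of $a$ via a Nash Euclidean structure and the orthogonal complement of $\ker a$ --- exactly the point you flag as delicate; it is cleanest to take the pseudo-inverse $s=a^{*}(aa^{*})^{-1}$, whose matrix entries are manifestly Nash because $\det(aa^{*})$ is a nowhere-vanishing Nash function, so one never needs to check that $\ker a$ is a Nash subbundle. The paper sidesteps this issue altogether: after trivializing, it fixes bases of the fibers $W$ of $\g$ and $V$ of $E$, lets $\fS$ be the collection of coordinate subspaces $L\subset W\otimes V$ with $\dim L=\dim V$, and covers $X$ by the open sets $U_L=\{x\in X\mid a_x(L)=V\}$; on each $U_L$ the restriction $a_x|_L$ is an isomorphism inverted by Cramer's rule (with Nash, hence tempered, coefficients on $U_L$), and a second application of Property \ref{pCosheaf} reduces to $X=U_L$, where the statement is declared evident. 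So the paper's covering trick buys, with one extra partition of unity, exactly what your Euclidean-structure argument has to establish; conversely, your route makes explicit the Dixmier--Malliavin factorization (Property \ref{PropDixMal} with $\phi=\id$) needed to write Schwartz sections of $\g\otimes E$ as sums $\sum_i\alpha_i\otimes g_i$ with \emph{both} factors Schwartz --- a step the paper leaves implicit in its final ``evident''.
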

\begin{proof}
For any $x\in X$ denote by $a_x$ the map ${\g|_x} \otimes E|_x \to E|_x.$
By property \ref{pCosheaf} we may  assume that $E$ and $\g$ are trivial bundles with fibers $V$ and $W$. Fix a basis for $V$ and $W$ and the corresponding basis for $W \otimes V$. Let $\fS$ be the collection of coordinate subspaces of $W \otimes V$ of dimension $\dim V$. For any $L \in \fS$
denote $U_L=\{x \in X| a_x(L)=V\}$. Clearly $X=\bigcup U_L$. Thus by property \ref{pCosheaf} we may assume that $X=U_L$ for some $L$. For this case the lemma is evident.
\end{proof}
\begin{cor} \label{g_fam_coinv_0}
Let $G$ be a Nash family of groups over a Nash manifold $X$ and $\g$ be its Lie algebra. Let $(E,a)$ be a tempered (finite dimensional) family of representations of $G$. Let $\phi $ denote the composition
$$ \Sc(G,p^?(E)) \overset{a}{\to} \Sc(G,p^?(E)) \overset{p_*}{\to} \Sc(X,E).$$ Suppose that for any $x \in X$ we have $(E|_x)_{\g_x}=0.$
Then   $$\phi(\Sc(G,p^?(E))|_0)=\Sc(X,E).$$
\end{cor}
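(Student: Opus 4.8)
The plan is to obtain Corollary \ref{g_fam_coinv_0} as a formal consequence of the two results immediately preceding it, with no new computation.

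First I would apply the last lemma (the one asserting that fiberwise vanishing of $\g$-coinvariants implies vanishing of $\Sc(X,\g)$-coinvariants). Here $\g=\Lie(G)$ is a Nash family of Lie algebras over $X$, $E$ is a Nash family of its representations (this is part of the definition of a tempered finite dimensional family of representations $(E,a)$), and the hypothesis of the present corollary says exactly that $(E|_x)_{\g_x}=0$ for every $x\in X$, where $\g_x$ denotes the fiber $\g|_x=\Lie(G|_x)$ and $E|_x$ carries its induced $\g|_x$-module structure. Hence the lemma gives $(\Sc(X,E))_{\Sc(X,\g)}=0$, i.e. the action map $\Sc(X,\g)\otimes\Sc(X,E)\to\Sc(X,E)$ is onto.

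Next I would invoke the preceding corollary, which states that the image of the natural map $\Sc(X,\g)\otimes\Sc(X,E)\to\Sc(X,E)$ is contained in $\phi(\Sc(G,p^?(E))_0)$, with $\phi$ the composition in the statement. Combining the two displayed facts yields the chain of inclusions
$$\Sc(X,E)=\Sc(X,\g)\,\Sc(X,E)\subseteq\phi(\Sc(G,p^?(E))_0)\subseteq\Sc(X,E),$$
where the rightmost inclusion holds because $\phi=p_*\circ a$ already takes values in $\Sc(X,E)$. Therefore $\phi(\Sc(G,p^?(E))_0)=\Sc(X,E)$, which is the assertion.

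There is essentially no obstacle here; the only point requiring a word of care is the identification of hypotheses, namely that $(E|_x)_{\g_x}=0$ in the sense of the family agrees with the fiberwise hypothesis of the lemma — this is immediate from the definitions of $\Lie(G)$ and of a (tempered) family of representations, since restriction to the fiber over $x$ turns them into the group $G|_x$, its Lie algebra $\g|_x$, and a genuine $\g|_x$-representation on $E|_x$. One may also remark, if desired, that all spaces involved are nuclear \Fre spaces, so "image" and "sum of subspaces" behave as in \S\S\ref{NucFre}; but since the conclusion is merely the equality of two linear subspaces of $\Sc(X,E)$, this is not even needed.
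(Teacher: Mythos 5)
Your argument is correct and coincides with the paper's intended (and in fact unwritten) proof: the corollary is stated there without proof precisely because it follows by combining the preceding corollary, which gives $\Sc(X,\g)\,\Sc(X,E)\subseteq\phi(\Sc(G,p^?(E))_0)$, with the preceding lemma, which under the fiberwise hypothesis $(E|_x)_{\g_x}=0$ gives $\Sc(X,\g)\,\Sc(X,E)=\Sc(X,E)$. Your remarks on identifying the fiberwise hypotheses via the definition of a tempered family of representations, and on the irrelevance of the quotient topologies for this set-theoretic equality, are accurate.
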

\begin{definition}
We call a set $G$ equipped with a map $m:G\times G\times G \to G$ \textbf{ a torsor} if there exists a group structure on $G$ s.t. $m(x,y,z)=z((z^{-1}x) (z^{-1}y))$. One may say that a torsor is a group without choice of identity element.
\end{definition}
\begin{definition}
A Nash family of torsors over a Nash manifold $X$ is a surjective submersion $G \to X$ and a Nash map $m: G \times_X G\times_X G\to G$ s.t. for any $x \in X$  the map $m|_{G|_x \times G|_x \times G|_x}$ gives a torsor structure on the fiber $G|_x$.
\end{definition}
\begin{definition}
Let $G$ be a Nash family of torsors over a Nash manifold $X$. Let $p:G \to X$  be the projection. Consider $\Ker dp$ as a subbundle of $T G$. It has a natural structure of a family of Lie algebras over $G$. We will call this family the family of Lie algebras of $G$.
\end{definition}

\begin{rem}
One could define the family of Lie algebras of $G$ to be a family of Lie algebras over $X$. This definition would be more adequate, but it is technically harder to phrase it. We did not do it since it is unnecessary for our purposes.
\end{rem}

\begin{definition}
A {\bf representation of a  torsor $G$} is a pair $(V,W)$ of vector spaces and a morphism of torsors $G \to \Iso(V,W)$.
\end{definition}

\begin{definition}
Let $G$ be a Nash family of torsors over a Nash manifold $X$. Let  $\g$ be its family of Lie algebras. Let $p:G \to X$  by the projection.
A {\bf tempered (finite dimensional) family of  representations of $G$} is a triple $(E,L,a)$, where $E$ and $L$ are (Nash) bundles over $X$ and $a$ is a tempered section of the bundle $Hom(p^*E,p^*L)$ s.t.  for any $x \in X$  the section $a|_{G|_x}$  gives a structure of a representation of $G|_x$ on
the fibers $E|_x$ and $L|_x$  and s.t. the differential of $a$ considered as a section of $\Hom(\g \otimes p^*L, p^*L)$ gives a structure of a Nash family of representations of $\g$ on $p^*L$.
\end{definition}

Corollary \ref{g_fam_coinv_0} gives us the following corollary.
\begin{cor}\label{tor_fam_coinv_0}
Let $G$ be a Nash family of torsors over a Nash manifold $X$ and $\g$ be its Lie algebra. Let $(E,L,a)$ be a tempered (finite dimensional) family of representations of $G$. Let $\phi $ denote the composition
$$ \Sc(G,p^?(E)) \overset{a}{\to} \Sc(G,p^?(L)) \overset{p_*}{\to} \Sc(X,L).$$  Suppose that for any
$x \in G$
  we have
 $(L|_{p(x)})_{\g_x}=0.$
Then $$\phi(\Sc(G,p^?(E))_0)=\Sc(X,L).$$
\end{cor}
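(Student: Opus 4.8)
The plan is to deduce the statement from Corollary \ref{g_fam_coinv_0} by a base change: although $p\colon G\to X$ has no unit section, its pullback along $p$ itself acquires the diagonal as a section and thus becomes a family of \emph{groups}, to which Corollary \ref{g_fam_coinv_0} applies. Concretely I would set $Y:=G$ and $\mathcal G:=G\times_X G$, with projections $\pi_1,\pi_2\colon\mathcal G\to G$ and $q:=p\pi_1=p\pi_2\colon\mathcal G\to X$. Viewing $\mathcal G$ over $Y=G$ via $\pi_1$, the diagonal $\Delta\colon G\to\mathcal G$ is a Nash section, and $(\mathcal G,\pi_1,\Delta)$ is a Nash family of groups over $Y$ — the fibre $\pi_1^{-1}(g)$ is the torsor $G|_{p(g)}$ with $g$ chosen as unit. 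One checks that $\Lie(\mathcal G)=\Delta^*(N_{\Delta(G)}^{\mathcal G})$ is canonically $\g=\ker dp\subset TG$, and that this identification is fibrewise: $\Lie(\mathcal G)_x=\g_x$ for each $x\in G$.

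Next I would transport the representation data. Since $a$ is a tempered section of $\Hom(p^*E,p^*L)$ with values in isomorphisms, $a^{-1}$ is a tempered section of $\Hom(p^*L,p^*E)$, and $\beta:=\pi_2^*(a)\circ\pi_1^*(a^{-1})$ is a tempered section of $\End(\pi_1^*(p^*L))$ over $\mathcal G$; the torsor-morphism property of $a$ makes $\beta|_{\pi_1^{-1}(g)}$ a genuine representation of the group $G|_{p(g)}$ on $L|_{p(g)}$. Tensoring the bundle $p^*L$ over $Y=G$ by the relative density bundle $D_p$ and letting $\mathcal G$ act trivially on $D_p$, one gets a tempered family of representations of $\mathcal G$ on $\mathcal F:=p^?(L)$ over $Y$. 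The essential point of this step is that, $D_p|_y$ being one dimensional with trivial $\g_y$-action,
$$(\mathcal F|_y)_{\g_y}=(L|_{p(y)})_{\g_y}\otimes D_p|_y,$$
so the hypothesis of the present corollary is exactly the hypothesis of Corollary \ref{g_fam_coinv_0} for the data $(\mathcal G\to Y,\ \mathcal F,\ \beta)$.

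Corollary \ref{g_fam_coinv_0} then gives that $\phi_{\mathrm{gr}}\colon\Sc(\mathcal G,\pi_1^?\mathcal F)\overset{\beta}{\to}\Sc(\mathcal G,\pi_1^?\mathcal F)\overset{(\pi_1)_*}{\to}\Sc(Y,\mathcal F)$ is onto already on $\Sc(\mathcal G,\pi_1^?\mathcal F)_0=\ker(\pi_1)_*$. Using the identities $\pi_1^?(p^?(\,\cdot\,))=q^?(\,\cdot\,)=\pi_2^?(p^?(\,\cdot\,))$ (transitivity of relative densities) and $p_*(\pi_1)_*=q_*=p_*(\pi_2)_*$, I would finish as follows. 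Given $\psi\in\Sc(X,L)$, choose by Property \ref{NashSub} a lift $\widetilde\psi\in\Sc(G,p^?L)=\Sc(Y,\mathcal F)$ with $p_*\widetilde\psi=\psi$, and write $\widetilde\psi=\phi_{\mathrm{gr}}(\widetilde F)$ with $\widetilde F\in\Sc(\mathcal G,q^?L)$, $(\pi_1)_*\widetilde F=0$. Put $f:=(\pi_2)_*\bigl(\pi_1^*(a^{-1})\cdot\widetilde F\bigr)\in\Sc(G,p^?E)$. The projection formula for $\pi_1$ (valid since $a^{-1}$ is pulled back from $G$) gives $p_*f=p_*(\pi_1)_*\bigl(\pi_1^*(a^{-1})\widetilde F\bigr)=p_*\bigl(a^{-1}(\pi_1)_*\widetilde F\bigr)=0$, so $f\in\Sc(G,p^?E)_0$; and
\begin{multline*}
\phi(f)=p_*\bigl(a\cdot(\pi_2)_*(\pi_1^*(a^{-1})\,\widetilde F)\bigr)
=q_*\bigl(\pi_2^*(a)\,\pi_1^*(a^{-1})\,\widetilde F\bigr)=\\
=p_*(\pi_1)_*(\beta\,\widetilde F)=p_*\widetilde\psi=\psi,
\end{multline*}
using the projection formula for $\pi_2$ and the identity $\pi_2^*(a)\pi_1^*(a^{-1})=\beta$. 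The reverse inclusion $\phi(\Sc(G,p^?E)_0)\subseteq\Sc(X,L)$ is trivial.

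I expect the only genuine work to lie in the bookkeeping of the middle two steps: checking that $D_p$ over $Y=G$ really carries the trivial $\mathcal G$-equivariant structure (so that the two hypotheses match with no spurious modular-character twist), verifying the two $(\,\cdot\,)^?$/pushforward identities, and confirming that the projection-formula manipulations are legitimate in the Schwartz setting — each is routine, but all are needed in order to identify $\phi$ with the group-case composition.
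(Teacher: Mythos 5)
Your reduction is sound in outline, and it is a genuinely different route from the paper's. The paper disposes of this corollary by localizing on $X$: by \cite[Theorem 2.4.16]{AG_RhamShap} the surjective submersion $p$ admits Nash sections over a finite open cover of $X$, a section turns the family of torsors into a family of groups, and Property \ref{pCosheaf} (partition of unity) reduces the surjectivity statement to these pieces, where Corollary \ref{g_fam_coinv_0} applies. You instead avoid localization altogether by the tautological base change: $G\times_X G\to G$ with the diagonal as unit section is a family of groups over $Y=G$, you apply Corollary \ref{g_fam_coinv_0} there, and you descend using $q_*=p_*\circ(\pi_1)_*=p_*\circ(\pi_2)_*$ and the projection formula. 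The bookkeeping you flag (the identification $\Lie(G\times_X G)\cong\g=\Ker dp$, the identity $\pi_1^?(p^?(L))=q^?(L)$, the matching of the coinvariants hypotheses, the projection-formula manipulations for Schwartz pushforwards) is indeed routine and checks out; your route buys a global, section-free argument at the price of these verifications, while the paper's buys brevity at the price of a finite cover and local sections.

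The one step you assert that is not a formal consequence of the definitions is that $a^{-1}$ is a tempered section of $\Hom(p^*L,p^*E)$ (you use it twice: to make $\beta=\pi_2^*(a)\circ\pi_1^*(a^{-1})$ a \emph{tempered} family of representations, and to guarantee that $\pi_1^*(a^{-1})\widetilde F$ is Schwartz so that $f$ is well defined). In general, a tempered section with invertible values need not have tempered inverse: already $a=e^{-x^2}\cdot\mathrm{Id}$ on a trivial family over $\R$ is tempered, fiberwise a torsor morphism with vanishing (hence Nash) differential, but $a^{-1}=e^{x^2}\cdot\mathrm{Id}$ is not tempered. So this step needs a justification (e.g.\ reading the definition of a tempered family of representations of a family of torsors as a tempered map into the $\Iso$-torsor, or extracting temperedness of the relevant inverse from the coinvariants and Nash-differential hypotheses), or the statement should be applied only where such control is available. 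To be fair, the paper's own reduction has exactly the same implicit need — converting the torsor representation into a group representation along a local section $s_i$ requires inverting $a\circ s_i$ — and in the only place the corollary is used, the proof of Theorem \ref{fac_co_inv}, the inverse is tempered for free, since the inverse of the equivariant structure of a tempered $G$-equivariant bundle is obtained from the structure itself by composing with the Nash map $(g,x)\mapsto(g^{-1},gx)$. So: same endpoint, different and workable mechanism, but you should either prove or explicitly hypothesize the temperedness of $a^{-1}$ rather than declare it automatic.
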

\begin{proof}
It follows from Corollary \ref{g_fam_coinv_0} using property \ref{pCosheaf} and \cite[Theorem 2.4.16]{AG_RhamShap}.
\end{proof}

Now we are ready to prove Theorem \ref{fac_co_inv}.
\begin{proof}[Proof of Theorem  \ref{fac_co_inv}]
By Lemma \ref{coinv_g} it is enough to show that $$a_*(\Sc(G \times X, a^?(E))_0)=\Sc(X,E).$$
Let $Y$ be the image of the map $b:G \times X \to X \times X$ defined by $b(g,x)=(x,gx)$. Let $E_i=p_i^?(E)$ for $i=1,2$. Here $p_i:Y \to X$ is the projection
to the $i$'s coordinate. Note that $G \times X$ has a natural structure of a family of torsors over $Y$ and the $G$-equivariant structure on $E$  gives a family of representations
$(\psi,E_1,E_2)$ of the family of torsors $b:G \times X \to Y$. It is enough to show that $$b_*(\Sc(G \times X, a^?(E)) )_0)=\Sc(Y,E_2).$$ Recall that
$\Sc(G \times X, a^?(E))_0$ is the image of $\Sc(G \times X, p^?(E))_{0,X}$ under the
identification $\phi:\Sc(G \times X, a^?(E)) \to  \Sc(G \times X, p^?(E)))$. Note that
$\Sc(G \times X, p^?(E))_{0,X}$ includes $\Sc(G \times X, p^?(E))_{0,Y}$. Therefore it is enough to show that the image of $\Sc(G \times X, b^?(E_1))_{0,Y}$ under the composition $$\Sc(G \times X, b^?(E_1)) \to \Sc(G \times X, b^?(E_2)) \to \Sc(Y, E_2)$$ is $\Sc(Y,E_2).$ This follows by Corollary \ref{tor_fam_coinv_0} from the fact that for every $y \in Y$ we have $((E_2)|_y)_{\g_{p_2(y)}}=0.$ This fact is a reformulation of the fact that $$((E|_{p_2(y)} \otimes ((\Delta_G)|_{G_{p_2(y)}}/\Delta_{G_{p_2(y)}})))_{\g_{p_2(y)}} =0,$$ which is part of the assumptions of the theorem.
\end{proof}


\subsection{Dual uncertainty principle}\label{AppDualUn}



\begin{notation}
Let $V$ be a finite dimensional real vector space. Let $\psi$ be a
non-trivial additive character of $\R$. Let $\mu $ be a Haar
measure on $V$. Let $f \in \Sc(V)$ be a function. We denote by
$\widehat{f} \in \Sc(V^*)$ the Fourier transform of $f$ defined by
$\mu$ and $\psi$.
\end{notation}

In this subsection we prove the following generalization of Theorem \ref{DualUncerPrel}.

\begin{theorem} \label{DualUncer}
Let $V$ be a linear space, $L \subset V$ and $L' \subset V^*$ be
 subspaces. Suppose that $(L')_\bot \nsubseteqq L$.
Then $$\Sc(V-L) + \widehat{\Sc(V^*-L')}=\Sc(V).$$
\end{theorem}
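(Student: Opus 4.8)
The plan is to peel off a sequence of soft reductions until the statement becomes the one dimensional identity $\Sc(\R)=\Sc(\R^\times)+\Fou(\Sc(\R^\times))$, and then to prove that identity by a Borel-type argument.

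\emph{Step 1 (normal form).} The hypothesis $(L')_\bot\nsubseteqq L$ is equivalent, by taking annihilators in the finite dimensional space $V$, to $L^\bot\nsubseteqq L'$. Fix $\lambda\in L^\bot\setminus L'$ and put $W:=\ker\lambda$, a hyperplane containing $L$. Since $\Sc(V-W)\subseteq\Sc(V-L)$ it suffices to prove the statement with $L$ replaced by $W$, and the hypothesis survives: it now reads $W^\bot=\R\lambda\nsubseteqq L'$, i.e.\ $\lambda\notin L'$. As $\lambda\notin L'=\big((L')_\bot\big)^\bot$, the functional $\lambda$ is not identically zero on $(L')_\bot$, so we may pick $v_*\in(L')_\bot$ with $\lambda(v_*)=1$. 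This yields $V=\R v_*\oplus W$ and $V^*=\R\lambda\oplus W'$ with $W':=(\R v_*)^\bot\supseteq L'$; writing points of $V$ as $(r,y)$ and of $V^*$ as $(s,\mu)$, and identifying $W'\cong W^*$, the pairing becomes $\langle(s,\mu),(r,y)\rangle=sr+\langle\mu,y\rangle$, so $W=\{r=0\}$ and $L'\subseteq\{s=0\}$. Enlarging $L'$ to the hyperplane $\{s=0\}$ only strengthens the assertion (it shrinks $\Fou(\Sc(V^*-L'))$) and keeps the hypothesis (now $(\{s=0\})_\bot=\R v_*\nsubseteqq W$). Hence we may assume $V=\R\times W$, $L=\{0\}\times W$, $L'=\{0\}\times W^*$, and then the Fourier transform factors as $\Fou=\Fou_\R\hot\Fou_W$.

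\emph{Step 2 (dimension reduction).} Now $V-L=\R^\times\times W$ and $V^*-L'=\R^\times\times W^*$, so by Property \ref{pOpenSet} and Proposition \ref{SchKer} we have $\Sc(V-L)=\Sc(\R^\times)\hot\Sc(W)$ and, since $\Fou_W\colon\Sc(W^*)\to\Sc(W)$ is an isomorphism, $\Fou(\Sc(V^*-L'))=\Fou_\R(\Sc(\R^\times))\hot\Sc(W)$. Both $\Sc(\R^\times)$ and $\Fou_\R(\Sc(\R^\times))$ are closed subspaces of the nuclear \Fre space $\Sc(\R)$; so, granting the one dimensional identity $\Sc(\R)=\Sc(\R^\times)+\Fou_\R(\Sc(\R^\times))$, the induced map $\Fou_\R(\Sc(\R^\times))\to\Sc(\R)/\Sc(\R^\times)$ is onto, and tensoring the exact sequence $0\to\Sc(\R^\times)\to\Sc(\R)\to\Sc(\R)/\Sc(\R^\times)\to0$ with $\Sc(W)$ (Propositions \ref{NFSSubQu}, \ref{prop:ctp_ext} and Corollary \ref{OntoTensorOnto}) gives $\Sc(\R^\times)\hot\Sc(W)+\Fou_\R(\Sc(\R^\times))\hot\Sc(W)=\Sc(\R)\hot\Sc(W)=\Sc(V)$, as wanted.

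\emph{Step 3 (the one dimensional case --- the main obstacle).} Differentiating $\Fou^{-1}\phi$ at $0$ identifies $\Fou(\Sc(\R^\times))$ with $\{\phi\in\Sc(\R):\int_\R r^k\phi(r)\,dr=0\text{ for all }k\ge0\}$, while $\Sc(\R^\times)$ consists of the Schwartz functions vanishing to infinite order at $0$. Given $f\in\Sc(\R)$, the classical Borel lemma provides $\phi_0\in C_c^\infty(\R)$ with $\phi_0^{(k)}(0)=f^{(k)}(0)$ for all $k$; set $m_k:=\int r^k\phi_0\,dr$. If we can find $\psi\in\Sc(\R)$ vanishing to infinite order at $0$ with $\int r^k\psi\,dr=-m_k$ for all $k$, then $\phi:=\phi_0+\psi$ has all moments zero while $f-\phi$ vanishes to infinite order at $0$, so $f=(f-\phi)+\Fou(\Fou^{-1}\phi)\in\Sc(\R^\times)+\Fou(\Sc(\R^\times))$. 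To construct such $\psi$ one takes it supported in $[1,2]$ (where flatness at $0$ is automatic) of the form $\psi=\sum_{k\ge0}\alpha_k\theta_k$ with $\theta_k\in C_c^\infty((1,2))$ satisfying $\int r^j\theta_k\,dr=0$ for $j<k$ and $\int r^k\theta_k\,dr=1$ --- available since $r^k$ is not a linear combination of $1,\dots,r^{k-1}$ --- suitably rescaled so that the series converges in $\Sc(\R)$; the scalars $\alpha_k$ are then forced, and are found by solving the resulting lower triangular system of moment equations. Arranging this convergence is the real content, and is exactly the ``Borel lemma'' phenomenon referred to in the introduction; alternatively one can argue by duality, observing that $\Sc(\R^\times)+\Fou(\Sc(\R^\times))$ is weak-$*$ dense because its annihilator $\Span_\C\{\delta^{(k)}\}_k\cap\R[r]$ is zero, and is closed because $\Span_\C\{\delta^{(k)}\}_k+\R[r]$ is weak-$*$ closed in $\Sc^*(\R)$. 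Feeding this back through Steps 1--2 proves the theorem, and in particular recovers Theorem \ref{DualUncerPrel}.
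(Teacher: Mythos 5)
Your Steps 1 and 2 are sound: reducing to the case where $L$ and $L'$ are the ``coordinate'' hyperplanes of a splitting $V=\R\times W$ and then tensoring the one--dimensional identity $\Sc(\R)=\Sc(\R^\times)+\Fou(\Sc(\R^\times))$ with $\Sc(W)$ via Propositions \ref{NFSSubQu}, \ref{prop:ctp_ext}, \ref{SchKer} and Corollaries \ref{OntoTensorOnto}, \ref{ImTensorIm} is a legitimate variant of the paper's reduction (the paper instead enlarges $L,L'$ to hyperplanes, chooses a quadratic form identifying $V$ with $V^*$ carrying one to the other, and prescribes transversal jets of $\Fou(f)$ along the hyperplane). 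Your moment reformulation of $\Fou(\Sc(\R^\times))$ is also correct.

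The gap is in Step 3, and it is not merely an unchecked convergence: the object you ask for cannot exist. Your $\phi=\phi_0+\psi$ is compactly supported ($\phi_0\in C_c^\infty(\R)$ from Borel, $\psi$ supported in $[1,2]$) and is required to have all moments zero; but a compactly supported smooth function with all moments zero is identically zero, since its Fourier transform is entire and flat at the origin. So no such $\phi$ can carry the jets of $f$ at $0$ unless $f$ is already flat there. Even the sub-task of finding $\psi$ supported in $[1,2]$ with prescribed bounded moments is in general unsolvable: for $\psi$ supported in $[1,2]$ the series $\sum_k x^k\int r^k\psi\,dr=\int\frac{\psi(r)}{1-xr}\,dr$ continues holomorphically to $\C\setminus[1/2,1]$, which already excludes, e.g., the moment sequence $(-1)^k$. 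Nor can ``suitable rescaling'' of the $\theta_k$ rescue the series: multiplying $\theta_k$ by a scalar forces $\alpha_k$ to compensate, so the term $\alpha_k\theta_k$ is unchanged, and its sup-norm is at least $2^{-k}$ times the residual $k$-th moment it must produce; with all supports pinned in $(1,2)$ there is no freedom left to force convergence in $\Sc(\R)$. The missing idea --- and the real content of the paper's proof --- is to let the mass of the correctors escape to infinity with quantitative control: for each $i$ the paper builds $h\in\Sc(V-L)$ with prescribed $i$-th transversal derivative of $\Fou(h)$ along $L$, lower ones zero, and $\gN_{i-1}(h)<\eps$, via the dilation $h^t(x+\alpha e)=t^{i+2}h(x+t\alpha e)$ with $t\to 0$; these seminorm estimates are exactly what make the Borel-type series converge. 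In your moment language the correctors must be genuinely Schwartz, with support (or mass) running off to infinity, never compactly supported. Finally, the ``alternative by duality'' is not a proof either: the density statement is precisely the easy remark following Theorem \ref{DualUncerPrel}, while the asserted weak-$*$ closedness of $\Span\{\delta^{(k)}\}+\R[r]$ in $\Sc^*(\R)$ (together with the duality principle transferring it to closedness of the sum in $\Sc(\R)$) is left unproved and is essentially equivalent to the theorem itself.
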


The following lemma is obvious.
\begin{lem}
There exists $f \in \Sc(\R)$ such that $f$ vanishes at 0 with all
its derivatives and $\Fou(f)(0)=1$.
\end{lem}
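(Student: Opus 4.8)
The plan is to write such an $f$ down explicitly. Unwinding the definition of the Fourier transform, the condition $\Fou(f)(0)=1$ says nothing more than $\int_{\R}f\,d\mu=1$ for the relevant self-dual Haar measure $\mu$: the character contributes $\psi(0)=1$, and the bilinear form is irrelevant at the point $0$. So all I need is a Schwartz function on $\R$ which vanishes to infinite order at the origin and has a prescribed nonzero integral.

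For this I would start from the classical smooth function $g(x):=e^{-1/x^{2}}$ (with $g(0):=0$), which lies in $C^{\infty}(\R)$ and satisfies $g^{(k)}(0)=0$ for all $k$, but is not Schwartz since $g(x)\to 1$ as $|x|\to\infty$. To repair the decay I multiply by a Gaussian and set $f_{0}(x):=e^{-1/x^{2}}e^{-x^{2}}$, again with $f_{0}(0):=0$. For $x\neq 0$ each derivative of $e^{-1/x^{2}}$ has the form $P_{k}(1/x)\,e^{-1/x^{2}}$, hence extends continuously across $0$ by the value $0$ and tends to the constant $P_{k}(0)$ as $|x|\to\infty$; thus $e^{-1/x^{2}}$ is a bounded $C^{\infty}$ function all of whose derivatives are bounded. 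Multiplying such a function by $e^{-x^{2}}\in\Sc(\R)$ yields, via Property~\ref{pClass} and the Leibniz rule, an element of $\Sc(\R)$; and $f_{0}$ still vanishes to infinite order at $0$, being $g$ times a smooth function. Finally, since $f_{0}\geq 0$ and $f_{0}\not\equiv 0$, the constant $c:=\int_{\R}f_{0}\,d\mu$ is strictly positive, and $f:=c^{-1}f_{0}$ has all the required properties.

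There is no real obstacle here; as the authors say, the statement is obvious. The only step deserving a line of justification is the verification that $f_{0}\in\Sc(\R)$, i.e.\ that multiplying a Schwartz function by the bounded factor $e^{-1/x^{2}}$ (together with its bounded derivatives) stays within $\Sc(\R)$ — and this is exactly what the description of the derivatives of $e^{-1/x^{2}}$ above supplies.
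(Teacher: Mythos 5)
Your proof is correct: since $\Fou(f)(0)=\int_{\R}f\,d\mu$, the normalized function $c^{-1}e^{-1/x^{2}}e^{-x^{2}}$ does exactly what is required, and your check that multiplying the Schwartz factor $e^{-x^{2}}$ by the bounded-with-bounded-derivatives factor $e^{-1/x^{2}}$ stays in $\Sc(\R)$ is the only point needing care. The paper offers no argument at all (the lemma is simply declared obvious), so your explicit construction is a perfectly adequate substitute and there is nothing to compare it against.
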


\begin{cor}
Let $L$ be a quadratic space. Let $V:=L \oplus \R$ be enhanced
with the obvious quadratic form. Let $g \in \Sc(L)$. Then there
exists $f \in \Sc(V)$ such that $f \in \Sc(V-L)$ and $\Fou(f)|_L=g$.
\end{cor}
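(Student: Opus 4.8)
The plan is to reduce everything to a one‑variable statement via the tensor‑product decomposition of Schwartz spaces and of the Fourier transform, and then invoke the preceding lemma. Write $V=L\oplus\R$ and let $t$ denote the coordinate on the $\R$‑summand, so that $L=L\times\{0\}\subset V$ and $B_V=B_L\perp B_\R$ with $B_\R(t,t')=tt'$ the obvious form on $\R$. Choose on $L$ and on $\R$ the self‑dual Haar measures and equip $V$ with their product; this product is self‑dual for $B_V$, and under the identification $V^*\cong V$ induced by $B_V$ the Fourier transform factors as $\Fou_V=\Fou_{B_L,\psi}\,\hot\,\Fou_{B_\R,\psi}$ on $\Sc(V)=\Sc(L)\hot\Sc(\R)$ (Proposition \ref{SchKer}).

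First I would apply the preceding lemma to obtain $h\in\Sc(\R)$ that vanishes at $0$ together with all its derivatives and satisfies $\Fou_{B_\R,\psi}(h)(0)=1$. Next, by Fourier inversion on $\Sc(L)$ (valid since $\Sc(L)$ consists of the classical Schwartz functions by Property \ref{pClass}), there is $g_0\in\Sc(L)$ with $\Fou_{B_L,\psi}(g_0)=g$. Now set $f:=g_0\hot h\in\Sc(L)\hot\Sc(\R)=\Sc(V)$, i.e. $f(x,t)=g_0(x)h(t)$.

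It remains to verify the two required properties. Since $h$ and all its derivatives vanish at $t=0$, every partial derivative of $f$ vanishes on $L\times\{0\}=L$, so $f\in\Sc(V-L)$ by Property \ref{pOpenSet}. By the factorization of $\Fou_V$ we get $\Fou_V(f)(x,t)=(\Fou_{B_L,\psi}g_0)(x)\cdot(\Fou_{B_\R,\psi}h)(t)=g(x)\,(\Fou_{B_\R,\psi}h)(t)$, hence restricting to $t=0$ gives $\Fou_V(f)|_L=g\cdot(\Fou_{B_\R,\psi}h)(0)=g$, as desired.

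There is no serious obstacle here: the whole content sits in the preceding one‑variable lemma (a Borel‑type construction of a Schwartz function flat at the origin whose Fourier transform is nonvanishing there), and the only points needing care are the compatibility of the self‑dual measures and of the Fourier transform with the orthogonal decomposition $V=L\perp\R$, together with the identification $V^*\cong V$ through which the restriction $\Fou(f)|_L$ is to be read.
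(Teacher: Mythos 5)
Your proof is correct and is essentially the argument the paper intends (the corollary is left without proof there, as an immediate consequence of the preceding one-variable lemma): tensor a Fourier preimage of $g$ on $L$ with the flat-at-$0$ function $h$ on $\R$, using the factorization of the Fourier transform along the orthogonal decomposition $V=L\perp\R$ and Property \ref{pOpenSet} to identify $\Sc(V-L)$.
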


\begin{cor}
Let $L$ be a quadratic space. Let $V:=L \oplus \R e$ be enhanced
with the obvious quadratic form. Let $g \in \Sc(L)$. Let
$i$ be a natural number. Then there exists $f \in \Sc(V)$ such that $f \in \Sc(V-L)$,
$\frac{\partial^i \Fou(f)}{(\partial e)^i}|_L=g$ and
$\frac{\partial^i \Fou(f)}{(\partial e)^j}|_L=0$ for any $j<i$.
\end{cor}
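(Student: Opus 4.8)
The plan is to give a self-contained argument refining the previous corollary, which is the case $i=0$: one simply replaces its one-dimensional building block by its $i$-th derivative. First I would use the preceding lemma to fix $\phi \in \Sc(\R)$ that vanishes at the origin together with all its derivatives and has $\Fou(\phi)(0)=1$, and set $f_0 := \phi^{(i)}$. Being a derivative of $\phi$, the function $f_0$ again vanishes at the origin with all its derivatives. Since differentiation becomes multiplication by a linear function under Fourier transform, $\Fou(f_0)(t) = (ct)^i\,\Fou(\phi)(t)$ for a nonzero constant $c$; hence $\Fou(f_0)$ has a zero of order exactly $i$ at the origin, with $\frac{d^j}{dt^j}\Fou(f_0)(0)=0$ for $j<i$ and $\frac{d^i}{dt^i}\Fou(f_0)(0) = i!\,c^i\,\Fou(\phi)(0) = i!\,c^i \neq 0$ by the Leibniz rule. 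After rescaling $f_0$ I may assume this last derivative equals $1$.

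Next I would form the desired function as a product. Choose $h \in \Sc(L)$ with $\Fou_L(h) = g$ (possible since the Fourier transform on $\Sc(L)$ is an automorphism), and put $f := h \otimes f_0 \in \Sc(L) \ctp \Sc(\R) = \Sc(V)$, where I use Proposition \ref{SchKer} together with the identification $V = L \oplus \R e \cong L \times \R$; thus $f(l + te) = h(l) f_0(t)$. Because $f_0$ vanishes at $t = 0$ with all its derivatives, every partial derivative of $f$ vanishes on $L$, so $f \in \Sc(V-L)$ by Property \ref{pOpenSet}.

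It remains to read off $\Fou(f)$. Since the decomposition $V = L \oplus \R e$ is orthogonal for the quadratic form, the self-dual measure and the Fourier transform factor through it, so $\Fou(f) = \Fou_L(h) \otimes \Fou_\R(f_0) = g \otimes \Fou_\R(f_0)$, i.e. $\Fou(f)(l + te) = g(l)\,\Fou_\R(f_0)(t)$. Differentiating $j$ times in the $e$-direction and restricting to $L$ gives $\frac{\partial^j \Fou(f)}{(\partial e)^j}|_L = g \cdot \frac{d^j}{dt^j}\Fou_\R(f_0)(0)$, which vanishes for $j < i$ and equals $g$ for $j = i$ by the first paragraph — precisely the claim. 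I do not expect any genuine obstacle: the entire content is the choice $f_0 = \phi^{(i)}$, which trades ``vanishing to order $i$ after Fourier transform'' for ``multiplication by $t^i$''; the only points needing care are the compatibility of the Fourier transform with the orthogonal splitting $V = L \oplus \R e$ and the bookkeeping of the constant $c$, both of which are routine.
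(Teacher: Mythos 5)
Your proposal is correct, and it is essentially the intended argument: the paper states this corollary without proof as part of the chain following from the lemma producing $\phi\in\Sc(\R)$ flat at $0$ with $\Fou(\phi)(0)=1$, and the expected derivation is exactly your tensor-product construction $f=h\otimes f_0$ with the one-dimensional factor adjusted (here $f_0=\phi^{(i)}$, suitably normalized) so that its Fourier transform vanishes to order $i$ at $0$ with prescribed $i$-th derivative. All the supporting points you invoke (flatness giving $f\in\Sc(V-L)$ via Property \ref{pOpenSet}, factorization of $\Fou$ over the orthogonal splitting, surjectivity of $\Fou_L$) are sound.
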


\begin{cor}
Let $L$ be a quadratic space. Let $V:=L \oplus \R e$ be enhanced
with the obvious quadratic form. Let $g \in \Sc(L)$. Then for all $i$ and $\eps$ there exists $f \in \Sc(V)$ such that
$\gN_{i-1}(f) < \eps$, 
$f \in \Sc(V-L)$,  $\frac{\partial^i
\Fou(f)}{(\partial e)^i}|_L=g$ and $\frac{\partial^i
\Fou(f)}{(\partial e)^j}|_L=0$ for any $j<i$.
\end{cor}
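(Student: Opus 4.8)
The plan is to deduce the statement from the previous Corollary by an anisotropic rescaling in the $e$-direction: such a rescaling does not raise the order of the derivatives that enter $\gN_{i-1}$, so it contributes a genuine positive power of the scaling parameter to $\gN_{i-1}$, while one can arrange the normalization so that the prescribed $i$-th normal derivative of the Fourier transform is left untouched. Concretely, first I would apply the previous Corollary to fix $f_0 \in \Sc(V)$ with $f_0 \in \Sc(V-L)$, $\frac{\partial^i \Fou(f_0)}{(\partial e)^i}\big|_L = g$, and $\frac{\partial^j \Fou(f_0)}{(\partial e)^j}\big|_L = 0$ for $j < i$. Then I would choose linear coordinates on $V$ in which $e$ is the last basis vector, and write a point of $V$ as $(x,t)$ with $x \in L$ and $t$ the $e$-coordinate; since the quadratic form on $V$ is the orthogonal sum of that on $L$ with $e\cdot e = 1$, the associated bilinear form is $B((x,t),(\eta,s)) = B_L(x,\eta) + ts$. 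For $\lambda \in (0,1]$ set
\[
f_\lambda(x,t) := \lambda^{i+1}\, f_0(x,\lambda t).
\]
Then $f_\lambda \in \Sc(V)$, and since $f_0$ vanishes on $L = \{t = 0\}$ together with all its derivatives, so does $f_\lambda$; hence $f_\lambda \in \Sc(V-L)$.

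Next I would check that the normal-derivative conditions are independent of $\lambda$. A change of variables in the integral defining $\Fou$ — using the formula for $B$ above together with the fact that the self-dual Haar measure is multiplied by $\lambda^{-1}$ under $(x,t)\mapsto(x,\lambda t)$ — gives $\Fou(f_\lambda)(\eta,s) = \lambda^{i}\,\Fou(f_0)(\eta,\lambda^{-1}s)$ for $\eta \in L$, $s \in \R$. Differentiating $j$ times in the $e$-coordinate and then setting $s = 0$ yields
\[
\frac{\partial^j \Fou(f_\lambda)}{(\partial e)^j}\Big|_L = \lambda^{\,i-j}\,\frac{\partial^j \Fou(f_0)}{(\partial e)^j}\Big|_L ,
\]
so for every $\lambda \in (0,1]$ we still have $\frac{\partial^j \Fou(f_\lambda)}{(\partial e)^j}\big|_L = 0$ for $j < i$ and $\frac{\partial^i \Fou(f_\lambda)}{(\partial e)^i}\big|_L = g$.

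Finally I would estimate $\gN_{i-1}(f_\lambda)$. Writing monomials as $v^\alpha = x^{\alpha'} t^{\alpha_t}$ and differential monomials as $\partial^\beta = \partial_x^{\beta'}\partial_t^{\beta_t}$, a direct computation gives
\[
\sup_{V}\bigl| v^\alpha \partial^\beta f_\lambda \bigr| = \lambda^{\,i+1+\beta_t-\alpha_t}\,\sup_{V}\bigl| v^\alpha \partial^\beta f_0 \bigr|.
\]
When $|\alpha| \le i-1$ and $|\beta| \le i-1$ we have $\alpha_t \le i-1$ and $\beta_t \ge 0$, so the exponent of $\lambda$ is at least $2$; since $f_0$ is a Schwartz function each supremum on the right is finite, whence $\gN_{i-1}(f_\lambda) \le C\lambda^2$ for a constant $C$ depending only on $f_0$ and $i$. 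Choosing $\lambda$ small enough that $C\lambda^2 < \eps$ and setting $f := f_\lambda$ completes the proof. I expect no essential obstacle here: the only point that needs care is the bookkeeping of how the Fourier transform and the Haar measure scale, which is exactly what guarantees that the prescribed $i$-th normal derivative is preserved while the lower ones remain zero.
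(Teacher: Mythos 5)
Your proof is correct and follows essentially the same route as the paper: take $f_0$ from the preceding corollary and rescale it anisotropically in the $e$-direction, checking that the prescribed normal derivatives of $\Fou(f_0)$ on $L$ are preserved while $\gN_{i-1}$ acquires a positive power of the scaling parameter. The only difference is cosmetic: the paper scales by $t^{i+2}$ while you use $\lambda^{i+1}$, and your bookkeeping (giving exactly $\partial^j_e\Fou(f_\lambda)|_L=\lambda^{i-j}\partial^j_e\Fou(f_0)|_L$) is in fact the normalization that preserves the $i$-th derivative on the nose.
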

\begin{proof}
Let $f \in \Sc(V)$ be s.t. $f \in \Sc(V-L)$, $\frac{\partial^i
\Fou(f)}{(\partial e)^i}|_L=g$ and $\frac{\partial^i
\Fou(f)}{(\partial e)^j}|_L=0$ for any $j<i$.

Let $f^t \in \Sc(V)$ defined by $f^t(x+\alpha e_1)=t^{i+2} f(x+t
\alpha e_1)$. It is easy to see that $\frac{\partial^i
\Fou(f)}{(\partial e)^i}|_L=g$ and $\frac{\partial^i
\Fou(f)}{(\partial e)^j}|_L=0$ for any $j<i$. Also it is easy to
see that $\lim \limits _{t\to 0} \gN_{i-1}(f^t)=0.$ This implies the assertion.
\end{proof}

\begin{cor}
Let $L$ be a quadratic space. Let $V:=L \oplus \R e$ be enhanced
with the obvious quadratic form. Let $\{g_i\}_{i=0}^{\infty} \in
\Sc(L)$. Then there exists $f \in \Sc(V)$ such that $f$ vanishes
on $L$ with all its derivatives and $\frac{\partial^i
\Fou(f)}{(\partial e)^i}|_L=g_i$.
\end{cor}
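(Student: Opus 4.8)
The plan is to imitate the proof of the classical Borel Lemma, arranging the construction as a telescoping sum so that the prescribed data are matched exactly rather than only approximately. The engine is the preceding corollary: for any order $i$, any target $g\in\Sc(L)$ and any $\eps>0$ it produces $f\in\Sc(V-L)$ with $\gN_{i-1}(f)<\eps$, with $\frac{\partial^i\Fou(f)}{(\partial e)^i}|_L=g$, and with $\frac{\partial^j\Fou(f)}{(\partial e)^j}|_L=0$ for every $j<i$. What it does not control are the normal derivatives of $\Fou(f)$ of order $>i$, and it is exactly the telescoping that absorbs those uncontrolled contributions.

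First I would construct a sequence $f_k\in\Sc(V-L)$ by induction on $k$. For $k=0$ take any $f_0\in\Sc(V-L)$ with $\Fou(f_0)|_L=g_0$ (furnished by the corollaries above; no smallness is needed for a single term). Given $f_0,\dots,f_{k-1}$, set
$$g_k':=g_k-\frac{\partial^k\Fou(f_0+\dots+f_{k-1})}{(\partial e)^k}\Big|_L\ \in\ \Sc(L),$$
and apply the preceding corollary with order $k$, target $g_k'$ and $\eps=2^{-k}$ to get $f_k\in\Sc(V-L)$ with $\gN_{k-1}(f_k)<2^{-k}$, with $\frac{\partial^k\Fou(f_k)}{(\partial e)^k}|_L=g_k'$, and with $\frac{\partial^j\Fou(f_k)}{(\partial e)^j}|_L=0$ for $j<k$.

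Then I would put $f:=\sum_{k=0}^\infty f_k$. Since the seminorms $\gN_m$ are nondecreasing in $m$, for each fixed $m$ one has $\gN_m(f_k)\le\gN_{k-1}(f_k)<2^{-k}$ once $k>m$, so the series converges absolutely in every seminorm, hence converges in the nuclear \Fre space $\Sc(V)$; as $\Sc(V-L)$ is a closed subspace of $\Sc(V)$ by Property \ref{pOpenSet}, it follows that $f\in\Sc(V-L)$, i.e. $f$ vanishes on $L$ with all derivatives. Because $\Fou$, $(\partial/\partial e)^i$ and restriction to $L$ are continuous maps of Schwartz spaces, they commute with the infinite sum, whence
$$\frac{\partial^i\Fou(f)}{(\partial e)^i}\Big|_L=\sum_{k=0}^\infty\frac{\partial^i\Fou(f_k)}{(\partial e)^i}\Big|_L=\sum_{k=0}^{i}\frac{\partial^i\Fou(f_k)}{(\partial e)^i}\Big|_L,$$
where the terms with $k>i$ drop out because all normal derivatives of $\Fou(f_k)$ of order $<k$ vanish on $L$. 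By linearity the last sum equals $\frac{\partial^i\Fou(f_0+\dots+f_{i-1})}{(\partial e)^i}|_L+g_i'$, which is $g_i$ by the definition of $g_i'$. The only point needing care is the termwise interchange of the series with $\Fou$, differentiation and restriction, and this is immediate from the \Fre structure and the bound $\gN_{k-1}(f_k)<2^{-k}$ built into the induction; I therefore expect no real obstacle here, the analytic substance having already been settled in the preceding corollaries, so that the write-up should be quite short.
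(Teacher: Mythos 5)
Your construction is exactly the paper's argument: the same recursive correction $g_k'=g_k-\frac{\partial^k\Fou(f_0+\dots+f_{k-1})}{(\partial e)^k}|_L$ with smallness $\gN_{k-1}(f_k)<2^{-k}$ from the preceding corollary, followed by summing the telescoping series in the \Fre topology. The proposal is correct and essentially identical to the paper's proof, only written out in slightly more detail concerning convergence and the interchange of limits.
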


\begin{proof}
Define 3 sequences of functions $f_i,h_i \in S(V),g_i' \in S(L)$
recursively in the following way: $f_0=0$.
$g_i'=g_i-\frac{\partial^i \Fou(f_{i-1})}{(\partial e)^i}|_L$. Let
$h_i\in \Sc(V)$ s.t. $h_i \in \Sc(V-L)$, $\gN_{i-1}(h_i) < 1/2^i$,
$\frac{\partial^i \Fou(h_i)}{(\partial e)^i}|_L=g_i'$ and
$\frac{\partial^i \Fou(h_i)}{(\partial e)^j}|_L=0$ for any $j<i$.
Define $f_i=f_{i-1}+h_i$.

Clearly $f:=\lim \limits _{i \to \infty} f_i$ exists and satisfies the
requirements.
\end{proof}

\begin{cor}
Let $L$ be a quadratic space. Let $V:=L \oplus \R e$ be enhanced
with the obvious quadratic form.


Then $\Sc(V-L) + \Fou(\Sc(V-L)) = \Sc(V).$
\end{cor}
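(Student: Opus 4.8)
The plan is to obtain the statement from the preceding corollary by a simple subtraction, the point being that that corollary already supplies, for any prescribed sequence of transversal jets along $L$, a Schwartz function on $V$ which is flat along $L$ and whose Fourier transform realizes those jets.

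First I would record the elementary fact that for $\phi\in\Sc(V)$ one has $\phi\in\Sc(V-L)$ if and only if $\frac{\partial^i\phi}{(\partial e)^i}|_L=0$ for all $i\ge 0$. Indeed, by Property \ref{pOpenSet}, $\Sc(V-L)$ (viewed as a subspace of $\Sc(V)$) consists precisely of the functions in $\Sc(V)$ vanishing on $L$ together with all their derivatives; and since $V=L\oplus\R e$, in linear coordinates adapted to the splitting every constant-coefficient partial derivative operator is a composition of $\partial_e$'s and derivations tangent to $L$, which commute. So if all the purely transversal jets $\frac{\partial^i\phi}{(\partial e)^i}|_L$ vanish, then applying any tangential derivative to these (identically zero) functions on $L$ shows that all mixed partials of $\phi$ vanish on $L$ as well.

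Now let $\phi\in\Sc(V)$ be arbitrary and put $g_i:=\frac{\partial^i\phi}{(\partial e)^i}|_L$. Each $\frac{\partial^i\phi}{(\partial e)^i}$ is again Schwartz on $V$, and by Property \ref{Extension} restriction to the closed submanifold $L$ maps onto $\Sc(L)$, so $g_i\in\Sc(L)$. Applying the previous corollary to the sequence $\{g_i\}_{i=0}^{\infty}$ yields $f\in\Sc(V)$ which vanishes on $L$ with all its derivatives --- hence $f\in\Sc(V-L)$ by the observation above --- and which satisfies $\frac{\partial^i\Fou(f)}{(\partial e)^i}|_L=g_i$ for every $i$. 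Then $\phi-\Fou(f)\in\Sc(V)$ has all its transversal jets along $L$ equal to zero, so $\phi-\Fou(f)\in\Sc(V-L)$, and the decomposition $\phi=(\phi-\Fou(f))+\Fou(f)$ exhibits $\phi$ as an element of $\Sc(V-L)+\Fou(\Sc(V-L))$. The reverse inclusion $\Sc(V-L)+\Fou(\Sc(V-L))\subseteq\Sc(V)$ is immediate.

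There is no serious obstacle left at this stage: all the analytic work --- the Borel-type recursive construction of a function flat along $L$ with prescribed transversal jets on the Fourier side, together with the seminorm estimates $\gN_{i-1}(h_i)<1/2^i$ that guarantee convergence of the series --- has already been carried out in the chain of corollaries above. The only points requiring a little care are that transversal flatness alone already forces membership in $\Sc(V-L)$ (handled in the first step), and that the self-dual Fourier transform on the orthogonal sum $V=L\oplus\R e$ genuinely factors through the $e$-coordinate, which is exactly the setting in which the previous corollary was established.
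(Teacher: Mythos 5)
Your proof is correct and is essentially the paper's argument: the paper takes $f'\in\Sc(V-L)$ whose Fourier transform matches the transversal jets $\frac{\partial^i \Fou(f)}{(\partial e)^i}|_L$ and observes $f-f'\in\Fou(\Sc(V-L))$, while you match the transversal jets of $\phi$ itself by $\Fou(f)$ with $f\in\Sc(V-L)$ and put the remainder in $\Sc(V-L)$ --- a mirror image of the same subtraction based on the same Borel-type corollary. Your explicit justification that transversal flatness of a Schwartz function forces membership in $\Sc(V-L)$ (via Property \ref{pOpenSet}) is a point the paper leaves implicit, so no gap.
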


\begin{proof}
Let $f \in \Sc(V)$. Let $f' \in  \Sc(V-L)$ s.t. $\frac{\partial^i
\Fou(f')}{(\partial e)^i}|_L=\frac{\partial^i \Fou(f)}{(\partial
e)^i}|_L$. Let $f''= f-f'$. Clearly $f'' \in  \Fou(\Sc(V-L))$.
\end{proof}

\begin{cor}
Let $V$ be a linear space, $L \subset V$ and $L' \subset V^*$ be
 subspaces of codimension $1$. Suppose that $(L')_\bot \nsubseteqq L$.
Then $$\Sc(V-L) + \widehat{\Sc(V^*-L')}=\Sc(V).$$
\end{cor}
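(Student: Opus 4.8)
The plan is to deduce this from the previous corollary (the case $V=L\oplus\R e$ with $L$ a quadratic space) by equipping the hyperplane $L$ with an auxiliary quadratic form and using it to re-coordinatize $V$.

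First I would unpack the hypothesis geometrically. Since $L'\subset V^*$ has codimension $1$, its pre-annihilator $(L')_\bot\subset V$ is one-dimensional, and the assumption $(L')_\bot\nsubseteqq L$ says exactly that this line is transversal to the hyperplane $L$, i.e.\ $V=L\oplus(L')_\bot$. I fix a vector $e$ spanning $(L')_\bot$, so that $V=L\oplus\R e$, and by reflexivity of finite-dimensional spaces $L'=\{\xi\in V^*:\xi(e)=0\}$.

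Next I would choose any non-degenerate symmetric bilinear form $q_L$ on $L$ (for instance a sum of squares in some basis) and let $q$ be the orthogonal sum of $q_L$ with the form $xe\mapsto x^2$ on $\R e$; this is precisely "the obvious quadratic form" making $(L,q_L)$ a quadratic space whose enhancement is $V=L\oplus\R e$, so the previous corollary applies and yields $\Sc(V-L)+\Fou_q(\Sc(V-L))=\Sc(V)$, where $\Fou_q$ is the Fourier transform attached to $q$ and $\psi$. It then remains to identify $\Fou_q(\Sc(V-L))$ with $\widehat{\Sc(V^*-L')}$. The form $q$ gives an isomorphism $q\colon V\to V^*$ with $L^{\perp_q}=\R e$, hence $q(L)=L'$ and $q^{-1}(V^*-L')=V-L$; so $g\mapsto g\circ q$ is a bijection from $\Sc(V^*-L')$ onto $\Sc(V-L)$ (by Property \ref{pOpenSet}). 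Finally, the change of variables $\xi=q(u)$ in the defining integral shows $\Fou_q(g\circ q)=c\,\widehat g$ for a fixed nonzero constant $c$ depending only on $|\det q|$ and the measure normalizations, whence $\Fou_q(\Sc(V-L))=c\cdot\widehat{\Sc(V^*-L')}=\widehat{\Sc(V^*-L')}$, a linear subspace being insensitive to scaling. Combining this with the previous corollary gives the asserted identity.

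I do not expect a serious obstacle here: the analytic heart of the matter (the Borel-type construction producing functions vanishing to all orders on $L$ with prescribed jets of their Fourier transform along $L$) is already contained in the previous corollary. The only care needed is bookkeeping: the reflexivity identification $V^{**}=V$ implicit in writing $\widehat{\Sc(V^*-L')}\subset\Sc(V)$, the verification that $q(L)=L'$, and tracking the harmless nonzero constant $c$ through the change of variables.
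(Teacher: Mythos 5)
Your proposal is correct and is essentially the paper's own argument: the paper likewise chooses a non-degenerate form on $V$ with $L\perp (L')_\bot$, uses the resulting identification $V\to V^*$ carrying $L$ to $L'$, and invokes the preceding corollary for $V=L\oplus\R e$. You simply spell out the details the paper leaves implicit (the splitting $V=L\oplus(L')_\bot$, the verification $q(L)=L'$, and the harmless constant relating $\Fou_q$ to $\widehat{\,\cdot\,}$), all of which check out.
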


\begin{proof}
Choose a non-degenerate quadratic form on $V$ s.t. $L \bot
(L')_\bot$. This form gives an identification $V \to V^*$ which
maps $L$ to $L'$. Now the corollary follows from the previous
corollary.
\end{proof}

Now we are ready to prove Theorem \ref{DualUncer}.

\begin{proof}[Proof of Theorem \ref{DualUncer}]
Let $M \supset L$ be a sub-space in $V$ of  codimension $1$ s.t.
$M^\bot \nsubseteqq L'$. Let $M' \supset L'$ be a sub-space in
$V^*$ of  codimension $1$ s.t. $M^\bot \nsubseteqq M'$. The
theorem follows now from the previous corollary.
\end{proof}


\end{document}